\tikzstyle{root}=[rectangle,draw=blue!90]
\tikzstyle{nonterminal}=[rectangle,inner sep=2pt,rounded corners,fill=blue!15,draw=blue!15]
\tikzstyle{terminal}=[rectangle]
\tikzstyle{cut}=[thick,dotted,draw=green!50!black]
\tikzstyle{local}=[color=green!50!black,text=green!25!black]
\newif\ifbiblatex
\DeclarePairedDelimiter{\group}{(}{)}
\DeclarePairedDelimiter{\sqgroup}{[}{]}
\DeclarePairedDelimiter{\set}{\{}{\}}
\DeclarePairedDelimiter{\abs}{\vert}{\vert}
\DeclarePairedDelimiter{\dist}{\vert}{\vert}
\DeclarePairedDelimiter{\floor}{\lfloor}{\rfloor}
\DeclarePairedDelimiter{\ceil}{\lceil}{\rceil}
\DeclarePairedDelimiter{\cylset}{\llbracket}{\rrbracket}
\newcommand{\gambles}{\mathcal{G}}
\newcommand{\gamblesonoutcomes}{\gambles(\outcomes)}
\newcommand{\bolleke}{\vcenter{\hbox{\scalebox{0.7}{\(\bullet\)}}}}
\newcommand{\availables}{\mathcal{A}}
\newcommand{\frcsts}{\sqgroup{0,1}}
\newcommand{\imprecisefrcsts}{\mathcal{I}}
\newcommand{\outcomes}{\set{0,1}}
\newcommand{\pths}{\Omega}
\newcommand{\sits}{\mathbb{S}}
\newcommand{\natsandsits}{\naturalswithzero\times\sits}
\newcommand{\natsandnats}{\naturalswithzero^2}
\newcommand{\natsandnatsandnats}{\naturalswithzero^3}
\newcommand{\natsandnatsandsits}{\naturalswithzero^2\times\sits}
\newcommand{\natsandnatsandnatsandsits}{\naturalswithzero^3\times\sits}
\newcommand{\natsandnatsandnatsandnats}{\naturalswithzero^4}
\newcommand{\cantoralgebra}{\mathcal{B}(\pths)}
\newcommand{\finitepowerset}[1]{\mathcal{P}_{\mathrm{fin}}(#1)}
\newcommand{\strategy}{\sigma}
\newcommand{\frcstsystem}{\varphi}
\newcommand{\precisefrcstsystem}{\varphi_{\mathrm{pr}}}
\newcommand{\faircoinfrcstsystem}{\frcstsystem_{\nicefrac12}}
\newcommand{\altfrcstsystem}{\psi}
\newcommand{\lfrcstsystem}{\smash{\underline{\frcstsystem}}}
\newcommand{\ufrcstsystem}{\smash{\overline{\frcstsystem}}}
\newcommand{\frcstsystems}{\Phi}
\newcommand{\precisefrcstsystems}{\frcstsystems_{\mathrm{pr}}}
\newcommand{\frcstsystemconstant}{\smash{c_\frcstsystem}}
\newcommand{\frcstsystembound}{\smash{C_\frcstsystem}}
\newcommand{\process}{F}
\newcommand{\supermartin}{M}
\newcommand{\submartin}{M}
\newcommand{\submartins}[1][\frcstsystem]{\smash{\underline{\mathbb{M}}^{#1}}}
\newcommand{\supermartins}[1][\frcstsystem]{\smash{\overline{\mathbb{M}}^{#1}}}
\newcommand{\test}{T}
\newcommand{\rationaltest}{R}
\newcommand{\utest}{\tau}
\newcommand{\ordering}{\rho}
\newcommand{\altordering}{\varsigma}
\newcommand{\rectest}{A}
\newcommand{\rectestpths}[1][]{G_{#1}}
\newcommand{\altrectest}{C}
\newcommand{\urectest}{U}
\newcommand{\rectestindx}[3][]{\smash{\cylset[#1]{\rectest_{#2}^{#3}}}}
\newcommand{\rectestcutindx}[2]{\smash{\rectest_{#1}^{#2}}}
\newcommand{\altrectestindx}[3][]{\smash{\cylset[#1]{\altrectest_{#2}^{#3}}}}
\newcommand{\altrectestcutindx}[2]{\smash{\altrectest_{#1}^{#2}}}
\newcommand{\allrectest}[4][]{\cylset[#1]{\allrectestcut{#2}{#3}{#4}}}
\newcommand{\allrectestcut}[3]{\smash{\prescript{#1\!}{}\rectest_{#2}^{#3}}}
\newcommand{\brectest}[4][]{\cylset[#1]{\brectestcut{#2}{#3}{#4}}}
\newcommand{\brectestcut}[3]{\smash{\prescript{#1\!}{}B_{#2}^{#3}}}
\newcommand{\urectestindx}[3][]{\smash{\cylset[#1]{\urectest_{#2}^{#3}}}}
\newcommand{\urectestcutindx}[2]{\smash{\urectest_{#1}^{#2}}}
\newcommand{\naturals}{\mathbb{N}}
\newcommand{\naturalswithzero}{\mathbb{N}_0}
\newcommand{\reals}{\mathbb{R}}
\newcommand{\nonnegreals}{\mathbb{R}_{\geq0}}
\newcommand{\extnonnegreals}{\overline{\mathbb{R}}_{\geq0}}
\renewcommand{\extnonnegreals}{\mathbb{R}_{\geq0}\cup\set{+\infty}}
\renewcommand{\extnonnegreals}{\sqgroup{0,+\infty}}
\newcommand{\rationals}{\mathbb{Q}}
\newcommand{\countables}{\mathcal{D}}
\newcommand{\ex}{E}
\newcommand{\lex}{\smash{\underline{\ex}}}
\newcommand{\uex}{\smash{\overline{\ex}}}
\newcommand{\pr}{P}
\newcommand{\lpr}{\smash{\underline{\pr}}}
\newcommand{\upr}{\smash{\overline{\pr}}}
\newcommand{\pglobal}[1][\frcstsystem]{\ex^{#1}}
\newcommand{\lglobal}[1][\frcstsystem]{\lex^{#1}}
\newcommand{\uglobal}[1][\frcstsystem]{\uex^{#1}}
\newcommand{\globalprob}[1][\frcstsystem]{\pr^{#1}}
\newcommand{\lglobalprob}[1][\frcstsystem]{\lpr^{#1}}
\newcommand{\uglobalprob}[1][\frcstsystem]{\upr^{#1}}
\newcommand{\globalcond}[3][\frcstsystem]{\ex^{#1}(#2\vert#3)}
\newcommand{\lglobalcond}[3][\frcstsystem]{\lex^{#1}(#2\vert#3)}
\newcommand{\uglobalcond}[3][\frcstsystem]{\uex^{#1}(#2\vert#3)}
\newcommand{\globalcondprob}[3][\frcstsystem]{\pr^{#1}(#2\vert#3)}
\newcommand{\lglobalcondprob}[3][\frcstsystem]{\lpr^{#1}(#2\vert#3)}
\newcommand{\uglobalcondprob}[3][\frcstsystem]{\upr^{#1}(#2\vert#3)}
\newcommand{\uglobalcondprobgroup}[4][\frcstsystem]{\upr^{#1}\group[#4]{#2#4\vert#3}}
\newcommand{\measure}[1][]{\mu^{#1}}
\newcommand{\measures}{\mathcal{M}(\pths)}
\newcommand{\bernoulis}{\mathit{Ber}}
\newcommand{\classofmeasures}[1][]{\mathcal{C}^{#1}}
\newcommand{\lp}[1][]{\smash{\underline{p}}_{#1}}
\newcommand{\up}[1][]{\smash{\overline{p}}_{#1}}
\newcommand{\pinterval}[1][]{\sqgroup{\lp[#1],\up[#1]}}
\newcommand{\init}{\square}
\newcommand{\pth}{\omega}
\newcommand{\pthat}[1]{\pth_{#1}}
\newcommand{\pthto}[1]{\pth_{1:#1}}
\newcommand{\altpth}{\varpi}
\newcommand{\altpthto}[1]{\altpth_{1:#1}}
\newcommand{\andoutcome}{\,\cdot}
\newcommand{\sit}{s}
\newcommand{\sitto}[1]{\sit_{1:#1}}
\newcommand{\sitat}[1]{\sit_{#1}}
\newcommand{\altsit}{t}
\newcommand{\precedes}{\sqsubseteq}
\newcommand{\sprecedes}{\sqsubset}
\newcommand{\follows}{\sqsupseteq}
\newcommand{\sfollows}{\sqsupset}
\newcommand{\incomp}{\parallel}
\newcommand{\xval}[1][]{x_{#1}}
\newcommand{\xvaltolong}[1][n]{\xval[1],\dots,\xval[#1]}
\newcommand{\xvalto}[1][n]{\xval[1:#1]}
\newcommand{\randomoutcome}[1][]{X_{#1}}
\newcommand{\leqsit}[1][\sit]{\mathbin{\leq_{#1}}}
\newcommand{\geqsit}[1][\sit]{\mathbin{\geq_{#1}}}
\newcommand{\cut}[1][]{K_{#1}}
\newcommand{\cutindx}[2]{K_{#1}^{#2}}
\newcommand{\comp}{computable}
\newcommand{\lscomp}{lower semicomputable}
\newcommand{\uscomp}{upper semicomputable}
\newcommand{\compy}{computability}
\newcommand{\ML}{Martin-L\"of}
\newcommand{\cset}[3][]{\set[#1]{#2\colon#3}}
\newcommand{\ind}[1]{\mathbb{I}_{#1}}
\newcommand{\indexact}[1]{\ind{\cylset{#1}}}
\newcommand{\indsing}[1]{\ind{\set{#1}}}
\newcommand{\then}{\Rightarrow}
\newcommand{\ifandonlyif}{\Leftrightarrow}
\newcommand{\adddelta}{\Delta}
\newtheorem{theorem}{Theorem}
\newtheorem{proposition}[theorem]{Proposition}
\newtheorem{lemma}[theorem]{Lemma}
\newtheorem{corollary}[theorem]{Corollary}
\theoremstyle{definition}
\newtheorem{definition}{Definition}
\theoremstyle{remark}
\newtheorem*{example}{Counterexample}
\begin{document}
\title{Randomness and imprecision: from supermartingales to randomness tests}
\author{Gert de Cooman}
\address{Ghent University, Foundations Lab for imprecise probabilities, Technologiepark--Zwijnaarde 125, 9052 Zwijnaarde, Belgium.}
\email{gert.decooman@ugent.be}
\author{Floris Persiau}
\address{Ghent University, Foundations Lab for imprecise probabilities, Technologiepark--Zwijnaarde 125, 9052 Zwijnaarde, Belgium.}
\email{floris.persiau@ugent.be}
\author{Jasper De Bock}
\address{Ghent University, Foundations Lab for imprecise probabilities, Technologiepark--Zwijnaarde 125, 9052 Zwijnaarde, Belgium.}
\email{jasper.debock@ugent.be}

\begin{abstract}
We generalise the randomness test definitions in the literature for both the {\ML} and Schnorr randomness of a series of binary outcomes, in order to allow for interval-valued rather than merely precise forecasts for these outcomes, and prove that under some computability conditions on the forecasts, our definition of {\ML} test randomness can be seen as a special case of Levin's uniform randomness.
We show that the resulting randomness notions are, under some computability and non-degeneracy conditions on the forecasts, equivalent to the martingale-theoretic versions we introduced in earlier papers.
In addition, we prove that our generalised notion of {\ML} randomness can be characterised by universal supermartingales and universal randomness tests.
\end{abstract}

\keywords{{\ML} randomness; Schnorr randomness; game-theoretic probability; interval forecast; supermartingale; randomness test}

\maketitle

\section{Introduction}\label{sec:introduction}
In a number of recent papers \cite{cooman2021:randomness,cooman2017:computablerandomness,cooman2017:pmlr}, two of us (De Cooman and De Bock) have shown how to associate various notions of algorithmic randomness with interval---rather than precise---forecasts for a sequence of binary outcomes, and argued why that is useful and interesting.
Providing such interval forecasts for binary outcomes is a way to allow for \emph{imprecision} in the resulting probability models.
Still more recent papers \cite{persiau2020:randomness:more:than:probabilities,persiau2020:randomness:more:than:probabilities:arxiv,persiau2021:nonstationary} by the three of us explore these ideas further, and identify interesting relations between randomness associated with imprecise (interval) and precise forecasts.

All of this work follows the so-called \emph{martingale-theoretic} approach to randomness, where a sequence of outcomes is considered to be random if there's some specific type of supermartingale that becomes unbounded on that sequence in some specific way.
How a supermartingale is defined in this context, is closely related to the interval forecasts involved, and how they can be interpreted.

There are, of course, other ways to approach and define algorithmic randomness, besides the martingale-theoretic one \cite{bienvenu2009:randomness}: via randomness tests \cite{schnorr1971,downey2010,martinlof1966:random:sequences}, via Kolmogorov complexity \cite{schnorr1971,schnorr1973,downey2010,martinlof1966:random:sequences,li1993}, via order-preserving transformations of the event tree associated with a sequence of outcomes \cite{schnorr1971}, via specific limit laws (such as Lévy's zero-one law) \cite{huttegger2023:levy,zafforablando2020:phdthesis}, and so on.

Here, we consider one of these alternatives, the randomness test approach, and we show how we can define specific tests involving interval forecasts that allow us to introduce two new flavours of so-called \emph{test(-theoretic) randomness} for imprecise forecasts: one reminiscent of the original {\ML} approach, and another of the original Schnorr approach.
We then proceed to show that these test-theoretic notions of randomness are, under some computability and non-degeneracy conditions on the forecasts, equivalent to the martingale-theoretic notions introduced in our earlier papers \cite{cooman2021:randomness,cooman2017:computablerandomness,cooman2017:pmlr}.
We thus succeed in extending, to our more general imprecise probabilities context, earlier results by Schnorr \cite{schnorr1971} and Levin \cite{levin1973:random:sequence} showing that the test and martingale-theoretic randomness notions are essentially equivalent for precise forecasts.\footnote{Schnorr proves this for fair-coin forecasts only.}

Given the state of the art in algorithmic randomness, it may seem unsurprising that there should be a connection between martingale-theoretic and randomness test approaches to randomness for imprecise (interval-valued) forecasting systems, as they are known to be there for their precise (point-valued) special cases; indeed, our suspicion that there might be such a connection in more general contexts is what made us look for it, initially.
That is not to say, however, that proving that there is such a link is a straightforward matter, especially since a number of the techniques used for additive probabilities and linear expectations become unworkable, or need a fundamentally different approach, when dealing with imprecise or game-theoretic probabilities and expectations, which are typically non-additive and non-linear.
The fact that we can identify \emph{new} ways of establishing the connection between martingale-theoretic and randomness test approaches in a more general and arguably more abstract setting would argue in favour of our method of approach.

How have we structured our argumentation?
When we work with precise forecasts, there are suitable notions of corresponding supermartingales and of corresponding measures on the set of all outcome sequences.
These allow us to formulate randomness definitions that follow, respectively, a martingale-theoretic and a randomness test approach.
Unsurprisingly therefore, we'll need to suitably extend such notions of supermartingale and measure to allow for interval forecasts, in order to help us broaden the existing randomness definitions on both approaches.
In Section~\ref{sec:forecasting:systems} we present an overview of the mathematical tools required to achieve this generalisation: we deal with generalised supermartingales in Section~\ref{sec:supermartingales}, and we extend the notion of a measure to that of an upper expectation in Section~\ref{sec:upper:expectations}.
All of these results are by now well established in the field of imprecise probabilities \cite{walley1991,augustin2013:itip,troffaes2013:lp,cooman2015:markovergodic} and game-theoretic probability \cite{shafer2001,shafer2019:book}, so this section is intended as a basic overview of relevant results in that literature.

The basic ideas and results from computability theory that we'll need to rely on, are summarised briefly in Section~\ref{sec:computability}.

In Section~\ref{sec:randomness:via:supermartingales}, we summarise the main ideas in our earlier paper \cite{cooman2021:randomness}, which allowed us to extend the existing martingale-theoretic versions of {\ML} randomness and Schnorr randomness to deal with interval forecasts.
Extending, on the other hand, the existing randomness test definitions of {\ML} randomness and Schnorr randomness to deal with interval forecasts is the subject of Section~\ref{sec:randomness:via:randomness:tests}.

In Section~\ref{sec:equivalence:for:martin-loef} we provide sufficient conditions for the martingale-theoretic and randomness test approaches to {\ML} randomness to be equivalent, and we do the same for Schnorr randomness in Section~\ref{sec:equivalence:for:schnorr}.

In Section~\ref{sec:uniform} we prove that our notion of {\ML} test randomness for a(n interval-valued) forecasting system can be reinterpreted as a special case of Levin's \cite{levin1973:random:sequence,bienvenu2011:randomness:class} notion of {\ML} test randomness for \emph{effectively compact classes of measures}, also known as \emph{uniform randomness}.
Together with the discussion in Section~\ref{sec:equivalence:for:martin-loef}, this then leads in effect to a martingale-theoretic account of uniform randomness, at least in the special case covered by our notion of {\ML} test randomness.

As a bonus, we use our argumentation in the earlier sections to prove in Section~\ref{sec:universal} that  there are so-called \emph{universal} test supermartingales and \emph{universal} randomness tests for our generalisations of {\ML} randomness.

This paper unites results from two distinct areas of research, imprecise and game-theoretic probabilities on the one hand and algorithmic randomness on the other.
We realise that the intersection of both research communities is fairly small, and we've therefore tried to make the introductory discussion in Sections~\ref{sec:forecasting:systems} and~\ref{sec:computability} as self-contained as possible, by including relevant results and even proofs from both research fields, in order to make it serve as a footbridge between them.
In order not to interrupt the flow of the discussion too much, and to get to the new content as soon as possible, we've moved these proofs to an appendix.

\section{Forecasting systems, supermartingales and upper expectations}\label{sec:forecasting:systems}
% Checked by Gert

\subsection{Forecast for a single outcome}
Let's begin by describing a single forecast as a game played by two players, a \emph{Forecaster} and a \emph{Sceptic}.\footnote{The names \emph{Sceptic} and \emph{Forecaster} are borrowed from Shafer and Vovk's work \cite{shafer2001,shafer2019:book}.}

We consider a \emph{variable}~\(\randomoutcome\) that may assume any of the two values in the doubleton~\(\outcomes\), and whose actual value is initially unknown.

A Forecaster specifies an interval bound~\(I=\pinterval\subseteq\frcsts\) for the expectation of~\(\randomoutcome\)---or equivalently, for the probability that \(\randomoutcome=1\).
This so-called \emph{interval forecast}~\(I\) is interpreted as a commitment for Forecaster to adopt \(\lp\) as his \emph{maximum acceptable buying price} and~\(\up\) as his \emph{minimum acceptable selling price} for the uncertain reward (also called \emph{gamble})~\(\randomoutcome\)---expressed in units of some linear utility scale, called \emph{utiles}.\footnote{Our exposition here uses \emph{maximum} rather than the more common \cite{walley1991} \emph{supremum} acceptable buying prices, and \emph{minimum} rather the more common \emph{infimum} acceptable selling prices. We show in Ref.~\cite[App.~A]{cooman2021:randomness} that the difference is of no consequence.}

We take this to imply that \emph{Forecaster} commits to offering to some \emph{Sceptic} (any combination of) the following gambles, whose uncertain pay-offs are also expressed in utiles:
\begin{enumerate}[label=\upshape(\roman*),leftmargin=*,noitemsep,topsep=0pt]
\item for all real~\(q\leq\lp\) and all real~\(\alpha\geq0\), Forecaster offers the gamble~\(\alpha[q-\randomoutcome]\) to Sceptic;
\item for all real~\(r\geq\up\) and all real~\(\beta\geq0\), Forecaster offers the gamble~\(\beta[\randomoutcome-r]\) to Sceptic.
\end{enumerate}
Sceptic can then pick any combination of the gambles offered to him by Forecaster, or in other words, she accepts the gamble (with reward function)
\begin{equation*}
\alpha[q-\randomoutcome]+\beta[\randomoutcome-r]
\text{ for some choice of~\(q\leq\lp\), \(r\geq\up\) and~\(\alpha,\beta\geq0\)}.
\end{equation*}

Then finally, when the actual value~\(x\) of the variable~\(\randomoutcome\) in~\(\outcomes\) becomes known to both Forecaster and Sceptic, the corresponding reward~\(\alpha[q-x]+\beta[x-r]\) is paid by Forecaster to Sceptic.

This game already allows us to introduce some of the terminology, definitions and notation that we'll use further on.
We call elements~\(x\) of~\(\outcomes\) \emph{outcomes}, and elements~\(p\) of the real unit interval~\(\frcsts\) serve as \emph{precise forecasts}.
We denote by~\(\imprecisefrcsts\) the set of all \emph{imprecise}, or \emph{interval}, \emph{forecasts}~\(I\): non-empty and closed subintervals of the real unit interval~\(\frcsts\).
Any interval forecast~\(I\) has a smallest element~\(\min I\) and a greatest element~\(\max I\), so \(I=\sqgroup{\min I,\max I}\).
We'll also use the generic notations~\(\lp\coloneqq\min I\) and~\(\up\coloneqq\max I\) for its lower and upper bound, respectively.
Clearly, an interval forecast~\(I=\sqgroup{\lp,\up}\) is precise when \(\lp=\up\eqqcolon p\), and we then make \emph{no distinction} between a singleton interval forecast~\(\set{p}\in\imprecisefrcsts\) and the corresponding precise forecast~\(p\in\frcsts\); this also means we'll consider the set of precise forecasts~\(\frcsts\) to be a subset of the set of imprecise forecasts~\(\imprecisefrcsts\).

Any \emph{gamble} on the variable~\(\randomoutcome\) is completely determined by its reward (in utiles) when \(\randomoutcome=1\) and when \(\randomoutcome=0\).
It can therefore be represented as a map~\(f\colon\outcomes\to\reals\), or equivalently, as a point~\((f(1),f(0))\) in the two-dimensional linear space~\(\reals^2\).
We denote the set of all such maps~\(f\colon\outcomes\to\reals\) by~\(\gamblesonoutcomes\).
The gamble~\(f(\randomoutcome)\) is then the corresponding (possibly negative) increase in Sceptic's capital, as a function of the variable~\(\randomoutcome\).
As we indicated above, the gambles~\(f(\randomoutcome)\) that Forecaster actually offers to Sceptic as a result of his interval forecast~\(I\) constitute a closed convex cone~\(\availables_I\) in~\(\reals^2\):
\begin{equation}\label{eq:available:cone}
\availables_I\coloneqq\cset[\big]{\alpha[q-\randomoutcome]+\beta[\randomoutcome-r]}
{\text{\(q\leq\lp\), \(r\geq\up\) and~\(\alpha,\beta\in\nonnegreals\)}},
\end{equation}
where we use \(\nonnegreals\) to denote the set of non-negative real numbers.

It turns out that this cone is quite easily characterised by a so-called upper expectation functional, as we'll now explain.
It won't surprise the reader if we associate with any precise forecast~\(p\in\frcsts\) the \emph{expectation} (functional)~\(\ex_p\), defined by
\begin{equation*}%\label{eq:local:linear}
\ex_p(f)
\coloneqq pf(1)+(1-p)f(0)
\text{ for any gamble~\(f\colon\outcomes\to\reals\).}
\end{equation*}
But it so happens that we can just as well associate (lower and upper) expectation functionals with an interval forecast~\(I\in\imprecisefrcsts\).
The so-called \emph{lower expectation} (functional)~\(\lex_I\) associated with~\(I\) is defined by
\begin{multline*}%\label{eq:local:lower}
\lex_I(f)
\coloneqq\min_{p\in I}\ex_p(f)
=\min_{p\in I}\sqgroup[\big]{pf(1)+(1-p)f(0)}
=
\begin{cases}
\ex_{\lp}(f)&\text{if \(f(1)\geq f(0)\)}\\
\ex_{\up}(f)&\text{if \(f(1)\leq f(0)\)}
\end{cases}\\
\text{ for any gamble~\(f\in\gamblesonoutcomes\)},
\end{multline*}
and similarly, the \emph{upper expectation} (functional)~\(\uex_I\) is defined by
\begin{multline}\label{eq:local:upper}
\uex_I(f)
\coloneqq\max_{p\in I}\ex_p(f)
=\begin{cases}
\ex_{\up}(f)&\text{if \(f(1)\geq f(0)\)}\\
\ex_{\lp}(f)&\text{if \(f(1)\leq f(0)\)}
\end{cases}
=-\lex_I(-f)\\
\text{ for any gamble~\(f\in\gamblesonoutcomes\)},
\end{multline}
where the last equality identifies the so-called \emph{conjugacy} relationship between the lower and upper expectations~\(\lex_I\) and~\(\uex_I\).
If we now combine the characterisation~\eqref{eq:available:cone} of the gambles available to Sceptic with the properties of the upper expectation~\(\uex_I\), listed in Proposition~\ref{prop:properties:of:expectations} below, then it is easy to see that\footnote{The proof is straightforward; see also Refs.~\cite{cooman2017:computablerandomness} and~\cite[Prop.~2]{persiau2021:nonstationary}.}
\begin{equation*}
f(X)\in\availables_I\ifandonlyif\uex_I(f)\leq0,
\text{ for all~\(f\in\gamblesonoutcomes\)}.
\end{equation*}
In fact, the condition~\(\uex_I(f)\leq0\) is equivalent to~\((\forall p\in I)\ex_p(f)\leq0\), so the convex cone of all available gambles is the intersection of all half-planes determined by~\(\ex_p(f)\leq0\) for all~\(p\in I\).

The functionals~\(\lex_I\) and~\(\uex_I\) have the following fairly immediate \emph{coherence properties}, typical for the more general lower and upper expectation functionals defined on arbitrary gamble spaces \cite{walley1991,troffaes2013:lp}; see also Proposition~\ref{prop:properties:of:global:expectations} further on.

\begin{proposition}\label{prop:properties:of:expectations}
Consider any interval forecast~\(I\in\imprecisefrcsts\).
Then for all gambles~\(f,g\in\gamblesonoutcomes\), all~\(\mu\in\reals\) and all~\(\lambda\in\nonnegreals\):
\begin{enumerate}[label=\upshape C{\arabic*}.,ref=\upshape C{\arabic*},leftmargin=*,noitemsep,topsep=0pt]
\item\label{axiom:coherence:bounds} \(\min f\leq\lex_I(f)\leq\uex_I(f)\leq\max f\);\upshape\hfill[bounds]
\item\label{axiom:coherence:homogeneity} \(\uex_I(\lambda f)=\lambda\uex_I(f)\) and~\(\lex_I(\lambda f)=\lambda\lex_I(f)\);\upshape\hfill[non-negative homogeneity]
\item\label{axiom:coherence:subadditivity} \(\uex_I(f+g)\leq\uex_I(f)+\uex_I(g)\) and~\(\lex_I(f+g)\geq\lex_I(f)+\lex_I(g)\);\upshape\hfill[sub/super-additivity]
\item\label{axiom:coherence:constant:additivity} \(\uex_I(f+\mu)=\uex_I(f)+\mu\) and~\(\lex_I(f+\mu)=\lex_I(f)+\mu\);\upshape\hfill[constant additivity]
\item\label{axiom:coherence:monotonicity} if \(f\leq g\) then \(\uex_I(f)\leq\uex_I(g)\) and~\(\lex_I(f)\leq\lex_I(g)\);\upshape\hfill[monotonicity]
\item\label{axiom:coherence:uniform:convergence} if the sequence \(f_n\) of gambles in~\(\gamblesonoutcomes\) converges uniformly to the gamble~\(f\), then \(\uex_I(f_n)\to\uex_I(f)\) and~\(\lex_I(f_n)\to\lex_I(f)\).\upshape\hfill[uniform continuity]
\end{enumerate}
\end{proposition}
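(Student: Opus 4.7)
The plan is to exploit the representation \(\uex_I(f)=\max_{p\in I}\ex_p(f)\) and \(\lex_I(f)=\min_{p\in I}\ex_p(f)\) from~\eqref{eq:local:upper}, together with the conjugacy \(\uex_I(f)=-\lex_I(-f)\) derived in the same display. Since \(\ex_p(-f)=-\ex_p(f)\) by linearity, the conjugacy immediately transforms any statement about \(\uex_I\) into its paired statement about \(\lex_I\). I would therefore prove each item only for \(\uex_I\), and then obtain the \(\lex_I\) version by applying the property to \(-f\) (and, where relevant, to \(-g\) and \(-\mu\)) and negating.

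For a single precise forecast \(p\in\frcsts\), the functional \(\ex_p(f)=pf(1)+(1-p)f(0)\) is a convex combination, hence it is linear in \(f\), monotone, bounded between \(\min f\) and \(\max f\), and satisfies \(\ex_p(f+\mu)=\ex_p(f)+\mu\) and \(\ex_p(\lambda f)=\lambda\ex_p(f)\). Taking the pointwise maximum over \(p\in I\) — which is attained because \(I\) is a non-empty compact subinterval of \(\frcsts\) — preserves each of these features separately: boundedness (\ref{axiom:coherence:bounds}), non-negative homogeneity (\ref{axiom:coherence:homogeneity}), constant additivity (\ref{axiom:coherence:constant:additivity}) and monotonicity (\ref{axiom:coherence:monotonicity}) all drop out at once. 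For sub-additivity (\ref{axiom:coherence:subadditivity}) I would simply invoke \(\max_{p\in I}\sqgroup{a_p+b_p}\leq\max_{p\in I}a_p+\max_{p\in I}b_p\) with \(a_p=\ex_p(f)\) and \(b_p=\ex_p(g)\).

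For uniform continuity (\ref{axiom:coherence:uniform:convergence}), I would not go back to the representation but rather derive it from the properties already established. Writing \(f_n=f+(f_n-f)\) and using sub-additivity gives
\begin{equation*}
\uex_I(f_n)\leq\uex_I(f)+\uex_I(f_n-f),
\end{equation*}
and symmetrically \(\uex_I(f)\leq\uex_I(f_n)+\uex_I(f-f_n)\), so
\begin{equation*}
\abs{\uex_I(f_n)-\uex_I(f)}\leq\max\set[\big]{\uex_I(f_n-f),\uex_I(f-f_n)}\leq\norm{f_n-f}_\infty
\end{equation*}
by~\ref{axiom:coherence:bounds}, and uniform convergence closes the argument.

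None of the individual steps is really hard: the functionals are explicit maxima and minima of two linear forms in a two-dimensional space, so all ``coherence'' amounts to checking that the relevant inequalities survive a pointwise max or min. The only thing to watch is the bookkeeping of sign flips when transporting a property from \(\uex_I\) to \(\lex_I\) via conjugacy — in particular, sub-additivity for \(\uex_I\) becomes super-additivity for \(\lex_I\), as stated in~\ref{axiom:coherence:subadditivity}.
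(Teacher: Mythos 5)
Your proof is correct. The paper does not actually give a proof of this proposition: it calls the properties ``fairly immediate'' and points to the standard imprecise-probability references \cite{walley1991,troffaes2013:lp}. Your argument is exactly the kind of verification those references contain and that the paper implicitly has in mind: exploit the explicit envelope representation \(\uex_I(f)=\max_{p\in I}\ex_p(f)\), observe that each property holds for the linear functionals \(\ex_p\) and survives a pointwise maximum, transfer to \(\lex_I\) via conjugacy, and derive uniform continuity (\ref{axiom:coherence:uniform:convergence}) as a consequence of~\ref{axiom:coherence:bounds} and~\ref{axiom:coherence:subadditivity} rather than from the representation. The only bookkeeping point worth highlighting explicitly is the step \(\abs{\uex_I(f_n)-\uex_I(f)}\leq\max\set{\uex_I(f_n-f),\uex_I(f-f_n)}\), which holds because a real number \(a\) with \(a\leq b\) and \(-a\leq c\) satisfies \(\abs{a}\leq\max\set{b,c}\); you have this right, but it is slightly less routine than the other steps and deserves the sentence you gave it.
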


\subsection{Forecasting for a sequence of outcomes: event trees and forecasting systems}
In a next step, we extend this set-up by considering a sequence of repeated versions of the forecasting game in the previous section.
The ideas behind this extension are rather straightforward and can be sketched as follows.
At each successive stage~\(k\in\naturals\), Forecaster presents an interval forecast~\(I_k=\pinterval[k]\) for the unknown variable~\(\randomoutcome[k]\).
This effectively allows Sceptic to choose any gamble~\(f_k(\randomoutcome[k])\) such that \(\uex_{I_k}(f_k)\leq0\).
When the value~\(\xval[k]\) for~\(\randomoutcome[k]\) becomes known, this results in a gain in capital~\(f_k(\xval[k])\) for Sceptic at stage~\(k\).
This gain~\(f_k(\xval[k])\) can, of course, be negative, resulting in an actual decrease in Sceptic's capital.
Here and in what follows, \(\naturals\) denotes the set of all natural numbers, without zero.
We'll also use the notation~\(\naturalswithzero\coloneqq\naturals\cup\set{0}\) for the set of all non-negative integers.
 % and the notation~\(\integers\) for the set of all integers.

Let's now describe the formal framework that will allow us to better investigate several interesting aspects of this extended forecasting set-up.

We call~\((\xval[1],\xval[2],\dots,\xval[n],\dots)\) an \emph{outcome sequence}, and collect all such outcome sequences in the set~\(\pths\coloneqq\outcomes^\naturals\).
Finite outcome sequences~\(\xvalto[n]\coloneqq(\xvaltolong[n])\) are collected in the set~\(\sits\coloneqq\outcomes^*=\bigcup_{n\in\naturalswithzero}\outcomes^n\).
Such finite outcome sequences~\(\sit\) in~\(\sits\) and infinite outcome sequences~\(\pth\) in~\(\pths\) constitute the nodes---called \emph{situations}---and \emph{paths} in an event tree with unbounded horizon, partially depicted below.
\begin{center}
\begin{tikzpicture}
\tikzstyle{level 1}=[sibling distance=16em,level distance=2.5em]
\tikzstyle{level 2}=[sibling distance=8em,level distance=2.5em]
\tikzstyle{level 3}=[sibling distance=4em,level distance=2.5em]
\tikzstyle{level 4}=[sibling distance=2em,level distance=2.5em]
\tikzstyle{level 5}=[level distance=2em]
\node[root] (root) {} [grow=down,level distance=5ex]
child {node[nonterminal] (a) {\tiny\(0\)}
	child {node[nonterminal] (aa) {\tiny\(00\)}
		child {node[nonterminal] (aaa) {\tiny\(000\)}
			child {node[nonterminal] (aaaa) {\tiny\(0000\)}
				child[semithick,black,dotted]}
			child {node[nonterminal] (aaab) {\tiny\(0001\)}
				child[semithick,black,dotted]}
		}
		child {node[nonterminal] (aab) {\tiny\(001\)}
			child {node[nonterminal] (aaba) {\tiny\(0010\)}
				child[semithick,black,dotted]}
			child {node[nonterminal] (aabb) {\tiny\(0011\)}
				child[semithick,black,dotted]}
		}
	}
	child {node[nonterminal] (ab) {\tiny\(01\)}
		child {node[nonterminal] (aba) {\tiny\(010\)}
			child {node[nonterminal] (abaa) {\tiny\(0100\)}
				child[semithick,black,dotted]}
			child {node[nonterminal] (abab) {\tiny\(0101\)}
				child[semithick,black,dotted]}
		}
		child {node[nonterminal] (abb) {\tiny\(011\)}
			child {node[nonterminal] (abba) {\tiny\(0110\)}
				child[semithick,black,dotted]}
			child {node[nonterminal] (abbb) {\tiny\(0111\)}
				child[semithick,black,dotted]}
		}
	}
}
child {node[nonterminal] (b) {\tiny\(1\)}
	child {node[nonterminal] (ba) {\tiny\(10\)}
		child {node[nonterminal] (baa) {\tiny\(100\)}
			child {node[nonterminal] (baaa) {\tiny\(1000\)}
				child[semithick,black,dotted]}
			child {node[nonterminal] (baab) {\tiny\(1001\)}
				child[semithick,black,dotted]}
		}
		child {node[nonterminal] (bab) {\tiny\(101\)}
			child {node[nonterminal] (baba) {\tiny\(1010\)}
				child[semithick,black,dotted]}
			child {node[nonterminal] (babb) {\tiny\(1011\)}
				child[semithick,black,dotted]}
		}
	}
	child {node[nonterminal] (bb) {\tiny\(11\)}
		child {node[nonterminal] (bba) {\tiny\(110\)}
			child {node[nonterminal] (bbaa) {\tiny\(1100\)}
				child[semithick,black,dotted]}
			child {node[nonterminal] (bbab) {\tiny\(1101\)}
				child[semithick,black,dotted]}
		}
		child {node[nonterminal] (bbb) {\tiny\(111\)}
			child {node[nonterminal] (bbba) {\tiny\(1110\)}
				child[semithick,black,dotted]}
			child {node[nonterminal] (bbbb) {\tiny\(1111\)}
				child[semithick,black,dotted]}
		}
	}
};
\end{tikzpicture}
\end{center}

The empty sequence~\(\xvalto[0]\eqqcolon\init\) is also called the \emph{initial} situation.
Any path~\(\pth\in\pths\) is an infinite outcome sequence, and can therefore be identified with (the binary expansion of) a real number in the unit interval~\(\frcsts\).

For any path~\(\pth\in\pths\), the initial sequence that consists of its first \(n\) elements is a situation in~\(\outcomes^n\) that is denoted by~\(\pthto{n}\).
Its \(n\)-th element belongs to~\(\outcomes\) and is denoted by~\(\pth_n\).
As a convention, we let its \(0\)-th element be the \emph{initial} situation~\(\pthto{0}=\pth_0\coloneqq\init\).

For any situation~\(\sit\in\sits\) and any path~\(\pth\in\pths\), we say that \(\pth\) \emph{goes through}~\(\sit\) if there's some~\(n\in\naturalswithzero\) such that \(\pthto{n}=\sit\).
We denote by~\(\cylset{\sit}\) the so-called \emph{cylinder set} of all paths~\(\pth\in\pths\) that go through~\(\sit\).
More generally, if \(S\subseteq\sits\) is some set of situations, then we denote by~\(\cylset{S}\coloneqq\bigcup_{\sit\in S}\cylset{\sit}\) the set of all paths that go through some situation in~\(S\).

We write that \(\sit\precedes\altsit\), and say that the situation~\(\sit\) \emph{precedes} the situation~\(\altsit\), when every path that goes through~\(\altsit\) also goes through~\(\sit\)---so \(\sit\) is a precursor of~\(\altsit\).
An equivalent condition is of course that \(\cylset{\altsit}\subseteq\cylset{\sit}\).
We may then also write \(\altsit\follows\sit\) and say that~\(\altsit\) \emph{follows}~\(\sit\).

We say that the situation~\(\sit\) \emph{strictly precedes} the situation~\(\altsit\), and write \(\sit\sprecedes\altsit\), when \(\sit\precedes\altsit\) and~\(\sit\neq\altsit\), or equivalently, when \(\cylset{\altsit}\subset\cylset{\sit}\).

Finally, we say that two situations~\(\sit\) and~\(\altsit\) are \emph{incomparable}, and write~\(\sit\incomp\altsit\), when neither~\(\sit\precedes\altsit\) nor~\(\altsit\precedes\sit\), or equivalently, when~\(\cylset{\sit}\cap\cylset{\altsit}=\emptyset\), so there's no path that goes through both~\(\sit\) and~\(\altsit\).

For any situation~\(\sit=(x_1,\dots,x_n)\in\sits\), we call~\(n=\dist{\sit}\) its depth in the tree, so \(\dist{\sit}\geq\dist{\init}=0\).
We use a similar notational convention for situations as for paths: we let \(\sitat{k}\coloneqq\xval[k]\) and~\(\sitto{k}\coloneqq(\xvaltolong[k])\) for all~\(k\in\set{1,\dots,n}\), and~\(\sitto{0}=\sitat{0}\coloneqq\init\).
Also, for any~\(\xval\in\outcomes\), we denote by~\(\sit x\) the situation~\((\xvaltolong[n],x)\).

A subset~\(\cut\) of~\(\sits\) is called a \emph{partial cut}---the term `\emph{prefix free set}' is also commonly used in the algorithmic randomness literature---if its elements are mutually incomparable, or in other words constitute an anti-chain for the partial order~\(\precedes\), meaning that \(\sit\incomp\altsit\), or equivalently, \(\cylset{\sit}\cap\cylset{\altsit}=\emptyset\), for all~\(\sit,\altsit\in\cut\) with~\(\sit\neq\altsit\).
With such a partial cut~\(\cut\), there corresponds a set~\(\cylset{\cut}\coloneqq\bigcup_{\sit\in\cut}\cylset{\sit}\), which contains all paths that go through (some situation in)~\(\cut\), and the corresponding collection of cylinder sets~\(\cset{\cylset{\sit}}{\sit\in\cut}\) constitutes a partition of~\(\cylset{\cut}\).

For any situation~\(\sit\) and any partial cut~\(\cut\), there are a number of possibilities.
We say that \(\sit\) \emph{precedes}~\(\cut\), and write \(\sit\precedes\cut\),  if \(\sit\) precedes some situation in~\(\cut\): \((\exists\altsit\in\cut)\sit\precedes\altsit\).
Similarly, we say that \(\sit\) \emph{strictly precedes}~\(\cut\), and write \(\sit\sprecedes\cut\),  if \(\sit\) strictly precedes some situation in~\(\cut\): \((\exists\altsit\in\cut)\sit\sprecedes\altsit\).
We say that \(\sit\) \emph{follows}~\(\cut\), and write \(\sit\follows\cut\),  if \(s\) follows some---then necessarily unique---situation in~\(\cut\): \((\exists\altsit\in\cut)\sit\follows\altsit\).
Similarly for~\(\sit\) \emph{strictly follows}~\(\cut\), written as \(\sit\sfollows\cut\).
Of course, the situations in~\(\cut\) are the only ones that both precede and follow~\(\cut\).
And, finally, we say that \(\sit\) is \emph{incomparable} with~\(\cut\), and write \(\sit\incomp\cut\), if \(\sit\) neither follows nor precedes (any situation in)~\(\cut\): \((\forall\altsit\in\cut)\sit\incomp\altsit\).

In the set-up described above, Forecaster only provides interval forecasts~\(I_k\) after observing an actual sequence~\((\xvaltolong[k-1])\) of outcomes, and a corresponding sequence of available gambles~\((f_1,\dots,f_{k-1})\) that Sceptic has chosen.
This is the essence of \emph{prequential forecasting} \cite{dawid1982:well:calibrated:bayesian,dawid1984,dawid1999}.
In our present discussion, it will be advantageous to consider an alternative setting where, before the start of the game, Forecaster specifies a forecast~\(I_\sit\) in each of the possible situations~\(\sit\) in the event tree~\(\sits\); see the figure below.
\begin{center}
\begin{tikzpicture}
\tikzstyle{level 1}=[sibling distance=18em,level distance=2.5em]
\tikzstyle{level 2}=[sibling distance=9em,level distance=2.5em]
\tikzstyle{level 3}=[sibling distance=4.5em,level distance=2.5em]
\tikzstyle{level 4}=[sibling distance=2.25em,level distance=2.5em]
\tikzstyle{level 5}=[level distance=2em]
\node[root] (root) {} [grow=down,level distance=5ex]
child {node[nonterminal] (a) {\tiny\(0\)}
	child {node[nonterminal] (aa) {\tiny\(00\)}
		child {node[nonterminal] (aaa) {\tiny\(000\)}
			child {node[nonterminal] (aaaa) {\tiny\(0000\)}
				child[semithick,black,dotted]}
			child {node[nonterminal] (aaab) {\tiny\(0001\)}
				child[semithick,black,dotted]}
		}
		child {node[nonterminal] (aab) {\tiny\(001\)}
			child {node[nonterminal] (aaba) {\tiny\(0010\)}
				child[semithick,black,dotted]}
			child {node[nonterminal] (aabb) {\tiny\(0011\)}
				child[semithick,black,dotted]}
		}
	}
	child {node[nonterminal] (ab) {\tiny\(01\)}
		child {node[nonterminal] (aba) {\tiny\(010\)}
			child {node[nonterminal] (abaa) {\tiny\(0100\)}
				child[semithick,black,dotted]}
			child {node[nonterminal] (abab) {\tiny\(0101\)}
				child[semithick,black,dotted]}
		}
		child {node[nonterminal] (abb) {\tiny\(011\)}
			child {node[nonterminal] (abba) {\tiny\(0110\)}
				child[semithick,black,dotted]}
			child {node[nonterminal] (abbb) {\tiny\(0111\)}
				child[semithick,black,dotted]}
		}
	}
}
child {node[nonterminal] (b) {\tiny\(1\)}
	child {node[nonterminal] (ba) {\tiny\(10\)}
		child {node[nonterminal] (baa) {\tiny\(100\)}
			child {node[nonterminal] (baaa) {\tiny\(1000\)}
				child[semithick,black,dotted]}
			child {node[nonterminal] (baab) {\tiny\(1001\)}
				child[semithick,black,dotted]}
		}
		child {node[nonterminal] (bab) {\tiny\(101\)}
			child {node[nonterminal] (baba) {\tiny\(1010\)}
				child[semithick,black,dotted]}
			child {node[nonterminal] (babb) {\tiny\(1011\)}
				child[semithick,black,dotted]}
		}
	}
	child {node[nonterminal] (bb) {\tiny\(11\)}
		child {node[nonterminal] (bba) {\tiny\(110\)}
			child {node[nonterminal] (bbaa) {\tiny\(1100\)}
				child[semithick,black,dotted]}
			child {node[nonterminal] (bbab) {\tiny\(1101\)}
				child[semithick,black,dotted]}
		}
		child {node[nonterminal] (bbb) {\tiny\(111\)}
			child {node[nonterminal] (bbba) {\tiny\(1110\)}
				child[semithick,black,dotted]}
			child {node[nonterminal] (bbbb) {\tiny\(1111\)}
				child[semithick,black,dotted]}
		}
	}
};
\draw[local,thick] (root) +(180:1em) arc (180:360:1em);
\draw[local,thick] (b) +(190:1.2em) arc (190:350:1.2em);
\draw[local,thick] (a) +(190:1.2em) arc (190:350:1.2em);
\draw[local,thick] (bb) +(210:1.3em) arc (210:330:1.3em);
\draw[local,thick] (ba) +(210:1.3em) arc (210:330:1.3em);
\draw[local,thick] (ab) +(210:1.3em) arc (210:330:1.3em);
\draw[local,thick] (aa) +(210:1.3em) arc (210:330:1.3em);
\draw[local,thick] (bbb) +(230:1em) arc (230:310:1em);
\draw[local,thick] (bba) +(230:1em) arc (230:310:1em);
\draw[local,thick] (bab) +(230:1em) arc (230:310:1em);
\draw[local,thick] (baa) +(230:1em) arc (230:310:1em);
\draw[local,thick] (abb) +(230:1em) arc (230:310:1em);
\draw[local,thick] (aba) +(230:1em) arc (230:310:1em);
\draw[local,thick] (aab) +(230:1em) arc (230:310:1em);
\draw[local,thick] (aaa) +(230:1em) arc (230:310:1em);
\path (root) +(275:1.5em) node[local] {\tiny\(I_\init\)};
\path (a) +(270:1.7em) node[local] {\tiny\(I_{0}\)};
\path (b) +(270:1.7em) node[local] {\tiny\(I_{1}\)};
\path (aa) +(270:1.8em) node[local] {\tiny\(I_{00}\)};
\path (bb) +(270:1.8em) node[local] {\tiny\(I_{11}\)};
\path (ba) +(270:1.8em) node[local] {\tiny\(I_{10}\)};
\path (ab) +(270:1.8em) node[local] {\tiny\(I_{01}\)};
\path (aaa) +(270:1.6em) node[local] {\tiny\(I_{000}\)};
\path (aab) +(270:1.6em) node[local] {\tiny\(I_{001}\)};
\path (bba) +(270:1.6em) node[local] {\tiny\(I_{110}\)};
\path (bbb) +(270:1.6em) node[local] {\tiny\(I_{111}\)};
\path (baa) +(270:1.6em) node[local] {\tiny\(I_{100}\)};
\path (bab) +(270:1.6em) node[local] {\tiny\(I_{101}\)};
\path (aba) +(270:1.6em) node[local] {\tiny\(I_{010}\)};
\path (abb) +(270:1.6em) node[local] {\tiny\(I_{011}\)};
\end{tikzpicture}
\end{center}

This leads us to the notion of a forecasting system.

\begin{definition}[Forecasting system]
A \emph{forecasting system} is a map~\(\frcstsystem\colon\sits\to\imprecisefrcsts\) that associates an interval forecast~\(\frcstsystem(\sit)\in\imprecisefrcsts\) with every situation~\(\sit\) in the event tree~\(\sits\).
With any forecasting system~\(\frcstsystem\) we can associate two maps~\(\lfrcstsystem,\ufrcstsystem\colon\sits\to\frcsts\), defined by~\(\lfrcstsystem(\sit)\coloneqq\min\frcstsystem(\sit)\) and~\(\ufrcstsystem(\sit)\coloneqq\max\frcstsystem(\sit)\) for all~\(\sit\in\sits\).
A forecasting system~\(\frcstsystem\) is called \emph{precise} if \(\lfrcstsystem=\ufrcstsystem\).
We denote by~\(\frcstsystems\) the set~\(\imprecisefrcsts^{\sits}\) of all forecasting systems and by~\(\precisefrcstsystems\) its subset~\(\frcsts^{\sits}\) of all precise forecasting systems.
\end{definition}

We use the notation~\(\frcstsystem\subseteq\frcstsystem^*\) to express that the forecasting system~\(\frcstsystem^*\) is \emph{at least as conservative} as \(\frcstsystem\), meaning that \(\frcstsystem(\sit)\subseteq\frcstsystem^*(\sit)\) for all~\(\sit\in\sits\).

In each situation~\(\sit\in\sits\), the interval forecast~\(\frcstsystem(\sit)\) corresponds to a so-called \emph{local} upper expectation~\(\uex_{\frcstsystem(\sit)}\).
These forecasts and their associated upper expectations allow us to turn the event tree into a so-called \emph{imprecise probability tree}, with associated supermartingales and \emph{global} upper (and lower) expectations.
In the next two sections, we give a brief outline of how to do this.
For more details, we refer to earlier papers \cite{cooman2007d,cooman2015:ergodic,cooman2015:markovergodic}, inspired by Shafer and Vovk's work \cite{shafer2001,shafer2019:book,shafer2012:zero-one,vovk2014:itip}.

\subsection{Supermartingales}\label{sec:supermartingales}
Recall that we use a forecasting system~\(\frcstsystem\) to identify Forecaster's forecasts~\(\frcstsystem(\sit)\) in each of the possible situations~\(\sit\in\sits\).
In a similar way, we can introduce a strategy as a way to identify Sceptic's choice of gamble in each of the situations.

\begin{definition}[Strategy]
A \emph{strategy} is a map~\(\strategy\colon\sits\to\gamblesonoutcomes\).
It allows us to associate a gamble~\(\strategy(\sit)\in\gamblesonoutcomes\) with each situation~\(\sit\) in the event tree~\(\sits\).
We call a strategy~\(\strategy\) \emph{compatible with a forecasting system}~\(\frcstsystem\) if it only selects gambles that are offered by the (Forecaster with) forecasting system~\(\frcstsystem\) in the sense that \(\uex_{\frcstsystem(\sit)}(\strategy(\sit))\leq0\) for all~\(\sit\in\sits\).
\end{definition}

We infer from the example of strategies and forecasting systems above that it can be useful to associate objects with situations, or in other words, to consider maps on~\(\sits\).
We'll call any map~\(\process\) defined on~\(\sits\) a \emph{process}.
We now discuss other useful special cases besides forecasting systems and strategies.

A \emph{real process} is a real-valued process: it associates a real number~\(\process(\sit)\in\reals\) with every situation~\(\sit\in\sits\).
Similarly, a \emph{rational process} is a process that assumes values in the set~\(\rationals\) of all rational numbers, and is therefore a special real process.
A real process is called \emph{non-negative} if it is non-negative in all situations, and a \emph{positive} real process is (strictly) positive in all situations.

With any real process~\(\process\), we can associate a process~\(\adddelta\process\), called its \emph{process difference}, defined as follows: for every situation~\(\sit\in\sits\), \(\adddelta\process(\sit)\) is the gamble on~\(\outcomes\) defined by
\begin{equation*}
\adddelta\process(\sit)(\xval)
\coloneqq\process(\sit x)
-\process(\sit)
\text{ for all~\(\xval\in\outcomes\)},
\end{equation*}
or in shorthand, with obvious notations, \(\adddelta\process(\sit)=\process(\sit\andoutcome)-\process(\sit)\), where the `\(\cdot\)' in `\(\sit\andoutcome\)' is a placeholder for any element~\(x\) of~\(\outcomes\).
The \emph{initial value} of a process~\(\process\) is its value~\(\process(\init)\) in the initial situation~\(\init\).
Clearly, a real process is completely determined by its initial value and its process difference, since
\begin{equation*}
\process(\xvaltolong[n])
=\process(\init)+\smashoperator{\sum_{k=0}^{n-1}}\adddelta\process(\xvaltolong[k])(\xval[k+1])
\text{ for all~\((\xvaltolong[n])\in\sits\)}.
\end{equation*}

Now, if we consider any strategy~\(\strategy\) for Sceptic, then for any~\(\sit\in\sits\),
\begin{equation*}
\process_{\strategy}(\sit)
\coloneqq\process_{\strategy}(\init)
+\smashoperator{\sum_{k=0}^{\dist{\sit}-1}}\strategy(\sitto{k})(\sitat{k+1})
\end{equation*}
is the capital she has accumulated in situation~\(\sit\) by starting in the initial situation~\(\init\) with initial capital~\(\process_{\strategy}(\init)\) and selecting the gamble~\(\strategy(\sitto{k})\) in each of the situations~\(\sitto{k}\) strictly preceding~\(\sit\).
This tells us that, as soon as we fix the initial values~\(\process(\init)\), there's a one-to-one correspondence between real processes~\(\process\) and strategies~\(\strategy\) by letting \(\strategy\mapsto\process_{\strategy}\) and, conversely, \(\process\mapsto\strategy_\process\coloneqq\adddelta\process\).
Any real process~\(\process\) can therefore be seen as a capital process for Sceptic, generated by a suitably chosen strategy~\(\strategy\coloneqq\adddelta\process\) and initial capital~\(\process(\init)\).

We now turn to the special case of the capital processes for those strategies that are compatible with a given forecasting system~\(\frcstsystem\).
A \emph{supermartingale}~\(\supermartin\) for~\(\frcstsystem\) is a real process such that
\begin{equation}\label{eq:supermartingale}
\uex_{\frcstsystem(\sit)}(\adddelta\supermartin(\sit))\leq0,
\text{ or equivalently, }
\uex_{\frcstsystem(\sit)}(\supermartin(\sit\andoutcome))\leq\supermartin(\sit),
\text{ for all~\(\sit\in\sits\)},
\end{equation}
or in other words, such that the corresponding strategy \(\strategy_\supermartin\coloneqq\adddelta\supermartin\) is compatible with~\(\frcstsystem\).
Supermartingale differences have non-positive upper expectation, so roughly speaking supermartingales are real processes that Forecaster expects to decrease.

A real process~\(\submartin\) is a \emph{submartingale} for~\(\frcstsystem\) if \(-\submartin\) is a supermartingale, meaning that \(\lex_{\frcstsystem(\sit)}(\adddelta\submartin(\sit))\geq0\) for all~\(\sit\in\sits\).
Submartingale differences have non-negative lower expectation, so roughly speaking submartingales are real processes that Forecaster expects to increase.
We denote the set of all supermartingales for a given forecasting system~\(\frcstsystem\) by~\(\supermartins[\frcstsystem]\), and~\(\submartins[\frcstsystem]\coloneqq-\supermartins[\frcstsystem]\) denotes the set of all submartingales for~\(\frcstsystem\).

We call \emph{test supermartingale} for~\(\frcstsystem\) any non-negative supermartingale~\(\supermartin\) for~\(\frcstsystem\) with initial value~\(\supermartin(\init)=1\).
These test supermartingales will play a crucial part further on in this paper.
They correspond to the capital processes that Sceptic can build by starting with unit capital and selecting, in each situation, a gamble that is offered there as a result of Forecaster's specification of the forecasting system~\(\frcstsystem\), and that make sure that she never needs to resort to borrowing.

\subsection{Upper expectations}\label{sec:upper:expectations}
A \emph{gamble}  on~\(\pths\), also called a \emph{global gamble}, is a bounded real-valued map defined on the so-called \emph{sample space}, which is the set~\(\pths\) of all paths.
We denote the set of all global gambles by~\(\gambles(\pths)\).
A \emph{global event}~\(G\) is a subset of~\(\pths\), and its \emph{indicator}~\(\ind{G}\) is the gamble on~\(\pths\) that assumes the value~\(1\) on~\(G\) and~\(0\) elsewhere.

The super(- and sub)martingales for a forecasting system~\(\frcstsystem\) can be used to associate so-called \emph{global} conditional upper and lower expectation functionals---defined on global gambles---with the forecasting system~\(\frcstsystem\):\footnote{\label{footnote:equivalentglobalmodels}Several related expressions appear in the literature, the domain of which typically also includes unbounded and even extended real-valued functions on~\(\pths\); see for example Refs.~\cite[Def.~3]{tjoens2021:upper:expectations:stochastic:processes} and~\cite[p.~12]{tjoens2021:upper:expectation}. These expressions are similar, but require supermartingales to be bounded below and submartingales to be bounded above, and often allow both to be extended real-valued. When applied to gambles, however, all of these expressions are equivalent; see Refs.~\cite[Prop.~10]{cooman2015:markovergodic} and~\cite[Prop.~36]{tjoens2021:upper:expectations:stochastic:processes}. This allows us to apply properties that were proved for these alternative expressions in our context as well. In particular, we'll make use of Ref.~\cite[Thm.~23]{tjoens2021:upper:expectations:stochastic:processes} in our proof of Proposition~\ref{prop:properties:of:global:expectations}, Ref.~\cite[Prop.~10 and Thm.~6]{tjoens2021:upper:expectation} in our proof of Proposition~\ref{prop:precise:forecasting:systems} and Ref.~\cite[Thm.~13]{tjoens2021:equivalence} in our proof of Theorem~\ref{theorem:uniform}.}
\begin{align}
\uglobalcond{g}{\sit}
\coloneqq&\inf\cset[\big]{\supermartin(\sit)}
{\supermartin\in\supermartins[\frcstsystem]
\text{ and }
\liminf\supermartin\geqsit g}
\label{eq:tree:upper:expectation}\\
\lglobalcond{g}{\sit}
\coloneqq&\sup\cset[\big]{\submartin(\sit)}
{\submartin\in\submartins[\frcstsystem]
\text{ and }
\limsup\submartin\leqsit g}
\label{eq:tree:lower:expectation}
\end{align}
for all gambles~\(g\) on~\(\pths\) and all situations~\(\sit\in\sits\).
In these expressions, we use the notations
\begin{equation*}
\liminf\supermartin(\pth)\coloneqq\liminf_{n\to\infty}\supermartin(\pthto{n})
\text{ and }
\limsup\supermartin(\pth)\coloneqq\limsup_{n\to\infty}\supermartin(\pthto{n})
\text{ for all~\(\pth\in\pths\)},
\end{equation*}
and take \(g\geqsit h\) to mean that the global gamble~\(g\) dominates the global gamble~\(h\) on the cylinder set~\(\cylset{\sit}\)---in all paths through~\(\sit\)---or in other words that \(\group{\forall\pth\in\cylset{\sit}}g(\pth)\geq h(\pth)\).
Similarly, \(g\leqsit h\) means that \(\group{\forall\pth\in\cylset{\sit}}g(\pth)\leq h(\pth)\).
Thus, for instance, \(\uglobalcond{g}{\sit}\) is the infimum capital that Sceptic needs to start with in situation~\(\sit\) in order to be able to hedge the gamble~\(g\) on all paths that go through~\(\sit\).

In the particular case that \(\sit=\init\), we find the (so-called \emph{unconditional}) upper and lower expectations~\(\uglobal\coloneqq\uglobalcond{\cdot}{\init}\) and~\(\lglobal\coloneqq\lglobalcond{\cdot}{\init}\).

Upper and lower expectations are clearly related to each other through \emph{conjugacy}:
\begin{equation}\label{eq:conjugacy}
\lglobalcond{g}{\sit}=-\uglobalcond{-g}{\sit}
\text{ for all gambles~\(g\) on~\(\pths\) and all situations~\(\sit\in\sits\)}.
\end{equation}

These upper and lower expectations satisfy a number of very useful properties, which we list below.
We'll make repeated use of them in what follows, and we provide most of their proofs in the Appendix for the sake of completeness and easy reference, even if proofs for similar results can also be found elsewhere \cite{shafer2001,shafer2019:book,cooman2015:markovergodic,tjoens2019:global,tjoens2021:upper:expectations:stochastic:processes,tjoens2022:phdthesis}.

For any global gamble~\(g\) and any situation~\(\sit\in\sits\), we'll use the notations \(\inf(g\vert\sit)\coloneqq\inf\cset{g(\pth)}{\pth\in\cylset{\sit}}\) and~\(\sup(g\vert\sit)\coloneqq\sup\cset{g(\pth)}{\pth\in\cylset{\sit}}\).
Observe that then \(\inf(g\vert\init)=\inf g\) and~\(\sup(g\vert\init)=\sup g\).
Also, with any so-called \emph{local gamble}~\(f\) on~\(\outcomes\) and any situation~\(\sit\in\sits\), we associate the global gamble~\(f_\sit\), defined by
\begin{equation*}
f_\sit(\pth)\coloneqq
\begin{cases}
f(x)&\text{if~\(\pth\in\cylset{\sit x}\) with~\(x\in\outcomes\)}\\
0&\text{otherwise, so if~\(\pth\notin\cylset{\sit}\)}
\end{cases}
\text{ for all~\(\pth\in\pths\)}.
\end{equation*}

\begin{proposition}[Properties of upper/lower expectations]\label{prop:properties:of:global:expectations}
Consider any forecasting system~\(\frcstsystem\in\frcstsystems\).
Then for all gambles \(g,g_n,h\) on~\(\pths\), with~\(n\in\naturalswithzero\), for all gambles~\(f\) on~\(\outcomes\), all~\(\lambda\in\nonnegreals\), and all situations~\(\sit\in\sits\):
\begin{enumerate}[label=\upshape E{\arabic*}.,ref=\upshape E{\arabic*},leftmargin=*,noitemsep,topsep=0pt]{}
\item\label{axiom:lower:upper:bounds} \(\inf(g\vert\sit)\leq\lglobalcond{g}{\sit}\leq\uglobalcond{g}{\sit}\leq\sup(g\vert\sit)\);\hfill\textup{[bounds]}
\item\label{axiom:lower:upper:homogeneity} \(\uglobalcond{\lambda g}{\sit}=\lambda\uglobalcond{g}{\sit})\) and~\(\lglobalcond{\lambda g}{\sit}=\lambda\lglobalcond{g}{\sit}\);\hfill\textup{[non-negative homogeneity]}
\item\label{axiom:lower:upper:subadditivity} \(\lglobalcond{g}{\sit}+\lglobalcond{h}{\sit}\leq\lglobalcond{g+h}{\sit}\leq\lglobalcond{g}{\sit}+\uglobalcond{h}{\sit}\leq\uglobalcond{g+h}{\sit}\leq\uglobalcond{g}{\sit}+\uglobalcond{h}{\sit}\);\hfill\textup{[mixed sub/super-additivity]}
\item\label{axiom:lower:upper:constant:additivity} \(\uglobalcond{g+h}{\sit}=\uglobalcond{g}{\sit}+h_s\) and~\(\lglobalcond{g+h}{\sit}=\lglobalcond{g}{\sit}+h_s\) if \(h\) assumes the constant value~\(h_s\) on~\(\cylset{\sit}\);\hfill\textup{[constant additivity]}
\item\label{axiom:lower:upper:restriction} \(\uglobalcond{g}{\sit}=\uglobalcond{g\indexact{\sit}}{\sit}\) and~\(\lglobalcond{g}{\sit}=\lglobalcond{g\indexact{\sit}}{\sit}\);\hfill\textup{[restriction]}
\item\label{axiom:lower:upper:monotonicity} if \(g\leqsit h\) then \(\uglobalcond{g}{\sit}\leq\uglobalcond{h}{\sit}\) and~\(\lglobalcond{g}{\sit}\leq\lglobalcond{h}{\sit}\);\hfill\textup{[monotonicity]}
\item\label{axiom:lower:upper:local} \(\uglobalcond{f_\sit}{\sit}=\uex_{\frcstsystem(\sit)}(f)\) and~\(\lglobalcond{f_\sit}{\sit}=\lex_{\frcstsystem(\sit)}(f)\);\hfill\textup{[locality]}
\item\label{axiom:lower:upper:supermartin} \(\uex_{\frcstsystem(\sit)}(\uglobalcond{g}{\sit\andoutcome})=\uglobalcond{g}{\sit}\) and~\(\lex_{\frcstsystem(\sit)}(\lglobalcond{g}{\sit\andoutcome})=\lglobalcond{g}{\sit}\);\hfill\textup{[sub/super-martingale]}
\item\label{axiom:lower:upper:monotone:convergence} if \(g_n\nearrow g\) point-wise on~\(\cylset{\sit}\), then \(\uglobalcond{g}{\sit}=\sup_{n\in\naturalswithzero}\uglobalcond{g_n}{\sit}\).\hfill\textup{[convergence]}
\end{enumerate}
\end{proposition}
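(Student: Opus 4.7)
The plan is to prove each clause directly from the defining infimum~\eqref{eq:tree:upper:expectation}, transfer every statement to its lower-expectation counterpart via the conjugacy relation~\eqref{eq:conjugacy}, and rely on the local coherence axioms of Proposition~\ref{prop:properties:of:expectations} to verify that the supermartingales we construct still satisfy~\eqref{eq:supermartingale}. The recurring template is this: to establish \(\uglobalcond{g}{\sit}\leq B\), I will exhibit a supermartingale \(\supermartin\in\supermartins\) with \(\liminf\supermartin\geqsit g\) and \(\supermartin(\sit)=B\); to establish \(\uglobalcond{g}{\sit}\geq B\), I will argue that every qualifying supermartingale already satisfies \(\supermartin(\sit)\geq B\).

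Under this template, the routine clauses are immediate. For the outer bounds in~\ref{axiom:lower:upper:bounds}, the constant process \(\supermartin\equiv\sup(g\vert\sit)\) is a supermartingale by~\ref{axiom:coherence:bounds} that dominates \(g\) on \(\cylset{\sit}\). Non-negative homogeneity~\ref{axiom:lower:upper:homogeneity} uses that \(\lambda\supermartin\in\supermartins\) whenever \(\supermartin\in\supermartins\), by~\ref{axiom:coherence:homogeneity}; constant additivity~\ref{axiom:lower:upper:constant:additivity} follows by translating supermartingales by \(h_\sit\) using~\ref{axiom:coherence:constant:additivity}; the outer sub-additivity in~\ref{axiom:lower:upper:subadditivity} follows because the sum of two supermartingales is a supermartingale by~\ref{axiom:coherence:subadditivity}; monotonicity~\ref{axiom:lower:upper:monotonicity} follows because \(\liminf\supermartin\geqsit h\) is stronger than \(\liminf\supermartin\geqsit g\) whenever \(g\leqsit h\); and restriction~\ref{axiom:lower:upper:restriction} is immediate because \(\liminf\supermartin\geqsit g\) only depends on paths through \(\sit\), so it is equivalent to \(\liminf\supermartin\geqsit g\indexact{\sit}\). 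The middle mixed inequalities in~\ref{axiom:lower:upper:subadditivity} then reduce via conjugacy to outer sub-additivity.

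The substantive clauses are the inner bound in~\ref{axiom:lower:upper:bounds}, locality~\ref{axiom:lower:upper:local}, and the tower identity~\ref{axiom:lower:upper:supermartin}; all three hinge on one auxiliary fact: any supermartingale \(\supermartin\) with \(\liminf\supermartin\geqsit c\) for a constant \(c\in\reals\) satisfies \(\supermartin(\sit)\geq c\)---the standard one-sided game-theoretic convergence principle, which follows from iterating~\eqref{eq:supermartingale} together with~\ref{axiom:coherence:bounds}. Given this, the inner bound \(\lglobalcond{g}{\sit}\leq\uglobalcond{g}{\sit}\) follows because for any qualifying \(\supermartin\in\supermartins\) and \(\submartin\in\submartins\), \(\supermartin-\submartin\) is a supermartingale (by~\ref{axiom:coherence:subadditivity}) with \(\liminf(\supermartin-\submartin)\geqsit 0\), whence \((\supermartin-\submartin)(\sit)\geq0\). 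For locality~\ref{axiom:lower:upper:local} I construct a canonical supermartingale that equals \(\uex_{\frcstsystem(\sit)}(f)\) on every predecessor of \(\sit\), takes the value \(f(x)\) at each child \(\sit x\), and is constant thereafter and on branches incomparable with \(\sit\); the matching lower bound uses the auxiliary fact on each subtree rooted at \(\sit x\) to give \(\supermartin(\sit x)\geq f(x)\), then~\ref{axiom:coherence:monotonicity} and~\eqref{eq:supermartingale} at \(\sit\). The tower identity~\ref{axiom:lower:upper:supermartin} is the technical core: the `\(\leq\)' direction uses that restriction of any qualifying \(\supermartin\) to the subtree at \(\sit x\) gives \(\uglobalcond{g}{\sit x}\leq\supermartin(\sit x)\) for each \(x\in\outcomes\), hence \(\uex_{\frcstsystem(\sit)}(\uglobalcond{g}{\sit\andoutcome})\leq\uex_{\frcstsystem(\sit)}(\supermartin(\sit\andoutcome))\leq\supermartin(\sit)\) by~\ref{axiom:coherence:monotonicity} and~\eqref{eq:supermartingale}; the `\(\geq\)' direction is proved by gluing, for each \(\epsilon>0\) and each \(x\in\outcomes\), a near-optimal supermartingale on the subtree rooted at \(\sit x\) with initial value within \(\epsilon\) of \(\uglobalcond{g}{\sit x}\), extending by the constant value \(\uex_{\frcstsystem(\sit)}(\uglobalcond{g}{\sit\andoutcome})+\epsilon\) on all situations not strictly following \(\sit\), and verifying~\eqref{eq:supermartingale} at \(\sit\) via~\ref{axiom:coherence:monotonicity} and~\ref{axiom:coherence:constant:additivity}.

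The hard part will be the monotone convergence clause~\ref{axiom:lower:upper:monotone:convergence}: no finite manipulation of supermartingales produces it, since a pointwise supremum of qualifying supermartingales for the \(g_n\) need not remain a supermartingale, and \(\liminf\) does not commute with suprema. My plan here is the one signposted in footnote~\ref{footnote:equivalentglobalmodels}: appeal to the equivalence of~\eqref{eq:tree:upper:expectation} with the alternative characterisations of the upper expectation developed in Refs.~\cite{tjoens2021:upper:expectations:stochastic:processes,tjoens2021:upper:expectation}, for which upward continuity on bounded gambles is already established, and import the result across that equivalence.
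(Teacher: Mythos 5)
Your proposal is correct and follows essentially the same architecture as the paper's proof: the outer bound via a constant supermartingale, the one-sided game-theoretic convergence principle (which is the paper's Lemma~\ref{lem:supermartingale:bounding:result}, obtained by iterating the supermartingale inequality with~\ref{axiom:coherence:bounds} and dependent choice) as the single engine driving the inner bound in~\ref{axiom:lower:upper:bounds}, locality, and the tower property, the same gluing and restriction constructions for~\ref{axiom:lower:upper:local} and~\ref{axiom:lower:upper:supermartin}, and the same appeal to the external references for~\ref{axiom:lower:upper:monotone:convergence}. The only divergences are cosmetic reorderings — you prove~\ref{axiom:lower:upper:constant:additivity} and~\ref{axiom:lower:upper:monotonicity} directly from~\eqref{eq:tree:upper:expectation}, where the paper derives them from~\ref{axiom:lower:upper:bounds} and~\ref{axiom:lower:upper:subadditivity}, and you prove the inner bound of~\ref{axiom:lower:upper:bounds} by applying the auxiliary fact to a difference \(\supermartin-\submartin\) rather than via~\ref{axiom:lower:upper:subadditivity} — all of which are sound.
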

\noindent Property~\ref{axiom:lower:upper:local} essentially shows that the global models are extensions of the local ones.
Property~\ref{axiom:lower:upper:supermartin} shows in particular that for any global gamble~\(g\), the real process~\(\uglobalcond{g}{\bolleke}\) is a supermartingale for~\(\frcstsystem\).

Extensive discussion in related contexts about why expressions such as~\eqref{eq:tree:upper:expectation} and~\eqref{eq:tree:lower:expectation} are interesting as well as useful, can be found in Refs.~\cite{cooman2007d,cooman2015:markovergodic,shafer2001,shafer2019:book,tjoens2019:continuity:arxiv,tjoens2019:global,tjoens2021:upper:expectation,tjoens2021:equivalence,tjoens2021:upper:expectations:stochastic:processes,tjoens2022:phdthesis}.\footnote{See footnote~\ref{footnote:equivalentglobalmodels} for more details.}
We mention explicitly that for precise forecasting systems, they result in models that coincide with the ones found in measure-theoretic probability theory; see Refs.~\cite{tjoens2021:upper:expectation,tjoens2022:phdthesis}, and the brief discussion in Section~\ref{sec:precise:forecasting:systems} further on.
Related results can also be found in Refs.~\cite[Ch.~8]{shafer2001} and~\cite[Ch.~9]{shafer2019:book}.
In particular, for the precise \emph{fair-coin forecasting system}~\(\faircoinfrcstsystem\), where all local forecasts equal~\(\nicefrac{1}{2}\), these models coincide on all measurable global gambles with the usual uniform (Lebesgue) expectations.
More generally, for an interval-valued forecasting system~\(\frcstsystem\), the upper and lower expectation~\(\uglobal\) and~\(\lglobal\) provide tight upper and lower bounds on the measure-theoretic expectation of measurable global gambles for every precise forecasting system~\(\precisefrcstsystem\) that is compatible with~\(\frcstsystem\), in the sense that \(\precisefrcstsystem\subseteq\frcstsystem\); see Refs.~\cite{tjoens2021:equivalence,tjoens2022:phdthesis} for related discussion and, in particular, Theorem~13 in Ref.~\cite{tjoens2021:equivalence} and Theorem 5.5.10 in Ref.~\cite{tjoens2022:phdthesis}.

For any global event~\(G\subseteq\pths\) and any situation~\(\sit\in\sits\), the corresponding (conditional) upper and lower \emph{probabilities} are defined by~\(\uglobalcondprob{G}{\sit}\coloneqq\uglobalcond{\ind{G}}{\sit}\) and~\(\lglobalcondprob{G}{\sit}\coloneqq\lglobalcond{\ind{G}}{\sit}\).
The following \emph{conjugacy relationship for global events} follows at once from~\ref{axiom:lower:upper:constant:additivity}:
\begin{equation*}
\lglobalcondprob{G}{\sit}=1-\uglobalcondprob{G^c}{\sit}
\text{ for all~\(G\subseteq\pths\) and~\(\sit\in\sits\)},
\end{equation*}
where \(G^c\coloneqq\pths\setminus G\) is the complement of the global event~\(G\).

We'll have occasion to use the following direct corollary a number of times.
Its proof can also be found in the Appendix.

\begin{corollary}\label{cor:supermartingales:based:on:cuts}
Consider any forecasting system~\(\frcstsystem\), any partial cut~\(\cut\subseteq\sits\), and any~\(\sit\in\sits\).
Then the following statements hold for the real process~\(\uglobalcondprob{\cylset{\cut}}{\bolleke}\):
\begin{enumerate}[label=\upshape(\roman*),leftmargin=*,noitemsep,topsep=0pt]
\item\label{it:supermartingales:based:on:cuts:with:equality} \(\uglobalcondprob{\cylset{\cut}}{\sit}=\uex_{\frcstsystem(\sit)}(\uglobalcondprob{\cylset{\cut}}{\sit\andoutcome})\);
\item\label{it:supermartingales:based:on:cuts:supermartingale} \(\uglobalcondprob{\cylset{\cut}}{\bolleke}\) is a supermartingale for~\(\frcstsystem\);
\item\label{it:supermartingales:based:on:cuts:bounds} \(0\leq\uglobalcondprob{\cylset{\cut}}{\sit}\leq1\);
\item\label{it:supermartingales:based:on:cuts:where:one:and:where:zero} \(\sit\follows\cut\then\uglobalcondprob{\cylset{\cut}}{\sit}=1\) and~\(\sit\incomp\cut\then\uglobalcondprob{\cylset{\cut}}{\sit}=0\);
\item\label{it:supermartingales:based:on:cuts:levy} \(\liminf\uglobalcondprob{\cylset{\cut}}{\bolleke}\geq\indexact{\cut}\).
\end{enumerate}
\end{corollary}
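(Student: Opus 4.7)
The corollary is essentially a collection of direct consequences of Proposition~\ref{prop:properties:of:global:expectations}, applied to the single bounded global gamble \(g \coloneqq \ind{\cylset{\cut}}\), so I would set \(M \coloneqq \uglobalcond{g}{\bolleke}\) and check each statement in the order (iii), (iv), (i), (ii), (v). Claim (iii) is immediate from the bounds in \ref{axiom:lower:upper:bounds}, because \(g\) takes values in \(\set{0,1}\). For (iv), if \(\sit \follows \cut\) then \(\sit\) follows the necessarily unique \(\altsit \in \cut\) that is comparable with it, so \(\cylset{\sit} \subseteq \cylset{\altsit} \subseteq \cylset{\cut}\) and \(g \equiv 1\) on \(\cylset{\sit}\); and if \(\sit \incomp \cut\) then \(\cylset{\sit} \cap \cylset{\altsit} = \emptyset\) for every \(\altsit \in \cut\), so \(\cylset{\sit} \cap \cylset{\cut} = \emptyset\) and \(g \equiv 0\) on \(\cylset{\sit}\). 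In both cases, \ref{axiom:lower:upper:bounds} pins down \(M(\sit)\).

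Claim (i) is then an instance of the sub/super-martingale property \ref{axiom:lower:upper:supermartin} applied to this \(g\). Claim (ii) follows at once: since (iii) guarantees that \(M\) is a real process, the local constant additivity \ref{axiom:coherence:constant:additivity} lets us rewrite (i) as \(\uex_{\frcstsystem(\sit)}(\adddelta M(\sit)) = 0\), which a fortiori gives the supermartingale inequality \(\leq 0\) in every situation.

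Only (v) requires a short path-wise argument. Fix \(\pth \in \pths\). If \(\pth \notin \cylset{\cut}\), then \(\indexact{\cut}(\pth) = 0\), and the inequality is nothing but the lower bound in (iii). If \(\pth \in \cylset{\cut}\), then \(\pth\) goes through some \(\altsit \in \cut\), i.e., \(\pthto{\dist{\altsit}} = \altsit\), and for every \(n \geq \dist{\altsit}\) we have \(\pthto{n} \follows \altsit\) and therefore \(\pthto{n} \follows \cut\); by (iv), \(M(\pthto{n}) = 1\) for all such \(n\), so \(\liminf M(\pth) = 1 = \indexact{\cut}(\pth)\). There is no genuine obstacle here; the main thing to be careful about is correctly translating the definitions of \(\follows\) and \(\incomp\) from single situations to a partial cut when reducing each claim to the properties in Proposition~\ref{prop:properties:of:global:expectations}.
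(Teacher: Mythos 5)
Your proposal is correct and follows essentially the same route as the paper's proof: all five items are read off from Proposition~\ref{prop:properties:of:global:expectations} applied to the gamble~\(\ind{\cylset{\cut}}\), with the same pathwise argument via~(iv) for item~(v). The only cosmetic difference is in~(ii), where you detour through~\ref{axiom:coherence:constant:additivity} to rewrite~(i) as~\(\uex_{\frcstsystem(\sit)}(\adddelta M(\sit))=0\); the paper simply notes that the equality in~(i) (which is just~\ref{axiom:lower:upper:supermartin}) already implies the defining supermartingale inequality~\(\uex_{\frcstsystem(\sit)}(M(\sit\andoutcome))\leq M(\sit)\), so both~(i) and~(ii) are immediate from~\ref{axiom:lower:upper:supermartin} alone.
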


It will also prove useful to have expressions for the upper and lower probabilities of the cylinder sets.
Unlike those for more general global events, they turn out to be particularly simple and elegant.
For a proof, we again refer to the Appendix.

\begin{proposition}\label{prop:lower:upper:probs:for:cylinder:sets}
Consider any forecasting system~\(\frcstsystem\) and any situation~\(\sit\in\sits\), then
\begin{align*}
\uglobalprob(\cylset{\sit})
&=\smashoperator{\prod_{k=0}^{\dist{\sit}-1}}\ufrcstsystem(\sitto{k})^{\sitat{k+1}}[1-\lfrcstsystem(\sitto{k})]^{1-\sitat{k+1}}\\
\lglobalprob(\cylset{\sit})
&=\smashoperator{\prod_{k=0}^{\dist{\sit}-1}}\lfrcstsystem(\sitto{k})^{\sitat{k+1}}[1-\ufrcstsystem(\sitto{k})]^{1-\sitat{k+1}}.
\end{align*}
\end{proposition}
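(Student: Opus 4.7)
My plan is to prove both formulas by a backward recursion along the unique chain of situations $\init = \sitto{0} \sprecedes \sitto{1} \sprecedes \dots \sprecedes \sitto{n} = \sit$ from the root down to $\sit$, where $n = \dist{\sit}$. The singleton $\set{\sit}$ is trivially a partial cut, so Corollary~\ref{cor:supermartingales:based:on:cuts}\ref{it:supermartingales:based:on:cuts:where:one:and:where:zero} yields the boundary conditions $\uglobalcondprob{\cylset{\sit}}{\sit} = 1$ and $\uglobalcondprob{\cylset{\sit}}{\altsit} = 0$ whenever $\altsit \incomp \sit$. For the lower probability, the analogous boundary conditions $\lglobalcondprob{\cylset{\sit}}{\sit} = 1$ and $\lglobalcondprob{\cylset{\sit}}{\altsit} = 0$ follow from property~\ref{axiom:lower:upper:bounds} applied to the indicator~$\indexact{\sit}$.

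The key recursive step comes from the (super)martingale property~\ref{axiom:lower:upper:supermartin}, which gives, for each $k \in \set{0,\dots,n-1}$,
\begin{equation*}
\uglobalcondprob{\cylset{\sit}}{\sitto{k}}
= \uex_{\frcstsystem(\sitto{k})}\bigl(\uglobalcondprob{\cylset{\sit}}{\sitto{k}\andoutcome}\bigr),
\end{equation*}
and similarly for $\lex$ and $\lglobalcondprob{\cdot}{\cdot}$. Observe that the children of $\sitto{k}$ are $\sitto{k+1} = \sitto{k}\sitat{k+1}$ (which still precedes or equals $\sit$) and $\sitto{k}(1-\sitat{k+1})$ (which is incomparable with $\sit$). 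Thus the local gamble $\uglobalcondprob{\cylset{\sit}}{\sitto{k}\andoutcome}$ is zero at the outcome $1-\sitat{k+1}$ and equals $u_{k+1} \coloneqq \uglobalcondprob{\cylset{\sit}}{\sitto{k+1}}$ at the outcome $\sitat{k+1}$.

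Writing $u_k \coloneqq \uglobalcondprob{\cylset{\sit}}{\sitto{k}}$, the explicit formula~\eqref{eq:local:upper} for the local upper expectation then gives, depending on whether $\sitat{k+1} = 1$ (so $f(1) = u_{k+1} \geq 0 = f(0)$) or $\sitat{k+1} = 0$ (so $f(0) = u_{k+1} \geq 0 = f(1)$),
\begin{equation*}
u_k = \ufrcstsystem(\sitto{k})^{\sitat{k+1}}\bigl[1-\lfrcstsystem(\sitto{k})\bigr]^{1-\sitat{k+1}}\, u_{k+1},
\end{equation*}
where non-negativity of $u_{k+1}$ is inherited inductively from $u_n = 1$ via this very recursion. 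Iterating from $k = n-1$ down to $k = 0$ collapses the recursion into the claimed product for $\uglobalprob(\cylset{\sit}) = u_0$.

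The lower probability follows by the same argument with $\lex$ in place of $\uex$: the case analysis in the definition of $\lex_I$ swaps the roles of $\lfrcstsystem(\sitto{k})$ and $\ufrcstsystem(\sitto{k})$, yielding the recursion $l_k = \lfrcstsystem(\sitto{k})^{\sitat{k+1}}[1-\ufrcstsystem(\sitto{k})]^{1-\sitat{k+1}}\, l_{k+1}$, which iterates to the stated product. The only subtlety is the case-analysis bookkeeping in~\eqref{eq:local:upper}; once one verifies that the intermediate values $u_k,l_k$ remain non-negative, every step is immediate, so I anticipate no serious obstacle beyond writing it cleanly.
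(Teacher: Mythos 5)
Your proof is correct and takes essentially the same route as the paper's: both set up the backward recursion $u_k = \ufrcstsystem(\sitto{k})^{\sitat{k+1}}[1-\lfrcstsystem(\sitto{k})]^{1-\sitat{k+1}}u_{k+1}$ via the (super)martingale property~\ref{axiom:lower:upper:supermartin} together with the explicit local formula~\eqref{eq:local:upper}, and iterate from the boundary value $u_n=1$. The only cosmetic difference is how the local gamble $\uglobalcondprob{\cylset{\sit}}{\sitto{k}\andoutcome}$ is identified as $u_{k+1}\indsing{\sitat{k+1}}$: you read it off directly from Corollary~\ref{cor:supermartingales:based:on:cuts}\ref{it:supermartingales:based:on:cuts:where:one:and:where:zero}, whereas the paper derives it via the restriction property~\ref{axiom:lower:upper:restriction} and homogeneity~\ref{axiom:lower:upper:homogeneity} — both entirely equivalent.
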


The idea for the following elegant and powerful inequality, in its simplest form, is due to Ville \cite{ville1939}.
In the Appendix, we give a proof for the sake of completeness, based on Shafer and Vovk's work on game-theoretic probabilities \cite{shafer2001,shafer2019:book}.

\begin{proposition}[Ville's inequality \protect{\cite{shafer2001,shafer2019:book}}]\label{prop:ville:inequality}
Consider any forecasting system~\(\frcstsystem\), any nonnegative supermartingale~\(\test\) for~\(\frcstsystem\), and any~\(C>0\), then
\begin{equation*}\label{eq:villes:inequality}
\uglobalprob\group[\bigg]{\cset[\bigg]{\pth\in\pths}{\sup_{n\in\naturalswithzero}\test(\pthto{n})\geq C}}
\leq\frac{1}{C}\test(\init).
\end{equation*}
\end{proposition}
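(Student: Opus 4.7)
The plan is to exhibit an explicit non-negative supermartingale $M$ for~$\frcstsystem$ whose initial value equals $T(\init)/C$ and whose $\liminf$ dominates the indicator of the event
\[
G\coloneqq\cset[\big]{\pth\in\pths}{\sup_{n\in\naturalswithzero}T(\pthto{n})\geq C};
\]
the conclusion then follows at once from the definition~\eqref{eq:tree:upper:expectation} of $\uglobalprob(G)=\uglobalcond{\ind{G}}{\init}$.

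I would first organise the level-crossing geometry by introducing the \emph{hitting cut}
\[
\cut \coloneqq \cset[\big]{\sit\in\sits}{T(\sit)\geq C\text{ and }T(\altsit)<C\text{ for every }\altsit\sprecedes\sit}.
\]
Its elements are pairwise incomparable, so $\cut$ is a partial cut in the sense of Section~\ref{sec:forecasting:systems}, and $G=\cylset{\cut}$ by construction. Next, I would define the \emph{stopped process} $M$ by $M(\sit)\coloneqq T(\sitto{n_\sit})/C$, where $n_\sit$ is the smallest depth $k\in\set{0,\dots,\dist{\sit}}$ with $T(\sitto{k})\geq C$ if such a~$k$ exists, and $n_\sit\coloneqq\dist{\sit}$ otherwise. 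Clearly $M\geq0$ and $M(\init)=T(\init)/C$.

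The central verification is that $M$ is itself a supermartingale for~$\frcstsystem$, for which I would split each $\sit\in\sits$ into two cases. If $\sit\follows\cut$, then $M$ is constant on the subtree rooted at~$\sit$ (with common value $T(\altsit)/C$ for the unique $\altsit\in\cut$ with $\altsit\precedes\sit$), so $M(\sit\andoutcome)$ is the constant gamble $M(\sit)$ and $\uex_{\frcstsystem(\sit)}(M(\sit\andoutcome))=M(\sit)$ by~\ref{axiom:coherence:bounds}. Otherwise $\sit\sprecedes\cut$ or $\sit\incomp\cut$, so no ancestor of $\sit$ lies in $\cut$, whence $n_\sit=\dist{\sit}$ and $M(\sit x)=T(\sit x)/C$ for each $x\in\outcomes$; \ref{axiom:coherence:homogeneity} combined with the supermartingale property of~$T$ then yields $\uex_{\frcstsystem(\sit)}(M(\sit\andoutcome))=C^{-1}\uex_{\frcstsystem(\sit)}(T(\sit\andoutcome))\leq T(\sit)/C=M(\sit)$.

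Finally I would check that $\liminf M\geq\ind{G}$: on $\pth\notin G$ this holds because $M\geq0$, and on $\pth\in G$ the path $\pth$ meets $\cut$ at some first depth~$n^*$, whence $M(\pthto{m})=T(\pthto{n^*})/C\geq1$ for all $m\geq n^*$ by definition of $\cut$. Invoking~\eqref{eq:tree:upper:expectation} yields $\uglobalprob(G)\leq M(\init)=T(\init)/C$, as required. The main obstacle I anticipate is carrying out the trichotomy-based case analysis for the supermartingale verification of the stopped process cleanly; everything else is essentially routine bookkeeping around the definitions in Section~\ref{sec:supermartingales}.
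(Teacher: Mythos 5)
Your approach---building an explicit witnessing supermartingale by stopping $T$ at the first level crossing and rescaling by~$C^{-1}$---is essentially the same strategy the paper uses, and your verification that the stopped process is a supermartingale is correct. But there is a genuine gap in the final domination step, and it is precisely the gap that the paper's $\epsilon$-device is designed to close.

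You claim that for every $\pth\in G$ the path meets the hitting cut $\cut$ at some finite depth. That would require $T(\pthto{n})\geq C$ to \emph{actually hold for some $n$}, but $\sup_{n}T(\pthto{n})\geq C$ only forces $T(\pthto{n})>C-\delta$ for every $\delta>0$; the supremum need not be attained. There do exist non-negative supermartingales with a path along which $T(\pthto{n})<C$ for all $n$ while $\limsup_{n}T(\pthto{n})=C$ and $\liminf_{n}T(\pthto{n})<C$ (oscillating ever closer to~$C$ from below but repeatedly dipping). On such a path $\pth$ you have $\pth\in G$, yet $\pth$ never follows $\cut$, so $M(\pthto{n})=T(\pthto{n})/C$ for all $n$ and $\liminf M(\pth)=\liminf T(\pthto{n})/C<1=\ind{G}(\pth)$. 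Hence the requirement $\liminf M\geqsit[\init]\ind{G}$ of Equation~\eqref{eq:tree:upper:expectation} fails, and the argument as written only bounds the upper probability of the strictly smaller event $\cset{\pth}{(\exists n)T(\pthto{n})\geq C}=\cylset{\cut}$. To repair it, do exactly what the paper does: for any $0<\epsilon<C$, take the cut at level $C-\epsilon$ (or, equivalently, stop $T$ the first time it reaches $C-\epsilon$); then $\sup_n T(\pthto{n})\geq C>C-\epsilon$ guarantees the threshold \emph{is} crossed, the stopped process at that threshold, divided by $C-\epsilon$, has $\liminf\geq\ind{G}$, and you get $\uglobalprob(G)\leq T(\init)/(C-\epsilon)$; letting $\epsilon\downarrow0$ finishes the proof.
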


Finally, we can see that more conservative forecasting systems lead to more conservative (larger) upper expectations.

\begin{proposition}\label{prop:more:conservative}
Consider any two forecasting systems \(\frcstsystem,\altfrcstsystem\in\frcstsystems\) such that \(\frcstsystem\subseteq\altfrcstsystem\).
Then
\begin{enumerate}[label=\upshape(\roman*),leftmargin=*,noitemsep,topsep=0pt]
\item\label{it:more:conservative:supermartingales} any supermartingale for~\(\altfrcstsystem\) is also a supermartingale for~\(\frcstsystem\), so \(\supermartins[\altfrcstsystem]\subseteq\supermartins[\frcstsystem]\);
\item\label{it:more:conservative:upper:expectations} \(\uglobalcond[\frcstsystem]{f}{\sit}\leq\uglobalcond[\altfrcstsystem]{f}{\sit}\) for all global gambles \(g\in\gambles(\pths)\) and all situations~\(\sit\in\sits\).
\end{enumerate}
\end{proposition}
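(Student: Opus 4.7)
The proof should proceed in two stages, with part (i) providing the engine for part (ii).

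For part (i), my plan is to first establish the local inequality $\uex_{\frcstsystem(\sit)}(f)\leq\uex_{\altfrcstsystem(\sit)}(f)$ for every local gamble $f\in\gamblesonoutcomes$ and every situation~$\sit\in\sits$. This follows immediately from the characterisation in~\eqref{eq:local:upper}: since $\frcstsystem(\sit)\subseteq\altfrcstsystem(\sit)$, taking the maximum of $\ex_p(f)$ over the smaller set $\frcstsystem(\sit)$ cannot exceed the maximum over $\altfrcstsystem(\sit)$. Now if $\supermartin\in\supermartins[\altfrcstsystem]$, then by definition $\uex_{\altfrcstsystem(\sit)}(\adddelta\supermartin(\sit))\leq0$ for all $\sit\in\sits$, and combining with the local inequality yields $\uex_{\frcstsystem(\sit)}(\adddelta\supermartin(\sit))\leq0$, so $\supermartin\in\supermartins[\frcstsystem]$.

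For part (ii), my plan is to read off the conclusion directly from the definition~\eqref{eq:tree:upper:expectation}. By part (i), we have the inclusion $\supermartins[\altfrcstsystem]\subseteq\supermartins[\frcstsystem]$, so the infimum defining $\uglobalcond[\frcstsystem]{g}{\sit}$ is taken over a larger set than the infimum defining $\uglobalcond[\altfrcstsystem]{g}{\sit}$ (and the constraint $\liminf\supermartin\geqsit g$ is the same in both cases). An infimum over a larger set is no larger, giving the desired inequality.

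There is no real obstacle here: both claims are essentially monotonicity statements that follow mechanically from the set-inclusion hypothesis $\frcstsystem\subseteq\altfrcstsystem$ and the ``sup/inf over a set'' shape of the definitions involved. The only point worth verifying carefully is that the admissibility constraint $\liminf\supermartin\geqsit g$ in~\eqref{eq:tree:upper:expectation} does not depend on the forecasting system, so that enlarging $\supermartins$ genuinely enlarges the feasible region of the infimum.
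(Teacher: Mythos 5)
Your proof is correct and follows essentially the same route as the paper's: for (i) you reduce to the local monotonicity $\uex_{\frcstsystem(\sit)}\leq\uex_{\altfrcstsystem(\sit)}$ (which the paper writes out via the $\sup_{p\in\cdot}\ex_p$ characterisation, and which you equivalently read off from Eq.~\eqref{eq:local:upper}), and apply it to the supermartingale difference; for (ii) you observe that the defining infimum in Eq.~\eqref{eq:tree:upper:expectation} is over a larger feasible set. The only cosmetic difference is that you use the $\uex_{\frcstsystem(\sit)}(\adddelta\supermartin(\sit))\leq0$ form of the supermartingale condition while the paper uses the equivalent $\uex_{\frcstsystem(\sit)}(\supermartin(\sit\andoutcome))\leq\supermartin(\sit)$ form.
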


\begin{proof}[Proof of Proposition~\ref{prop:more:conservative}]
% Proof by Gert
For~\ref{it:more:conservative:supermartingales}, consider any supermartingale~\(\supermartin\) for \(\altfrcstsystem\), which means that \(\uex_{\altfrcstsystem(\sit)}(\supermartin(\sit\andoutcome))\leq\supermartin(\sit)\) for all~\(\sit\in\sits\).
Now simply observe that also
\[
\uex_{\frcstsystem(\sit)}(\supermartin(\sit\andoutcome))
=\sup_{p\in\frcstsystem(\sit)}\ex_p(\supermartin(\sit\andoutcome))
\leq\sup_{p\in\altfrcstsystem(\sit)}\ex_p(\supermartin(\sit\andoutcome))
=\uex_{\altfrcstsystem(\sit)}(\supermartin(\sit\andoutcome))
\leq\supermartin(\sit),
\]
where the first inequality holds because \(\frcstsystem(\sit)\subseteq\altfrcstsystem(\sit)\).

For~\ref{it:more:conservative:upper:expectations}, we use Equation~\eqref{eq:tree:upper:expectation}:
\begin{align*}
\uglobalcond[\frcstsystem]{g}{\sit}
&=\inf\cset[\big]{\supermartin(\sit)}{\supermartin\in\supermartins[\frcstsystem]\text{ and }\liminf\supermartin\geqsit g}\\
&\leq\inf\cset[\big]{\supermartin(\sit)}
{\supermartin\in\supermartins[\altfrcstsystem]\text{ and }\liminf\supermartin\geqsit g}
=\uglobalcond[\altfrcstsystem]{g}{\sit},
\end{align*}
where the inequality holds because we have just shown that \(\supermartins[\altfrcstsystem]\subseteq\supermartins[\frcstsystem]\).
\end{proof}

\subsection{Precise forecasting systems and probability measures}\label{sec:precise:forecasting:systems}
Let's consider any forecasting system~\(\frcstsystem\) and any situation~\(\sit\in\sits\).
It's an immediate consequence of~\ref{axiom:lower:upper:bounds}--\ref{axiom:lower:upper:subadditivity} that the set
\begin{equation*}
\gambles_{\frcstsystem,\sit}(\pths)
\coloneqq\cset{g\in\gambles(\pths)}
{\lglobalcond{g}{\sit}=\uglobalcond{g}{\sit}}
\end{equation*}
of all global gambles~\(g\) whose (conditional) lower expectation~\(\lglobalcond{g}{\sit}\) and (conditional) upper expectation~\(\uglobalcond{g}{\sit}\) in~\(\sit\) coincide, is a real linear space; we refer to Ref.~\cite[Chs.~8 and~9]{troffaes2013:lp} for a closer study of such linear spaces.

For any such gamble~\(g\in\gambles_{\frcstsystem,\sit}(\pths)\), we call the common value
\begin{equation*}
\globalcond{g}{\sit}
\coloneqq\lglobalcond{g}{\sit}=\uglobalcond{g}{\sit}
\end{equation*}
the (precise conditional) \emph{expectation} of the global gamble~\(g\) in the situation~\(\sit\).
Similarly, for any global event~\(G\subseteq\pths\) such that \(\ind{G}\in\gambles_{\frcstsystem,\sit}(\pths)\), we call the common value
\begin{equation*}
\globalcondprob{G}{\sit}
\coloneqq\lglobalcondprob{G}{\sit}=\uglobalcondprob{G}{\sit}
\end{equation*}
the (precise conditional) \emph{probability} of the global event~\(G\) in the situation~\(\sit\).

It's then again an immediate consequence of~\ref{axiom:lower:upper:bounds}--\ref{axiom:lower:upper:subadditivity} that
\begin{equation*}
\globalcond{\lambda f+\mu g}{\sit}
=\lambda\globalcond{f}{\sit}+\mu\globalcond{g}{\sit}
\text{ for all~\(f,g\in\gambles_{\frcstsystem,\sit}(\pths)\) and~\(\lambda,\mu\in\reals\)},
\end{equation*}
so the expectation~\(\globalcond{\bolleke}{\sit}\) is a real linear functional on the linear space~\(\gambles_{\frcstsystem,\sit}(\pths)\), which, by the way, contains all constant global gambles, by~\ref{axiom:lower:upper:bounds}.
That \(\globalcond{\bolleke}{\sit}\) is bounded in the sense of~\ref{axiom:lower:upper:bounds}, normalised as a consequence of~\ref{axiom:lower:upper:bounds}, and monotone in the sense of~\ref{axiom:lower:upper:monotonicity}, also justifies our calling it an `expectation'.

It's not hard to see that \(\frcstsystem\subseteq\altfrcstsystem\) implies that \(\gambles_{\frcstsystem,\sit}(\pths)\supseteq\gambles_{\altfrcstsystem,\sit}(\pths)\), so we gather that the more precise~\(\frcstsystem\), the larger~\(\gambles_{\frcstsystem,\sit}(\pths)\).
The linear space~\(\gambles_{\frcstsystem,\sit}(\pths)\) will be maximally large when \(\frcstsystem\) is precise, or in other words when~\(\lfrcstsystem=\ufrcstsystem\).

Let's now assume that the forecasting system~\(\frcstsystem=\precisefrcstsystem\in\precisefrcstsystems\) is indeed \emph{precise}, and take a better look at the linear space~\(\gambles_{\precisefrcstsystem,\init}(\pths)\) of those global gambles~\(g\) that have a precise (unconditional) expectation~\(\pglobal[\precisefrcstsystem](g)=\globalcond[\precisefrcstsystem]{g}{\init}\).
As is quite often done, we provide the set of all paths~\(\pths\) with the Cantor topology, whose base is the collection of all cylinder sets~\(\cset{\cylset{\sit}}{\sit\in\sits}\); see for instance Ref.~\cite[Sec.~1.2]{downey2010}.
All these cylinder sets~\(\cylset{\sit}\) are clopen in this topology.
The corresponding Borel algebra~\(\cantoralgebra\) is the \(\sigma\)-algebra generated by this Cantor topology.

We'll need the following proposition further on in Section~\ref{sec:schnorr:tests} (and in particular Proposition~\ref{prop:if:precise:and:total:then:our:condition:follows}) to show that our newly proposed notion of a Schnorr test for a forecasting system properly generalises Schnorr's original notion of a totally recursive sequential test for the fair-coin forecasting system~\(\faircoinfrcstsystem\), and in Section~\ref{sec:uniform} to relate our version of {\ML} test randomness to uniform randomness.
A proof for this result can be found in the Appendix.

\begin{proposition}\label{prop:precise:forecasting:systems}
Assume that the forecasting system~\(\frcstsystem=\precisefrcstsystem\in\precisefrcstsystems\) is precise.
Then \(\gambles_{\precisefrcstsystem,\init}(\pths)\) includes the linear space of all Borel measurable global gambles, and \(\pglobal[\precisefrcstsystem]\) corresponds on that space with the usual expectation of the countably additive probability measure given by Ionescu Tulcea's extension theorem \cite[Thm.~II.9.2]{billingsley1995}.
In particular, for any partial cut~\(\cut\subseteq\sits\), we have that~\(\ind{\cylset{\cut}}\in\gambles_{\precisefrcstsystem,\init}(\pths)\) and
\[
\globalprob[\precisefrcstsystem](\cylset{\cut})
=\sum_{\sit\in\cut}\prod_{k=0}^{\dist{\sit}-1}
\precisefrcstsystem(\sitto{k})^{\sitat{k+1}}[1-\precisefrcstsystem(\sitto{k})]^{1-\sitat{k+1}}.
\]
\end{proposition}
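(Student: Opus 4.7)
The plan is to reduce the assertion to two observations: first, that the Ionescu Tulcea measure $\measure$ built from $\precisefrcstsystem$ agrees with both $\uglobalprob$ and $\lglobalprob$ on cylinder sets, and second, that this agreement propagates to the whole Borel algebra $\cantoralgebra$ through a standard uniqueness-of-extension argument powered by the properties collected in Proposition~\ref{prop:properties:of:global:expectations}.

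First I would construct $\measure$. Interpreting each precise forecast $\precisefrcstsystem(\sit) \in \frcsts$ as the Bernoulli mass $\bernoulis(\precisefrcstsystem(\sit))$ on~$\outcomes$ and using these as transition probabilities along the tree, Ionescu Tulcea's theorem supplies a unique countably additive probability measure $\measure$ on $(\pths, \cantoralgebra)$ such that
\[
\measure(\cylset{\sit}) = \smashoperator{\prod_{k=0}^{\dist{\sit}-1}}\precisefrcstsystem(\sitto{k})^{\sitat{k+1}}[1-\precisefrcstsystem(\sitto{k})]^{1-\sitat{k+1}}
\text{ for every } \sit \in \sits.
\]
Writing $\ex_{\measure}$ for the associated expectation, I would then invoke Proposition~\ref{prop:lower:upper:probs:for:cylinder:sets} with $\lfrcstsystem = \ufrcstsystem = \precisefrcstsystem$ to conclude $\uglobalprob(\cylset{\sit}) = \lglobalprob(\cylset{\sit}) = \measure(\cylset{\sit}) = \ex_{\measure}(\ind{\cylset{\sit}})$. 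Hence every cylinder indicator lies in $\gambles_{\precisefrcstsystem, \init}(\pths)$ and is correctly evaluated by $\pglobal[\precisefrcstsystem]$.

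The main technical obstacle is lifting this cylinder-level agreement to all bounded Borel measurable gambles. The cylinders form a $\pi$-system generating $\cantoralgebra$, and on this $\pi$-system $\uglobalprob$ and $\lglobalprob$ coincide with $\measure$. I would first turn this into finite additivity on the algebra of finite disjoint unions of cylinders, by combining the mixed sub/super-additivity~\ref{axiom:lower:upper:subadditivity} with constant additivity~\ref{axiom:lower:upper:constant:additivity} and the cylinder equality. The upward monotone continuity~\ref{axiom:lower:upper:monotone:convergence} of $\uglobalprob$ then upgrades this to $\sigma$-additivity, and Carath\'eodory's uniqueness theorem identifies the resulting countably additive set function with $\measure$ on all of $\cantoralgebra$. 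A final monotone-class-style approximation---uniformly bracketing a bounded Borel gamble between simple Borel functions and again invoking~\ref{axiom:lower:upper:subadditivity}, \ref{axiom:lower:upper:constant:additivity} and \ref{axiom:lower:upper:monotone:convergence}---then yields $\uglobalcond{g}{\init} = \ex_{\measure}(g) = \lglobalcond{g}{\init}$ for every bounded Borel $g$. This is essentially the content of \cite[Prop.~10 and Thm.~6]{tjoens2021:upper:expectation}, which I would either cite directly or reconstruct along these lines.

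The ``in particular'' clause is then a short corollary. A partial cut $\cut \subseteq \sits$ is at most countable and, by the very definition of a partial cut, the cylinders $\cset{\cylset{\sit}}{\sit \in \cut}$ are pairwise disjoint, so $\cylset{\cut} = \bigcup_{\sit \in \cut}\cylset{\sit}$ lies in $\cantoralgebra$ and hence $\ind{\cylset{\cut}} \in \gambles_{\precisefrcstsystem, \init}(\pths)$ by the extension just established. Countable additivity of $\measure$, together with the cylinder formula from Proposition~\ref{prop:lower:upper:probs:for:cylinder:sets} applied term by term, then yields the advertised product-sum expression for $\globalprob[\precisefrcstsystem](\cylset{\cut})$.
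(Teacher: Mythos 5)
Your overall strategy matches the paper's: cite the results in Ref.~\cite[Prop.~10 and Thm.~6]{tjoens2021:upper:expectation} for the statement that the global upper and lower expectations coincide with the Ionescu Tulcea expectation on Borel gambles, and then read off the partial-cut formula as a corollary using countable additivity of~\(\measure[\precisefrcstsystem]\) together with Proposition~\ref{prop:lower:upper:probs:for:cylinder:sets}. That is exactly what the paper does in its first paragraph.

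Two remarks are worth making, though. First, the paper does not stop there: it also gives a fully self-contained direct derivation of the partial-cut formula that makes no appeal to the general Borel statement at all—for finite cuts it sandwiches \(\sum_{\sit\in\cut}p(\sit)\) between~\(\uglobalprob[\precisefrcstsystem](\cylset{\cut})\) and~\(\lglobalprob[\precisefrcstsystem](\cylset{\cut})\) using~\ref{axiom:lower:upper:bounds} and~\ref{axiom:lower:upper:subadditivity}, and for countable cuts it uses~\ref{axiom:lower:upper:monotone:convergence} for the upper probability and a separate sub/super-additivity squeeze for the lower one. Your proposal omits this direct route and therefore leans entirely on the reference for the part of the proposition the paper actually needs downstream.

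Second, your sketch of a reconstruction contains a genuine logical slip: you invoke Carath\'eodory's uniqueness theorem to "identify the resulting countably additive set function with~\(\measure\) on all of~\(\cantoralgebra\)." Uniqueness of the Carath\'eodory extension tells you the premeasure on the cylinder algebra extends uniquely, but it does \emph{not} tell you that the (a priori merely sub/super-additive) \(\uglobalprob\) restricted to~\(\cantoralgebra\) equals that extension—for that you would first have to know \(\uglobalprob\) is \(\sigma\)-additive on~\(\cantoralgebra\), which is part of what you are trying to prove. The argument that actually works is a monotone class argument on the collection~\(\cset{G\in\cantoralgebra}{\lglobalprob(G)=\uglobalprob(G)=\measure(G)}\), using upward continuity of~\(\uglobalprob\) from~\ref{axiom:lower:upper:monotone:convergence}, downward continuity of~\(\lglobalprob\) by conjugacy, monotonicity~\ref{axiom:lower:upper:monotonicity} for the opposite inequalities, and~\ref{axiom:lower:upper:bounds} to close the gap. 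Since you do ultimately say you would cite the reference rather than rely on the sketch, this is not fatal to the proposal—but the sketch as written would not go through.
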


As a direct consequence, we can associate with any precise forecasting system~\(\precisefrcstsystem\in\precisefrcstsystems\) a probability measure \(\measure[\precisefrcstsystem]\) on the measurable space \((\pths,\cantoralgebra)\) defined by restricting the probability~\(\globalprob[\precisefrcstsystem]\) to the Borel measurable events:
\begin{equation}\label{eq:frcstsystem:to:measure}
\measure[\precisefrcstsystem](G)
\coloneqq\globalprob[\precisefrcstsystem](G)
=\pglobal[\precisefrcstsystem](\ind{G})
\text{ for all~\(G\in\cantoralgebra\)}.
\end{equation}
If we consider any Borel measurable gamble \(g\), then the result above tells us that
\begin{equation*}
\pglobal[\precisefrcstsystem](g)
=\int_\pths g(\pth)\mathrm{d}\measure[\precisefrcstsystem](\omega).
\end{equation*}

\section{Notions of computability}\label{sec:computability}
Computability theory studies what it means for a mathematical object to be implementable, or in other words, achievable by some computation on a machine.
It considers as basic building blocks \emph{partial recursive} natural maps~\(\phi\colon\naturalswithzero\to\naturalswithzero\), which are maps that can be computed by a Turing machine. %\cite{Pour-ElRichards2016}.
This means that there's some Turing machine that halts on the input~\(n\in\naturalswithzero\) and outputs the natural number~\(\phi(n)\in\naturalswithzero\) if \(\phi(n)\) is defined, and doesn't halt otherwise.
By the Church--Turing (hypo)thesis, this is equivalent to the existence of a finite algorithm that, given any input~\(n\in\naturalswithzero\), outputs the non-negative integer~\(\phi(n)\in\naturalswithzero\) if \(\phi(n)\) is defined, and never finishes otherwise; in what follows, we'll often use this equivalence without mentioning it explicitly.
If the Turing machine halts for all inputs~\(n\in\naturalswithzero\), that is, if the Turing machine computes the non-negative integer~\(\phi(n)\) in a finite number of steps for every~\(n\in\naturalswithzero\), then the map~\(\phi\) is defined for all arguments and we call it \emph{total recursive}, or simply \emph{recursive} \cite[Ch.~2]{downey2010}.

Instead of~\(\naturalswithzero\), we'll also consider functions with domain or codomain~\(\outcomes\), \(\naturals\), \(\sits\), \(\sits\times\naturalswithzero\), \(\rationals\) or any other countable set~\(\countables\) whose elements can be encoded by the natural numbers; the choice of encoding isn't important, provided we can algorithmically decide whether a natural number is an encoding of an object and, if this is the case, we can find an encoding of the same object with respect to the other encoding \cite[p.~\textrm{xvi}]{shen2017}.
A function~\(\phi\colon\countables\to\countables'\) is then called \emph{partial recursive} if there's a Turing machine that, when given the natural-valued encoding of any~\(d\in\countables\), outputs the natural-valued encoding of~\(\phi(d)\in\countables'\) if \(\phi(d)\) is defined, and never halts otherwise.
By the Church--Turing thesis, this is equivalent to the existence of a finite algorithm that, when given the input~\(d\in\countables\), outputs the object~\(\phi(d)\in\countables'\) if \(\phi(d)\) is defined, and never finishes otherwise.
If the Turing machine halts on all natural numbers that encode some element~\(d\in\countables\), or equivalently, if the finite algorithm outputs an element~\(\phi(d)\in\countables'\) for every~\(d\in\countables\), then we call~\(\phi\) \emph{total recursive}, or simply \emph{recursive}.
When \(\mathcal{D}'=\rationals\), then for any rational number \(\alpha\in\rationals\) and any two recursive rational maps \(q_1,q_2\colon\mathcal{D}\to\rationals\), the following rational maps are clearly recursive as well: \(q_1+q_2\), \(q_1\cdot q_2\), \(q_1/q_2\) with \(q_2(d)\neq 0\) for all~\(d\in\mathcal{D}\), \(\max\set{q_1,q_2}\), \(\alpha q_1\) and~\(\lceil q_1 \rceil\).
Since a finite number of finite algorithms can always be combined into one, it follows from the foregoing that the rational maps \(\min\set{q_1,q_2}\) and~\(\lfloor q_1 \rfloor\) are also recursive.

We'll also consider notions of implementability for sets of objects.
For any countable set~\(\countables\) whose elements can be encoded by the natural numbers, a subset~\(D\subseteq\countables\) is called \emph{recursively enumerable} if there's a Turing machine that halts on every natural number that encodes an element~\(d\in D\), but never halts on any natural number that encodes an element~\(d\in\countables\setminus D\) \cite[Def.~2.2.1]{downey2010}.
%For any non-empty~\(D\subseteq\mathcal{D}\), this is equivalent to the existence of a finite algorithm that enumerates the elements of the set~\(D\), meaning that there's some recursive map~\(\phi\colon\naturalswithzero\to\countables\) such that \(D=\phi(\naturalswithzero)\), with \(\phi(\naturalswithzero)\coloneqq\cset{\phi(n)}{n\in\naturalswithzero}\) \cite[Prop.~2.2.2]{downey2010}.
If both the set~\(D\) and its complement~\(\countables\setminus D\) are recursively enumerable, then we call~\(D\) \emph{recursive}.
This is equivalent to the existence of a recursive indicator~\(\ind{D}\colon\countables\to\{0,1\}\) that outputs \(1\) for all~\(d\in D\), and outputs \(0\) otherwise \cite[p.~11]{downey2010}.
For any indexed family \((D_{d'})_{d'\in\countables'}\), with \(D_{d'}\subseteq\countables\) for all~\(d'\in\countables'\) and \(\countables'\) a countable set whose elements can be encoded by the natural numbers, we say that \(D_{d'}\) is \emph{recursive(ly enumerable) effectively in} \(d'\in\countables'\) if there's a recursive(ly enumerable) set~\(\mathfrak{D}\subseteq\countables'\times\countables\) such that \(D_{d'}=\cset{d\in\countables}{(d',d)\in\mathfrak{D}}\) for all~\(d'\in\countables'\).

Countably infinite sets can also be used to come up with a notion of implementability for uncountably infinite sets of objects.
Consider, as an example, a set of paths~\(G\subseteq\pths\).
It's called \emph{effectively open} if there's some recursively enumerable set~\(A\subseteq\sits\) such that \(G=\cylset{A}\).
For any indexed family \((G_{d})_{d\in\countables}\), with \(G_{d}\subseteq\pths\) for all~\(d\in\countables\), we say that \(G_{d}\) is \emph{effectively open, effectively in} \(d\in\countables\) if there's some recursively enumerable set~\(\mathfrak{D}\subseteq\countables\times\sits\) such that \(G_{d}=\bigcup\cset{\cylset{\sit}\subseteq\pths}{(d,\sit)\in\mathfrak{D}}\) for all~\(d\in\countables\).

Recursive functions and recursively enumerable sets can also be used to define notions of implementability for maps whose codomain is uncountably infinite, such as real-valued maps.
For any countable set~\(\countables\) whose elements can be encoded by the natural numbers, a real map~\(r\colon\countables\to\reals\) is called \emph{\lscomp} if there's some recursive rational map~\(q\colon\countables\times\naturalswithzero\to\rationals\) such that \(q(d,n+1)\geq q(d,n)\) and~\(r(d)=\lim_{m\to\infty}q(d,m)\) for all~\(d\in\countables\) and~\(n\in\naturalswithzero\).
Equivalently, a real map~\(r\colon\countables\to\reals\) is {\lscomp} if and only if the set~\(\cset{(d,q)\in\countables\times\rationals}{r(d)>q}\) is recursively enumerable \cite[Sec.~5.2]{downey2010}; in this case, we also say that the set~\(\cset{(d,x)\in\countables\times\reals}{r(d)>x}\) is \emph{effectively open}.\footnote{This is the second time we encounter the term `effectively open' in this section. Both definitions, the one for effectively open subsets of~\(\pths\) and the one for effectively open subsets of~\(\countables\times\reals\), are instances of a general definition of effective openness; see for instance the appendix on Effective Topology in Ref.~\cite{vovk2010:randomness}.}
A real map~\(r\colon\countables\to\reals\) is called \emph{{\uscomp}} if \(-r\) is {\lscomp}.
If a real map~\(r\colon\countables\to\reals\) is both lower and {\uscomp}, then we call it \emph{{\comp}}; we then also say that \(r(d)\) is a {\comp} real \emph{effectively} in \(d\in\countables\).
This is equivalent to the existence of a recursive rational map~\(q\colon\countables\times\naturalswithzero\to\rationals\) such that \(\abs{r(d)-q(d,N)}\leq2^{-N}\) for all~\(d\in\countables\) and~\(N\in\naturalswithzero\) \cite[Props.~3 and~4]{cooman2021:randomness}.
It is also equivalent to the existence of two recursive maps~\(q\colon\countables\times\naturalswithzero\to\rationals\) and~\(e\colon\countables\times\naturalswithzero\to\naturalswithzero\) such that \(\abs{r(d)-q(d,\ell)}\leq2^{-N}\) for all~\(d\in\countables\), \(N\in\naturalswithzero\) and~\(\ell\geq e(d,N)\) \cite[Prop.~3]{cooman2021:randomness}.
A real number~\(\alpha\in\reals\) is then called {\comp} if it is {\comp} as a real map on a singleton.
For any {\comp} real number~\(\alpha\in\reals\) and any two {\comp} real maps~\(r_1,r_2\colon\mathcal{D}\to\reals\), the following real maps are {\comp} as well: \(r_1+r_2\), \(r_1\cdot r_2\), \(r_1/r_2\) with \(r_2(d)\neq0\) for all~\(d\in\mathcal{D}\), \(\max\set{r_1,r_2}\), \(\alpha r_1\), \(\exp(r_1)\) and~\(\log_2(r_1)\) with \(r_1(d)>0\) for all~\(d\in\mathcal{D}\) \cite[Ch.~0, Sec.~2]{pourel1989}.
Moreover, a forecasting system~\(\frcstsystem\in\frcstsystems\) is called \emph{{\comp}} if the two real processes \(\lfrcstsystem,\ufrcstsystem\) are {\comp}.

Computable real maps can also be used to show that another real map~\(r\colon\countables\to\reals\) is {\comp} or {\lscomp}.
If there's some {\comp} real map~\(q\colon\countables\times\naturalswithzero\to\reals\) such that \(\abs{r(d)-q(d,N)}\leq2^{-N}\) for all~\(d\in\countables\) and~\(N\in\naturalswithzero\), then the real map~\(r\) is {\comp} and we say that \(q\) \emph{converges effectively} to \(r\) \cite[Ch.~0]{li1993}.
Equivalently, the real map~\(r\) is {\comp} if and only if there's some {\comp} real map~\(q\colon\countables\times\naturalswithzero\to\reals\) and some recursive map~\(e\colon\countables\times\naturalswithzero\to\naturalswithzero\) such that \(\abs{r(d)-q(d,\ell)}\leq2^{-N}\) for all~\(d\in\countables\), \(N\in\naturalswithzero\) and~\(\ell\geq e(d,N)\), and we then also say that \(q\) converges effectively to~\(r\) \cite[Ch.~0]{li1993}.
Finally, if there's some {\comp} real map~\(q\colon\countables\times\naturalswithzero\to\reals\) such that \(q(d,n+1)\geq q(d,n)\) and~\(r(d)=\lim_{m\to\infty}q(d,m)\) for all~\(n\in\naturalswithzero\), then the real map~\(r\) is {\lscomp}; since we haven't found an explicit proof for this last property in the relevant literature, we provide one in the Appendix.

\begin{proposition} \label{prop:lsc:reals}
Consider any countable set~\(\mathcal{D}\) whose elements can be encoded by the natural numbers.
Then a real map~\(r\colon\mathcal{D}\to\reals\) is {\lscomp} if there's a {\comp} real map \(q\colon\mathcal{D}\times\naturalswithzero\to\reals\) such that \(q(d,n+1)\geq q(d,n)\) and~\(r(d)=\lim_{m\to\infty}q(d,m)\) for all~\(d\in\mathcal{D}\) and~\(n\in\naturalswithzero\).
\end{proposition}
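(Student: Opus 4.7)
The goal is to exhibit a recursive rational map that is non-decreasing in its second argument and converges to \(r(d)\) for every \(d\in\mathcal{D}\), since this is precisely the defining property of \lscompy{} given at the start of this section. The plan has three short steps: pass from the given \comp{} real map \(q\) to a recursive rational map, adjust it so that it lies pointwise below \(r\), and then monotonise.

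First, I would invoke the equivalent rational-approximation characterisation of \compy{} recalled earlier in this section: there is a recursive rational map \(\hat{q}\colon\mathcal{D}\times\naturalswithzero\times\naturalswithzero\to\rationals\) such that \(\abs{q(d,n)-\hat{q}(d,n,N)}\leq 2^{-N}\) for all \(d\in\mathcal{D}\) and all \(n,N\in\naturalswithzero\). Specialising to \(N=n\) and subtracting a safety margin, I would set
\[
\tilde{q}_0(d,n)\coloneqq\hat{q}(d,n,n)-2^{-n},
\]
which is recursive rational. By construction \(\tilde{q}_0(d,n)\leq q(d,n)\), and by monotonicity of \(q\) in its second argument together with \(r(d)=\lim_m q(d,m)\) we also have \(q(d,n)\leq r(d)\); hence \(\tilde{q}_0(d,n)\leq r(d)\). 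In the other direction, \(\tilde{q}_0(d,n)\geq q(d,n)-2\cdot 2^{-n}\), so \(\tilde{q}_0(d,n)\to r(d)\) as \(n\to\infty\).

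The remaining issue is that \(\tilde{q}_0\) need not be non-decreasing in \(n\). I would repair this by taking the running maximum
\[
\tilde{q}(d,n)\coloneqq\max_{0\leq k\leq n}\tilde{q}_0(d,k),
\]
which is recursive rational (a maximum of finitely many values of a recursive rational map), non-decreasing in \(n\) by construction, and sandwiched between \(\tilde{q}_0(d,n)\) and \(r(d)\). It therefore still converges to \(r(d)\) for every \(d\). Applying the defining rational-sequence characterisation of \lscompy{} to \(\tilde{q}\) yields the conclusion.

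No serious obstacle is expected; the only delicate point is ensuring that the approximating values already sit below \(r(d)\) before monotonising, so that the running maximum cannot overshoot the limit. This is precisely why the \(2^{-n}\) safety margin is subtracted in the definition of \(\tilde{q}_0\) and why the hypothesis that \(q\) is non-decreasing in \(n\) is needed.
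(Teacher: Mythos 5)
Your proof is correct and takes essentially the same route as the paper: both define the recursive rational map $\max_{k=0}^n[\hat{q}(d,k,k)-2^{-k}]$ with the same $2^{-k}$ safety margin to keep the approximants below $r(d)$ before monotonising. The only small difference is that you establish convergence by a direct sandwich argument (noting $\tilde{q}_0(d,n)\geq q(d,n)-2\cdot 2^{-n}\to r(d)$), whereas the paper does so by contradiction; your version is arguably a touch cleaner.
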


\section{Randomness via supermartingales}\label{sec:randomness:via:supermartingales}
% Checked by Gert
We now turn to the martingale-theoretic notions of {\ML} and Schnorr randomness associated with an interval-valued forecasting system~\(\frcstsystem\), which we borrow from our earlier paper on randomness and imprecision \cite{cooman2021:randomness}.
We limit ourselves here to a discussion of the definitions of these randomness notions, and refer to that earlier work for an extensive account of their properties, relevance and usefulness.

\begin{definition}[{\ML} randomness {\protect\cite{cooman2021:randomness}}]\label{def:randomness:martin-loef}
Consider any forecasting system~\(\frcstsystem\colon\sits\to\imprecisefrcsts\) and any path~\(\pth\in\pths\).
We call~\(\pth\) \emph{{\ML} random for~\(\frcstsystem\)} if all {\lscomp} test supermartingales~\(\test\) for~\(\frcstsystem\) remain bounded above on~\(\pth\), meaning that there's some~\(B_\test\in\reals\) such that \(\test(\pthto{n})\leq B_\test\) for all~\(n\in\naturalswithzero\), or equivalently, that \(\sup_{n\in\naturalswithzero}\test(\pthto{n})<\infty\).
We then also say that the forecasting system~\(\frcstsystem\) \emph{makes \(\pth\) {\ML} random}.
\end{definition}
\noindent
In other words, {\ML} randomness of a path means that there's no strategy leading to a {\lscomp} capital process that starts with unit capital and avoids borrowing, and that allows Sceptic to increase her capital without bounds by exploiting the bets on the outcomes along the path that are made available to her by Forecaster's specification of the forecasting system~\(\frcstsystem\).

When the forecasting system~\(\frcstsystem\) is non-degenerate,\footnote{Further on, we will define \emph{non-degenerate} as never assuming the precise `degenerate' values~\(\set{0}\) or~\(\set{1}\).} precise and {\comp}, our definition reduces to that of \emph{{\ML} randomness} on the Schnorr--Levin martingale-theoretic account.\footnote{For an historical overview with many relevant references, see Ref.~\cite{bienvenu2009:randomness}. Schnorr's martingale-theoretic definition focuses on the fair-coin forecasting system~\(\faircoinfrcstsystem\); see Ref.~\cite[Ch.~5]{schnorr1971}. Levin's approach \cite{levin1973:random:sequence,zvonkin1970} works for computable probability measures (equivalent with computable precise forecasting systems), and uses semimeasures (equivalent with supermartingales). In these discussions, supermartingales may be infinite-valued, whereas we only allow for real-valued supermartingales, but this difference in approach has no consequences as long as the forecasting systems involved are non-degenerate; see also the discussion in Ref.~\cite[Sec.~5.3]{cooman2021:randomness}.}
We propose to continue speaking of {\ML} randomness also when~\(\frcstsystem\) is no longer precise, {\comp}, or non-degenerate.

We provide a clear motivation for allowing for non-{\comp} forecasting systems \(\frcstsystem\in\frcstsystems\) in this way---that is, without providing them as oracles---in Ref.~\cite{persiau2022:dissimilarities}, where we show that a path~\(\pth\in\pths\) is {\ML} random for a \emph{stationary} forecasting system~\(\frcstsystem\in\frcstsystems\) if and only if it is {\ML} random for at least one (possibly non-{\comp}) compatible precise forecasting system~\(\precisefrcstsystem\subseteq\frcstsystem\); a forecasting system~\(\frcstsystem\in\frcstsystems\) is called \emph{stationary} if there's some interval forecast~\(I\in\imprecisefrcsts\) such that \(\frcstsystem(\sit)=I\) for all~\(\sit\in\sits\), and then we also denote it by \(\frcstsystem_I\).
That \(\frcstsystem\)'s non-computability is an essential ingredient for this result is made obvious by our Theorem~37 in Ref.~\cite{cooman2021:randomness}, as that implies that for any stationary forecasting system~\(\frcstsystem_I\) with \(\min I<\max I\), there is at least one path \(\pth\in\pths\) that is {\ML} random for~\(\frcstsystem\), but not for any \emph{{\comp}} compatible precise forecasting system~\(\precisefrcstsystem\subseteq\frcstsystem\).

We can also use the ideas in our earlier paper on randomness and imprecision~\cite{cooman2021:randomness}  to extend Schnorr's original randomness definition \cite[Ch.~9]{schnorr1971} for the afore-mentioned fair-coin forecasting system~\(\faircoinfrcstsystem\) to more general---not necessarily precise nor necessarily {\comp}---forecasting systems.
We begin with a definition borrowed from Schnorr's seminal work; see Refs.~\cite[Ch.~9]{schnorr1971} and \cite{schnorr1973}.

\begin{definition}[Growth function]
We call a map~\(\ordering\colon\naturalswithzero\to\naturalswithzero\) a \emph{growth function} if
\begin{enumerate}[label=\upshape(\roman*),leftmargin=*,noitemsep,topsep=0pt]
\item it is recursive;
\item it is non-decreasing: \(\group{\forall n_1,n_2\in\naturalswithzero}\group{n_1\leq n_2\then\ordering(n_1)\leq\ordering(n_2)}\);
\item it is unbounded.
\end{enumerate}
We say that a real-valued map~\(\mu\colon\naturalswithzero\to\reals\) is \emph{computably unbounded} if there's some growth function~\(\ordering\) such that \(\limsup_{n\to\infty}\sqgroup{\mu(n)-\ordering(n)}>0\).
\end{definition}
\noindent Clearly, if a real-valued map~\(\mu\colon\naturalswithzero\to\reals\) is computably unbounded, it is also unbounded above \cite[Prop.~13]{cooman2021:randomness}.
Similarly to before, we choose to continue speaking of Schnorr randomness also when~\(\frcstsystem\) is no longer the precise, {\comp}, and non-degenerate fair-coin forecasting system~\(\faircoinfrcstsystem\).

\begin{definition}[Schnorr randomness {\protect\cite{cooman2021:randomness}}]\label{def:schnorrrandomness}
Consider any forecasting system~\(\frcstsystem\colon\sits\to\imprecisefrcsts\) and any path~\(\pth\in\pths\).
We call~\(\pth\) \emph{Schnorr random for~\(\frcstsystem\)} if no {\comp} test supermartingale~\(\test\) for~\(\frcstsystem\) is computably unbounded on~\(\pth\), or in other words, if \(\limsup_{n\to\infty}\sqgroup{\test(\pthto{n})-\ordering(n)}\leq0\) for all {\comp} test supermartingales~\(\test\) for~\(\frcstsystem\) and all growth functions~\(\ordering\).
We then also say that the forecasting system~\(\frcstsystem\) \emph{makes \(\pth\) Schnorr random}.
\end{definition}
\noindent Clearly, Schnorr randomness is implied by {\ML} randomness.
Furthermore, without any loss of generality, we can focus on recursive \emph{positive} and \emph{rational} test supermartingales in the definition above.

\begin{proposition}\label{prop:schnorr:with:rational:positive:test:supermartingales}
Consider any forecasting system~\(\frcstsystem\) and any path~\(\pth\in\pths\).
Then \(\pth\) is Schnorr random for~\(\frcstsystem\) if and only if no recursive positive rational test supermartingale for~\(\frcstsystem\) is computably unbounded on~\(\pth\).
\end{proposition}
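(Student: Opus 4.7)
The forward implication is immediate, since every recursive positive rational test supermartingale is in particular a {\comp} test supermartingale. For the converse, I would argue by contraposition: given a {\comp} test supermartingale~\(\test\) for~\(\frcstsystem\) that is computably unbounded on~\(\pth\), I would construct a recursive positive rational test supermartingale~\(\rationaltest\) for~\(\frcstsystem\) that is also computably unbounded on~\(\pth\). This would contradict the assumption on the right-hand side, yielding the desired Schnorr randomness.

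My construction will proceed in three moves. \emph{First}, to create room for the perturbations to come, I replace~\(\test\) by \(\test^*\coloneqq\tfrac12\test+\tfrac12\). Since both~\(\test\) and the constant process~\(1\) are non-negative supermartingales for~\(\frcstsystem\) with initial value~\(1\), so is the convex combination~\(\test^*\); moreover \(\test^*\geq\tfrac12\) pointwise. And because \(\test^*\geq\tfrac12\test\), any growth function witnessing unboundedness of~\(\test\) on~\(\pth\) can be converted (by rescaling and taking an integer floor) into one witnessing that of~\(\test^*\). \emph{Second}, I invoke {\compy} of~\(\test^*\) to pick a recursive map \(q\colon\sits\times\naturalswithzero\to\rationals\) with \(\abs{\test^*(\sit)-q(\sit,N)}\leq2^{-N}\); since \(\test^*(\init)=1\) exactly, I may and will set \(q(\init,N)\coloneqq1\) for all~\(N\). \emph{Third}, I define
\[
\rationaltest(\sit)\coloneqq\tfrac12\bigl(q(\sit,\dist{\sit}+2)+2^{-\dist{\sit}}\bigr)
\text{ for all~\(\sit\in\sits\),}
\]
a rational truncation of~\(\test^*\) equipped with a geometrically decaying additive cushion, and rescaled to normalise at the root.

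Checking the required properties of~\(\rationaltest\): recursiveness and rationality are immediate, and \(\rationaltest(\init)=1\) follows from the normalisation \(q(\init,2)=1\). Positivity (in fact \(\rationaltest\geq\tfrac14\)) follows from the lower bound \(q(\sit,\dist{\sit}+2)\geq\test^*(\sit)-2^{-\dist{\sit}-2}\geq\tfrac12-2^{-\dist{\sit}-2}\) together with the strictly positive cushion~\(2^{-\dist{\sit}}\). The supermartingale inequality \(\uex_{\frcstsystem(\sit)}(\rationaltest(\sit\andoutcome))\leq\rationaltest(\sit)\) unpacks into a brief dyadic calculation that leans on three ingredients: the supermartingale property \(\uex_{\frcstsystem(\sit)}(\test^*(\sit\andoutcome))\leq\test^*(\sit)\); the elementary stability estimate \(\abs{\uex_I(f)-\uex_I(g)}\leq\sup\abs{f-g}\), which follows from~\ref{axiom:coherence:subadditivity} and~\ref{axiom:coherence:bounds} in Proposition~\ref{prop:properties:of:expectations}; and the arithmetic inequality \(\tfrac14+\tfrac18+\tfrac{1}{16}\leq\tfrac12\). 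Computable unboundedness on~\(\pth\) then follows from the pointwise lower bound \(\rationaltest(\sit)\geq\tfrac14\test(\sit)+\tfrac14-2^{-\dist{\sit}-3}\): if~\(\ordering\) is a growth function witnessing unboundedness of~\(\test\) on~\(\pth\), then \(\ordering^*(n)\coloneqq\floor{\ordering(n)/4}\) is a growth function and the constant term survives in the limsup to yield the same conclusion for~\(\rationaltest\).

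The main obstacle will be the third move, where rationality, strict positivity, the supermartingale inequality, and the normalisation \(\rationaltest(\init)=1\) all have to hold simultaneously. The first move is what buys the slack to pull this off: the uniform lower bound~\(\test^*\geq\tfrac12\) is precisely what lets a sufficiently sharp rational approximation (precision~\(2^{-\dist{\sit}-2}\)), supplemented with a summable additive correction (\(2^{-\dist{\sit}}\)), simultaneously absorb the positivity and supermartingale losses incurred by rationalisation, without destroying the unboundedness we need to preserve.
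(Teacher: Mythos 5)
Your proof is correct and follows essentially the same route as the paper's Lemma~\ref{lem:schnorr:simpler}: rationalise the computable test supermartingale to dyadic precision, add a strictly positive geometrically decaying cushion, rescale to recover a recursive positive rational test supermartingale, and transfer the unboundedness to it via the growth function \(n\mapsto\lfloor\ordering(n)/4\rfloor\). The preliminary convex mix of~\(\test\) with the constant process~\(1\) is a harmless extra step — the paper gets the same positivity slack directly from the non-negativity of~\(\test\) together with the larger cushion \(3\cdot2^{-\dist{\sit}}\), without needing the uniform floor of~\(\tfrac12\) — but the construction, the supermartingale verification, and the final bookkeeping are otherwise the same.
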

\noindent Our proof below makes use of the following lemma, which is a simplified---to fit our present purpose---version of one we proved earlier in Ref.~\cite[Lemma~24]{persiau2020:randomness:more:than:probabilities:arxiv}.
We include it and its proof in the interest of making this paper as self-contained as possible.

\begin{lemma}\label{lem:schnorr:simpler}
For any {\comp} test supermartingale~\(\test\) for~\(\frcstsystem\), there's a recursive positive rational test supermartingale~\(\rationaltest\) for~\(\frcstsystem\) such that \(\abs{4\rationaltest(\sit)-\test(\sit)}\leq4\cdot2^{-\dist{\sit}}\) for all~\(\sit\in\sits\).
\end{lemma}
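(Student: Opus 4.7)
The plan is to blend a recursive lower-rational approximation of~\(\test\) with a small, depth-dependent positive ``safety buffer''~\(\beta\), the latter shrinking fast enough to keep the final process close to~\(\test/4\), but slowly enough to absorb the approximation errors when passing from~\(\sit\) to its children.

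First I would exploit the assumed {\comp}ability of~\(\test\) to fix a recursive rational map \(q\colon\sits\times\naturalswithzero\to\rationals\) with \(\abs{q(\sit,N)-\test(\sit)}\leq2^{-N}\) for all~\(\sit\in\sits\) and~\(N\in\naturalswithzero\). From this I would build a recursive rational process~\(r\colon\sits\to\rationals\) by setting \(r(\init)\coloneqq1\) and, for \(\sit\in\sits\) with \(\dist{\sit}\geq1\), \(r(\sit)\coloneqq\max\set{0,q(\sit,\dist{\sit})-2^{-\dist{\sit}}}\); a short check using \(\test\geq0\) then gives \(0\leq r(\sit)\leq\test(\sit)\) and \(\test(\sit)-r(\sit)\leq2^{1-\dist{\sit}}\) at every depth \(\dist{\sit}\geq1\), while equality holds at the root.

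Next I would set \(\beta(0)\coloneqq\nicefrac34\) and \(\beta(n)\coloneqq2^{-n}\) for~\(n\geq1\), and define the candidate \(\rationaltest(\sit)\coloneqq\nicefrac{r(\sit)}4+\beta(\dist{\sit})\). By construction, \(\rationaltest\) is recursive, rational and strictly positive, with \(\rationaltest(\init)=\nicefrac14+\nicefrac34=1\). The supermartingale condition is then verified level by level: since \(r(\sit\andoutcome)\leq\test(\sit\andoutcome)\) and \(\beta(\dist{\sit}+1)\) is constant across the two children, Proposition~\ref{prop:properties:of:expectations}\ref{axiom:coherence:constant:additivity}--\ref{axiom:coherence:monotonicity} yield
\[
\uex_{\frcstsystem(\sit)}(\rationaltest(\sit\andoutcome))
\leq\tfrac14\uex_{\frcstsystem(\sit)}(\test(\sit\andoutcome))+\beta(\dist{\sit}+1)
\leq\tfrac14\test(\sit)+\beta(\dist{\sit}+1),
\]
so I need \(\tfrac14(\test(\sit)-r(\sit))+\beta(\dist{\sit}+1)\leq\beta(\dist{\sit})\). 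At the root this asks for \(0+\nicefrac12\leq\nicefrac34\) (using \(r(\init)=\test(\init)=1\)); at depth~\(n\geq1\) it asks for \(\tfrac14\cdot2^{1-n}+2^{-n-1}=2^{-n}\leq2^{-n}\), which holds with equality, so the safety buffer is \emph{just} big enough.

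Finally, the closeness bound is a direct computation once the signs are tracked: writing \(4\rationaltest(\sit)-\test(\sit)=(r(\sit)-\test(\sit))+4\beta(\dist{\sit})\), the first summand lies in \([-2^{1-\dist{\sit}},0]\) and the second equals \(4\cdot2^{-\dist{\sit}}\) (for \(\dist{\sit}\geq1\); at \(\init\) the expression equals \(3\leq4\)), and the two signs partially cancel to give exactly \(\abs{4\rationaltest(\sit)-\test(\sit)}\leq4\cdot2^{-\dist{\sit}}\). I expect no serious obstacle here: the only genuinely delicate part is the tight matching between the decay of~\(\beta\) and the approximation error~\(2^{1-\dist{\sit}}\) in the supermartingale inequality, which is why the constants~\(\nicefrac34\) at the root and the factor~\(4\) in the stated bound appear in the first place.
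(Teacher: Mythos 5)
Your proof is correct and, once the decomposition is unwound, essentially coincides with the paper's construction: for $\dist{\sit}\geq1$ (and whenever the $\max\{0,\cdot\}$ clipping is inactive) you have $\rationaltest(\sit)=\frac{r(\sit)}{4}+\beta(\dist{\sit})=\frac{q(\sit,\dist{\sit})-2^{-\dist{\sit}}}{4}+2^{-\dist{\sit}}=\frac{q(\sit,\dist{\sit})+3\cdot2^{-\dist{\sit}}}{4}$, which is exactly the paper's $\rationaltest$; the paper handles the root via a WLOG normalisation $q(\init,0)=1$ rather than a separate definition, and verifies the supermartingale property by establishing the pointwise bound $\adddelta\rationaltest(\sit)\leq\frac14\adddelta\test(\sit)$ before applying $\uex_{\frcstsystem(\sit)}$, whereas you compare $r\leq\test$ directly under the upper expectation and absorb the error via the geometric decay of~$\beta$. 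The two verifications are interchangeable, and your explicit ``lower approximation plus shrinking buffer'' decomposition makes the role of the constants $\nicefrac34$ and $4$ somewhat more transparent, at the cost of slightly more bookkeeping at the root.
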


\begin{proof}
% Proof adapted by Gert
% Corrected and simplified by Gert
% Checked by Gert
Consider any {\comp} test supermartingale~\(\test\).
Since \(\test\) is {\comp}, there's some recursive rational map~\(q\colon\sits\times\naturalswithzero\to\rationals\) such that
\begin{equation}\label{eq:schnorr:simpler}
\abs{\test(\sit)-q(\sit,N)}\leq2^{-N}
\text{ for all~\(\sit\in\sits\) and~\(N\in\naturalswithzero\)}.
\end{equation}
Observe that, since \(\test(\init)=1\), we can assume without loss of generality that~\(q(\init,0)=1\).
Define the rational process~\(\rationaltest\) by letting
\begin{equation*}
\rationaltest(\sit)
\coloneqq\frac{q(\sit,\dist{\sit})+3\cdot2^{-\dist{\sit}}}{4}
\text{ for all~\(\sit\in\sits\)}.
\end{equation*}
Since the maps~\(\dist{\bolleke}\) and~\(q\) are recursive, so is the rational process~\(\rationaltest\).
Furthermore, it follows from Equation~\eqref{eq:schnorr:simpler} that
\begin{equation}\label{eq:schnorr:simpler:inequalities}
\left.
\begin{aligned}
q(\sit x,\dist{\sit x})
\leq\test(\sit x)+\frac12\cdot2^{-\dist{\sit}}&\\
\test(\sit)\leq q(\sit,\dist{\sit})+2^{-\dist{\sit}}&
\end{aligned}
\right\}
\text{ for all~\(\sit\in\sits\) and~\(x\in\outcomes\)}.
\end{equation}
Moreover, \(\rationaltest(\init)=\frac{q(\init,0)+3}{4}=1\), and the bottom inequality in Equation~\eqref{eq:schnorr:simpler:inequalities} also guarantees that \(\rationaltest\) is positive:
\begin{equation*}
\rationaltest(\sit)
=\frac{q(\sit,\dist{\sit})+3\cdot2^{-\dist{\sit}}}{4}
\geq\frac{\test(\sit)+2\cdot2^{-\dist{\sit}}}{4}
\geq\frac{2\cdot2^{-\dist{\sit}}}{4}
>0
\text{ for all~\(\sit\in\sits\)}.
\end{equation*}
Next, we show that \(\rationaltest\) is a supermartingale.
By combining the inequalities in Equation~\eqref{eq:schnorr:simpler:inequalities}, we find that for all~\(\sit\in\sits\),
\begin{equation*}
q(\sit\andoutcome,\dist{\sit\andoutcome})-q(\sit,\dist{\sit})
\leq\test(\sit\andoutcome)-\test(\sit)+\frac32\cdot2^{-\dist{\sit}},
\end{equation*}
and therefore also, again using the inequalities in Equation~\eqref{eq:schnorr:simpler:inequalities},
\begin{align*}
\adddelta\rationaltest(\sit)
&=\rationaltest(\sit\andoutcome)-\rationaltest(\sit)
=\frac{q(\sit\andoutcome,\dist{\sit\andoutcome})+3\cdot2^{-\dist{\sit\andoutcome}}}{4}
-\frac{q(\sit,\dist{\sit})+3\cdot2^{-\dist{\sit}}}{4}\\
&=\frac{q(\sit\andoutcome,\dist{\sit\andoutcome})-q(\sit,\dist{\sit})-\frac32\cdot2^{-\dist{\sit}}}{4}\\
&\leq\frac{\test(\sit\andoutcome)-\test(\sit)+\frac32\cdot2^{-\dist{\sit}}-\frac32\cdot2^{-\dist{\sit}}}{4}
=\frac{\adddelta\test(\sit)}{4}.
\end{align*}
This implies that, indeed,
\begin{equation*}
\uex_{\frcstsystem(\sit)}(\adddelta\rationaltest(\sit))
\leq\uex_{\frcstsystem(\sit)}\group[\bigg]{\frac{\adddelta\test(\sit)}{4}}
=\frac{1}{4}\uex_{\frcstsystem(\sit)}\group{\adddelta\test(\sit)}
\leq0
\text{ for all~\(\sit\in\sits\)},
\end{equation*}
where the first inequality follows from~\ref{axiom:coherence:monotonicity}, the equality follows from~\ref{axiom:coherence:homogeneity}, and the last inequality follows from the supermartingale inequality~\(\uex_{\frcstsystem(\sit)}(\adddelta\test(\sit))\leq0\).

This shows that \(\rationaltest\) is a recursive positive rational test supermartingale for~\(\frcstsystem\).
For the rest of the proof, consider that, by Equation~\eqref{eq:schnorr:simpler}, indeed
\begin{align*}
\abs{4\rationaltest(\sit)-\test(\sit)}
=\abs[\big]{q(\sit,\dist{\sit})+3\cdot2^{-\dist{\sit}}-\test(\sit)}
&\leq3\cdot2^{-\dist{\sit}}+\abs[\big]{q(\sit,\dist{\sit})-\test(\sit)}\\
&\leq3\cdot2^{-\dist{\sit}}+2^{-\dist{\sit}}
=4\cdot2^{-\dist{\sit}}
\text{ for all~\(\sit\in\sits\)}.
\qedhere
\end{align*}
\end{proof}

\begin{proof}[Proof of Proposition~\ref{prop:schnorr:with:rational:positive:test:supermartingales}]
% Proof by Floris, simplified by Gert
% Checked by Gert
Since any recursive positive supermartingale is also {\comp} and non-negative, it clearly suffices to prove the `if' part.
So suppose that no recursive positive and rational test supermartingale for~\(\frcstsystem\) is computably unbounded on~\(\pth\).
To prove that \(\pth\) is Schnorr random, consider any {\comp} test supermartingale~\(\test\) for~\(\frcstsystem\), and assume towards contradiction that \(\test\) is computably unbounded on~\(\pth\), so there's some growth function~\(\ordering\) such that \(\limsup_{n\to\infty}\sqgroup{\test(\pthto{n})-\ordering(n)}>0\).
If we consider the map~\(\ordering'\colon\naturalswithzero\to\naturalswithzero\) defined by~\(\ordering'(n)\coloneqq\floor{\frac14\ordering(n)}\) for all~\(n\in\naturalswithzero\), then it is clear that \(\ordering'\) is a growth function too, and that \(\ordering\geq4\ordering'\).
Now observe that, for all~\(n\in\naturalswithzero\),
\begin{equation*}
\test(\pthto{n})-\ordering(n)
=\test(\pthto{n})-4\rationaltest(\pthto{n})+4\rationaltest(\pthto{n})-\ordering(n)
\leq4\cdot2^{-n}+4\sqgroup{\rationaltest(\pthto{n})-\ordering'(n)},
\end{equation*}
where \(\rationaltest\) is the recursive positive rational test supermartingale~\(\rationaltest\) for~\(\frcstsystem\) constructed in Lemma~\ref{lem:schnorr:simpler}.
Hence also \(\limsup_{n\to\infty}\sqgroup{\rationaltest(\pthto{n})-\ordering'(n)}>0\), so~\(\rationaltest\) is computably unbounded on~\(\pth\), a contradiction.
\end{proof}

\section{Randomness via randomness tests}\label{sec:randomness:via:randomness:tests}
% Checked by Gert
Next, we turn to a `measure-theoretic', or \emph{randomness test}, approach to defining {\ML} and Schnorr randomness for (interval-valued) forecasting systems, which will be inspired by the existing corresponding notions for fair-coin, or more generally, {\comp} precise forecasts \cite{martinlof1966:random:sequences,schnorr1971,levin1973:random:sequence,zvonkin1970,downey2010}.

To this end, we consider a forecasting system~\(\frcstsystem\) and the upper and lower expectations for global gambles associated with the corresponding imprecise probability tree, given by Equations~\eqref{eq:tree:upper:expectation} and~\eqref{eq:tree:lower:expectation}.

In this context, we prove the following general and powerful lemma, various instantiations of which will help us through many a complicated argument further on.

\begin{lemma}[Workhorse Lemma]\label{lem:comp:pain:in:the:ass}
Consider any {\comp} forecasting system~\(\frcstsystem\), any countable set~\(\countables\) whose elements can be encoded by the natural numbers, and any recursive set~\(\altrectest\subseteq\countables\times\naturalswithzero\times\sits\) such that \(\abs{\sit}\leq p\) for all~\((d,p,\sit)\in\altrectest\).
Then \(\uglobalcondprob{\altrectestindx{d}{p}}{\sit}\) is a {\comp} real effectively in \(d\), \(p\) and \(\sit\), with~\(\altrectestcutindx{d}{p}\coloneqq\cset{\sit\in\sits}{(d,p,\sit)\in\altrectest}\) for all~\(p\in\naturalswithzero\) and \(d\in\countables\).
\end{lemma}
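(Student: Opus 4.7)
The plan is to reduce the computation of $\uglobalcondprob{\altrectestindx{d}{p}}{\sit}$ to a finite backward recursion on the event tree, driven by the local upper expectations associated with the forecasting system. First, fix $d\in\countables$ and $p\in\naturalswithzero$. Because $\altrectest$ is recursive and every element of $\altrectestcutindx{d}{p}\subseteq\sits$ has depth at most $p$, the set $\altrectestcutindx{d}{p}$ is \emph{finite}---its cardinality is at most $2^{p+1}-1$---and we can, effectively in $(d,p)$, enumerate all its elements by running the decision procedure for $\altrectest$ on each $\sit\in\sits$ with $\dist{\sit}\leq p$. Write $U(\sit)\coloneqq\uglobalcondprob{\altrectestindx{d}{p}}{\sit}$ for brevity.

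For the base case of the recursion, I would argue that $U(\sit)\in\set{0,1}$ whenever $\dist{\sit}\geq p$: since every $\sit'\in\altrectestcutindx{d}{p}$ has $\dist{\sit'}\leq p\leq\dist{\sit}$, each such $\sit'$ is either a predecessor of $\sit$ (in which case $\cylset{\sit}\subseteq\cylset{\sit'}\subseteq\altrectestindx{d}{p}$) or incomparable with $\sit$ (in which case $\cylset{\sit}\cap\cylset{\sit'}=\emptyset$). Hence $U(\sit)=1$ if some $\sit'\in\altrectestcutindx{d}{p}$ satisfies $\sit'\precedes\sit$, and $U(\sit)=0$ otherwise, by~\ref{axiom:lower:upper:bounds} and~\ref{axiom:lower:upper:restriction}. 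Given the finite list of elements of $\altrectestcutindx{d}{p}$, this predicate is decidable effectively in $(d,p,\sit)$.

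The inductive step uses the local-recursion property~\ref{axiom:lower:upper:supermartin} applied to the global gamble $g=\indexact{\altrectestcutindx{d}{p}}$, namely
\begin{equation*}
U(\sit)=\uex_{\frcstsystem(\sit)}(U(\sit\andoutcome))
=\max\set[\big]{\ufrcstsystem(\sit)U(\sit1)+(1-\ufrcstsystem(\sit))U(\sit0),\,\lfrcstsystem(\sit)U(\sit1)+(1-\lfrcstsystem(\sit))U(\sit0)}
\end{equation*}
for every $\sit$ with $\dist{\sit}<p$, where the explicit form comes from~\eqref{eq:local:upper}. Starting from the $\set{0,1}$-valued base at depth $p$ and working down to $\sit$ via at most $p-\dist{\sit}$ levels of this recursion, I obtain $U(\sit)$ as the output of a finite arithmetic circuit whose gates are additions, multiplications and maxima applied to values of $\lfrcstsystem$, $\ufrcstsystem$, and the $\set{0,1}$ leaves.

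The effectivity conclusion then follows from the computability of $\frcstsystem$: the real maps $\lfrcstsystem$ and $\ufrcstsystem$ are computable, and computable reals are closed under addition, multiplication and maximum, all of which preserve effectivity in the parameters. Since the shape and size of the arithmetic circuit depends recursively on $(d,p,\sit)$ only, one can uniformly compute a rational $2^{-N}$-approximation to $U(\sit)$ by approximating each of the finitely many occurrences of $\lfrcstsystem(\sitto{k})$ and $\ufrcstsystem(\sitto{k})$ to a sufficient precision and then evaluating the circuit. The main obstacle in the write-up is the routine but delicate bookkeeping of error propagation through the recursion: because the circuit has bounded depth $p-\dist{\sit}$ and bounded coefficients in $\sqgroup{0,1}$, choosing each intermediate approximation to within $2^{-(N+p)}$ (say) is enough to guarantee a $2^{-N}$ error in the output, and this choice itself is recursive in $(d,p,\sit,N)$.
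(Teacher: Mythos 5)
Your proof is correct and follows essentially the same approach as the paper's: a backward recursion over the finite event subtree of depth at most $p$, with $\{0,1\}$-valued base cases at depth $\geq p$ justified by precedence/incomparability and the recursion step given by~\ref{axiom:lower:upper:supermartin}. The only cosmetic differences are that the paper normalises to a partial cut $\altrectestcutindx{d}{p}'$ at exact depth $p$ before recursing (so it can cite Corollary~\ref{cor:supermartingales:based:on:cuts} directly), while you recurse on the raw set and argue the base case by hand; your error-propagation remark, while informal, is the part the paper leaves implicit.
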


\begin{proof}
% Proof by Floris
% Generalised and simplified by Gert
We start by observing that \(\altrectestcutindx{d}{p}\) is a finite recursive set of situations, effectively in~\(d\) and~\(p\).
Similarly,
\begin{equation*}
\altrectestcutindx{d}{p}'
\coloneqq\cset{\altsit\in\sits}{\abs{\altsit}=p\text{ and }\altrectestcutindx{d}{p}\precedes\altsit}
\end{equation*}
is clearly also a finite recursive set of situations, effectively in~\(d\) and~\(p\).
Moreover, it is a partial cut.

Another important observation is that there are, in principle, three mutually exclusive possibilities for any of the sets~\(\altrectestcutindx{d}{p}\) and any~\(\altsit\in\sits\).
The first possibility is that \(\altrectestcutindx{d}{p}\precedes\altsit\), which can be checked recursively.
In that case, we know from Corollary~\ref{cor:supermartingales:based:on:cuts}\ref{it:supermartingales:based:on:cuts:where:one:and:where:zero} that \(\uglobalcondprob{\altrectestindx{d}{p}}{\altsit}=1\).
The second possibility is that \(\altsit\incomp\altrectestcutindx{d}{p}\), which can be checked recursively as well.
In that case, we know from Corollary~\ref{cor:supermartingales:based:on:cuts}\ref{it:supermartingales:based:on:cuts:where:one:and:where:zero} that \(\uglobalcondprob{\altrectestindx{d}{p}}{\altsit}=0\).
The third, final, and most involved possibility is that \(\altsit\sprecedes\altrectestcutindx{d}{p}\), which can also be checked recursively.

It's clear from this discussion that the computability of~\(\uglobalcondprob{\altrectestindx{d}{p}}{\sit}\) is trivial when \(\altrectestcutindx{d}{p}\precedes\sit\) or \(\sit\incomp\altrectestcutindx{d}{p}\), so we'll from now on only pay attention to the case that \(\sit\sprecedes\altrectestcutindx{d}{p}\).
Since, obviously, \(\cylset{\altrectestcutindx{d}{p}'}=\altrectestindx{d}{p}\) and in this case also \(\sit\sprecedes\altrectestcutindx{d}{p}'\), we'll focus on the computability of~\(\uglobalcondprob{\cylset{\altrectestcutindx{d}{p}'}}{\sit}\).

For any~\(\altsit\follows\sit\) with \(\abs{\altsit}=p\), we infer from the discussion above that \(\uglobalcondprob{\cylset{\altrectestcutindx{d}{p}'}}{\altsit}=1\) if \(\altsit\in\altrectestcutindx{d}{p}'\) and~\(\uglobalcondprob{\cylset{\altrectestcutindx{d}{p}'}}{\altsit}=0\) otherwise.
Clearly then, \(\uglobalcondprob{\cylset{\altrectestcutindx{d}{p}'}}{\altsit}\) is a {\comp} real effectively in~\(d\), \(p\) and~\(\altsit\) with~\(\abs{\altsit}=p\).

In a next step, we find by applying Corollary~\ref{cor:supermartingales:based:on:cuts}\ref{it:supermartingales:based:on:cuts:with:equality} that, for any~\(\altsit\follows\sit\) with~\(\abs{\altsit}=p-1\),
\begin{align*}
\uglobalcondprob{\cylset{\altrectestcutindx{d}{p}'}}{\altsit}
&=\uex_{\frcstsystem(\altsit)}\group[\big]{\uglobalcondprob{\cylset{\altrectestcutindx{d}{p}'}}{\altsit\andoutcome}}\\
&=\max\Big\{
\lfrcstsystem(\altsit)\uglobalcondprobgroup{\cylset{\altrectestcutindx{d}{p}'}}{\altsit1}{\big}
+[1-\lfrcstsystem(\altsit)]\uglobalcondprobgroup{\cylset{\altrectestcutindx{d}{p}'}}{\altsit0}{\big},\\
&\qquad\qquad\qquad\ufrcstsystem(\altsit)\uglobalcondprobgroup{\cylset{\altrectestcutindx{d}{p}'}}{\altsit1}{\big}
+[1-\ufrcstsystem(\altsit)]\uglobalcondprobgroup{\cylset{\altrectestcutindx{d}{p}'}}{\altsit0}{\big}
\Big\},
\end{align*}
which is clearly a {\comp} real effectively in~\(d\), \(p\) and~\(\altsit\) with~\(\abs{\altsit}=p-1\), simply because \(\frcstsystem\) is {\comp}.

By applying Corollary~\ref{cor:supermartingales:based:on:cuts}\ref{it:supermartingales:based:on:cuts:with:equality} to situations~\(\altsit\follows\sit\) with successively smaller~\(\abs{\altsit}\), we eventually end up in the situation~\(\sit\) after a finite number of steps, which implies that \(\uglobalcondprob{\altrectestindx{d}{p}}{\sit}\) is a {\comp} real, effectively in~\(d\), \(p\) and~\(\sit\).
\end{proof}

\subsection{{\ML} tests}\label{sec:martin-loef:tests}
Let's begin our discussion of {\ML} tests with a few notational conventions that will prove useful for the remainder of this paper.
With any subset~\(\rectest\) of~\(\natsandsits\), we can associate a sequence~\(\rectestcutindx{n}{}\) of subsets of~\(\sits\), defined by
\begin{equation*}
\rectestcutindx{n}{}\coloneqq\cset{\sit\in\sits}{(n,\sit)\in\rectest}
\text{ for all~\(n\in\naturalswithzero\)}.
\end{equation*}
With each such~\(\rectestcutindx{n}{}\), we can associate the set of paths~\(\rectestindx{n}{}\).
If the set~\(\rectest\) is recursively enumerable, then we'll say that the \(\rectestindx{n}{}\) constitute a \emph{{\comp} sequence of effectively open sets}.\footnote{We've  borrowed this terminology from Ref.~\cite{vovk2010:randomness}. For a justification of the term `{\comp}', we also refer to the discussion in~Ref.~\cite[Sec.~2.19]{downey2010}.}

The following definition trivially generalises the idea of a randomness test, as introduced by {\ML} \cite{martinlof1966:random:sequences}, from the fair-coin forecasting system---and more generally from a {\comp} precise forecasting system---to our present context.
It will lead in Section~\ref{sec:def:randomness} further on to a suitable generalisation of {\ML}'s randomness definition that allows for \emph{interval-valued} forecasting systems.
Here too, we'll continue to speak of {\ML} tests also when~\(\frcstsystem\) is no longer precise, {\comp}, or non-degenerate.

\begin{definition}[{\ML} test]\label{def:martin-loef:test}
We call a sequence of global events \(\rectestpths[n]\subseteq\pths\) a \emph{{\ML} test} for a forecasting system~\(\frcstsystem\) if there's some recursively enumerable subset~\(\rectest\) of~\(\natsandsits\) such that for the associated {\comp} sequence of effectively open sets~\(\rectestindx{n}{}\), we have that \(\rectestpths[n]=\rectestindx{n}{}\) and~\(\uglobalprob\group{\rectestindx{n}{}}\leq2^{-n}\) for all~\(n\in\naturalswithzero\).
\end{definition}
\noindent We may always---and often will---assume without loss of generality that the subsets~\(\rectestcutindx{n}{}\) of the event tree~\(\sits\) that constitute the {\ML} test are \emph{partial cuts}.
Moreover, we can even assume the set~\(\rectest\) to be \emph{recursive} rather than merely recursively enumerable, because there's actually a single algorithm that turns any recursively enumerable set~\(B\subseteq\sits\) into a recursive partial cut \(B'\subseteq\sits\) such that \(\cylset{B}=\cylset{B'}\).
We refer to Ref.~\cite[Sec.~2.19]{downey2010} for discussion and proofs; see also the related discussions in Refs.~\cite[Korollar~4.10, p.~37]{schnorr1971} and \cite[Lemma~2, Section~5.6]{shen2017}.

\begin{corollary}\label{cor:martin-loef:test}
A sequence of global events~\(\rectestpths[n]\) is a {\ML} test for a forecasting system~\(\frcstsystem\) if and only if there's some \emph{recursive} subset~\(\rectest\) of~\(\natsandsits\) such that~\(\rectestcutindx{n}{}\) is a \emph{partial cut}, \(\rectestpths[n]=\rectestindx{n}{}\) and~\(\uglobalprob\group{\rectestindx{n}{}}\leq2^{-n}\) for all~\(n\in\naturalswithzero\).
\end{corollary}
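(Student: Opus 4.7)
The plan is to handle the two implications separately. The `if' direction is essentially trivial: any \emph{recursive} set is in particular \emph{recursively enumerable}, and imposing the extra structural requirement that each~\(\rectestcutindx{n}{}\) be a partial cut is fully compatible with Definition~\ref{def:martin-loef:test}. So if \((\rectest,\rectestpths[\bolleke])\) satisfies the stronger conditions of the corollary, it is a fortiori a {\ML} test in the sense of Definition~\ref{def:martin-loef:test}.

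For the `only if' direction, suppose that \((\rectestpths[n])_{n\in\naturalswithzero}\) is a {\ML} test for~\(\frcstsystem\), witnessed by some recursively enumerable \(\rectest\subseteq\natsandsits\), with associated sections~\(\rectestcutindx{n}{}\coloneqq\cset{\sit\in\sits}{(n,\sit)\in\rectest}\), such that \(\rectestpths[n]=\rectestindx{n}{}\) and~\(\uglobalprob(\rectestindx{n}{})\leq2^{-n}\) for all~\(n\in\naturalswithzero\). For each fixed~\(n\in\naturalswithzero\), the section~\(\rectestcutindx{n}{}\) is a recursively enumerable subset of~\(\sits\), \emph{uniformly in}~\(n\). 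The key input is the result quoted in the text just above the corollary (see Ref.~\cite[Sec.~2.19]{downey2010}, and also Refs.~\cite[Korollar~4.10]{schnorr1971} and~\cite[Lemma~2, Sec.~5.6]{shen2017}): there is a single algorithm that turns any recursively enumerable subset~\(B\subseteq\sits\) into a \emph{recursive} partial cut~\(B'\subseteq\sits\) such that \(\cylset{B}=\cylset{B'}\). Running this algorithm with parameter~\(n\) produces recursive partial cuts~\(\rectestcutindx{n}{}'\) satisfying \(\cylset{\rectestcutindx{n}{}'}=\cylset{\rectestcutindx{n}{}}\), and the uniformity of the procedure (it is a single algorithm, with~\(n\) entering only as an index into the enumeration of~\(\rectest\)) ensures that the combined set
\[
\rectest'\coloneqq\cset{(n,\sit)\in\natsandsits}{\sit\in\rectestcutindx{n}{}'}
\]
is itself \emph{recursive}, not merely recursively enumerable.

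It then remains to verify that \(\rectest'\) still witnesses the {\ML} test. By construction \(\rectestcutindx{n}{}'\) is a partial cut, and the equality \(\cylset{\rectestcutindx{n}{}'}=\cylset{\rectestcutindx{n}{}}=\rectestpths[n]\) gives \(\rectestindx{n}{}'=\rectestpths[n]\). Since the upper probability~\(\uglobalprob\) is determined by the cylinder sets generated, the equality of cylinder sets also gives \(\uglobalprob(\rectestindx{n}{}')=\uglobalprob(\rectestindx{n}{})\leq2^{-n}\), so the required bound survives the replacement. This yields the desired recursive witness.

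The only place any real work is being done is inside the cited r.e.-to-recursive-partial-cut conversion, and the one thing that needs a moment's attention is that this conversion is genuinely \emph{uniform in the index}~\(n\)---otherwise we would only recover a recursive section for each fixed~\(n\) rather than joint recursiveness of~\(\rectest'\); this uniformity is precisely what the cited references guarantee, since the algorithm is described as a single procedure that takes any r.e. index as input and returns a recursive index for a prefix-free set with the same generated open set.
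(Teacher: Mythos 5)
Your proof is correct and follows the same route the paper takes: the `if' direction is immediate since recursive sets are recursively enumerable, and the `only if' direction rests on the cited uniform algorithm (Ref.~\cite[Sec.~2.19]{downey2010}) converting an r.e. set of situations into a recursive partial cut generating the same cylinder set. The paper states this fact and the corollary without further proof; you have simply spelled out the uniformity-in-\(n\) point and the preservation of the bound \(\uglobalprob(\rectestindx{n}{})\leq2^{-n}\), both of which are exactly the checks needed and are handled correctly.
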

\noindent In what follows, we'll also use the term \emph{{\ML} test} to refer to a subset~\(\rectest\) of~\(\natsandsits\) that \emph{represents} the {\ML} test~\(\rectestpths[n]\) in the specific sense that \(\rectestpths[n]=\rectestindx{n}{}\) for all~\(n\in\naturalswithzero\).
Due to Corollary~\ref{cor:martin-loef:test}, we can always assume such subsets~\(\rectest\) of~\(\natsandsits\) to be recursive, and the corresponding~\(\rectestcutindx{n}{}\) to be partial cuts.

\subsection{Schnorr tests}\label{sec:schnorr:tests}
In order to propose a suitable generalisation of Schnorr's definition of a totally recursive sequential test \cite[Def.~(8.1), p.~63]{schnorr1971} for the fair-coin forecasting system~\(\faircoinfrcstsystem\), we need a few more notations.
Starting from any subset~\(\rectest\) of~\(\natsandsits\), we let
\begin{equation}\label{eq:schnorr:auxiliary:cuts}
\left.
\begin{aligned}
\rectestcutindx{n}{<\ell}\coloneqq\rectestcutindx{n}{}\cap\cset{\altsit\in\sits}{\dist{\altsit}<\ell} \\
\rectestcutindx{n}{\geq\ell}\coloneqq\rectestcutindx{n}{}\cap\cset{\altsit\in\sits}{\dist{\altsit}\geq\ell}
\end{aligned}
\right\}
\text{ for all~\(n,\ell\in\naturalswithzero\)}.
\end{equation}
In the important special case that \(\rectestcutindx{n}{}\) is a partial cut, the global event~\(\rectestindx{n}{}\) is the disjoint union of the global events~\(\rectestindx{n}{<\ell}\) and~\(\rectestindx{n}{\geq\ell}\), implying that \(\ind{\rectestindx{n}{}}=\ind{\rectestindx{n}{<\ell}}+\ind{\rectestindx{n}{\geq\ell}}\).

Here as well, we'll continue to speak of Schnorr tests also when~\(\frcstsystem\) is no longer the precise, {\comp} and non-degenerate~\(\faircoinfrcstsystem\).

\begin{definition}[Schnorr test]\label{def:schnorr:test}
We call a sequence of global events~\(\rectestpths[n]\subseteq\pths\) a \emph{Schnorr test} for a forecasting system~\(\frcstsystem\) if there's some \emph{recursive} subset~\(\rectest\) of~\(\natsandsits\)---called its \emph{representation}---such that \(\rectestpths[n]=\rectestindx{n}{}\) and~\(\uglobalprob\group{\rectestindx{n}{}}\leq2^{-n}\) for all~\(n\in\naturalswithzero\), and \emph{additionally}, if there's some recursive map~\(e\colon\natsandnats\to\naturalswithzero\)---called its \emph{tail bound}---such that
\begin{equation}\label{eq:schnorr:test:additional:condition}
\uglobalprob\group[\big]{\rectestindx{n}{}\setminus\rectestindx{n}{<\ell}}\leq2^{-N}
\text{ for all~\((N,n)\in\natsandnats\) and all~\(\ell\geq e(N,n)\)}.
\end{equation}
\end{definition}

As for the case of {\ML} tests, we can assume without loss of generality that the representation~\(\rectest\) is such that the~\(\rectestcutindx{n}{}\) are partial cuts, at which point \(\rectestindx{n}{}\setminus\rectestindx{n}{<\ell}=\rectestindx{n}{\geq\ell}\) in Equation~\eqref{eq:schnorr:test:additional:condition}.
Moreover, we can assume without loss of generality that there's no dependence of the tail bound~\(e\) on the index~\(n\) of the~\(\rectestindx{n}{\geq\ell}\).
The proposition below also shows that these simplifications can be implemented independently.

\begin{proposition}\label{prop:schnorr:test}
Consider any Schnorr test~\(\rectestpths[n]\) for a forecasting system~\(\frcstsystem\) with representation~\(\altrectest\subseteq\natsandsits\).
Then
\begin{enumerate}[label=\upshape{(\roman*)},leftmargin=*]
\item\label{it:schnorr:test:partial:cuts} it also has a representation~\(\rectest\) such that \(\rectestcutindx{n}{}\) is a partial cut for all~\(n\in\naturalswithzero\);
\item\label{it:schnorr:test:tail:bound} it has a tail bound~\(e\) that does not depend on the index~\(n\) of the~\(\altrectestindx{n}{}\setminus\altrectestindx{n}{<\ell}\), meaning that \(e(N,n)=e(N,n')\eqqcolon e(N)\) for all~\(N,n,n'\in\naturalswithzero\), and that moreover is a growth function.
\end{enumerate}
\end{proposition}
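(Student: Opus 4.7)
The plan is to address the two claims separately, since the proposition itself remarks that the two simplifications can be performed independently. Part~\ref{it:schnorr:test:partial:cuts} is handled by a ``minimal prefix'' pruning of the original representation~\(\altrectest\). Part~\ref{it:schnorr:test:tail:bound} exploits the fact that the probability bound \(\uglobalprob(\altrectestindx{n}{})\leq 2^{-n}\) already does all the work for indices \(n\geq N\), so only the finitely many values \(e(N,0),\dots,e(N,N)\) genuinely constrain the desired \(n\)-independent tail bound, after which a cosmetic maximisation promotes it to a growth function.

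For~\ref{it:schnorr:test:partial:cuts}, I would define
\[
\rectestcutindx{n}{}\coloneqq\cset{\sit\in\altrectestcutindx{n}{}}{(\forall\altsit\sprecedes\sit)\altsit\notin\altrectestcutindx{n}{}},
\]
that is, the \(\sprecedes\)-minimal elements of \(\altrectestcutindx{n}{}\). Since \(\altrectest\) is recursive and each situation has only finitely many strict predecessors, the associated set \(\rectest\subseteq\natsandsits\) is recursive. By construction each \(\rectestcutindx{n}{}\) is a partial cut, and \(\rectestindx{n}{}=\altrectestindx{n}{}\) because every path through a situation in \(\altrectestcutindx{n}{}\) also passes through the unique shortest prefix of that situation in \(\altrectestcutindx{n}{}\), and that prefix is minimal. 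To check that a tail bound survives, observe that if \(\sit\in\altrectestcutindx{n}{<\ell}\) then its minimal prefix \(\sit'\in\rectestcutindx{n}{}\) satisfies \(\dist{\sit'}\leq\dist{\sit}<\ell\) and \(\cylset{\sit}\subseteq\cylset{\sit'}\), whence \(\altrectestindx{n}{<\ell}\subseteq\rectestindx{n}{<\ell}\). Combined with \(\rectestindx{n}{}=\altrectestindx{n}{}\), this gives \(\rectestindx{n}{}\setminus\rectestindx{n}{<\ell}\subseteq\altrectestindx{n}{}\setminus\altrectestindx{n}{<\ell}\), and monotonicity~\ref{axiom:lower:upper:monotonicity} lets us reuse the original tail bound~\(e\) verbatim; the probability bound \(\uglobalprob(\rectestindx{n}{})=\uglobalprob(\altrectestindx{n}{})\leq 2^{-n}\) is immediate.

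For~\ref{it:schnorr:test:tail:bound}, observe that for \(n\geq N\) we have \(\uglobalprob(\altrectestindx{n}{}\setminus\altrectestindx{n}{<\ell})\leq\uglobalprob(\altrectestindx{n}{})\leq 2^{-n}\leq 2^{-N}\) for every~\(\ell\), so the tail bound need only dominate the values \(e(N,n)\) for \(n<N\). Accordingly, I would set
\[
e'(N)\coloneqq\max\set{e(N,0),e(N,1),\dots,e(N,N)},
\]
a recursive natural map that works as a tail bound uniformly in~\(n\). To obtain a growth function, I would then define
\[
\tilde e(N)\coloneqq\max\set{N,e'(0),e'(1),\dots,e'(N)}.
\]
By construction \(\tilde e\) is recursive and non-decreasing, it is unbounded because \(\tilde e(N)\geq N\), and \(\tilde e(N)\geq e'(N)\) keeps the tail-bound inequality intact.

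The hard part, such as it is, will be the inclusion \(\altrectestindx{n}{<\ell}\subseteq\rectestindx{n}{<\ell}\) in part~\ref{it:schnorr:test:partial:cuts}, which superficially points the ``wrong way'': the pruning shrinks the cut, yet the set of paths caught before depth~\(\ell\) actually grows, because each removed situation is replaced by a strictly shallower one. Once that subtlety is unpacked, the rest is routine verification that the constructed sets and maps are recursive, non-decreasing, unbounded, or partial cuts, as appropriate.
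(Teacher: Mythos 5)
Your proposal is correct and takes essentially the same route as the paper's own proof: part~\ref{it:schnorr:test:partial:cuts} by pruning to the \(\sprecedes\)-minimal elements, and part~\ref{it:schnorr:test:tail:bound} by the same two-case split (\(n\leq N\) versus \(n>N\)) followed by a maximisation that makes the bound \(n\)-free and a growth function. The only differences are cosmetic: you spell out the (nontrivial direction of the) inclusion \(\altrectestindx{n}{<\ell}\subseteq\rectestindx{n}{<\ell}\) where the paper just asserts \(\rectestindx{n}{<\ell}=\altrectestindx{n}{<\ell}\), and you build the growth function in two stages (\(e'\) then \(\tilde e\)) where the paper does it in one step with \(e(N)\coloneqq N+\max_{m,n\leq N}e'(m,n)\).
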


\begin{proof}
% Proof by Floris
% Cleaned up and clarified by Gert
% Checked by Gert
By assumption, the representation~\(\altrectest\) is a recursive subset of~\(\natsandsits\) such that \(\rectestpths[n]=\altrectestindx{n}{}\) and~\(\uglobalprob\group[\big]{\altrectestindx{n}{}}\leq2^{-n}\) for all~\(n\in\naturalswithzero\), and such that there's some recursive map~\(e'\colon\naturalswithzero^2\to\naturalswithzero\) such that \(\uglobalprob\group[\big]{\altrectestindx{n}{}\setminus\altrectestindx{n}{<\ell}}\leq2^{-N}\) for all~\((N,n)\in\natsandnats\) and all~\(\ell\geq e'(N,n)\).

For the proof of the first statement, consider for any~\(n\in\naturalswithzero\), the set of situations
\begin{equation*}
\rectestcutindx{n}{}
\coloneqq\cset{\sit\in\altrectestcutindx{n}{}}
{\group{\forall\altsit\sprecedes\sit}\altsit\notin\altrectestcutindx{n}{}}\subseteq\altrectestcutindx{n}{},
\end{equation*}
which is clearly a partial cut and recursive effectively in \(n\).
Of course, the corresponding~\(\rectest\coloneqq\cset{(n,\sit)}{n\in\naturalswithzero\text{ and }\sit\in\rectestcutindx{n}{}}\subseteq\altrectest\) is then recursive.
It follows readily from our construction that \(\rectestindx{n}{}=\altrectestindx{n}{}\) and~\(\rectestindx{n}{<\ell}=\altrectestindx{n}{<\ell}\) for all~\(n,\ell\in\naturalswithzero\).

For proof of the second statement, define \(e\colon\naturalswithzero\to\naturalswithzero\) by letting
\begin{equation*}
e(N)\coloneqq N+\max_{m=0}^{N}\max_{n=0}^{N}e'(m,n)
\text{ for all~\(N\in\naturalswithzero\)}.
\end{equation*}
Clearly, the map~\(e\) is recursive because \(e'\) is.
It's non-decreasing because
\begin{equation*}
e(N+1)
=N+1+\max_{m=0}^{N+1}\max_{n=0}^{N+1}e'(m,n)
\geq N+\max_{m=0}^{N}\max_{n=0}^{N}e'(m,n)
=e(N)
\text{ for all~\(N\in\naturalswithzero\)},
\end{equation*}
and it is unbounded because \(e(N)\geq N\) for all~\(N\in\naturalswithzero\).
We conclude that \(e\) is a growth function.
Now, fix any~\(N\in\naturalswithzero\) and \(n\in\naturalswithzero\), then there are two possibilities.
The first is that \(n\leq N\), and then for all~\(\ell\geq e(N)\) also \(\ell\geq e'(N,n)\), and therefore, as we know from the beginning of this proof,
\begin{equation*}
\uglobalprob(\altrectestindx{n}{}\setminus\altrectestindx{n}{<\ell})
\leq2^{-N}.
\end{equation*}
The other possibility is that \(n>N\), and then trivially for all~\(\ell\geq e(N)\)
\begin{equation*}
\uglobalprob(\altrectestindx{n}{}\setminus\altrectestindx{n}{<\ell})
\leq\uglobalprob(\altrectestindx{n}{})
\leq2^{-n}
\leq2^{-N}.
\end{equation*}
where the first inequality follows from~\ref{axiom:lower:upper:monotonicity}, and the penultimate one, as explained at the beginning of this proof, follows from the assumption.
\end{proof}

We'll also use the term \emph{Schnorr test} to refer its representation~\(\rectest\).
So, a Schnorr test is a {\ML} test with the additional property that it is always assumed to be recursive rather than merely recursively enumerable, and that the upper probabilities of its `tail global events' converge to zero effectively.
As indicated above, we can, and often will, assume that the sets~\(\rectestcutindx{n}{}\) are partial cuts and that the tail bound is a univariate growth function.
But we'll never assume that these simplifications are in place without explicitly saying so.

Let's now investigate our notion of a Schnorr test in some more detail.
First of all, we study how it relates to Schnorr's definition of a totally recursive sequential test \cite[Def.~(8.1), p.~63]{schnorr1971} for the (precise) fair-coin forecasting system~\(\faircoinfrcstsystem\) that associates a constant precise forecast~\(\faircoinfrcstsystem(\sit)\coloneqq\nicefrac12\) with each situation~\(\sit\in\sits\).

Recall that Schnorr calls a recursive subset~\(\rectest\) of~\(\natsandsits\) a \emph{totally recursive sequential test} provided that \(\globalprob[\faircoinfrcstsystem]\group{\rectestindx{n}{}}\leq2^{-n}\) for all~\(n\in\naturalswithzero\), and \emph{additionally}, that the sequence of real numbers~\(\globalprob[\faircoinfrcstsystem]\group{\rectestindx{n}{}}\) is {\comp}.
Our additional condition~\eqref{eq:schnorr:test:additional:condition} in Definition~\ref{def:schnorr:test} above therefore seems somewhat more involved than Schnorr's additional computability requirement for the sequence~\(\globalprob[\faircoinfrcstsystem]\group{\rectestindx{n}{}}\).

Let's now show, by means of Propositions~\ref{prop:if:schnorr:then:computable} and~\ref{prop:if:precise:and:total:then:our:condition:follows} below, that that is only an illusion.
Indeed, in Proposition~\ref{prop:if:schnorr:then:computable} we show that our additional condition~\eqref{eq:schnorr:test:additional:condition} implies the Schnorr-like additional computability requirement, even in the case of more general {\comp} \emph{interval-valued} forecasting systems.
And in Proposition~\ref{prop:if:precise:and:total:then:our:condition:follows}, we prove that for general {\comp} but \emph{precise} forecasting systems the Schnorr-like additional requirement implies our additional effective convergence condition.

\begin{proposition}\label{prop:if:schnorr:then:computable}
If~\(\rectest\subseteq\natsandsits\) is a Schnorr test for a {\comp} forecasting system~\(\frcstsystem\), then the \(\uglobalprob\group{{\rectestindx{n}{}}}\) constitute a {\comp} sequence of real numbers.
\end{proposition}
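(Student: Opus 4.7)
The plan is to exploit the tail bound in the definition of a Schnorr test to approximate $\uglobalprob(\rectestindx{n}{})$ by upper probabilities of bounded-depth events, which are computable via the Workhorse Lemma.

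First, I would invoke Proposition~\ref{prop:schnorr:test} to assume, without loss of generality, that the representation~\(\rectest\) is such that every~\(\rectestcutindx{n}{}\) is a partial cut, and that the tail bound~\(e\colon\naturalswithzero\to\naturalswithzero\) is a univariate growth function satisfying \(\uglobalprob(\rectestindx{n}{\geq\ell})\leq2^{-N}\) for all~\(n\in\naturalswithzero\) and all~\(\ell\geq e(N)\). Here I used that \(\rectestindx{n}{}\setminus\rectestindx{n}{<\ell}=\rectestindx{n}{\geq\ell}\) because \(\rectestcutindx{n}{}\) is a partial cut.

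Next, I would apply the Workhorse Lemma (Lemma~\ref{lem:comp:pain:in:the:ass}) to the computable forecasting system~\(\frcstsystem\), with \(\countables\coloneqq\naturalswithzero\), and with the recursive set
\[
\altrectest
\coloneqq\cset[\big]{(n,\ell,\sit)\in\natsandnatsandsits}
{(n,\sit)\in\rectest\text{ and }\dist{\sit}<\ell},
\]
for which \(\altrectestcutindx{(n)}{\ell}=\rectestcutindx{n}{<\ell}\) and \(\abs{\sit}<\ell\leq\ell\). The lemma then yields that \(\uglobalprob(\rectestindx{n}{<\ell})\) is a computable real, effectively in~\(n\) and~\(\ell\).

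To conclude, combine monotonicity~\ref{axiom:lower:upper:monotonicity} and the mixed subadditivity~\ref{axiom:lower:upper:subadditivity}---using the partition \(\rectestindx{n}{}=\rectestindx{n}{<\ell}\sqcup\rectestindx{n}{\geq\ell}\)---to obtain the two-sided estimate
\[
0\leq\uglobalprob(\rectestindx{n}{})-\uglobalprob(\rectestindx{n}{<\ell})
\leq\uglobalprob(\rectestindx{n}{\geq\ell}).
\]
Given any desired precision~\(N\in\naturalswithzero\), set \(\ell\coloneqq e(N+1)\), which is recursive in~\(N\); then the right-hand side is at most~\(2^{-(N+1)}\). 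Since \(\uglobalprob(\rectestindx{n}{<\ell})\) is computable effectively in~\(n\) and~\(\ell\), we can compute a rational~\(q\) with \(\abs{q-\uglobalprob(\rectestindx{n}{<\ell})}\leq2^{-(N+1)}\), effectively in~\(n\) and~\(N\); the triangle inequality then yields \(\abs{q-\uglobalprob(\rectestindx{n}{})}\leq2^{-N}\). Since the whole procedure is effective in~\(n\) and~\(N\), this is precisely what it means for \(\uglobalprob(\rectestindx{n}{})\) to be a computable sequence of real numbers.

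The argument is largely a bookkeeping exercise; the only conceptual point is the realisation that the Schnorr-test tail bound is exactly what makes the finite-depth truncations \(\rectestcutindx{n}{<\ell}\)---to which the Workhorse Lemma applies directly---into effective approximators of the full effectively open set~\(\rectestindx{n}{}\), so the mild non-additivity of~\(\uglobalprob\) (handled by~\ref{axiom:lower:upper:subadditivity}) does not get in the way.
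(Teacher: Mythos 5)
Your proof is correct and follows essentially the same route as the paper's: apply the Workhorse Lemma to the truncations~\(\rectestcutindx{n}{<\ell}\), bound \(\abs{\uglobalprob(\rectestindx{n}{})-\uglobalprob(\rectestindx{n}{<\ell})}\) via monotonicity and subadditivity, and invoke the tail bound to make the convergence effective. The only cosmetic deviations are your preliminary appeal to Proposition~\ref{prop:schnorr:test} to normalise the representation (the paper works with the raw bivariate tail bound and \(\rectestindx{n}{}\setminus\rectestindx{n}{<\ell}\) directly) and your spelling out the final rational-approximation step that the paper compresses into the phrase ``converges effectively''.
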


\begin{proof}
% Proof by Gert
% Simplified and given a better justification by Floris and Gert
% Checked by Gert
Given the assumptions, an appropriate instantiation of our Workhorse Lemma~\ref{lem:comp:pain:in:the:ass} [with~\(\countables\to\naturalswithzero\), \(d\to n\), \(p\to\ell\) and \(\altrectest\to\cset{(n,\ell,\sit)\in\natsandnatsandsits}{\sit\in\rectestcutindx{n}{<\ell}}\), and therefore \(\altrectestcutindx{d}{p}\to\rectestcutindx{n}{<\ell}\)] guarantees that the real map~\(\group{n,\ell}\mapsto\uglobalprob\group{{\rectestindx{n}{<\ell}}}\) is {\comp}.
Moreover, the following line of reasoning tells us that for all~\(n,\ell\in\naturalswithzero\),
\begin{align}
\abs[\big]{\uglobalprob\group{{\rectestindx{n}{}}}-\uglobalprob\group{{\rectestindx{n}{<\ell}}}}
&=\abs[\big]{\uglobal\group[\big]{\indexact{\rectestcutindx{n}{}}}-\uglobal\group[\big]{\ind{\rectestindx{n}{<\ell}}}}
=\uglobal\group[\big]{\indexact{\rectestcutindx{n}{}}}-\uglobal\group[\big]{\ind{\rectestindx{n}{<\ell}}}
\notag\\
&\leq\uglobal\group[\big]{\indexact{\rectestcutindx{n}{}}-\ind{\rectestindx{n}{<\ell}}}
=\uglobalprob\group{\rectestindx{n}{}\setminus\rectestindx{n}{<\ell}},
\label{eq:schnorr:equivalence:bounds}
\end{align}
where the second equality follows from~\ref{axiom:lower:upper:monotonicity}, and the inequality follows from~\ref{axiom:lower:upper:subadditivity}.
Since \(\rectest\) is a Schnorr test, we know that it has a tail bound, so there's some recursive map~\(e\colon\natsandnats\to\naturalswithzero\) such that \(\uglobalprob\group[\big]{\rectestindx{n}{}\setminus\rectestindx{n}{>\ell}}\leq2^{-N}\) for all~\((N,n)\in\natsandnats\) and all~\(\ell\geq e(N,n)\), and if we combine this with the inequality in Equation~\eqref{eq:schnorr:equivalence:bounds}, this leads to
\begin{align*}
\abs[\big]{\uglobalprob\group{\rectestindx{n}{}}-\uglobalprob\group{\rectestindx{n}{<\ell}}}\leq2^{-N}
\text{ for all~\((N,n)\in\natsandnats\) and all~\(\ell\geq e(N,n)\)}.
\end{align*}
Since this tells us that the {\comp} real map~\(\group{n,\ell}\mapsto\uglobalprob{\rectestindx{n}{<\ell}}\) converges effectively to the sequence of real numbers \(\uglobalprob\group{\rectestindx{n}{}}\), we conclude that \(\uglobalprob\group{\rectestindx{n}{}}\) is a {\comp} sequence of real numbers.
\end{proof}

The next proposition is concerned with the special case of precise forecasting systems~\(\precisefrcstsystem\).
We recall from Proposition~\ref{prop:precise:forecasting:systems} [with~\(\sit=\init\)] that the martingale-theoretic approach of defining global upper and lower expectations through Eqs.~\eqref{eq:tree:upper:expectation} and~\eqref{eq:tree:lower:expectation} then recovers the standard probability measure~\(\globalprob[\precisefrcstsystem]\) associated with the local mass functions implicit in~\(\precisefrcstsystem\), and that for each partial cut~\(\cut\), the corresponding set of paths~\(\cylset{\cut}\) is Borel measurable, so \(\lglobalprob[\precisefrcstsystem](\cylset{\cut})=\uglobalprob[\precisefrcstsystem](\cylset{\cut})=\globalprob[\precisefrcstsystem](\cylset{\cut})\).
We'll use this fact implicitly and freely in the formulation and proof of the result below.

\begin{proposition}\label{prop:if:precise:and:total:then:our:condition:follows}
Consider a {\ML} test~\(\rectestpths[n]\) for a {\comp} \emph{precise} forecasting system~\(\precisefrcstsystem\).
If the \(\globalprob[\precisefrcstsystem](\rectestpths[n])\) constitute a {\comp} sequence of real numbers, then \(\rectestpths[n]\) is a Schnorr test.
\end{proposition}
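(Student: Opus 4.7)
The plan is to construct a recursive tail bound~$e\colon\natsandnats\to\naturalswithzero$ for the given {\ML} test. By Corollary~\ref{cor:martin-loef:test}, I may assume the test has a \emph{recursive} representation~$\rectest\subseteq\natsandsits$ whose~$\rectestcutindx{n}{}$ are partial cuts, so that~$\rectestindx{n}{}$ partitions as the disjoint union of~$\rectestindx{n}{<\ell}$ and~$\rectestindx{n}{\geq\ell}$, and hence
\[
\globalprob[\precisefrcstsystem]\group{\rectestindx{n}{}\setminus\rectestindx{n}{<\ell}}
=\globalprob[\precisefrcstsystem]\group{\rectestindx{n}{}}-\globalprob[\precisefrcstsystem]\group{\rectestindx{n}{<\ell}}.
\]
The task reduces to making this non-negative difference small \emph{effectively} in~$\ell$, uniformly in~$n$.

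To do so, I would obtain uniform effective rational approximations of both terms. The minuend~$\globalprob[\precisefrcstsystem]\group{\rectestpths[n]}$ is {\comp} in~$n$ by hypothesis. For the subtrahend, I would invoke the Workhorse Lemma~\ref{lem:comp:pain:in:the:ass} with $\countables=\naturalswithzero$ and the recursive set
\[
\altrectest\coloneqq\cset{(n,\ell,\sit)\in\natsandnatsandsits}{\sit\in\rectestcutindx{n}{<\ell}},
\]
whose elements automatically satisfy~$\dist{\sit}<\ell$, as the Lemma requires. The Lemma then guarantees that $\uglobalprob[\precisefrcstsystem]\group{\rectestindx{n}{<\ell}}$---which, by preciseness of~$\precisefrcstsystem$, equals $\globalprob[\precisefrcstsystem]\group{\rectestindx{n}{<\ell}}$---is a {\comp} real effectively in~$n$ and~$\ell$. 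Alongside this, the pointwise increase $\ind{\rectestindx{n}{<\ell}}\nearrow\ind{\rectestindx{n}{}}$ combined with property~\ref{axiom:lower:upper:monotone:convergence} yields the qualitative convergence $\globalprob[\precisefrcstsystem]\group{\rectestindx{n}{\geq\ell}}\to0$ for each fixed~$n$.

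I would then define~$e(N,n)$ by the following search: compute a rational~$a$ with $\abs{a-\globalprob[\precisefrcstsystem]\group{\rectestindx{n}{}}}\leq2^{-N-3}$ and, iterating over $\ell=0,1,2,\ldots$, rationals~$b_\ell$ with $\abs{b_\ell-\globalprob[\precisefrcstsystem]\group{\rectestindx{n}{<\ell}}}\leq2^{-N-3}$, returning the first~$\ell$ with $a-b_\ell\leq2^{-N-1}$. Qualitative tail convergence ensures the search halts as soon as the true tail drops below~$2^{-N-2}$, so~$e$ is total recursive. When it returns~$\ell$, the triangle inequality yields
\[
\globalprob[\precisefrcstsystem]\group{\rectestindx{n}{}\setminus\rectestindx{n}{<\ell}}
\leq(a-b_\ell)+2\cdot2^{-N-3}
\leq2^{-N-1}+2^{-N-2}
\leq2^{-N},
\]
which is exactly the required tail bound, establishing that~$\rectest$ is a Schnorr test.

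The chief subtlety is bridging \emph{qualitative} convergence of the tails and an \emph{effective} bound on the rate: this is precisely why we need both the Workhorse Lemma (for uniform effective approximation of the subtrahend) and the computability hypothesis on~$\globalprob[\precisefrcstsystem](\rectestpths[n])$ (for uniform effective approximation of the minuend) acting jointly, since neither ingredient alone would let us turn the unrated limit into a uniform modulus~$e$.
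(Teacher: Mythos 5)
Your proposal is correct and follows essentially the same route as the paper's proof: pass to a recursive partial-cut representation via Corollary~\ref{cor:martin-loef:test}, use preciseness of~\(\precisefrcstsystem\) to decompose the tail as a difference, invoke the Workhorse Lemma~\ref{lem:comp:pain:in:the:ass} for the subtrahend, the hypothesis for the minuend and~\ref{axiom:lower:upper:monotone:convergence} for the qualitative decay, then extract a recursive tail bound by a terminating search. The only (cosmetic) divergence is that the paper first packages the difference into a single computable map \(\group{n,\ell}\mapsto\globalprob[\precisefrcstsystem]\group{\rectestindx{n}{\geq\ell}}\) and searches on a rational approximation of that map, whereas you search directly on the difference of two separate rational approximations; also, you should add one sentence noting that the tail \(\globalprob[\precisefrcstsystem]\group{\rectestindx{n}{\geq\ell}}\) is non-increasing in~\(\ell\), so the bound at~\(\ell=e(N,n)\) propagates to all~\(\ell\geq e(N,n)\) via~\ref{axiom:lower:upper:monotonicity}, as Definition~\ref{def:schnorr:test} requires.
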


\begin{proof}
% Proof by Floris
% Improved by Gert
% Checked by Gert
By Corollary~\ref{cor:martin-loef:test}, we may assume without loss of generality that there's a recursive~\(\rectest\subseteq\natsandsits\) such that \(\rectestcutindx{n}{}\) is a partial cut, \(\rectestpths[n]=\rectestindx{n}{}\) and~\(\globalprob[\precisefrcstsystem](\rectestindx{n}{}{})\leq2^{-n}\) for all~\(n\in\naturalswithzero\).
Assume that the \(\globalprob[\precisefrcstsystem](\rectestindx{n}{})\) constitute a {\comp} sequence of real numbers.
Then, by Definition~\ref{def:schnorr:test}, it suffices to prove that there's some recursive map~\(e\colon\natsandnats\to\naturalswithzero\) such that \(\globalprob[\precisefrcstsystem]\group{\rectestindx{n}{\geq\ell}}\leq2^{-N}\) for all~\((N,n)\in\natsandnats\) and all~\(\ell\geq e(N,n)\).

To do so, we start by proving that the real map~\(\group{n,\ell}\mapsto\globalprob[\precisefrcstsystem]\group{\rectestindx{n}{\geq\ell}}\) is {\comp} and that \(\lim_{\ell\to\infty}\globalprob[\precisefrcstsystem]\group{\rectestindx{n}{\geq\ell}}=0\) for all~\(n\in\naturalswithzero\).
First of all, observe that the computability of the forecasting system~\(\precisefrcstsystem\), the recursive character of the finite partial cuts~\(\rectestcutindx{n}{<\ell}\) and an appropriate instantiation of our Workhorse Lemma~\ref{lem:comp:pain:in:the:ass} [with~\(\countables\to\naturalswithzero\), \(d\to n\), \(p\to\ell\) and \(\altrectest\to\cset{(n,\ell,\sit)\in\natsandnatsandsits}{\sit\in\rectestcutindx{n}{<\ell}}\), and therefore \(\altrectestcutindx{d}{p}\to\rectestcutindx{n}{<\ell}\)] allow us to infer that the real map~\(\group{n,\ell}\mapsto\globalprob[\precisefrcstsystem]\group{\rectestindx{n}{<\ell}}\) is {\comp}.
Since the forecasting system~\(\precisefrcstsystem\) is precise, and since \(\indexact{\rectestcutindx{n}{}}=\ind{\rectestindx{n}{<\ell}}+\ind{\rectestindx{n}{\geq\ell}}\) for all~\((n,\ell)\in\natsandnats\) due to~\(\rectestcutindx{n}{}\) being a partial cut, we infer from Proposition~\ref{prop:precise:forecasting:systems} that
\begin{equation}\label{eq:helpie}
\globalprob[\precisefrcstsystem]\group[\big]{\rectestindx{n}{\geq\ell}}
=\globalprob[\precisefrcstsystem]\group[\big]{\rectestindx{n}{}}-\globalprob[\precisefrcstsystem]\group[\big]{\rectestindx{n}{<\ell}}.
\end{equation}
Since \(\globalprob[\precisefrcstsystem](\rectestindx{n}{})\) is a {\comp} sequence of real numbers and~\(\group{n,\ell}\mapsto\globalprob[\precisefrcstsystem]\group{\rectestindx{n}{<\ell}}\) is a {\comp} real map, it follows from Equation~\eqref{eq:helpie} that~\(\group{n,\ell}\mapsto\globalprob[\precisefrcstsystem]\group{\rectestindx{n}{\geq\ell}}\) is a {\comp} real map.
Furthermore, since \(\ind{\rectestindx{n}{<\ell}}\nearrow\ind{\rectestindx{n}{}}\) point-wise as~\(\ell\to\infty\), it follows from~\ref{axiom:lower:upper:monotone:convergence} that~\(\lim_{\ell\to\infty}\globalprob[\precisefrcstsystem]\group{\rectestindx{n}{<\ell}}=\globalprob[\precisefrcstsystem]\group{\rectestindx{n}{}}\), and therefore also that \(\globalprob[\precisefrcstsystem]\group{\rectestindx{n}{\geq\ell}}\searrow0\) as~\(\ell\to\infty\), for all~\(n\in\naturalswithzero\).

We are now ready to prove that there's some recursive map~\(e\colon\natsandnats\to\naturalswithzero\) such that \(\globalprob[\precisefrcstsystem]\group{\rectestindx{n}{\geq\ell}}\leq2^{-N}\) for all~\((N,n)\in\natsandnats\) and all~\(\ell\geq e(N,n)\).
Since \(\group{n,\ell}\mapsto\globalprob[\precisefrcstsystem]\group{\rectestindx{n}{\geq\ell}}\) is a {\comp} real map, there's some recursive rational map~\(q\colon\natsandnatsandnats\to\rationals\) such that
\begin{equation}\label{eq:define:q:map}
\abs[\big]{\globalprob[\precisefrcstsystem]\group[\big]{\rectestindx{n}{\geq\ell}}-q(n,\ell,N)}\leq2^{-N}
\text{ for all~\((n,\ell,N)\in\natsandnatsandnats\)}.
\end{equation}
Define the map~\(e\colon\natsandnats\to\naturalswithzero\) by
\begin{equation} \label{eq:define:e:map}
e(N,n)\coloneqq\min\cset[\big]{\ell\in\naturalswithzero}{q(n,\ell,N+2)<2^{-(N+1)}}
\text{ for all~\((N,n)\in\natsandnats\)}.
\end{equation}
Clearly, if we can prove that the set of natural numbers in the definition above is always non-empty, then the map~\(e\) will be well-defined and recursive.
To do so, fix any~\((N,n)\in\natsandnats\), and observe that since~\(\globalprob[\precisefrcstsystem]\group{\rectestindx{n}{\geq\ell}}\searrow0\) as \(\ell\to\infty\), there always is some~\(\ell_o\in\naturalswithzero\) such that~\(\globalprob[\precisefrcstsystem]\group{\rectestindx{n}{\geq\ell_o}}<2^{-(N+2)}\).
For this same~\(\ell_o\), it then indeed follows from Equation~\eqref{eq:define:q:map} that
\begin{equation*}
q(n,\ell_o,N+2)
\leq\globalprob[\precisefrcstsystem]\group{\rectestindx{n}{\geq\ell_o}}+2^{-(N+2)}
<2^{-(N+2)}+2^{-(N+2)}
=2^{-(N+1)}.
\end{equation*}
To complete the proof, consider any~\(n,N\in\naturalswithzero\) and any~\(\ell\geq e(N,n)\).
Then, indeed,
\begin{align*}
\globalprob[\precisefrcstsystem]\group[\big]{\rectestindx{n}{\geq\ell}}
&\leq\globalprob[\precisefrcstsystem]\group[\big]{\rectestindx{n}{\geq e(N,n)}}
\leq q(n,e(N,n),N+2)+2^{-(N+2)}\\
&<2^{-(N+1)}+2^{-(N+2)}
<2^{-N},
\end{align*}
where the first inequality follows from~\(\ell\geq e(N,n)\) and~\ref{axiom:lower:upper:monotonicity}, the second inequality follows from Equation~\eqref{eq:define:q:map}, and the third inequality follows from Equation~\eqref{eq:define:e:map}.
\end{proof}

\subsection{Defining {\ML} and Schnorr test randomness}\label{sec:def:randomness}

With the definitions of {\ML} and Schnorr tests for a forecasting system at hand, we are now in a position to generalise both {\ML}'s and Schnorr's definition for randomness using randomness tests, from fair-coin to interval-valued forecasting systems.

\begin{definition}[Test randomness]\label{def:martin-loef:schnorr:test:randomness}
Consider a forecasting system~\(\frcstsystem\).
Then we call a sequence~\(\pth\in\pths\)
\begin{enumerate}[label=\upshape(\roman*),leftmargin=*,noitemsep,topsep=0pt]
\item \emph{{\ML} test random} for~\(\frcstsystem\) if \(\pth\not\in\bigcap_{m\in\naturalswithzero}\rectestindx{m}{}\), for all {\ML} tests~\(\rectest\) for~\(\frcstsystem\);
\item \emph{Schnorr test random} for~\(\frcstsystem\) if \(\pth\not\in\bigcap_{m\in\naturalswithzero}\rectestindx{m}{}\), for all Schnorr tests~\(\rectest\) for~\(\frcstsystem\).
\end{enumerate}
\end{definition}

We want to show in the next two sections that for forecasting systems that are \emph{\comp} and satisfy a simple additional non-degeneracy condition, our `test' and `martingale-theoretic' notions of both {\ML} and Schnorr randomness are equivalent.

\section{Equivalence of {\ML} and {\ML} test randomness}\label{sec:equivalence:for:martin-loef}
% Checked by Gert
Let's start by considering {\ML} randomness.
Our claim, in Theorem~\ref{thm:martin-loef:equivalence} further on, that the `test' and `martingale-theoretic' versions for this type of randomness are equivalent, follows the spirit of a reasonably similar proof in a paper on precise prequential {\ML} randomness by Vovk and Shen \cite[Proof of Theorem~1]{vovk2010:randomness}.
It allows us to extend Schnorr's line of reasoning for this equivalence \cite[Secs.~5--9]{schnorr1971} from fair-coin to {\comp} interval-valued forecasting systems.

\subsection{{\ML} test randomness implies {\ML} randomness}
We begin with the more easily proved side of the equivalence, which relies rather heavily on Ville's inequality.

\begin{proposition}\label{prop:martin-loef:equivalence:test:then:martingale}
Consider any path~\(\pth\) in~\(\pths\) and any forecasting system~\(\frcstsystem\).
If \(\pth\) is {\ML} test random for~\(\frcstsystem\) then it is also {\ML} random for~\(\frcstsystem\).
\end{proposition}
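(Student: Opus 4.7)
The plan is to prove the contrapositive: if \(\pth\) is not {\ML} random for~\(\frcstsystem\), then it is not {\ML} test random for~\(\frcstsystem\) either. So assume there is some {\lscomp} test supermartingale~\(\test\) for~\(\frcstsystem\) with \(\sup_{n\in\naturalswithzero}\test(\pthto{n})=+\infty\). The idea is to turn the \emph{level sets} of~\(\test\) into a {\ML} test that catches~\(\pth\): for every~\(n\in\naturalswithzero\), define
\[
\rectestcutindx{n}{}\coloneqq\cset{\sit\in\sits}{\test(\sit)>2^n}
\text{ and }
\rectest\coloneqq\cset{(n,\sit)\in\natsandsits}{\sit\in\rectestcutindx{n}{}}.
\]

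The first step is to check that \(\rectest\) is recursively enumerable. By {\lscompy} of~\(\test\), there is a recursive rational map~\(q\colon\sits\times\naturalswithzero\to\rationals\) that is non-decreasing in its second argument with \(q(\sit,m)\nearrow\test(\sit)\); hence \(\test(\sit)>2^n\) iff~\((\exists m\in\naturalswithzero)\,q(\sit,m)>2^n\), and this condition on \((n,\sit)\) can be enumerated by dovetailing. Therefore the \(\rectestindx{n}{}\) constitute a {\comp} sequence of effectively open sets in the sense of Definition~\ref{def:martin-loef:test}.

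The second step is to bound~\(\uglobalprob(\rectestindx{n}{})\). Note that \(\rectestindx{n}{}=\cset{\pth\in\pths}{\sup_{k\in\naturalswithzero}\test(\pthto{k})>2^n}\), which is included in \(\cset{\pth\in\pths}{\sup_{k\in\naturalswithzero}\test(\pthto{k})\geq2^n}\). Since~\(\test\) is a non-negative supermartingale for~\(\frcstsystem\) with \(\test(\init)=1\), Ville's inequality (Proposition~\ref{prop:ville:inequality} with \(C=2^n\)) together with monotonicity~\ref{axiom:lower:upper:monotonicity} gives \(\uglobalprob(\rectestindx{n}{})\leq2^{-n}\). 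Thus \(\rectest\) satisfies all requirements of Definition~\ref{def:martin-loef:test} and is a {\ML} test for~\(\frcstsystem\).

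The third step is the catching argument. Because~\(\test\) is unbounded on~\(\pth\), for every~\(n\in\naturalswithzero\) there is some \(k\in\naturalswithzero\) with \(\test(\pthto{k})>2^n\), so \(\pthto{k}\in\rectestcutindx{n}{}\) and consequently \(\pth\in\rectestindx{n}{}\). Hence \(\pth\in\bigcap_{n\in\naturalswithzero}\rectestindx{n}{}\), showing that~\(\pth\) is not {\ML} test random for~\(\frcstsystem\) and completing the contrapositive. The only subtle point—hence the main (though modest) obstacle—is the recursive enumerability of~\(\rectest\); everything else is an essentially direct invocation of Ville's inequality.
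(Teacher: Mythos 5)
Your proof is correct and follows essentially the same route as the paper: a contrapositive argument defining $\rectest=\cset{(n,\sit)}{\test(\sit)>2^n}$, using lower semicomputability of $\test$ for recursive enumerability, Ville's inequality for the probability bound, and the unboundedness of $\test$ on $\pth$ to place $\pth$ in the intersection. The paper simply factors the verification that $\rectest$ is a {\ML} test into a separate lemma (Lemma~\ref{lem:from:supermartingale:to:randomness:test}), whereas you do it inline.
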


\begin{proof}
% Proof by Gert
% Checked by Gert
We give a proof by contraposition.
Assume that \(\pth\) isn't {\ML} random for~\(\frcstsystem\), which implies that there's some {\lscomp} test supermartingale~\(\test\) for~\(\frcstsystem\) that becomes unbounded on~\(\pth\), so \(\sup_{n\in\naturalswithzero}\test(\pthto{n})=\infty\).
Now, let us consider the following set
\begin{equation*}
\rectest\coloneqq\cset{(n,\sit)\in\natsandsits}{\test(\sit)>2^n}\subseteq\naturalswithzero\times\sits.
\end{equation*}
That \(\test\) is a {\lscomp} test supermartingale implies, by Lemma~\ref{lem:from:supermartingale:to:randomness:test}\ref{it:from:supermartingale:to:randomness:test:recursive}\&\ref{it:from:supermartingale:to:randomness:test:effectively:open}, that \(\rectest\) is a {\ML} test for~\(\frcstsystem\) with \(\rectestindx{m}{}\coloneqq\cset{\altpth\in\pths}{\sup_{n\in\naturalswithzero}\test(\altpthto{n})>2^m}\) for all~\(m\in\naturalswithzero\).
That \(\sup_{n\in\naturalswithzero}\test(\pthto{n})=\infty\) then implies that \(\pth\in\rectestindx{m}{}\) for all~\(m\in\naturalswithzero\), so \(\pth\) isn't {\ML} test random for~\(\frcstsystem\) either.
\end{proof}

\begin{lemma}\label{lem:from:supermartingale:to:randomness:test}
Consider any {\lscomp} test supermartingale~\(\test\) for~\(\frcstsystem\), and let \(\rectest\coloneqq\cset{(n,\sit)\in\natsandsits}{\test(\sit)>2^n}\).
Then
\begin{enumerate}[label=\upshape(\roman*),leftmargin=*,noitemsep,topsep=0pt]
\item\label{it:from:supermartingale:to:randomness:test:effectively:open} \(\rectestindx{m}{}=\cset{\altpth\in\pths}{\sup_{n\in\naturalswithzero}\test(\altpthto{n})>2^m}\) for all~\(m\in\naturalswithzero\);
\item\label{it:from:supermartingale:to:randomness:test:bounds} \(\uglobalprob(\rectestindx{m}{})\leq2^{-m}\) for all~\(m\in\naturalswithzero\);
\item\label{it:from:supermartingale:to:randomness:test:recursive} \(\rectest\) is a {\ML} test.
\end{enumerate}
\end{lemma}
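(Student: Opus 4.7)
The plan is to verify the three parts in the order they are stated, leaning only on definitions, Ville's inequality and the lower semicomputability of~\(\test\).

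For~\ref{it:from:supermartingale:to:randomness:test:effectively:open}, I would simply unpack definitions. By construction, \(\rectestcutindx{m}{}=\cset{\sit\in\sits}{\test(\sit)>2^m}\) and \(\rectestindx{m}{}=\cylset{\rectestcutindx{m}{}}\) is the set of paths going through some situation in~\(\rectestcutindx{m}{}\). Since \(\pth\in\cylset{\sit}\) iff \(\sit=\pthto{n}\) for some~\(n\in\naturalswithzero\), this immediately gives \(\pth\in\rectestindx{m}{}\iff(\exists n\in\naturalswithzero)\,\test(\pthto{n})>2^m\iff\sup_{n\in\naturalswithzero}\test(\pthto{n})>2^m\).

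For~\ref{it:from:supermartingale:to:randomness:test:bounds}, the plan is to invoke Ville's inequality (Proposition~\ref{prop:ville:inequality}) with~\(C\coloneqq2^m\). Since \(\test\) is a non-negative supermartingale for~\(\frcstsystem\) with \(\test(\init)=1\), this yields \(\uglobalprob\group{\cset{\pth\in\pths}{\sup_{n\in\naturalswithzero}\test(\pthto{n})\geq2^m}}\leq2^{-m}\). Combining this with~\ref{it:from:supermartingale:to:randomness:test:effectively:open} and the trivial inclusion \(\set{\sup>2^m}\subseteq\set{\sup\geq2^m}\), monotonicity~\ref{axiom:lower:upper:monotonicity} (applied to the corresponding indicators) gives \(\uglobalprob(\rectestindx{m}{})\leq2^{-m}\), as required.

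For~\ref{it:from:supermartingale:to:randomness:test:recursive}, in view of Definition~\ref{def:martin-loef:test} and part~\ref{it:from:supermartingale:to:randomness:test:bounds}, the only thing left to check is that the subset \(\rectest\subseteq\natsandsits\) is recursively enumerable. Here I would use lower semicomputability of~\(\test\): it provides a recursive rational map \(q\colon\sits\times\naturalswithzero\to\rationals\) that is non-decreasing in its second argument and satisfies \(\test(\sit)=\lim_{k\to\infty}q(\sit,k)\) for all~\(\sit\in\sits\). Then \((m,\sit)\in\rectest\) iff \((\exists k\in\naturalswithzero)\,q(\sit,k)>2^m\), which is a \(\Sigma_1\) condition in~\((m,\sit)\) and hence recursively enumerable.

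No step poses a real obstacle; the only subtlety is the strict-versus-nonstrict inequality in the application of Ville's inequality, which is absorbed by the monotonicity argument above rather than by sharpening~\(C\).
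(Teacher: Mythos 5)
Your proof is correct and follows essentially the same route as the paper's: unpacking the definition of the cylinder sets for part (i), invoking Ville's inequality with \(C=2^m\) together with the strict-versus-nonstrict inclusion and monotonicity for part (ii), and using the non-decreasing recursive rational approximant from lower semicomputability to conclude recursive enumerability for part (iii). No gaps.
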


\begin{proof}
% Proof by Gert
% Corrected and simplified by Gert
% Checked by Gert
We begin with the proof of~\ref{it:from:supermartingale:to:randomness:test:effectively:open}.
Since, by its definition, \(\rectestindx{m}{}=\bigcup\cset{\cylset{\sit}}{\sit\in\rectestcutindx{m}{}}\), we have the following chain of equivalences for any~\(\altpth\in\pths\):
\begin{align*}
\altpth\in\rectestindx{m}{}
&\ifandonlyif(\exists\sit\in\rectestcutindx{m}{})(\altpth\in\cylset{\sit})
\ifandonlyif(\exists\sit\in\sits)(\altpth\in\cylset{\sit}\text{ and }(m,\sit)\in\rectest)\\
&\ifandonlyif(\exists\sit\in\sits)(\altpth\in\cylset{\sit}\text{ and }\test(\sit)>2^m)
\ifandonlyif(\exists n\in\naturalswithzero)\test(\altpthto{n})>2^m,
\end{align*}
proving~\ref{it:from:supermartingale:to:randomness:test:effectively:open}.

Next, we turn to the proof of~\ref{it:from:supermartingale:to:randomness:test:bounds}.
If we recall that \(\test\) is a nonnegative supermartingale for~\(\frcstsystem\) with~\(\test(\init)=1\) and let \(C\coloneqq2^m>0\) in Ville's inequality [Proposition~\ref{prop:ville:inequality}], then we find, also taking into account~\ref{it:from:supermartingale:to:randomness:test:effectively:open} and~\ref{axiom:lower:upper:monotonicity}, that indeed,
\begin{align*}
\uglobalprob(\rectestindx{m}{})
&=\uglobalprob\group[\bigg]{\cset[\bigg]{\altpth\in\pths}{\sup_{n\in\naturalswithzero}\test(\altpthto{n})>2^m}}\\
&\leq\uglobalprob\group[\bigg]{\cset[\bigg]{\altpth\in\pths}{\sup_{n\in\naturalswithzero}\test(\altpthto{n})\geq2^m}}
\leq\frac{1}{2^m}\test(\init)
=2^{-m}.
\end{align*}

For~\ref{it:from:supermartingale:to:randomness:test:recursive}, it now only remains to prove that the set~\(\rectest=\cset{(n,\sit)\in\natsandsits}{\test(\sit)>2^n}\) is recursively enumerable.
The argument is a standard one.
That \(\test\) is {\lscomp} means that there's some recursive map~\(q_\test\colon\natsandsits\to\rationals\) such that \(q_\test(m+1,\sit)\geq q_\test(m,\sit)\) and~\(\test(\sit)=\sup_{\ell\in\naturalswithzero}q_\test(\ell,\sit)\) for all~\(m\in\naturalswithzero\) and all~\(\sit\in\sits\).
Now observe the following chain of equivalences, for any~\((n,\sit)\in\natsandsits\):
\begin{equation*}
(n,\sit)\in\rectest
\ifandonlyif\test(\sit)>2^n
\ifandonlyif\sup_{m\in\naturalswithzero}q_\test(m,\sit)>2^n
\ifandonlyif\group{\exists m\in\naturalswithzero}q_\test(m,\sit)>2^n.
\end{equation*}
Since \(q_\test\) is rational-valued and recursive, the inequality \(q_\test(m,\sit)>2^n\) is decidable, which makes it clear that \(\rectest\) is indeed recursively enumerable.
\end{proof}

\subsection{{\ML} randomness implies {\ML} test randomness}
For the converse result, whose proof is definitely more involved, the following definition introduces a useful additional condition.

\begin{definition}[Non-degeneracy]
We call a forecasting system~\(\frcstsystem\) \emph{non-degenerate} when \(\lfrcstsystem(\sit)<1\) and~\(\ufrcstsystem(\sit)>0\) for all~\(\sit\in\sits\), and \emph{degenerate} otherwise.
\end{definition}
\noindent So, a forecasting system~\(\frcstsystem\) is degenerate as soon as there's some situation~\(\sit\) for which either \(\lfrcstsystem(\sit)=\ufrcstsystem(\sit)=0\), or \(\lfrcstsystem(\sit)=\ufrcstsystem(\sit)=1\), meaning that according to Forecaster, after observing \(\sit\), the next outcome will be almost surely~\(1\), or almost surely~\(0\).

With any non-degenerate forecasting system~\(\frcstsystem\), we can associate the (clearly) positive real processes~\(\frcstsystemconstant\) and~\(\frcstsystembound\), defined by
\begin{equation*}
\frcstsystemconstant(\sit)\coloneqq\min\set[\big]{1-\lfrcstsystem(\sit),\ufrcstsystem(\sit)}
\text{ and }
\frcstsystembound(\sit)\coloneqq\prod_{k=0}^{\dist{s}-1}\frcstsystemconstant(\sitto{k})^{-1}
\text{ for all~\(\sit\in\sits\)}.
\end{equation*}
Observe that \(\frcstsystembound(\init)=1\), and that \(0<\frcstsystemconstant(\sit)\leq1\) and therefore also \(\frcstsystembound(\sit)\geq1\) for all~\(\sit\in\sits\).
Also, if \(\frcstsystem\) is {\comp}, then so are \(\frcstsystemconstant\) and \(\frcstsystembound\).

Interestingly, the map~\(\frcstsystembound\) can be used to bound non-negative supermartingales for non-degenerate forecasting systems.

\begin{proposition}\label{prop:bound:for:any:situation}
Consider any non-degenerate forecasting system~\(\frcstsystem\) and any non-negative supermartingale~\(\supermartin\) for~\(\frcstsystem\).
Then \(0\leq\supermartin(\sit)\leq\supermartin(\init)\frcstsystembound(\sit)\) for all~\(\sit\in\sits\).
\end{proposition}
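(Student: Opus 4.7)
The plan is to reduce the claim to a one-step multiplicative bound~\(\supermartin(\sit x)\leq\supermartin(\sit)/\frcstsystemconstant(\sit)\) and then iterate along the path from~\(\init\) to~\(\sit\). The non-negativity~\(0\leq\supermartin(\sit)\) is part of the hypothesis, so only the upper bound requires argument.

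\emph{Step 1 (one-step bound).} Fix~\(\sit\in\sits\). The supermartingale inequality~\eqref{eq:supermartingale} gives \(\uex_{\frcstsystem(\sit)}(\supermartin(\sit\andoutcome))\leq\supermartin(\sit)\). Using the explicit formula~\eqref{eq:local:upper}, split into the two cases \(\supermartin(\sit1)\geq\supermartin(\sit0)\) and \(\supermartin(\sit1)\leq\supermartin(\sit0)\). In the first case, the upper expectation equals \(\ufrcstsystem(\sit)\supermartin(\sit1)+[1-\ufrcstsystem(\sit)]\supermartin(\sit0)\); since \(\supermartin(\sit0)\geq0\) by assumption and \(\ufrcstsystem(\sit)\leq 1\), dropping the second term yields \(\ufrcstsystem(\sit)\supermartin(\sit1)\leq\supermartin(\sit)\), and non-degeneracy \(\ufrcstsystem(\sit)>0\) then gives \(\supermartin(\sit1)\leq\supermartin(\sit)/\ufrcstsystem(\sit)\); combined with \(\supermartin(\sit0)\leq\supermartin(\sit1)\), this settles both choices of~\(x\in\outcomes\). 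The second case is symmetric, using \(\lfrcstsystem(\sit)\supermartin(\sit1)\geq0\) to obtain \([1-\lfrcstsystem(\sit)]\supermartin(\sit0)\leq\supermartin(\sit)\), and then non-degeneracy \(1-\lfrcstsystem(\sit)>0\) to divide. Taking the worse of the two resulting bounds gives
\[
\supermartin(\sit x)
\leq\supermartin(\sit)\max\set[\Big]{\tfrac{1}{\ufrcstsystem(\sit)},\tfrac{1}{1-\lfrcstsystem(\sit)}}
=\frac{\supermartin(\sit)}{\frcstsystemconstant(\sit)}
\text{ for all~\(x\in\outcomes\).}
\]

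\emph{Step 2 (induction on depth).} Now induct on~\(\dist{\sit}\). For~\(\sit=\init\), the product defining~\(\frcstsystembound(\init)\) is empty, so \(\frcstsystembound(\init)=1\) and the claim is trivial. For~\(\sit\) of depth~\(n\geq1\), write \(\sit=\sitto{n-1}\sitat{n}\); applying the one-step bound at~\(\sitto{n-1}\) and then the inductive hypothesis at~\(\sitto{n-1}\) gives
\[
\supermartin(\sit)
\leq\frac{\supermartin(\sitto{n-1})}{\frcstsystemconstant(\sitto{n-1})}
\leq\supermartin(\init)\frac{\frcstsystembound(\sitto{n-1})}{\frcstsystemconstant(\sitto{n-1})}
=\supermartin(\init)\smashoperator{\prod_{k=0}^{n-1}}\frcstsystemconstant(\sitto{k})^{-1}
=\supermartin(\init)\frcstsystembound(\sit).
\]

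\emph{Main obstacle.} There is no deep difficulty; the only subtle point is that the argument genuinely needs \emph{both} ingredients simultaneously---non-negativity of~\(\supermartin\), which allows us to discard the off-diagonal term in the expression for~\(\uex_{\frcstsystem(\sit)}\), and non-degeneracy of~\(\frcstsystem\), which ensures that the surviving coefficient is strictly positive and may be divided through. Without either of these, the one-step inequality fails.
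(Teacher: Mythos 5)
Your proof is correct and follows essentially the same route as the paper's: establish the one-step bound \(\max\supermartin(\sit\andoutcome)\leq\frcstsystemconstant(\sit)^{-1}\supermartin(\sit)\) by a two-case analysis on the order of \(\supermartin(\sit1)\) and \(\supermartin(\sit0)\), using non-negativity to discard the subdominant term and non-degeneracy to divide by the surviving coefficient, and then iterate along the path. The only cosmetic difference is that the paper presents the case analysis as one chain of inequalities ending in \(\supermartin(\sit)\geq\frcstsystemconstant(\sit)\max\supermartin(\sit\andoutcome)\) and then says ``a simple induction argument now leads to the desired result'', whereas you spell out the induction and the telescoping of \(\frcstsystembound\) explicitly.
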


\begin{proof}
% Proof by Gert
Fix any situation~\(\sit\in\sits\) and simply observe that
\begin{align*}
\supermartin(\sit)
\geq\uex_{\frcstsystem(\sit)}(\Delta\supermartin(\sit\andoutcome))
&=
\begin{cases}
\lfrcstsystem(\sit)\supermartin(\sit1)+[1-\lfrcstsystem(\sit)]\supermartin(\sit0)
&\text{if \(\supermartin(\sit1)\leq\supermartin(\sit0)\)}\\
\ufrcstsystem(\sit)\supermartin(\sit1)+[1-\ufrcstsystem(\sit)]\supermartin(\sit0)
&\text{if \(\supermartin(\sit1)>\supermartin(\sit0)\)}
\end{cases}\\
&\geq
\begin{cases}
[1-\lfrcstsystem(\sit)]\supermartin(\sit0)
&\text{if \(\supermartin(\sit1)\leq\supermartin(\sit0)\)}\\
\ufrcstsystem(\sit)\supermartin(\sit1)
&\text{if \(\supermartin(\sit1)>\supermartin(\sit0)\)}
\end{cases}\\
&=\max\supermartin(\sit\andoutcome)
\begin{cases}
1-\lfrcstsystem(\sit)
&\text{if \(\supermartin(\sit1)\leq\supermartin(\sit0)\)}\\
\ufrcstsystem(\sit)
&\text{if \(\supermartin(\sit1)>\supermartin(\sit0)\)}
\end{cases}\\
&\geq\min\set[\big]{1-\lfrcstsystem(\sit),\ufrcstsystem(\sit)}\max\supermartin(\sit\andoutcome),
\end{align*}
where the first inequality holds because \(\supermartin\) is a supermartingale for~\(\frcstsystem\), and the other inequalities hold because \(\supermartin\) is non-negative.
Hence, \(\max\supermartin(\sit\andoutcome)\leq\frcstsystemconstant(\sit)^{-1}\supermartin(\sit)\).
A simple induction argument now leads to the desired result.
\end{proof}

We are now ready to prove a converse to Proposition~\ref{prop:martin-loef:equivalence:test:then:martingale}.

\begin{proposition}\label{prop:martin-loef:equivalence:martingale:then:test}
Consider any path~\(\pth\) in~\(\pths\) and any non-degenerate {\comp} forecasting system~\(\frcstsystem\).
If \(\pth\) is {\ML} random for~\(\frcstsystem\) then it is also {\ML} test random for~\(\frcstsystem\).
\end{proposition}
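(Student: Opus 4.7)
The plan is to argue by contraposition: suppose $\pth$ is not {\ML} test random, pick an {\ML} test $\rectest$ for $\frcstsystem$ with $\pth\in\bigcap_{m\in\naturalswithzero}\rectestindx{m}{}$, and construct a {\lscomp} test supermartingale for $\frcstsystem$ that is unbounded on $\pth$. Using Corollary~\ref{cor:martin-loef:test}, I may assume the representation $\rectest\subseteq\natsandsits$ is recursive and each $\rectestcutindx{m}{}$ is a partial cut. The candidate, in the spirit of the classical Levin--Schnorr mixing argument, is
\[
\test(\sit)\coloneqq\tfrac12\sum_{m=0}^{\infty}\uglobalcondprob{\rectestindx{m}{}}{\sit}.
\]

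The analytic verification proceeds via finite truncations. By Corollary~\ref{cor:supermartingales:based:on:cuts}, each $\uglobalcondprob{\rectestindx{m}{}}{\bolleke}$ is a non-negative supermartingale for $\frcstsystem$ bounded above by $1$, so the partial sums $\test_N\coloneqq\tfrac12\sum_{m=0}^{N}\uglobalcondprob{\rectestindx{m}{}}{\bolleke}$ are themselves non-negative supermartingales by~\ref{axiom:coherence:homogeneity} and~\ref{axiom:coherence:subadditivity}, with $\test_N(\init)\leq\tfrac12\sum_{m=0}^{N}2^{-m}\leq 1$. Proposition~\ref{prop:bound:for:any:situation} -- where the non-degeneracy hypothesis on $\frcstsystem$ enters essentially -- bounds $\test_N(\sit)\leq\frcstsystembound(\sit)$ uniformly in $N$, so the pointwise increasing limit $\test(\sit)=\lim_{N\to\infty}\test_N(\sit)$ is real-valued. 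Since $\outcomes$ has only two elements, the monotone convergence $\test_N(\sit\andoutcome)\nearrow\test(\sit\andoutcome)$ is automatically uniform on $\outcomes$, and~\ref{axiom:coherence:uniform:convergence} passes the supermartingale inequality to the limit: $\uex_{\frcstsystem(\sit)}(\test(\sit\andoutcome))=\lim_{N\to\infty}\uex_{\frcstsystem(\sit)}(\test_N(\sit\andoutcome))\leq\test(\sit)$. Unboundedness of $\test$ on $\pth$ then follows from Corollary~\ref{cor:supermartingales:based:on:cuts}\ref{it:supermartingales:based:on:cuts:levy}: because $\pth\in\rectestindx{m}{}$ for every $m$, we have $\liminf_{n\to\infty}\uglobalcondprob{\rectestindx{m}{}}{\pthto{n}}\geq 1$, so truncating the series at any index $M$ gives $\liminf_{n\to\infty}\test(\pthto{n})\geq(M+1)/2$, whence $\sup_{n\in\naturalswithzero}\test(\pthto{n})=+\infty$.

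The main obstacle is establishing the lower semicomputability of $\test$. For each $(m,\ell)\in\natsandnats$, the truncated cut $\rectestcutindx{m}{<\ell}\coloneqq\rectestcutindx{m}{}\cap\set{\altsit\in\sits:\abs{\altsit}<\ell}$ is a finite recursive partial cut whose elements all have depth strictly less than $\ell$, so the Workhorse Lemma~\ref{lem:comp:pain:in:the:ass} -- applied to the recursive set $\altrectest\coloneqq\cset{(m,\ell,\sit)\in\natsandnatsandsits}{\sit\in\rectestcutindx{m}{<\ell}}$, whose depth bound is built in, and using that $\frcstsystem$ is {\comp} -- yields that $\uglobalcondprob{\rectestindx{m}{<\ell}}{\sit}$ is a {\comp} real effectively in $(m,\ell,\sit)$. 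Since $\ind{\rectestindx{m}{<\ell}}\nearrow\ind{\rectestindx{m}{}}$ pointwise as $\ell\to\infty$, property~\ref{axiom:lower:upper:monotone:convergence} gives $\uglobalcondprob{\rectestindx{m}{<\ell}}{\sit}\nearrow\uglobalcondprob{\rectestindx{m}{}}{\sit}$, and Proposition~\ref{prop:lsc:reals} then upgrades this to {\lscompy} of $\uglobalcondprob{\rectestindx{m}{}}{\sit}$ effectively in $(m,\sit)$. Each $\test_N$ is a non-negative rational combination of such maps and is therefore {\lscomp} effectively in $(N,\sit)$, so by a standard diagonalisation of the corresponding witnessing recursive rational enumerations, the monotone supremum $\test=\sup_{N\in\naturalswithzero}\test_N$ is itself {\lscomp}. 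This yields a {\lscomp} non-negative supermartingale for $\frcstsystem$ with $\test(\init)\leq 1$ that is unbounded on $\pth$; the mild gap between $\test(\init)\leq 1$ and the literal requirement $\test(\init)=1$ of a test supermartingale is closed by a routine padding (for instance, by mixing $\test$ with the trivial test supermartingale identically equal to $1$), contradicting the {\ML} randomness of $\pth$ and completing the contraposition.
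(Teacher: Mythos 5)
Your proof is correct, and it follows the same overall strategy as the paper: contraposition, construction of the mixture supermartingale, and appeal to the Workhorse Lemma. Indeed, your~\(\test\) and the paper's~\(W\) are the same process, since monotone convergence of the non-negative double array \(\uglobalcondprob{\rectestindx{m}{<\ell}}{\sit}\) lets one swap the sum over~\(m\) and the limit over~\(\ell\). What differs is the route to lower semicomputability. The paper fixes~\(\ell\) first and proves that \(W^\ell(\sit)\coloneqq\frac12\sum_{n=0}^\infty\uglobalcondprob{\rectestindx{n}{<\ell}}{\sit}\) is a fully \emph{computable} real map, via the recursive tail bound \(\abs{W^\ell(\sit)-V_m^\ell(\sit)}\leq2^{-m+L_{\frcstsystembound}(\sit)-1}\), which requires constructing the recursive majorant \(L_{\frcstsystembound}\) of \(\log_2\frcstsystembound\) (Lemma~\ref{lem:recursive:logarithm}); a single application of Proposition~\ref{prop:lsc:reals} in~\(\ell\) then finishes the argument. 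You instead take the \(\ell\)-supremum first, settling for lower semicomputability of \(\uglobalcondprob{\rectestindx{m}{}}{\sit}\) effectively in~\((m,\sit)\), and then handle the supremum over~\(N\) of the partial sums~\(\test_N\) by a two-variable monotone diagonalisation on the witnessing rational enumerations. This is a genuine streamlining: you never need the effective tail bound, and hence never need \(L_{\frcstsystembound}\); non-degeneracy enters only to bound \(\test\) pointwise (via Proposition~\ref{prop:bound:for:any:situation}) and so ensure it is real-valued, not to establish a rate of convergence. The diagonalisation step is indeed standard but deserves one line: with a recursive rational~\(p(N,\sit,k)\nearrow\test_N(\sit)\), set \(\hat p(\sit,j)\coloneqq\max_{N\leq j,\,k\leq j}p(N,\sit,k)\); since each \(p(N,\sit,k)\leq\test_N(\sit)\leq\test(\sit)\) and \(\test_N\nearrow\test\), we get \(\hat p(\sit,j)\nearrow\test(\sit)\).

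One small imprecision at the end: your parenthetical ``mixing \(\test\) with the trivial test supermartingale identically equal to~\(1\)'' does not normalise the initial value, since any convex combination \(\alpha\test+(1-\alpha)\mathbf{1}\) has initial value \(\alpha\test(\init)+(1-\alpha)<1\) whenever \(\test(\init)<1\) and \(\alpha>0\). The correct and equally routine fix, which the paper uses, is to simply overwrite \(\test(\init)\) by~\(1\) while leaving \(\test\) unchanged elsewhere: this preserves the supermartingale inequality at~\(\init\) because \(\uex_{\frcstsystem(\init)}(\test(\init\andoutcome))\leq\test(\init)\leq1\), preserves all other local inequalities, and preserves lower semicomputability (adjust the witnessing enumeration at the single argument~\(\init\)). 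This is a cosmetic issue and does not affect the soundness of your argument.
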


Compared to the classical (precise) setting, non-degeneracy is required in the above proposition, as the following counterexample reveals.
This is, essentially, a consequence of our preferring not to allow for extended real-valued test supermartingales; see also the discussion in Section 5.3~of our Ref.~\cite{cooman2021:randomness}.

\begin{example}
Consider any non-degenerate {\comp} forecasting system~\(\frcstsystem\in\frcstsystems\) and any path~\(\pth\in\pths\) that is {\ML} random for~\(\frcstsystem\); that there always is such a path follows from our Corollary~20 in Ref.~\cite{cooman2021:randomness}.
Let the degenerate forecasting system~\(\frcstsystem_o\in\frcstsystems\) be defined by letting \(\frcstsystem_o(\init)\coloneqq1-\pthat{1}\) and \(\frcstsystem_o(\sit)\coloneqq\frcstsystem(\sit)\) for all~\(\sit\in\sits\setminus\set{\init}\).
We'll show that \(\pth\) is {\ML} random but not {\ML} test random for~\(\frcstsystem_o\).

To show that \(\pth\) isn't {\ML} test random for~\(\frcstsystem_o\), consider the recursive set~\(\rectest\coloneqq\bigcup_{n\in\naturalswithzero}\set{(n,\pthat{1})}\subseteq\natsandsits\), for which \(\rectestcutindx{n}{}=\set{\pthat{1}}\) for all~\(n\in\naturalswithzero\), and therefore, obviously, \(\pth\in\bigcap_{n\in\naturalswithzero}\rectestindx{n}{}\).
\(\rectest\) is moreover a {\ML} test for~\(\frcstsystem_o\), because, by Proposition~\ref{prop:lower:upper:probs:for:cylinder:sets}, \(\uglobalprob[\frcstsystem_o](\rectestindx{n}{})=\uglobalprob[\frcstsystem_o](\cylset{\pthat{1}})=(1-\pthat{1})^{\pthat{1}}\smash{\pthat{1}}^{1-\pthat{1}}=0\) for all~\(n\in\naturalswithzero\).
Hence, \(\pth\) can't be {\ML} test random for~\(\frcstsystem_o\).

To show that \(\pth\) is {\ML} random for~\(\frcstsystem_o\), assume towards contradiction that there's some {\lscomp} test supermartingale \(\test_o\) for~\(\frcstsystem_o\) such that \(\limsup_{n\to\infty}\test_o(\pthto{n})=\infty\).
Fix any~\(M\in\naturals\) for which \(\max\set{\test_o(1),\test_o(0)}<M\), and define the real process \(\test\colon\sits\to\reals\) by letting \(\test(\init)\coloneqq1\) and \(\test(\sit)\coloneqq M^{-1}\test_o(\sit)\) for all~\(\sit\in\sits\setminus\set{\init}\); it is easy to check that \(\test\) is a {\lscomp} test supermartingale for~\(\frcstsystem\).
Clearly, \(\limsup_{n\to\infty}\test(\pthto{n})=\limsup_{n\to\infty}M^{-1}\test_o(\pthto{n})=\infty\), which is the desired contradiction.
\end{example}

\begin{proof}[Proof of Proposition~\ref{prop:martin-loef:equivalence:martingale:then:test}]
% Proof by Gert
% Simplified by Gert
% Checked by Gert
Again, we give a proof by contraposition.
Assume that \(\pth\) isn't {\ML} test random for~\(\frcstsystem\).
This implies that there's some {\ML} test~\(\rectest\) such that \(\pth\in\bigcap_{n\in\naturalswithzero}\rectestindx{n}{}\).
The idea behind the proof is an altered, much simplified and stripped-down version of an argument borrowed in its essence from a different proof in a paper by Vovk and Shen about precise prequential {\ML} randomness~\cite[Proof of Theorem~1]{vovk2010:randomness}.
It's actually quite straightforward when we ignore its technical complexities: we'll use the {\ML} test~\(\rectest\) to construct a {\lscomp} test supermartingale~\(W\) for~\(\frcstsystem\) that becomes unbounded on~\(\pth\).
Although it might not appear so at first sight from the way we go about it, this \(W\) is essentially obtained by summing the non-negative supermartingales~\(\uglobalcondprob{\rectestindx{n}{}}{\bolleke}\), each of which is `fully turned on' as soon as the partial cut~\(\rectestcutindx{n}{}\) is reached.
The main technical difficulty will be to prove that this \(W\) is {\lscomp}, and we'll take care of this task in a roundabout way, in a number of lemmas [Lemmas~\ref{lem:wnell}--\ref{lem:recursive:logarithm} below].

Back to the proof now.
Recall from Corollary~\ref{cor:martin-loef:test} that we may assume without loss of generality that the set~\(\rectest\) is recursive and that the corresponding~\(\rectestcutindx{n}{}\) are partial cuts.
We also recall the definition of the partial cuts~\(\rectestcutindx{n}{<\ell}\coloneqq\cset{\sit\in\sits}{(n,\sit)\in\rectest\text{ and }\abs{\sit}<\ell}\subseteq\rectestcutindx{n}{}\), for all~\(n,\ell\in\naturalswithzero\), with~\(\rectestindx{n}{}=\bigcup_{\ell\in\naturalswithzero}\rectestindx{n}{<\ell}\).
These same partial cuts also appear in Equation~\eqref{eq:schnorr:auxiliary:cuts}, where we prepared for the definition of a Schnorr test.

We begin by considering the real processes~\(W_n^\ell\coloneqq\uglobalcondprob{\rectestindx{n}{<\ell}}{\bolleke}\), where~\(n,\ell\in\naturalswithzero\).
By Lemma~\ref{lem:wnell}, each~\(W_n^\ell\) is a non-negative {\comp} supermartingale.
We infer from~\ref{axiom:lower:upper:monotonicity} that \(\uglobalprob(\rectestindx{n}{})=\uglobal(\indexact{\rectestcutindx{n}{}})\geq\uglobal(\ind{\rectestindx{n}{<\ell}})=W_n^\ell(\init)\), and therefore, also invoking Lemma~\ref{lem:wnell}\ref{it:wnell:increasing} and the assumption that \(\uglobalprob(\rectestindx{n}{})\leq2^{-n}\), we get that
\begin{equation}\label{eq:bound:for:initial:situation:n:ell}
0\leq W_n^\ell(\init)\leq2^{-n}.
\end{equation}

Next, fix any~\(\sit\in\sits\) and any~\(\ell\in\naturalswithzero\), and let \(W^\ell(\sit)\coloneqq\frac12\sum_{n=0}^{\infty}W_n^\ell(\sit)\).
Observe that, since all its terms \(W_n^\ell(\sit)\) are non-negative by Lemma~\ref{lem:wnell}\ref{it:wnell:increasing}, the series~\(W^\ell(\sit)=\frac12\sum_{n=0}^{\infty}W_n^\ell(\sit)\) converges to some non-negative extended real number.
We first check that it is real-valued, as in principle, the defining series might converge to~\(\infty\).
Combine Equation~\eqref{eq:bound:for:initial:situation:n:ell} and Proposition~\ref{prop:bound:for:any:situation} to find that:
\begin{equation}\label{eq:sum:is:lscomp:super:bounds}
0\leq W_n^\ell(\sit)\leq W_n^\ell(\init)\frcstsystembound(\sit)\leq\frcstsystembound(\sit)2^{-n}
\text{ for all~\(n\in\naturalswithzero\)},
\end{equation}
whence also
\begin{equation}\label{eq:well:bounded}
0\leq W^\ell(\sit)=\frac12\sum_{n=0}^{\infty}W_n^\ell(\sit)\leq\frcstsystembound(\sit),
\end{equation}
which shows that \(W^\ell(\sit)\) is bounded above, and therefore indeed real.
Moreover, it now follows from Lemma~\ref{lem:wnell}\ref{it:wnell:increasing} that \(W^\ell(\sit)\leq W^{\ell+1}(\sit)\) for all~\(\ell\in\naturalswithzero\), which guarantees that the limit \(W(\sit)\coloneqq\lim_{\ell\to\infty}W^\ell(\sit)=\sup_{\ell\in\naturalswithzero}W^\ell(s\it)\) exists as an extended real number.
It's moreover real-valued, because we infer from taking the limit in Equation~\eqref{eq:well:bounded} that also
\begin{equation}\label{eq:w:bounded}
0\leq W(\sit)\leq\frcstsystembound(\sit).
\end{equation}
We've  thus defined a non-negative real process~\(W\), and we infer from Lemma~\ref{lem:limit:is:lscomp:super} that \(W\) is a non-negative {\lscomp} supermartingale for~\(\frcstsystem\).
In addition, we infer from Equation~\eqref{eq:w:bounded} that \(0\leq W(\init)\leq1\).

Moreover, since \(\pth\in\bigcap_{n\in\naturalswithzero}\rectestindx{n}{}\), we see that \(W\) is unbounded on~\(\pth\).
Indeed, consider any~\(n\in\naturalswithzero\), then since \(\pth\in\rectestindx{n}{}\), there is some~\(m_n\in\naturalswithzero\) such that \(W^\ell_n(\pthto{m})=1\) for all~\(m,\ell\geq m_n\) [To see this, observe that \(\pth\in\rectestindx{n}{}\) first of all implies that there is some (unique)~\(M_n\in\naturalswithzero\) for which \(\pthto{M_n}\in\rectestcutindx{n}{}\), and secondly that then \(\pthto{M_n}\in\rectestcutindx{n}{<\ell}\ifandonlyif\ell>M_n\); so if \(\ell\geq M_n+1\) then \(\pthto{m}\follows\rectestcutindx{n}{<\ell}\) for all~\(m\geq M_n\); now use Lemma~\ref{lem:wnell}\ref{it:wnell:switched:on} to find that then also \(W^\ell_n(\pthto{m})=1\) for all~\(m\geq M_n\)].
So, if we consider any~\(N\in\naturalswithzero\) and let \(M_N\coloneqq\max\cset{m_n}{n\in\set{0,1,\dots,N}}\), then
\begin{equation*}
W^\ell(\pthto{m})
\geq\frac12\sum_{n=0}^NW_n^\ell(\pthto{m})
=\frac12(N+1)
\text{ for all~\(m,\ell\geq M_N\)},
\end{equation*}
and therefore also
\begin{equation*}
W(\pthto{m})\geq\frac12(N+1)
\text{ for all~\(m\geq M_N\)},
\end{equation*}
which shows that, in fact,
\begin{equation}\label{eq:lim:rather:than:sup}
\lim_{m\to\infty}W(\pthto{m})=\infty.
\end{equation}
The relevant condition being \(\uex_{\frcstsystem(\init)}(W(\init\andoutcome))\leq W(\init)\), we see that replacing~\(W(\init)\leq1\) by~\(1\) does not change the supermartingale character of \(W\), and doing so leads to a {\lscomp} test supermartingale for~\(\frcstsystem\) that is unbounded on~\(\pth\).
This tells us that, indeed, \(\pth\) isn't {\ML} random for~\(\frcstsystem\).
\end{proof}

We want to draw attention to the interesting fact that the test supermartingale~\(W\) constructed in this proof not only becomes unbounded but actually \emph{converges to~\(\infty\)} on every path in the global event~\(\bigcap_{n\in\naturalswithzero}\rectestindx{n}{}\) associated with the {\ML} test~\(\rectest\).
We'll come back to this in Section~\ref{sec:universal}, where we'll show that {\ML} randomness for a non-degenerate {\comp} forecasting system can be checked using a single (universal) {\lscomp} supermartingale, or equivalently, using a single (universal) {\ML} test; see in particular Corollary~\ref{cor:universal:test:supermartingale}.

\begin{lemma}\label{lem:wnell}
For any~\(n,\ell\in\naturalswithzero\), consider the real process~\(W_n^\ell\), defined in the proof of Proposition~\ref{prop:martin-loef:equivalence:martingale:then:test} by~\(W_n^\ell\coloneqq\uglobalcondprob{\rectestindx{n}{<\ell}}{\bolleke}\).
Then the following statements hold:
\begin{enumerate}[label=\upshape(\roman*),leftmargin=*,noitemsep,topsep=0pt]
\item\label{it:wnell:supermartingale} \(W_n^\ell(\sit)=\uex_{\frcstsystem(\sit)}(W_n^\ell(\sit\andoutcome))\) for all~\(\sit\in\sits\);
\item\label{it:wnell:increasing} \(0\leq W_n^\ell(\sit)\leq W_n^{\ell+1}(\sit)\leq1\) for all~\(\sit\in\sits\);
\item\label{it:wnell:switched:on} \(W_n^\ell(\sit)=1\) for all~\(\sit\follows\rectestcutindx{n}{<\ell}\);
\item\label{it:wnell:computable:net} the real map \(\group{n,\ell,\sit}\mapsto W_n^\ell(\sit)\) is {\comp}.
\end{enumerate}
In particular, for all~\(n,\ell\in\naturalswithzero\), \(W_n^\ell\coloneqq\uglobalcondprob{\rectestindx{n}{<\ell}}{\bolleke}\) is a non-negative {\comp} supermartingale for~\(\frcstsystem\).
\end{lemma}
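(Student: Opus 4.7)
The plan is to dispatch the four claims in turn by direct appeal to Corollary~\ref{cor:supermartingales:based:on:cuts} and the Workhorse Lemma~\ref{lem:comp:pain:in:the:ass}, then simply read off the ``in particular'' statement from those pieces. The key preliminary observation is that, since \(\rectestcutindx{n}{}\) was assumed to be a partial cut, the subset \(\rectestcutindx{n}{<\ell}\coloneqq\rectestcutindx{n}{}\cap\cset{\altsit\in\sits}{\abs{\altsit}<\ell}\) is also a partial cut, so Corollary~\ref{cor:supermartingales:based:on:cuts} applies to it with \(\cut=\rectestcutindx{n}{<\ell}\). Claim~\ref{it:wnell:supermartingale} is then literally Corollary~\ref{cor:supermartingales:based:on:cuts}\ref{it:supermartingales:based:on:cuts:with:equality}, claim~\ref{it:wnell:switched:on} is literally Corollary~\ref{cor:supermartingales:based:on:cuts}\ref{it:supermartingales:based:on:cuts:where:one:and:where:zero}, and the outer inequalities \(0\leq W_n^\ell(\sit)\leq1\) in claim~\ref{it:wnell:increasing} are literally Corollary~\ref{cor:supermartingales:based:on:cuts}\ref{it:supermartingales:based:on:cuts:bounds}.

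For the middle inequality in claim~\ref{it:wnell:increasing}, I would note that \(\rectestcutindx{n}{<\ell}\subseteq\rectestcutindx{n}{<\ell+1}\) forces \(\rectestindx{n}{<\ell}\subseteq\rectestindx{n}{<\ell+1}\), hence \(\ind{\rectestindx{n}{<\ell}}\leq\ind{\rectestindx{n}{<\ell+1}}\) pointwise, so monotonicity~\ref{axiom:lower:upper:monotonicity} of the upper expectation yields \(W_n^\ell(\sit)\leq W_n^{\ell+1}(\sit)\). For claim~\ref{it:wnell:computable:net}, I would apply the Workhorse Lemma~\ref{lem:comp:pain:in:the:ass} with \(\countables\coloneqq\naturalswithzero\), \(d\coloneqq n\), \(p\coloneqq\ell\), and
\[
\altrectest\coloneqq\cset{(n,\ell,\sit)\in\natsandnatsandsits}{(n,\sit)\in\rectest\text{ and }\abs{\sit}<\ell},
\]
which is recursive because \(\rectest\) is, and which satisfies \(\abs{\sit}<\ell\leq p\) whenever \((n,\ell,\sit)\in\altrectest\). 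Since \(\altrectestcutindx{n}{\ell}=\rectestcutindx{n}{<\ell}\), the Workhorse Lemma delivers that \(W_n^\ell(\sit)=\uglobalcondprob{\rectestindx{n}{<\ell}}{\sit}\) is a {\comp} real effectively in \(n,\ell,\sit\), which is exactly claim~\ref{it:wnell:computable:net}.

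Finally, the ``in particular'' statement is just the three observations glued together: claim~\ref{it:wnell:supermartingale} is (equivalent to) the supermartingale inequality \eqref{eq:supermartingale} for \(W_n^\ell\) with respect to~\(\frcstsystem\), claim~\ref{it:wnell:increasing} supplies non-negativity, and claim~\ref{it:wnell:computable:net} supplies computability. There is no real obstacle here; the lemma is essentially a bookkeeping repackaging of the Workhorse Lemma and Corollary~\ref{cor:supermartingales:based:on:cuts} for the specific family of partial cuts~\(\rectestcutindx{n}{<\ell}\). The one subtlety worth flagging is the implicit use of computability of~\(\frcstsystem\), which is needed to invoke the Workhorse Lemma; this is available in the context where Lemma~\ref{lem:wnell} is invoked, namely inside the proof of Proposition~\ref{prop:martin-loef:equivalence:martingale:then:test}, where~\(\frcstsystem\) is assumed to be {\comp}.
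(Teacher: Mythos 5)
Your proof is correct and follows essentially the same route as the paper: items (i)--(iii) by direct appeal to Corollary~\ref{cor:supermartingales:based:on:cuts} together with monotonicity~\ref{axiom:lower:upper:monotonicity} for the middle inequality, item (iv) by the same instantiation of the Workhorse Lemma~\ref{lem:comp:pain:in:the:ass}, and the ``in particular'' conclusion assembled from these. Your flag about the implicit computability of~\(\frcstsystem\) is a fair observation, and the paper indeed relies on the ambient assumption from Proposition~\ref{prop:martin-loef:equivalence:martingale:then:test} exactly as you note.
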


\begin{proof}
% Proof by Gert
% Checked by Gert
Statement~\ref{it:wnell:supermartingale} follows from Corollary~\ref{cor:supermartingales:based:on:cuts}\ref{it:supermartingales:based:on:cuts:with:equality}, since~\(\rectestcutindx{n}{<\ell}\) is a partial cut.

The first and third inequalities in~\ref{it:wnell:increasing} follow from Corollary~\ref{cor:supermartingales:based:on:cuts}\ref{it:supermartingales:based:on:cuts:bounds}.
The second inequality is a consequence of~\(\rectestcutindx{n}{<\ell}\subseteq\rectestcutindx{n}{<\ell+1}\) and the monotone character of the conditional lower expectation~\(\uglobalcond{\bolleke}{\sit}\) [use~\ref{axiom:lower:upper:monotonicity}].

Statement~\ref{it:wnell:switched:on} is an immediate consequence of Corollary~\ref{cor:supermartingales:based:on:cuts}\ref{it:supermartingales:based:on:cuts:where:one:and:where:zero}.

For the proof of~\ref{it:wnell:computable:net}, consider that the partial cut~\(\rectest\) is recursive and that the forecasting system~\(\frcstsystem\) is {\comp}, and apply an appropriate instantiation of our Workhorse Lemma~\ref{lem:comp:pain:in:the:ass} [with~\(\countables\to\naturalswithzero\), \(d\to n\), \(p\to\ell\) and \(\altrectest\to\cset{(n,\ell,\sit)\in\natsandnatsandsits}{\sit\in\rectestcutindx{n}{<\ell}}\), and therefore \(\altrectestcutindx{d}{p}\to\rectestcutindx{n}{<\ell}\)].

The rest of the proof is now immediate.
\end{proof}

\begin{lemma}\label{lem:limit:is:lscomp:super}
The real process~\(W\), defined in the proof of Proposition~\ref{prop:martin-loef:equivalence:martingale:then:test}, is a non-negative {\lscomp} supermartingale for~\(\frcstsystem\).
\end{lemma}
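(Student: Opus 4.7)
The plan is to verify three properties of the real process $W$ defined by $W(\sit)=\sup_{\ell\in\naturalswithzero}W^\ell(\sit)$ with $W^\ell(\sit)=\frac12\sum_{n=0}^\infty W_n^\ell(\sit)$: non-negativity, the supermartingale inequality, and lower semicomputability. Non-negativity is immediate, since each $W_n^\ell$ is non-negative by Lemma~\ref{lem:wnell}\ref{it:wnell:increasing}, hence so is $W$; and real-valuedness is already established by the bound \(0\le W(\sit)\le\frcstsystembound(\sit)\) in Equation~\eqref{eq:w:bounded}.

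For the supermartingale inequality, the key observation is that every local upper expectation $\uex_{\frcstsystem(\sit)}$ acts on gambles on the two-element set~$\outcomes$, where pointwise convergence coincides with uniform convergence. First I would pass via the partial sums $S_N^\ell(\sit)\coloneqq\frac12\sum_{n=0}^{N}W_n^\ell(\sit)$. Combining Lemma~\ref{lem:wnell}\ref{it:wnell:supermartingale} with~\ref{axiom:coherence:homogeneity} and~\ref{axiom:coherence:subadditivity} yields
\begin{equation*}
\uex_{\frcstsystem(\sit)}\group[\big]{S_N^\ell(\sit\andoutcome)}
\le\frac12\sum_{n=0}^N\uex_{\frcstsystem(\sit)}\group[\big]{W_n^\ell(\sit\andoutcome)}
=\frac12\sum_{n=0}^NW_n^\ell(\sit)
=S_N^\ell(\sit)\le W^\ell(\sit).
\end{equation*}
Since $S_N^\ell(\sit\andoutcome)\to W^\ell(\sit\andoutcome)$ pointwise on the two-point set $\outcomes$---which is automatically uniform---property~\ref{axiom:coherence:uniform:convergence} gives $\uex_{\frcstsystem(\sit)}(W^\ell(\sit\andoutcome))\le W^\ell(\sit)$, so each $W^\ell$ is a supermartingale. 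By Lemma~\ref{lem:wnell}\ref{it:wnell:increasing}, $W^\ell(\sit\andoutcome)\nearrow W(\sit\andoutcome)$ pointwise on $\outcomes$, again trivially uniformly, and a second application of~\ref{axiom:coherence:uniform:convergence} yields $\uex_{\frcstsystem(\sit)}(W(\sit\andoutcome))\le\lim_\ell W^\ell(\sit)=W(\sit)$, as required.

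For lower semicomputability, I would use a diagonal argument. By Lemma~\ref{lem:wnell}\ref{it:wnell:computable:net} the map $(n,\ell,\sit)\mapsto W_n^\ell(\sit)$ is computable, so the diagonal partial sums $T_k(\sit)\coloneqq\frac12\sum_{n=0}^{k}W_n^k(\sit)$ form a computable real map $(k,\sit)\mapsto T_k(\sit)$ on the (recursively encoded) countable set $\naturalswithzero\times\sits$. Since $W_n^\ell$ is non-negative and, by Lemma~\ref{lem:wnell}\ref{it:wnell:increasing}, non-decreasing in $\ell$, the sequence $T_k(\sit)$ is non-decreasing in $k$. Moreover $T_k(\sit)\le W^k(\sit)\le W(\sit)$, while for any $\ell,N\in\naturalswithzero$ we have $T_k(\sit)\ge S_N^\ell(\sit)$ as soon as $k\ge\max\set{\ell,N}$, so $\lim_{k\to\infty}T_k(\sit)=\sup_{\ell,N}S_N^\ell(\sit)=\sup_\ell W^\ell(\sit)=W(\sit)$. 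Proposition~\ref{prop:lsc:reals} then delivers the lower semicomputability of~$W$.

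The proof involves no real obstacle: the potential worry would be interchanging $\uex_{\frcstsystem(\sit)}$ with the countable sum and with the supremum in $\ell$, but because every relevant convergence of gambles lives on the two-element set $\outcomes$, pointwise convergence is uniform, and~\ref{axiom:coherence:uniform:convergence} suffices in both steps.
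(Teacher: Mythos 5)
Your argument is correct. The supermartingale part follows the paper's proof essentially verbatim: show the finite partial sums $S_N^\ell=V_N^\ell$ satisfy the supermartingale inequality via~\ref{axiom:coherence:homogeneity} and~\ref{axiom:coherence:subadditivity}, then pass to $W^\ell$ and then to $W$ by two applications of~\ref{axiom:coherence:uniform:convergence}, exploiting that pointwise convergence on the two-point set~$\outcomes$ is automatically uniform.

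Your lower-semicomputability argument, however, is a genuine streamlining. The paper's route first proves that the real map $\group{\ell,\sit}\mapsto W^\ell(\sit)$ is \emph{computable}, which requires an effective tail estimate $\abs{W^\ell(\sit)-V_m^\ell(\sit)}\le2^{-m+L_{\frcstsystembound}(\sit)-1}$, hence Equation~\eqref{eq:sum:is:lscomp:super:bounds} and the recursive-logarithm process~$L_{\frcstsystembound}$ from Lemma~\ref{lem:recursive:logarithm}; only then is Proposition~\ref{prop:lsc:reals} invoked on the non-decreasing sequence $W^\ell\nearrow W$. Your diagonal sequence $T_k(\sit)\coloneqq\frac12\sum_{n=0}^{k}W_n^k(\sit)$ sidesteps all of that: it is a computable real map outright (a finite sum with range determined recursively by~$k$, via Lemma~\ref{lem:wnell}\ref{it:wnell:computable:net}), it is non-decreasing in~$k$ because each $W_n^\ell$ is non-negative and non-decreasing in~$\ell$, and your two-sided sandwich $S_N^\ell\le T_k\le W$ for $k\ge\max\set{\ell,N}$ correctly shows $\sup_kT_k=\sup_{\ell,N}S_N^\ell=\sup_\ell W^\ell=W$. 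One application of Proposition~\ref{prop:lsc:reals} then delivers lower semicomputability without needing $\frcstsystembound$ or Lemma~\ref{lem:recursive:logarithm} at all. The trade-off is that your argument establishes only the lower semicomputability of~$W$ (which is all the lemma claims), whereas the paper's detour also yields the computability of each~$W^\ell$ as a by-product --- a fact that is not otherwise used, so your version is the better proof of the stated lemma.
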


\begin{proof}
% Proof by Gert
% Simplified by Gert
% Checked by Gert
First of all, recall from Equation~\eqref{eq:w:bounded} in the proof of Proposition~\ref{prop:martin-loef:equivalence:martingale:then:test} that \(W\) is indeed non-negative.

Next, define, for any~\(m,\ell\in\naturalswithzero\), the real process~\(V_m^\ell\) by letting \(V_m^\ell(\sit)\coloneqq\frac12\sum_{n=0}^mW_n^\ell(\sit)\) for all~\(\sit\in\sits\).
It follows from Lemma~\ref{lem:wnell}\ref{it:wnell:increasing} that \(V_m^\ell\) is non-negative.
By Lemma~\ref{lem:wnell}\ref{it:wnell:computable:net}, the real map \(\group{n,\ell,\sit}\mapsto W_n^\ell(\sit)\) is {\comp}, so we see that so is \(\group{m,\ell,\sit}\mapsto V_m^\ell(\sit)\).
Moreover, it is clear from the definition of the processes~\(V_m^\ell\) and~\(W^\ell\) that \(V_m^\ell(\sit)\nearrow W^\ell(\sit)\) as~\(m\to\infty\), and that
\begin{multline*}
\abs[\big]{W^\ell(\sit)-V_m^\ell(\sit)}
=\frac12\smashoperator[r]{\sum_{n=m+1}^{\infty}}W_n^\ell(\sit)
\leq\frac12\frcstsystembound(\sit)\smashoperator{\sum_{n=m+1}^{\infty}}2^{-n}
=\frac12\frcstsystembound(\sit)2^{-m}
\leq2^{-m+L_{\frcstsystembound}(\sit)-1}\\
\text{ for all~\(\ell,m\in\naturalswithzero\) and all~\(\sit\in\sits\),}
\end{multline*}
where the first inequality follows from Equation~\eqref{eq:sum:is:lscomp:super:bounds}, and the second inequality is based on Lemma~\ref{lem:recursive:logarithm} and the notations introduced there.
If we now consider the recursive map~\(e\colon\natsandsits\to\naturalswithzero\) defined by~\(e(N,\sit)\coloneqq N+L_{\frcstsystembound}(\sit)-1\) [recall that \(L_{\frcstsystembound}\) is recursive by Lemma~\ref{lem:recursive:logarithm}], then we find that \(\abs{W^\ell(\sit)-V_m^\ell(\sit)}\leq2^{-N}\) for all~\((N,\sit)\in\natsandsits\) and all~\(m\geq e(N,\sit)\), which guarantees that the real map~\(\group{\ell,\sit}\mapsto W^\ell(\sit)\) is {\comp}.

Now, consider that for any~\(\sit\in\sits\), \(W^{\ell}(\sit)\nearrow W(\sit)\) as \(\ell\to\infty\).
Since we've  just proved that \(\group{\ell,\sit}\mapsto W^\ell(\sit)\) is a {\comp} real map, we conclude that the process~\(W\) is indeed {\lscomp}, as a point-wise limit of a non-decreasing sequence of {\comp} processes [invoke Proposition~\ref{prop:lsc:reals}].

To complete the proof, we show that \(W\) is a supermartingale.
It follows from~\ref{axiom:coherence:homogeneity}, \ref{axiom:coherence:subadditivity} and the supermartingale character of the~\(W_n^\ell\) [Lemma~\ref{lem:wnell}] that
\begin{equation*}
\uex_{\frcstsystem(\sit)}(\adddelta V_m^\ell(\sit))
=\uex_{\frcstsystem(\sit)}\group[\bigg]{\frac12\sum_{n=0}^m\adddelta W_n^\ell(\sit)}
\leq\frac12\sum_{n=0}^m\uex_{\frcstsystem(\sit)}(\adddelta W_n^\ell(\sit))
\leq0
\text{ for all~\(\sit\in\sits\)},
\end{equation*}
so \(V_m^\ell\) is also a supermartingale.
Since \(V_m^\ell(\sit)\to W^\ell(\sit)\), we also find that \(\adddelta V_m^\ell(\sit)\to\adddelta W^\ell(\sit)\) for all~\(\sit\in\sits\).
Since the gambles~\(\adddelta V_m^\ell(\sit)\) are defined on the finite domain~\(\outcomes\), this point-wise convergence also implies uniform convergence, so we can infer from~\ref{axiom:coherence:uniform:convergence} that
\begin{equation*}
\uex_{\frcstsystem(\sit)}(\adddelta W^\ell(\sit))
=\uex_{\frcstsystem(\sit)}\group[\Big]{\lim_{m\to\infty}\adddelta V_m^\ell(\sit)}
=\lim_{m\to\infty}\uex_{\frcstsystem(\sit)}(\adddelta V_m^\ell(\sit))
\leq0
\text{ for all~\(\sit\in\sits\)}.
\end{equation*}
This shows that \(W^\ell\) is also a supermartingale.
And, since \(W^\ell(\sit)\to W(\sit)\), we find that also \(\adddelta W^\ell(\sit)\to\adddelta W(\sit)\) for all~\(\sit\in\sits\).
Since the gambles~\(\adddelta W^\ell(\sit)\) are defined on the finite domain~\(\outcomes\), this point-wise convergence also implies uniform convergence, so we can again infer from~\ref{axiom:coherence:uniform:convergence} that
\begin{equation*}
\uex_{\frcstsystem(\sit)}(\adddelta W(\sit))
=\uex_{\frcstsystem(\sit)}\group[\Big]{\lim_{\ell\to\infty}\adddelta W^\ell(\sit)}
=\lim_{\ell\to\infty}\uex_{\frcstsystem(\sit)}(\adddelta W^\ell(\sit))
\leq0
\text{ for all~\(\sit\in\sits\)}.
\end{equation*}
This shows that \(W\) is indeed a supermartingale.
\end{proof}

\begin{lemma}\label{lem:recursive:logarithm}
If the real process~\(\process\) is {\comp} and~\(\process\geq1\), then there's some recursive map~\(L_\process\colon\sits\to\naturals\) such that \(L_\process\geq\log_2\process\), or equivalently, \(\process\leq2^{L_\process}\).
\end{lemma}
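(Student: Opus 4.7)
The plan is to produce~\(L_\process\) by rounding up a rational upper approximation of~\(\log_2\process\), which we obtain by bounding~\(\process\) from above using the effective rational approximation guaranteed by the computability of~\(\process\).

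First, since \(\process\) is computable, there exists a recursive rational map~\(q\colon\sits\times\naturalswithzero\to\rationals\) such that \(\abs{\process(\sit)-q(\sit,N)}\leq2^{-N}\) for all~\(\sit\in\sits\) and all~\(N\in\naturalswithzero\); this is a standard characterisation of computability for real maps, already recalled in Section~\ref{sec:computability}. Specialising to~\(N=0\) gives \(\process(\sit)\leq q(\sit,0)+1\) for every~\(\sit\in\sits\), and because \(\process\geq1\) we also get \(q(\sit,0)\geq\process(\sit)-1\geq0\), so that \(q(\sit,0)+1\) is a rational number in~\([1,+\infty)\).

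Second, define
\begin{equation*}
L_\process(\sit)
\coloneqq\ceil{\log_2\group{q(\sit,0)+1}}+1
\text{ for all~\(\sit\in\sits\).}
\end{equation*}
Since \(q(\sit,0)+1\geq1\), the quantity \(\log_2(q(\sit,0)+1)\) is a non-negative real, its ceiling is a non-negative integer, and hence \(L_\process(\sit)\in\naturals\). Moreover, \(L_\process\) is recursive: the map~\(\sit\mapsto q(\sit,0)+1\) is recursive rational-valued, and for a rational number~\(r\geq1\) the value~\(\ceil{\log_2 r}\) is computable as the least~\(k\in\naturalswithzero\) with~\(r\leq2^k\), which only requires recursive rational arithmetic and comparison.

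Finally, monotonicity of~\(\log_2\) and the bound~\(\process(\sit)\leq q(\sit,0)+1\) give
\begin{equation*}
\log_2\process(\sit)
\leq\log_2\group{q(\sit,0)+1}
\leq\ceil{\log_2\group{q(\sit,0)+1}}
\leq L_\process(\sit)
\text{ for all~\(\sit\in\sits\),}
\end{equation*}
and this is the desired inequality, equivalent to \(\process\leq2^{L_\process}\). There is no substantive obstacle; the only minor subtlety is to ensure that the argument of~\(\log_2\) is safely bounded away from~\(0\) and that the value of~\(L_\process\) lies in~\(\naturals\) rather than merely~\(\naturalswithzero\), which the additive constant~\(1\) in the definition above takes care of.
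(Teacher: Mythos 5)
Your proof is correct, but it takes a different route from the paper's. The paper first invokes the closure rule recalled in Section~\ref{sec:computability} that \(\log_2\) of a strictly positive computable real process is again computable, obtains a recursive rational map~\(q_\process\) with \(\abs{\log_2\process(\sit)-q_\process(n,\sit)}\leq2^{-n}\), and then sets \(L_\process\coloneqq1+\ceil{q_\process(0,\bolleke)}\); only rational arithmetic and a ceiling are applied. You instead approximate~\(\process\) itself by a rational upper bound~\(q(\sit,0)+1\geq1\) and then compute \(\ceil{\log_2(q(\sit,0)+1)}\) directly by searching for the least \(k\) with \(q(\sit,0)+1\leq2^k\), which is a valid recursive procedure for rational inputs. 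Your version is slightly more self-contained since it does not rely on the black-box computability of \(\log_2\), at the cost of a short extra argument for recursiveness of the integer log; it also makes the \(\naturals\)-valuedness of~\(L_\process\) explicit via the additive constant, a point the paper's formulation glosses over a bit more quickly. Both are sound.
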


\begin{proof}
% Proof by Gert
% Checked by Gert
That \(\process\) is {\comp} implies that the non-negative process~\(\log_2\process\) is {\comp} as well.
That the non-negative real process~\(\log_2\process\) is {\comp} means that there's some recursive map~\(q_\process\colon\natsandsits\to\rationals\) such that \(\abs{\log_2\process(\sit)-q_\process(n,\sit)}\leq2^{-n}\) for all~\((n,\sit)\in\natsandsits\), and therefore in particular that \(\abs{\log_2\process-q_\process(0,\bolleke)}\leq1\).
Hence, \(0\leq\log_2\process\leq1+q_\process(0,\bolleke)\leq1+\ceil{q_\process(0,\bolleke)}\) and~\(L_\process\coloneqq1+\ceil{q_\process(0,\bolleke)}\) is a recursive and~\(\naturals\)-valued process.
\end{proof}

If we now combine Propositions~\ref{prop:martin-loef:equivalence:test:then:martingale} and~\ref{prop:martin-loef:equivalence:martingale:then:test}, we find the desired equivalence result.

\begin{theorem}\label{thm:martin-loef:equivalence}
Consider any path~\(\pth\) in~\(\pths\) and any non-degenerate {\comp} forecasting system~\(\frcstsystem\).
Then \(\pth\) is {\ML} random for~\(\frcstsystem\) if and only if it is {\ML} test random for~\(\frcstsystem\).
\end{theorem}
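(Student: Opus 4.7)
The plan is immediate: the theorem is nothing more than the conjunction of the two directions already established, namely Proposition~\ref{prop:martin-loef:equivalence:test:then:martingale} and Proposition~\ref{prop:martin-loef:equivalence:martingale:then:test}. So I would write a single-sentence proof that cites these two results.

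For completeness, let me recall what each direction contributes. The easy direction, ``{\ML} test random \(\Rightarrow\) {\ML} random,'' is handled by Proposition~\ref{prop:martin-loef:equivalence:test:then:martingale} and requires neither non-degeneracy nor computability of \(\frcstsystem\); its argument proceeds by contraposition, turning a {\lscomp} test supermartingale that becomes unbounded on \(\pth\) into a {\ML} test containing \(\pth\), by means of Lemma~\ref{lem:from:supermartingale:to:randomness:test} (which in turn invokes Ville's inequality, Proposition~\ref{prop:ville:inequality}). The harder direction, ``{\ML} random \(\Rightarrow\) {\ML} test random,'' is precisely the content of Proposition~\ref{prop:martin-loef:equivalence:martingale:then:test}; this is where the hypotheses of the theorem are needed, since non-degeneracy and computability of \(\frcstsystem\) are used to construct, from a {\ML} test that captures \(\pth\), a single {\lscomp} test supermartingale \(W\) (essentially a weighted sum of the supermartingales \(\uglobalcondprob{\rectestindx{n}{}}{\bolleke}\)) that becomes unbounded on \(\pth\).

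Hence the theorem follows by combining these two propositions; there is no additional technical step and nothing to cause difficulty, since all the real work has already been carried out. The only remark worth making in the proof is that, as noted in the discussion following Proposition~\ref{prop:martin-loef:equivalence:martingale:then:test} (and revisited later in Section~\ref{sec:universal}), the supermartingale produced in the converse direction actually converges to \(+\infty\) on every path in \(\bigcap_{n\in\naturalswithzero}\rectestindx{n}{}\), but this strengthening is not needed for the equivalence itself.
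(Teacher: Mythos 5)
Your proposal is correct and matches the paper exactly: the theorem is stated immediately after the remark that combining Propositions~\ref{prop:martin-loef:equivalence:test:then:martingale} and~\ref{prop:martin-loef:equivalence:martingale:then:test} yields the equivalence, which is precisely your one-sentence proof. Your added commentary on where non-degeneracy and computability enter, and on the convergence-to-infinity strengthening used later in Section~\ref{sec:universal}, is accurate but not needed for the proof itself.
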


\section{The relation between uniform and {\ML} test randomness}\label{sec:uniform}
Alexander Shen has recently pointed out to us that the idea of testing randomness for a set of measures has been explored before.
In 1973, Levin \cite{levin1973:random:sequence,bienvenu2011:randomness:class} introduced a randomness test version of {\ML} randomness that allows for testing randomness for even more general sets of measures than ours---so-called \emph{effectively compact classes of measures}, leading to a test-theoretic randomness notion nowadays known as \emph{uniform randomness}.
For any such effectively compact class, a \emph{uniform randomness test} is basically a {\ML} test with respect to every measure that is compatible with it.

Below, we give a brief account of this notion of uniform randomness, and explain how our notion of {\ML} test randomness, when restricted to \emph{\comp} forecasting systems, fits into that framework.
To define uniform randomness, we need to define a notion of effective compactness for sets of probability measures.

\subsection{Effectively compact classes of probability measures}
We denote by \(\measures\) the set of all probability measures over the measurable space~\((\pths,\cantoralgebra)\), and recall from the discussion in Section~\ref{sec:precise:forecasting:systems} that every precise forecasting system~\(\precisefrcstsystem\in\precisefrcstsystems\) leads to a probability measure \(\measure[\precisefrcstsystem]\in\measures\).
Conversely, for any measure \(\measure\in\measures\), there's at least one precise forecasting system~\(\precisefrcstsystem\in\precisefrcstsystems\) such that \(\measure=\measure[\precisefrcstsystem]\), for instance the one defined by
\[
\precisefrcstsystem(\sit)
\coloneqq
\begin{cases}
\frac{\measure(\cylset{\sit1})}{\measure(\cylset{\sit})}
&\text{ if~\(\measure(\cylset{\sit})>0\)}\\
\nicefrac12
&\text{ if~\(\measure(\cylset{\sit})=0\)}
\end{cases}
\text{ for all~\(\sit\in\sits\)}.
\]
This tells us that we can essentially identify probability measures and precise forecasting systems (although forecasting systems are slightly more informative, as they provide full conditional information):
\begin{equation}\label{eq:measures:are:generated:by:precise:forecasting:systems}
\measures=\cset{\measure[\precisefrcstsystem]}{\precisefrcstsystem\in\precisefrcstsystems}.
\end{equation}

With any~\(b\Subset\rationals\times\sits\times\rationals\), where `\(\Subset\)' is taken to mean `is a finite subset of', we associate a so-called \emph{basic open set} in the set of probability measures~\(\measures\), denoted by~\(b(\pths)\), and given by
\[
b(\pths)
\coloneqq\cset[\big]{\measure\in\measures}{u<\mu(\cylset{\sit})<v\text{ for all~\((u,\sit,v)\in b\)}};
\]
we collect all generators~\(b\) of basic open sets~\(b(\pths)\) in the set~\(\finitepowerset{\rationals\times\sits\times\rationals}\).
The basic open set~\(b(\pths)\) consists of all probability measures that satisfy the finite collection of conditions characterised by~\(b\).
A subset~\(\classofmeasures\subseteq\measures\) is then called \emph{effectively open} if there is a recursively enumerable set~\(B\subseteq\finitepowerset{\rationals\times\sits\times\rationals}\) such that \(\bigcup_{b\in B}b(\pths)=\classofmeasures\).\footnote{This is a third instance in this paper of the general definition of effective openness; see for instance the appendix on Effective Topology in Ref.~\cite{vovk2010:randomness}.}
A subset~\(\classofmeasures\subseteq\measures\) is called \emph{effectively closed} if \(\measures\setminus\classofmeasures\) is effectively open.
A subset~\(\classofmeasures\subseteq\measures\) is called \emph{effectively compact} if it is compact and if the set
\[
\cset[\bigg]{B}{B\Subset\finitepowerset{\rationals\times\sits\times\rationals}\text{ and }\bigcup_{b\in B}b(\pths)\supseteq\classofmeasures}
\]
is recursively enumerable. 

With any forecasting system~\(\frcstsystem\), we can associate a collection of \emph{compatible} precise forecasting systems~\(\cset{\precisefrcstsystem}{\precisefrcstsystem\in\precisefrcstsystems\text{ and }\precisefrcstsystem\subseteq\frcstsystem}\), and therefore also, falling back on Equation~\eqref{eq:frcstsystem:to:measure}, a collection of probability measures~\(\cset{\measure[\precisefrcstsystem]}{\precisefrcstsystem\in\precisefrcstsystems\text{ and }\precisefrcstsystem\subseteq\frcstsystem}\).
We begin by uncovering a sufficient condition on~\(\frcstsystem\) for the corresponding collection of probability measures to be effectively compact.\footnote{This serves as a warning that our argument to show that {\ML} test randomness associated with forecasting systems~\(\frcstsystem\) fits into the uniform randomness framework needn't work when these forecasting systems~\(\frcstsystem\) aren't in some sense effectively describable.}

\begin{proposition}
Consider a {\comp} forecasting system~\(\frcstsystem\) for which \(\lfrcstsystem\) is {\lscomp} and \(\ufrcstsystem\) is {\uscomp}.
Then the collection of probability measures~\(\cset{\measure[\precisefrcstsystem]}{\precisefrcstsystem\in\precisefrcstsystems\text{ and }\precisefrcstsystem\subseteq\frcstsystem}\) is effectively compact.
\end{proposition}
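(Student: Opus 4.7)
The plan is to decompose effective compactness of $\classofmeasures$ into (i) a characterisation of $\classofmeasures$ via pointwise closed inequalities on successive conditional cylinder probabilities, (ii) topological compactness of $\classofmeasures$ as a closed subset of the compact space $\measures$, and (iii) effective openness of the complement $\measures\setminus\classofmeasures$. Effective compactness then follows from (ii) and (iii) combined with the standard effective compactness of $\measures$ as the space of Borel probability measures on the computably compact Cantor space $\pths$, via the general effective-topology principle that a closed subset of an effectively compact space with effectively open complement is itself effectively compact.

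First, I would prove that $\measure\in\classofmeasures$ if and only if
\begin{equation*}
\lfrcstsystem(\sit)\measure(\cylset{\sit})
\leq\measure(\cylset{\sit 1})
\leq\ufrcstsystem(\sit)\measure(\cylset{\sit})
\quad\text{for every \(\sit\in\sits\)}.
\end{equation*}
The `only if' direction is immediate: if $\measure=\measure[\precisefrcstsystem]$ for some $\precisefrcstsystem\subseteq\frcstsystem$, then Proposition~\ref{prop:precise:forecasting:systems} applied to the cylinders $\cylset{\sit}$ and $\cylset{\sit 1}$ yields $\measure(\cylset{\sit 1})=\precisefrcstsystem(\sit)\measure(\cylset{\sit})$, and $\precisefrcstsystem(\sit)\in\frcstsystem(\sit)$. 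For the converse, given such a $\measure$, define $\precisefrcstsystem(\sit)\coloneqq\measure(\cylset{\sit 1})/\measure(\cylset{\sit})$ whenever $\measure(\cylset{\sit})>0$ and pick $\precisefrcstsystem(\sit)$ to be any value in $\frcstsystem(\sit)$ otherwise; then $\precisefrcstsystem\subseteq\frcstsystem$, and an induction on $\dist{\sit}$ using Proposition~\ref{prop:precise:forecasting:systems} shows that $\measure[\precisefrcstsystem]$ and $\measure$ agree on every cylinder set, hence coincide by the uniqueness in Ionescu Tulcea's extension. Each of the closed inequalities above cuts out a closed subset of the compact space $\measures$, so $\classofmeasures$ is closed and hence compact.

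Next, I would show that $\measures\setminus\classofmeasures$ is effectively open. By the characterisation, $\measure\notin\classofmeasures$ iff there exist $\sit\in\sits$ and a rational $r$ such that either (a) $r>\ufrcstsystem(\sit)$ together with $\measure(\cylset{\sit 1})>r\measure(\cylset{\sit})$, or (b) $r<\lfrcstsystem(\sit)$ together with $\measure(\cylset{\sit 1})<r\measure(\cylset{\sit})$. Because $\ufrcstsystem$ is {\uscomp} and $\lfrcstsystem$ is {\lscomp}, the sets of pairs $(\sit,r)$ witnessing case~(a) respectively case~(b) are recursively enumerable. For each such witness pair, I enumerate all rationals $u_0<v_0$ and $u_1<v_1$ satisfying $u_1\geq r v_0$ in case~(a) (respectively $v_1\leq r u_0$ in case~(b)); each such tuple yields a basic open set $b(\pths)=\cset{\measure\in\measures}{u_0<\measure(\cylset{\sit})<v_0\text{ and }u_1<\measure(\cylset{\sit 1})<v_1}$ that is disjoint from $\classofmeasures$, and conversely any $\measure\notin\classofmeasures$ lies in such a $b(\pths)$ because the strict inequality witnessed by $r$ leaves room to choose enclosing rationals. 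The resulting family of generators $b$ is recursively enumerable, proving effective openness.

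Finally, given a candidate family $B\Subset\finitepowerset{\rationals\times\sits\times\rationals}$, the cover relation $\bigcup_{b\in B}b(\pths)\supseteq\classofmeasures$ holds iff $B$ together with some finite subfamily of the enumerated generators of $\measures\setminus\classofmeasures$ covers $\measures$, and the latter property is semidecidable by the effective compactness of $\measures$. The main obstacle will be executing step~(iii) cleanly: one needs to coordinate the rational approximants for the semicomputable bounds $\lfrcstsystem(\sit)$ and $\ufrcstsystem(\sit)$ with those of $\measure(\cylset{\sit})$ and $\measure(\cylset{\sit 1})$, and verify that no $\measure\notin\classofmeasures$ slips through the enumeration—which works because the failure of membership at $\sit$ is always witnessed by a strict rational inequality, giving the slack needed to build a basic open neighbourhood around $\measure$ that lies entirely inside $\measures\setminus\classofmeasures$.
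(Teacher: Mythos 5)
Your plan follows the same overall strategy as the paper's proof: characterise the collection of compatible measures by the pointwise inequalities $\lfrcstsystem(\sit)\measure(\cylset{\sit})\leq\measure(\cylset{\sit1})\leq\ufrcstsystem(\sit)\measure(\cylset{\sit})$, show that the complement in $\measures$ is effectively open by enumerating rational generators using the lower semicomputability of $\lfrcstsystem$ and the upper semicomputability of $\ufrcstsystem$, and conclude effective compactness because effectively closed subsets of the effectively compact $\measures$ are effectively compact (the paper cites Proposition~5.5 of Ref.~\cite{bienvenu2011:randomness:class} for this; you unpack the finite-cover argument directly, which is fine). The characterisation and the overall structure are sound.

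There is, however, a genuine if local bug in your case~(b). You enumerate witness pairs $(\sit,r)$ with $r<\lfrcstsystem(\sit)$, which necessarily admits negative rationals $r$, and then impose $v_1\leq r u_0$ on the four bounds of the basic open set $b(\pths)=\cset{\measure\in\measures}{u_0<\measure(\cylset{\sit})<v_0\text{ and }u_1<\measure(\cylset{\sit1})<v_1}$. When $r<0$ and $u_0<0$, you get $r u_0>0$, and from $\measure(\cylset{\sit1})<v_1\leq r u_0$ together with $u_0<\measure(\cylset{\sit})$ you cannot conclude $\measure(\cylset{\sit1})<r\measure(\cylset{\sit})$, because multiplying the second inequality by a negative $r$ reverses it; so $b(\pths)$ need not be disjoint from the compatible class. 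Concretely, with $\lfrcstsystem(\sit)=\nicefrac12$ take $r=-1$, $u_0=-2$, $v_0=1$, $u_1=-1$, $v_1=2$: then $v_1\leq r u_0$ holds, but this $b(\pths)$ is simply $\cset{\measure\in\measures}{\measure(\cylset{\sit})<1}$ and contains compatible measures. The fix is to restrict $r$ to non-negative rationals, which costs nothing since forecasts live in $\frcsts$. The paper's construction avoids the issue entirely by recasting the lower-bound violation $\measure(\cylset{\sit1})<\lfrcstsystem(\sit)\measure(\cylset{\sit})$ as the upper-bound violation $\measure(\cylset{\sit0})>(1-\lfrcstsystem(\sit))\measure(\cylset{\sit})$; then both cases exceed a lower bound and $1-\lfrcstsystem$, which is upper semicomputable exactly like $\ufrcstsystem$, plays in case~(b) the very role that $\ufrcstsystem$ plays in case~(a), so no sign coordination is needed.
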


\begin{proof}
% Proof by Floris
% Checked by Gert, checked in new version by Gert
Proposition~5.5 in Ref.~\cite{bienvenu2011:randomness:class} tells us that every effectively closed subset of \(\measures\) is effectively compact, so it suffices to prove that \(\cset{\measure[\precisefrcstsystem]}{\precisefrcstsystem\in\precisefrcstsystems\text{ and }\precisefrcstsystem\subseteq\frcstsystem}\) is effectively closed, which we'll do by establishing the existence of a recursively enumerable set~\(B\subseteq\finitepowerset{\rationals\times\sits\times\rationals}\) such that \(\bigcup_{b\in B}b(\pths)=\measures\setminus\cset{\measure[\precisefrcstsystem]}{\precisefrcstsystem\in\precisefrcstsystems\text{ and }\precisefrcstsystem\subseteq\frcstsystem}\).

Since \(\lfrcstsystem\) is {\lscomp} and \(\ufrcstsystem\) is {\uscomp}, there are two recursive rational maps \(\underline{q},\overline{q}\colon\sits\times\naturalswithzero\to\rationals\) such that, for all~\(\sit\in\sits\), \(\underline{q}(\sit,n)\nearrow\lfrcstsystem(\sit)\) and \(\overline{q}(\sit,n)\searrow\ufrcstsystem(\sit)\) as~\(n\to\infty\).
Let
\begin{multline*}
B
\coloneqq
\bigcup_{r\in\rationals\cap\group{0,2},\sit\in\sits,n\in\naturalswithzero}
\set[\big]{\set{(-1,\sit,r),(r\overline{q}(\sit,n),\sit1,2)},
\set{(-1,\sit,r),(r(1-\underline{q}(\sit,n)),\sit0,2)}}.
\end{multline*}
This set is clearly recursively enumerable.

To show that \(\measures\setminus\cset{\measure[\precisefrcstsystem]}{\precisefrcstsystem\in\precisefrcstsystems\text{ and }\precisefrcstsystem\subseteq\frcstsystem}\subseteq\bigcup_{b\in B}b(\pths)\), we start by proving that for any measure \(\measure\in\measures\setminus\cset{\measure[\precisefrcstsystem]}{\precisefrcstsystem\in\precisefrcstsystems\text{ and }\precisefrcstsystem\subseteq\frcstsystem}\) there must be some~\(\altsit\in\sits\) such that \(\measure(\cylset{\altsit})>0\) and \(\nicefrac{\measure(\cylset{\altsit1})}{\measure(\cylset{\altsit})}\notin\frcstsystem(\altsit)\).
To this end, consider the precise (not necessarily computable) forecasting system~\(\precisefrcstsystem'\) defined by
\begin{equation*}
\precisefrcstsystem'(\sit)\coloneqq\begin{cases}
\frac{\measure(\cylset{\sit1})}{\measure(\cylset{\sit})}
&\text{if }\measure(\cylset{\sit})>0\\
\lfrcstsystem(\sit)
&\text{if }\measure(\cylset{\sit})=0 \\
\end{cases}\text{ for all }\sit\in\sits.
\end{equation*}
By construction, \(\measure=\measure[\precisefrcstsystem']\).
Since \(\measure\in\measures\setminus\cset{\measure[\precisefrcstsystem]}{\precisefrcstsystem\in\precisefrcstsystems\text{ and }\precisefrcstsystem\subseteq\frcstsystem}\) by assumption, there is some~\(\altsit\in\sits\) such that \(\precisefrcstsystem'(\altsit)\notin\frcstsystem(\altsit)\).
Since, for all \(\sit\in\sits\), \(\precisefrcstsystem'(\sit)=\lfrcstsystem(\sit)\subseteq\frcstsystem(\sit)\) if \(\measure(\cylset{\sit})=0\), we infer that, indeed, \(\measure(\cylset{\altsit})>0\) and \(\nicefrac{\measure(\cylset{\altsit1})}{\measure(\cylset{\altsit})}\notin\frcstsystem(\altsit)\).

There are now two possible and mutually exclusive cases.

The first case is that \(\nicefrac{\measure(\cylset{\altsit1})}{\measure(\cylset{\altsit})}>\ufrcstsystem(\altsit)\), and then there is some~\(\epsilon\in\group{0,1}\) such that \(\measure(\cylset{\altsit1})>\ufrcstsystem(\altsit)\measure(\cylset{\altsit})+\epsilon\).
Then there are \(r\in\rationals\cap\group{0,2}\) and \(n\in\naturalswithzero\) such that \(\measure(\cylset{\altsit})<r<\measure(\cylset{\altsit})+\nicefrac{\epsilon}{4}\) and \(\ufrcstsystem(\altsit)\leq\overline{q}(\altsit,n)<\ufrcstsystem(\altsit)+\nicefrac{\epsilon}{4}\), and we then find that
\begin{align*}
0\leq
r\overline{q}(\altsit,n)
&<\group[\Big]{\measure(\cylset{\altsit})+\frac{\epsilon}{4}}\group[\Big]{\ufrcstsystem(\altsit)+\frac{\epsilon}{4}}
=\measure(\cylset{\altsit})\ufrcstsystem(\altsit)+\measure(\cylset{\altsit})\frac{\epsilon}{4}+\ufrcstsystem(\altsit)\frac{\epsilon}{4}+\frac{\epsilon^2}{16}\\
&\leq\measure(\cylset{\altsit})\ufrcstsystem(\altsit)+\frac{\epsilon}{4}+\frac{\epsilon}{4}+\frac{\epsilon^2}{16}\\
&<\measure(\cylset{\altsit})\ufrcstsystem(\altsit)+\epsilon
<\measure(\cylset{\altsit1})<2,
\end{align*}
implying that \(\measure\in\bigcup_{b\in B}b(\pths)\).

The second possible case is that \(\nicefrac{\measure(\cylset{\altsit1})}{\measure(\cylset{\altsit})}<\lfrcstsystem(\altsit)\), and then there is some~\(\epsilon\in\group{0,1}\) such that \(\measure(\cylset{\altsit1})<\lfrcstsystem(\altsit)\measure(\cylset{\altsit})-\epsilon\), and for which then also \(\measure(\cylset{\altsit0})=\measure(\cylset{\altsit})-\measure(\cylset{\altsit1})>\measure(\cylset{\altsit})(1-\lfrcstsystem(\altsit))+\epsilon\).
Then there are \(r\in\rationals\cap\group{0,2}\) and \(n\in\naturalswithzero\) such that \(\measure(\cylset{\altsit})<r<\measure(\cylset{\altsit})+\nicefrac{\epsilon}{4}\) and \(\lfrcstsystem(\altsit)-\nicefrac{\epsilon}{4}<\underline{q}(\altsit,n)\leq\lfrcstsystem(\altsit)\), and then we find that
\begin{align*}
0\leq
r\group[\big]{1-\underline{q}(\altsit,n)}
&<\group[\Big]{\measure(\cylset{\altsit})+\frac{\epsilon}{4}}\group[\Big]{1-\lfrcstsystem(\altsit)+\frac{\epsilon}{4}}\\
&=\measure(\cylset{\altsit})\group[\big]{1-\lfrcstsystem(\altsit)}+\measure(\cylset{\altsit})\frac{\epsilon}{4}+\group[\big]{1-\lfrcstsystem(\altsit)}\frac{\epsilon}{4}+\frac{\epsilon^2}{16}\\
&\leq\measure(\cylset{\altsit})\group[\big]{1-\lfrcstsystem(\altsit)}+\frac{\epsilon}{4}+\frac{\epsilon}{4}+\frac{\epsilon^2}{16}\\
&<\measure(\cylset{\altsit})\group[\big]{1-\lfrcstsystem(\altsit)}+\epsilon
<\measure(\cylset{\altsit0})<2,
\end{align*}
also implying that \(\measure\in\bigcup_{b\in B}b(\pths)\), so \(\measures\setminus\cset{\measure[\precisefrcstsystem]}{\precisefrcstsystem\in\precisefrcstsystems\text{ and }\precisefrcstsystem\subseteq\frcstsystem}\subseteq\bigcup_{b\in B}b(\pths)\).

To prove that \(\bigcup_{b\in B}b(\pths)\subseteq\measures\setminus\cset{\measure[\precisefrcstsystem]}{\precisefrcstsystem\in\precisefrcstsystems\text{ and }\precisefrcstsystem\subseteq\frcstsystem}\), consider any~\(\precisefrcstsystem\subseteq\frcstsystem\).
For any~\(\sit\in\sits\), \(n\in\naturalswithzero\) and \(r>\measure[\precisefrcstsystem](\cylset{\sit})\) it follows from Proposition~\ref{prop:precise:forecasting:systems} that
\[
\measure[\precisefrcstsystem](\cylset{\sit1})
=\measure[\precisefrcstsystem](\cylset{\sit})\precisefrcstsystem(\sit)
\leq r\ufrcstsystem(\sit)
\leq r\overline{q}(\sit,n)
\]
and
\[
\measure[\precisefrcstsystem](\cylset{\sit0})
=\measure[\precisefrcstsystem](\cylset{\sit})\group[\big]{1-\precisefrcstsystem(\sit)}
\leq r\group[\big]{1-\lfrcstsystem(\sit)}
\leq r\group[\big]{1-\underline{q}(\sit,n)},
\]
implying that \(\measure[\precisefrcstsystem]\notin\bigcup_{b\in B}b(\pths)\).
\end{proof}

On the other hand, not every effectively compact set of probability measures is a collection that corresponds to a ({\comp}) forecasting system.
Consider, as a counterexample, the set~\(\bernoulis\coloneqq\cset{\measure[\precisefrcstsystem]}{\precisefrcstsystem\in\precisefrcstsystems\text{ and }\group{\exists p\in\frcsts}\group{\forall\sit\in\sits}\precisefrcstsystem(\sit)=p}\) that consists of all Bernoulli (iid) probability measures.
As is mentioned by \citeauthor{bienvenu2011:randomness:class} \cite[Sec.~5.3]{bienvenu2011:randomness:class}, this set~\(\bernoulis\) is an example of an effectively compact set of measures.

But, there is no forecasting system~\(\frcstsystem\) for which \(\bernoulis=\cset{\measure[\precisefrcstsystem]}{\precisefrcstsystem\in\precisefrcstsystems\text{ and }\precisefrcstsystem\subseteq\frcstsystem}\).
Indeed, consider any forecasting system~\(\frcstsystem\) for which \(\bernoulis\subseteq\cset{\measure[\precisefrcstsystem]}{\precisefrcstsystem\in\precisefrcstsystems\text{ and }\precisefrcstsystem\subseteq\frcstsystem}\), then necessarily \(p\in\frcstsystem(\sit)\) for all~\(p\in\group{0,1}\) and all~\(\sit\in\sits\), which implies that \(\frcstsystem(\sit)=\frcsts\) for all~\(\sit\in\sits\).
This means that \(\frcstsystem\) can only be the so-called \emph{vacuous forecasting system}~\(\frcstsystem_{\frcsts}\), for which, by Equation~\eqref{eq:measures:are:generated:by:precise:forecasting:systems}, \(\cset{\measure[\precisefrcstsystem]}{\precisefrcstsystem\in\precisefrcstsystems\text{ and }\precisefrcstsystem\subseteq\frcstsystem_{\frcsts}}=\measures\neq\bernoulis\).

We conclude in particular that \emph{the collections of probability measures that correspond to {\comp} forecasting systems constitute only a strict subset of the effectively compact sets of probability measures}.

\subsection{Uniform randomness}
Using this notion of effective compactness, we can now introduce uniform randomness, by associating tests with effectively compact classes of probability measures.

\begin{definition}
We call a map~\(\utest\colon\pths\to\extnonnegreals\) a \emph{\(\classofmeasures\)-test} for an effectively compact class of probability measures~\(\classofmeasures\subseteq\measures\) if the set~\(\cset{\pth\in\pths}{\utest(\pth)>r}\) is effectively open, effectively in~\(r\in\rationals\) and if \(\int\utest(\pth)\,\mathrm{d}\measure(\omega)\leq1\) for all~\(\measure\in\classofmeasures\).
\end{definition}
\noindent A few clarifications are in order here.
The conditions for a \(\classofmeasures\)-test~\(\utest\) require in particular that \(\cset{\pth\in\pths}{\utest(\pth)>r}\) should be open, and therefore belong to~\(\cantoralgebra\),  for all rational~\(r\), implying that the map~\(\utest\) is Borel measurable.
This implies that the integral \(\int\utest(\pth)\,\mathrm{d}\measure(\omega)\), which we'll also denote by~\(\measure(\utest)\), exists.

Going from tests to the corresponding randomness notion is now but a small step.

\begin{definition}[{\cite[Defs.~5.2\&5.22, Thm.~5.23]{bienvenu2011:randomness:class}}]
Consider an effectively compact class of probability measures~\(\classofmeasures\subseteq\measures\).
Then we call a path~\(\pth\in\pths\) uniformly random for~\(\classofmeasures\) if \(\utest(\pth)<\infty\) for every \(\classofmeasures\)-test \(\utest\).
\end{definition}

With the definition for uniform randomness now in place, we can show that our definition of {\ML} test randomness for a forecasting system~\(\frcstsystem\) is a special case, where the effectively compact class~\(\classofmeasures\) takes the specific form~\(\classofmeasures[\frcstsystem]\coloneqq\cset{\measure[\precisefrcstsystem]}{\precisefrcstsystem\in\precisefrcstsystems\text{ and }\precisefrcstsystem\subseteq\frcstsystem}\).
Observe, by the way, that Proposition~\ref{prop:precise:forecasting:systems} and the properties of integrals guarantee that
\begin{equation}\label{eq:test:from:below}
\measure[\precisefrcstsystem](\utest)
=\sup_{n\in\naturals}\measure[\precisefrcstsystem](\min\set{\utest,n})
=\sup_{n\in\naturals}\pglobal[\precisefrcstsystem](\min\set{\utest,n})
\end{equation}
for every \(\classofmeasures[\frcstsystem]\)-test~\(\utest\) and all precise forecasting systems~\(\precisefrcstsystem\subseteq\frcstsystem\) compatible with~\(\frcstsystem\).

\begin{theorem}\label{theorem:uniform}
Consider any {\comp} forecasting system~\(\frcstsystem\).
Then a path~\(\pth\in\pths\) is {\ML} test random for~\(\frcstsystem\) if and only if it is uniformly random for the effectively compact class of probability measures~\(\classofmeasures[\frcstsystem]\).
\end{theorem}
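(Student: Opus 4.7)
The plan is to prove the equivalence by converting tests in each direction, using the identity \(\uglobalprob(G)=\sup_{\precisefrcstsystem\subseteq\frcstsystem}\measure[\precisefrcstsystem](G)\) on Borel events as the bridge between the game-theoretic and measure-theoretic sides. Note that the effective compactness of \(\classofmeasures[\frcstsystem]\) is already in place thanks to the preceding proposition (since \(\frcstsystem\) being {\comp} makes \(\lfrcstsystem\) {\lscomp} and \(\ufrcstsystem\) {\uscomp}), so the uniform-randomness framework is available.

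For the ``\(\Rightarrow\)'' direction, suppose \(\pth\) is {\ML} test random for~\(\frcstsystem\) and let \(\utest\) be any \(\classofmeasures[\frcstsystem]\)-test. I would set \(U_n\coloneqq\cset{\altpth\in\pths}{\utest(\altpth)>2^n}\) for all \(n\in\naturalswithzero\). By the definition of a \(\classofmeasures[\frcstsystem]\)-test, these form a {\comp} sequence of effectively open sets, so they arise from some recursively enumerable \(\rectest\subseteq\natsandsits\) with \(\rectestindx{n}{}=U_n\). To see that \(\rectest\) is an {\ML} test for~\(\frcstsystem\), I would apply Markov's inequality for any compatible precise forecasting system \(\precisefrcstsystem\subseteq\frcstsystem\) to obtain \(\measure[\precisefrcstsystem](U_n)\leq2^{-n}\measure[\precisefrcstsystem](\utest)\leq2^{-n}\), and then invoke Theorem~13 of Ref.~\cite{tjoens2021:equivalence} (cf.~footnote~\ref{footnote:equivalentglobalmodels}) which, applied to the Borel-measurable bounded gamble \(\ind{U_n}\), gives \(\uglobalprob(U_n)=\sup_{\precisefrcstsystem\subseteq\frcstsystem}\measure[\precisefrcstsystem](U_n)\leq2^{-n}\). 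Since \(\pth\notin\bigcap_{n\in\naturalswithzero}\rectestindx{n}{}\) by {\ML} test randomness, some~\(n\) satisfies \(\utest(\pth)\leq2^n<\infty\), witnessing uniform randomness.

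For the ``\(\Leftarrow\)'' direction, suppose \(\pth\) is uniformly random for~\(\classofmeasures[\frcstsystem]\), and fix any {\ML} test \(\rectest\) for~\(\frcstsystem\); by Corollary~\ref{cor:martin-loef:test} I may assume \(\rectest\) is recursive and the \(\rectestcutindx{n}{}\) are partial cuts. I would define the candidate \(\utest\coloneqq\sum_{n\in\naturals}\ind{\rectestindx{n}{}}\colon\pths\to\extnonnegreals\). The integration bound then falls out from Proposition~\ref{prop:precise:forecasting:systems}, monotone convergence, and the assumption on~\(\rectest\): for every \(\precisefrcstsystem\subseteq\frcstsystem\),
\[
\measure[\precisefrcstsystem](\utest)
=\smashoperator{\sum_{n\in\naturals}}\measure[\precisefrcstsystem](\rectestindx{n}{})
\leq\smashoperator{\sum_{n\in\naturals}}\uglobalprob(\rectestindx{n}{})
\leq\smashoperator{\sum_{n\in\naturals}}2^{-n}=1.
\]
For the effective-openness condition, I would observe that for any~\(r\in\rationals\) with \(k\coloneqq\floor{r}+1\in\naturals\),
\[
\cset[\big]{\altpth\in\pths}{\utest(\altpth)>r}
=\smashoperator{\bigcup_{J\in\finitepowerset{\naturals},\,\abs{J}=k}}\,\bigcap_{n\in J}\rectestindx{n}{},
\]
and argue that this is effectively open, effectively in~\(r\), using that finite intersections of effectively open subsets of~\(\pths\) admit uniformly recursively enumerable presentations (two paths through \(\altsit_1\in A_1\) and \(\altsit_2\in A_2\) respectively share a path only through the deeper of the two, so \(\cylset{A_1}\cap\cylset{A_2}=\cylset{A_{1,2}}\) with \(A_{1,2}\coloneqq\cset{\sit\in A_1\cup A_2}{(\exists\altsit_1\in A_1)(\exists\altsit_2\in A_2)(\altsit_1,\altsit_2\precedes\sit)}\) recursively enumerable in \(A_1,A_2\)). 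Since \(\pth\in\bigcap_{n\in\naturalswithzero}\rectestindx{n}{}\) would force \(\utest(\pth)=\infty\), in contradiction with uniform randomness, we conclude \(\pth\notin\bigcap_{n\in\naturalswithzero}\rectestindx{n}{}\), so \(\pth\) is {\ML} test random.

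The main obstacle I expect is the effective bookkeeping in the ``\(\Leftarrow\)'' direction: namely, turning the union-of-intersections description of \(\{\utest>r\}\) into a uniform recursively enumerable presentation in~\(r\), which hinges on the observation that finite intersections of effectively open subsets of~\(\pths\) carry such presentations. Everything else is a straightforward assembly of Markov's inequality, the Tjoens identity transferring \(\measure[\precisefrcstsystem]\)-bounds to \(\uglobalprob\)-bounds on Borel gambles, and the definitions of {\ML} test and \(\classofmeasures[\frcstsystem]\)-test.
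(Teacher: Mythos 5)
Your proposal is correct, and the ``only if'' direction matches the paper's argument modulo contraposition: both build an ML test from the level sets $\cset{\utest>2^n}$ and transfer the Markov bound $\measure[\precisefrcstsystem](U_n)\le 2^{-n}\measure[\precisefrcstsystem](\utest)\le 2^{-n}$ to $\uglobalprob$ via Theorem~13 of Ref.~\cite{tjoens2021:equivalence}. Where you genuinely diverge from the paper is in the ``if'' direction. You take $\utest\coloneqq\sum_{n\in\naturals}\ind{\rectestindx{n}{}}$ directly, which forces you to represent $\cset{\utest>r}$ as a union over all $k$-element subsets $J\subseteq\naturals$ of intersections $\bigcap_{n\in J}\rectestindx{n}{}$, and then to prove that finite intersections of effectively open sets have uniformly r.e.\ presentations so that the countable outer union stays effectively open effectively in~$r$. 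That lemma (and your explicit formula $\cylset{A_1}\cap\cylset{A_2}=\cylset{A_{1,2}}$) is right, but it is the obstacle you flagged. The paper sidesteps this entirely by first passing to the \emph{nested} ML test $\altrectestcutindx{n}{}\coloneqq\bigcup_{m>n}\rectestcutindx{m}{}$; the resulting $\altrectestindx{n}{}$ are decreasing, so the partial-sum function $\utest=\sum_{n\in\naturals}\ind{\altrectestindx{n}{}}$ has level sets given \emph{exactly} by $\cset{\utest>n}=\altrectestindx{n+1}{}$, and effective openness effectively in $r$ is immediate. The price the paper pays is a slightly heavier integration bound (a double sum $\sum_n\sum_{m>n}2^{-m}$ instead of your single sum), but it trades away all the combinatorial effective-openness bookkeeping. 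Both routes work; the nesting trick is the cleaner one, and it is worth knowing as a general device when building a nonnegative test function from an ML test.
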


\begin{proof}
% Proof by Floris
% Checked, improved and corrected by Gert
For the `only if'-direction, assume that there's some~\(\classofmeasures[\frcstsystem]\)-test~\(\utest\) such that \(\utest(\pth)=\infty\). Then we must show that \(\pth\) isn't {\ML} test random for~\(\frcstsystem\).
First of all, that \(\utest\) is a \(\classofmeasures[\frcstsystem]\)-test implies in particular that \(\cset{\altpth\in\pths}{\utest(\altpth)>r}\) is effectively open, effectively in \(r\in\rationals\), meaning that there's some recursively enumerable subset~\(B\subseteq\rationals\times\sits\) such that, with obvious notations, \(\cylset{B_r}=\cset{\altpth\in\pths}{\utest(\altpth)>r}\) for all~\(r\in\reals\).
This in turn implies that \(\rectestcutindx{}{}\coloneqq\cset{(n,\sit)\in\natsandsits}{(2^n,\sit)\in B}\) is a recursively enumerable subset of~\(\natsandsits\) such that \(\cylset{\rectestcutindx{n}{}}=\cset{\altpth\in\pths}{\utest(\altpth)>2^n}\) for all~\(n\in\naturalswithzero\).
If we fix any~\(n\in\naturalswithzero\), then by assumption \(\utest(\pth)>2^n\) and therefore \(\pth\in\rectestindx{n}{}\).
Hence, \(\pth\in\bigcap_{n\in\naturalswithzero}\rectestindx{n}{}\), so we are done if we can prove that \(\rectestcutindx{}{}\) is a {\ML} test for~\(\frcstsystem\).
We already know that \(\rectestcutindx{}{}\) is recursively enumerable.
Suppose towards contradiction that there is some~\(m\in\naturalswithzero\) such that \(\uglobalprob(\rectestindx{m}{})>2^{-m}\).
By Theorem~\(13\) in Ref.~\cite{tjoens2021:equivalence}, and footnote~\ref{footnote:equivalentglobalmodels} which explains why this theorem applies to our context, it holds that \(\uglobalprob(\rectestindx{m}{})=\sup_{\precisefrcstsystem\subseteq\frcstsystem}\globalprob[\precisefrcstsystem](\rectestindx{m}{})\), and hence, there is some precise \(\precisefrcstsystem\subseteq\frcstsystem\) for which
\begin{equation*}
1
<2^m\globalprob[\precisefrcstsystem](\rectestindx{m}{})
=\pglobal[\precisefrcstsystem](2^m\ind{\rectestindx{m}{}})
\overset{\text{Prop.~\ref{prop:precise:forecasting:systems}}}{=}\measure[\precisefrcstsystem](2^m\ind{\rectestindx{m}{}})
\leq\measure[\precisefrcstsystem](\utest),
\end{equation*}
a contradiction.

For the `if'-direction, assume that \(\pth\in\bigcap_{n\in\naturalswithzero}\rectestindx{n}{}\) for some {\ML} test \(\rectestcutindx{}{}\) for~\(\frcstsystem\).
If we let \(\altrectestcutindx{n}{}\coloneqq\bigcup_{m>n}\rectestcutindx{m}{}\) for all~\(n\in\naturalswithzero\), then clearly the set
\begin{align*}
\altrectestcutindx{}{}
\coloneqq&\cset{(n,\sit)\in\natsandsits}{\sit\in\altrectestcutindx{n}{}}\\
=&\cset{(n,\sit)\in\natsandsits}{(\exists m>n)\sit\in\rectestcutindx{m}{}}
=\bigcup_{(m,\sit)\in\rectestcutindx{}{},m>n\in\naturalswithzero}\set{(n,\sit)}
\end{align*}
is recursively enumerable because \(\rectestcutindx{}{}\) is, and the \(\altrectestindx{n}{}\) therefore constitute a computable sequence of effectively open sets.
Moreover, clearly \(\altrectestindx{0}{}\supseteq\altrectestindx{1}{}\supseteq\dots\), and
\begin{equation}\label{eq:not:pass:test}
\pth\in\bigcap_{n\in\naturalswithzero}\rectestindx{n}{}\subseteq\bigcap_{n\in\naturals}\rectestindx{n}{}\subseteq\bigcap_{n\in\naturalswithzero}\bigcup_{m>n}\rectestindx{m}{}=\bigcap_{n\in\naturalswithzero}\altrectestindx{n}{}.
\end{equation}
Now define the map~\(\utest\colon\pths\to\extnonnegreals\) as \(\utest(\pth)\coloneqq\sum_{n\in\naturals}\ind{\altrectestindx{n}{}}(\pth)\) for all~\(\pth\in\pths\).
It follows from Equation~\eqref{eq:not:pass:test} that \(\utest(\pth)=\infty\), so we're done if we can show that \(\utest\) is a \(\classofmeasures[\frcstsystem]\)-test.

It follows from the nestedness \(\altrectestindx{0}{}\supseteq\altrectestindx{1}{}\supseteq\dots\) that \(\cset{\pth\in\pths}{\utest(\pth)>n}=\altrectestindx{n+1}{}\) for all~\(n\in\naturalswithzero\).
Therefore, since the \(\altrectestindx{n}{}\) constitute a {\comp} sequence of effectively open sets, so do the \(\cset{\pth\in\pths}{\utest(\pth)>n}\).
By observing that
\begin{equation*}
\cset{\pth\in\pths}{\utest(\pth)>r}
=\begin{cases}
\cset[\big]{\pth\in\pths}{\utest(\pth)>\lfloor r\rfloor}=\altrectestindx{\floor{r}+1}{}
&\text{if }r\geq0 \\
\pths=\cylset{\sits}&\text{if }r<0
\end{cases}
\text{ for all }r\in\rationals,
\end{equation*}
we infer that \(\cset{\pth\in\pths}{\utest(\pth)>r}\) is effectively open, effectively in \(r\in\rationals\).
Furthermore, it holds for any~\(\precisefrcstsystem\subseteq\frcstsystem\) that
\begin{align*}
\measure[\precisefrcstsystem](\utest)
&=\measure[\precisefrcstsystem]\group[\bigg]{\sum_{n\in\naturals}\ind{\altrectestindx{n}{}}}
\leq\measure[\precisefrcstsystem]\group[\bigg]{\sum_{n\in\naturals}\sum_{m>n}\ind{\rectestindx{m}{}}}
\leq\sum_{n\in\naturals}\sum_{m>n}\measure[\precisefrcstsystem]\group[\bigg]{\ind{\rectestindx{m}{}}} \\
% \overset{\text{\eqref{eq:test:from:below}}}{=}
%\overset{\text{\ref{axiom:lower:upper:subadditivity}}}{\leq}
&=\sum_{n\in\naturals}\sum_{m>n}\pglobal[\precisefrcstsystem]\group[\big]{\ind{\rectestindx{m}{}}}
\leq\sum_{n\in\naturals}\sum_{m>n}\uglobal\group[\big]{\ind{\rectestindx{m}{}}}
\leq\sum_{n\in\naturals}\sum_{m>n}2^{-m}
=\sum_{n\in\naturals}2^{-n}
=1,
\end{align*}
where the first two inequalities follow from the properties of integrals, the second equality follows from Proposition~\ref{prop:precise:forecasting:systems}, and the third inequality follows from Proposition~\ref{prop:more:conservative}.
\end{proof}

\section{Equivalence of Schnorr and Schnorr test randomness}\label{sec:equivalence:for:schnorr}
% Checked by Gert
Next, we turn to Schnorr randomness.
Our argumentation that the `test' and `martingale-theoretic' versions for this type of randomness are equivalent, in Theorem~\ref{thm:schnorr:equivalence} below, adapts and simplifies a line of reasoning in Downey and Hirschfeldt's book \cite[Thm.~7.1.7]{downey2010}, in order to still make it work in our more general context.
Here too, it allows us to extend Schnorr's argumentation \cite[Secs.~5--9]{schnorr1971} for this equivalence from fair-coin to {\comp} and non-degenerate interval forecasts.

\subsection{Schnorr test randomness implies Schnorr randomness}
As was the case for {\ML} randomness, we begin with the implication that is easier to prove.

\begin{proposition}\label{prop:schnorr:equivalence:test:then:martingale}
Consider any path~\(\pth\) in~\(\pths\) and any forecasting system~\(\frcstsystem\).
If \(\pth\) is Schnorr test random for~\(\frcstsystem\) then it is Schnorr random for~\(\frcstsystem\).
\end{proposition}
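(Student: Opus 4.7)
My plan is to prove the contrapositive: if \(\pth\) is not Schnorr random for \(\frcstsystem\), then it fails some Schnorr test for \(\frcstsystem\). By Proposition~\ref{prop:schnorr:with:rational:positive:test:supermartingales}, the hypothesis delivers a recursive positive rational test supermartingale \(\test\) for \(\frcstsystem\) and a growth function \(\ordering\) witnessing computable unboundedness of \(\test\) on \(\pth\), so in particular \(\test(\pthto{m})>\ordering(m)\) for infinitely many \(m\in\naturalswithzero\). From \(\test\) and \(\ordering\) I shall construct a Schnorr test \(\rectest\subseteq\natsandsits\) such that \(\pth\in\bigcap_{n\in\naturalswithzero}\rectestindx{n}{}\).

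The construction I have in mind is a \emph{tilted} first-entry cut: it replaces the constant threshold \(2^n\) used in the proof of Proposition~\ref{prop:martin-loef:equivalence:test:then:martingale} by the depth-dependent threshold \(\ordering(\abs{\sit})\). Specifically, for each \(n\in\naturalswithzero\) I take \(\rectestcutindx{n}{}\) to be the set of those \(\sit\in\sits\) satisfying \(\test(\sit)\geq\ordering(\abs{\sit})\geq 2^n\), while no proper prefix \(\altsit\sprecedes\sit\) satisfies both \(\test(\altsit)\geq\ordering(\abs{\altsit})\) and \(\ordering(\abs{\altsit})\geq 2^n\); and I set \(\rectest\coloneqq\cset{(n,\sit)\in\natsandsits}{\sit\in\rectestcutindx{n}{}}\). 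Recursiveness of \(\rectest\) follows from that of \(\test\) and \(\ordering\), and \(\rectestcutindx{n}{}\) is a partial cut because of the `no proper prefix' clause. Since \(\test(\pthto{m})>\ordering(m)\) infinitely often and \(\ordering\) is unbounded, for every \(n\) there is some witness \(m\) with in addition \(\ordering(m)\geq 2^n\); the minimal such \(m\) puts \(\pthto{m}\in\rectestcutindx{n}{}\), so \(\pth\in\rectestindx{n}{}\) for all \(n\). Moreover, every \(\sit\in\rectestcutindx{n}{}\) satisfies \(\test(\sit)\geq 2^n\), so \(\rectestindx{n}{}\subseteq\cset{\altpth\in\pths}{\sup_{m\in\naturalswithzero}\test(\altpthto{m})\geq 2^n}\), and Ville's inequality (Proposition~\ref{prop:ville:inequality}) gives \(\uglobalprob(\rectestindx{n}{})\leq 2^{-n}\).

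The last step, and the one I expect to be the genuinely delicate part, is the effective tail bound. A naïve first-entry cut at the constant threshold \(2^n\) — as sufficed for Martin-Löf randomness — cannot deliver it, since Ville then furnishes only the uniform bound \(2^{-n}\) regardless of \(\ell\); the tilting of the threshold is exactly what I use to extract more. For any \(\ell\in\naturalswithzero\) and any \(\sit\in\rectestcutindx{n}{}\) with \(\abs{\sit}\geq\ell\), monotonicity of \(\ordering\) gives \(\test(\sit)\geq\ordering(\abs{\sit})\geq\ordering(\ell)\), so
\[
\rectestindx{n}{}\setminus\rectestindx{n}{<\ell}
=\rectestindx{n}{\geq\ell}
\subseteq\cset[\big]{\altpth\in\pths}{\sup_{m\in\naturalswithzero}\test(\altpthto{m})\geq\ordering(\ell)},
\]
and Ville's inequality once more yields \(\uglobalprob(\rectestindx{n}{\geq\ell})\leq 1/\ordering(\ell)\). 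Setting \(e(N,n)\coloneqq\min\cset{\ell\in\naturalswithzero}{\ordering(\ell)\geq 2^N}\) — recursive because \(\ordering\) is recursive and unbounded — then makes \(\uglobalprob(\rectestindx{n}{\geq\ell})\leq 1/\ordering(\ell)\leq 2^{-N}\) for every \(\ell\geq e(N,n)\), which is exactly the tail condition demanded by Definition~\ref{def:schnorr:test}. This completes the Schnorr test, and since \(\pth\) lies in its intersection, \(\pth\) is not Schnorr test random.
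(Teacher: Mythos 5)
Your proposal is correct and, modulo presentational choices, is essentially the same argument as the paper's proof: the paper also constructs the test from exactly the tilted threshold condition \(\test(\altsit)\geq\ordering(\dist{\altsit})\geq2^n\), also applies Ville's inequality twice (once with threshold \(2^n\) for the main bound and once with threshold \(\ordering(\ell)\geq2^N\) for the tail), and also takes \(e(N,n)\coloneqq\min\cset{\ell}{\ordering(\ell)\geq2^N}\). The one cosmetic difference is that you thin to the first-entry partial cut, giving \(\rectestindx{n}{}\setminus\rectestindx{n}{<\ell}=\rectestindx{n}{\geq\ell}\), whereas the paper keeps the unthinned set and just uses the inclusion \(\rectestindx{n}{}\setminus\rectestindx{n}{<\ell}\subseteq\rectestindx{n}{\geq\ell}\), which is all the Schnorr-test condition actually needs; both lead to the same bound. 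A small slip worth flagging: you pick the minimal \(m\) with the \emph{strict} inequality \(\test(\pthto{m})>\ordering(m)\) and \(\ordering(m)\geq2^n\) and assert \(\pthto{m}\in\rectestcutindx{n}{}\), but first-entry is defined with non-strict \(\geq\), so a shorter prefix attaining equality could precede it and this particular \(\pthto{m}\) need not be the cut element. This does not affect your conclusion—\emph{some} prefix of \(\pth\) lies in the (non-empty) first-entry cut \(\rectestcutindx{n}{}\), so \(\pth\in\rectestindx{n}{}\) regardless—but the justification should pick the minimal \(m\) for the non-strict condition.
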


\begin{proof}
% Proof by Gert
% Simplified and corrected by Gert
% Checked by Gert
We give a proof by contraposition.
Assume that \(\pth\) isn't Schnorr random for~\(\frcstsystem\), which implies that there's some {\comp} test supermartingale~\(\test\) that is computably unbounded on~\(\pth\), meaning that there's some growth function~\(\ordering\) such that
\begin{equation}\label{eq:schnorr:equivalence:computably:unbounded}
\limsup_{n\to\infty}\sqgroup{\test(\pthto{n})-\ordering(n)}>0.
\end{equation}
By Proposition~\ref{prop:schnorr:with:rational:positive:test:supermartingales}, we may also assume without loss of generality that \(\test\) is recursive and rational-valued.
Drawing inspiration from Schnorr's proof \cite[Satz~(9.4), p.~73]{schnorr1971} and Downey and Hirschfeldt's simplified version \cite[Thm.~7.1.7]{downey2010}, we let
\begin{equation}\label{eq:schnorr:equivalence:randomness:test:from:martingale:sits}
\rectest\coloneqq\cset{(n,\altsit)\in\natsandsits}
{\test(\altsit)\geq\ordering(\dist{\altsit})\geq2^n}.
\end{equation}
Then \(\rectest\) is a recursive subset of~\(\natsandsits\) [because the inequalities in the expressions above are decidable, as all numbers involved are rational].
We also see that, for any~\(\altpth\in\pths\),
\begin{equation}\label{eq:schnorr:equivalence:randomness:test:from:martingale}
\altpth\in\rectestindx{n}{}
\ifandonlyif
(\exists m\in\naturalswithzero)\altpthto{m}\in\rectestcutindx{n}{}
\ifandonlyif
(\exists m\in\naturalswithzero)
\group[\big]{\test(\altpthto{m})\geq\ordering(m)\geq2^n}.
\end{equation}
Hence, \(\rectestindx{n}{}\subseteq\cset{\altpth\in\pths}{\sup_{m\in\naturalswithzero}\test(\altpthto{m})\geq2^n}\), so we infer from Ville's inequality [Proposition~\ref{prop:ville:inequality}] and~\ref{axiom:lower:upper:monotonicity} that
\begin{equation*}
\uglobalprob(\rectestindx{n}{})
\leq\uglobalprob\group[\bigg]{\cset[\bigg]{\altpth\in\pths}{\sup_{m\in\naturalswithzero}\test(\altpthto{m})\geq2^n}}
\leq2^{-n}
\text{ for all~\(n\in\naturalswithzero\)}.
\end{equation*}
This shows that~\(\rectest\) is a {\ML} test for~\(\frcstsystem\).
It also follows from Equations~\eqref{eq:schnorr:equivalence:computably:unbounded} and~\eqref{eq:schnorr:equivalence:randomness:test:from:martingale} that \(\pth\in\bigcap_{m\in\naturalswithzero}\rectestindx{m}{}\).
So we'll find that \(\pth\) isn't Schnorr test random for~\(\frcstsystem\), provided we can prove that \(\rectest\) is a Schnorr test.

To this end, we'll show that it has a tail bound.
Define the map~\(e\colon\natsandnats\to\naturalswithzero\) by letting \(e(N,n)\coloneqq\min\cset{k\in\naturalswithzero}{\ordering(k)\geq2^{N}}\), for all~\(N,n\in\naturalswithzero\).
Fix any~\(N,n\in\naturalswithzero\), then we infer from Equation~\eqref{eq:schnorr:equivalence:randomness:test:from:martingale:sits} that
\begin{equation*}
\altpth\in\rectestindx{n}{\geq\ell}
\ifandonlyif(\exists m\geq\ell)\test(\altpthto{m})\geq\ordering(m)\geq2^n),
\text{ for all~\(\ell\in\naturalswithzero\)}.
\end{equation*}
Hence, for all~\(\ell\geq e(N,n)\) and all~\(\altpth\in\rectestindx{n}{\geq\ell}\), there's some~\(m\geq\ell\) such that
\begin{equation*}
\test(\altpthto{m})\geq\ordering(m)\geq\ordering(\ell)\geq\ordering(e(N,n))\geq2^N,
\end{equation*}
which implies that \(\rectestindx{n}{\geq\ell}\subseteq\cset{\altpth\in\pths}{\sup_{m\in\naturalswithzero}\test(\altpthto{m})\geq2^N}\).
Ville's inequality [Proposition~\ref{prop:ville:inequality}] and~\ref{axiom:lower:upper:monotonicity} then guarantee that, for all~\(\ell\geq e(N,n)\), since \(\rectestindx{n}{}\setminus\rectestindx{n}{<\ell}\subseteq\rectestindx{n}{\geq\ell}\),
\begin{equation*}
\uglobalprob\group[\big]{\rectestindx{n}{}\setminus\rectestindx{n}{<\ell}}
\leq\uglobalprob\group[\big]{\rectestindx{n}{\geq\ell}}
\leq\uglobalprob\group[\bigg]{\cset[\bigg]{\altpth\in\pths}{\sup_{m\in\naturalswithzero}\test(\altpthto{m})\geq2^N}}
\leq2^{-N}.
\qedhere
\end{equation*}
\end{proof}

\subsection{Schnorr randomness implies Schnorr test randomness}
Non-degeneracy and {\compy} of the forecasting system are enough to guarantee that the converse implication also holds.
That non-degeneracy is a necessary condition can be shown by essentially the same simple counter-example as in the case of {\ML} randomness.

\begin{proposition}\label{prop:schnorr:equivalence:martingale:then:test}
Consider any path~\(\pth\) in~\(\pths\) and any non-degenerate {\comp} forecasting system~\(\frcstsystem\).
If \(\pth\) is Schnorr random for~\(\frcstsystem\) then it is Schnorr test random for~\(\frcstsystem\).
\end{proposition}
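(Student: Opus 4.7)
The plan is to argue by contraposition, extending Proposition~\ref{prop:martin-loef:equivalence:martingale:then:test} with two non-trivial upgrades: the constructed test supermartingale must be \emph{computable} (not merely lower semicomputable), and its unboundedness on~\(\pth\) must be \emph{computable}. Assume~\(\pth\) is not Schnorr test random, so some Schnorr test~\(\rectest\) satisfies \(\pth\in\bigcap_n\rectestindx{n}{}\); by Proposition~\ref{prop:schnorr:test} we may assume the \(\rectestcutindx{n}{}\) are partial cuts and the tail bound~\(e\) is a univariate growth function.

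I would first recycle the construction from Proposition~\ref{prop:martin-loef:equivalence:martingale:then:test}: set \(W_n\coloneqq\uglobalcondprob{\rectestindx{n}{}}{\bolleke}\) and \(W\coloneqq\frac12\sum_n W_n\). By Corollary~\ref{cor:supermartingales:based:on:cuts} each~\(W_n\) is a non-negative supermartingale with~\(W_n(\init)\leq 2^{-n}\); Proposition~\ref{prop:bound:for:any:situation} then gives \(W_n(\sit)\leq 2^{-n}\frcstsystembound(\sit)\), and the coherence-based argument used in the {\ML} case shows~\(W\) is a non-negative supermartingale for~\(\frcstsystem\) with~\(W(\init)\leq 1\), upgradable to a test supermartingale by raising the initial value to~\(1\). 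The key new input is \emph{computability} of~\(W\): writing \(\ind{\rectestindx{n}{}}=\ind{\rectestindx{n}{<\ell}}+\ind{\rectestindx{n}{\geq\ell}}\), subadditivity~\ref{axiom:lower:upper:subadditivity} combined with Proposition~\ref{prop:bound:for:any:situation} applied to the non-negative supermartingale~\(\uglobalcondprob{\rectestindx{n}{\geq\ell}}{\bolleke}\) gives \(|W_n(\sit)-W_n^\ell(\sit)|\leq\frcstsystembound(\sit)\cdot 2^{-N}\) whenever~\(\ell\geq e(N)\); combined with Lemma~\ref{lem:wnell}\ref{it:wnell:computable:net} and Lemma~\ref{lem:recursive:logarithm} this yields computability of~\(W_n\) in~\((n,\sit)\), and the tail estimate \(\sum_{n>m}W_n(\sit)\leq 2\frcstsystembound(\sit)2^{-m}\) then lifts computability to~\(W\).

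The main obstacle is producing a growth function~\(\ordering\) with \(\limsup_m[W(\pthto{m})-\ordering(m)]>0\); note that \(W(\pthto{m})\to\infty\) alone is not enough, since the depths~\(m_n\) at which~\(\pth\) enters~\(\rectestcutindx{n}{}\) are controlled only in upper probability and the rate of divergence of~\(W(\pthto{m})\) on a single~\(\pth\) can in principle dominate every recursive function. I plan to resolve this through a \emph{secondary} Schnorr test: setting \(m^*(K)\coloneqq e(K+\lceil\log_2(K+1)\rceil+1)\), the sets \((\rectest^*)_K\coloneqq\bigcup_{n\leq K}\rectestcutindx{n}{\geq m^*(K)}\) satisfy \(\uglobalprob(\cylset{(\rectest^*)_K})\leq 2^{-K}\) by a counting argument that uses the tail bound, and after partial-cut-ification plus the bivariate-to-univariate reduction furnished by Proposition~\ref{prop:schnorr:test} they form another Schnorr test. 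If~\(\pth\) is \emph{not} caught by~\(\rectest^*\), then for all large~\(K\) every~\(m_n\) with~\(n\leq K\) satisfies~\(m_n<m^*(K)\), so \(W(\pthto{m^*(K)})\geq(K+1)/2\) exceeds \(\ordering(m)\coloneqq\lfloor\max\set{K:m^*(K)\leq m}/4\rfloor\) at the depths~\(m^*(K)\).

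Closing the argument in the remaining case---when~\(\pth\) is also caught by~\(\rectest^*\)---requires iterating the same construction to build a tower \(\rectest=\rectest^{(0)},\rectest^{(1)},\dots\) of Schnorr tests, uniformly computable in~\(k\), and absorbing the associated test supermartingales into a single diagonal sum \(W_{\mathrm{diag}}\coloneqq\sum_{k\geq 0}2^{-k-1}W^{(k)}\), paired with a growth function obtained by diagonalizing across the depths~\(m^{*,(k)}(K)\); the level~\(k\) at which~\(\pth\) eventually escapes then contributes enough growth in~\(W_{\mathrm{diag}}\) to beat the matching growth function at infinitely many depths. I expect showing that this iteration does exit for each specific~\(\pth\) and assembling the growth function uniformly in~\(k\) to be the main technical challenge. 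Non-degeneracy is essential throughout via Proposition~\ref{prop:bound:for:any:situation}, both for the effectivity of the computable approximations and---as in the {\ML} case---because without it the same type of counterexample as the one following Proposition~\ref{prop:martin-loef:equivalence:martingale:then:test} invalidates the converse.
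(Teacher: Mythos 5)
You correctly diagnose the crux of the problem: the straightforward sum $W=\frac12\sum_n\uglobalcondprob{\rectestindx{n}{}}{\bolleke}$ tends to infinity on $\pth$ but need not be \emph{computably} unbounded, because the depths $\ell_n$ at which $\pth$ enters the cuts $\rectestcutindx{n}{}$ can outgrow every recursive function. Your attempted repair, however, leaves a genuine gap. You acknowledge that the iteration $\rectest^{(0)},\rectest^{(1)},\dots$ must somehow ``exit'' for each fixed $\pth$, but nothing in your setup forces this: having assumed $\pth$ is not Schnorr test random, it can perfectly well be caught by every $\rectest^{(k)}$, and then the diagonal supermartingale $W_{\mathrm{diag}}$ never gets the growth estimate you need at any level. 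Resolving this would seem to require an entirely different idea, since the issue is not merely technical---the tower is built to catch paths with ``large depths,'' but large depths are exactly the obstruction in the first place, so you may never escape them.

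The paper avoids the iteration altogether by changing the building block. Rather than summing the processes $\uglobalcondprob{\rectestindx{n}{}}{\bolleke}$, it sums the doubly-indexed processes $Z_{n,k}:=2^k\uglobalcondprob{\rectestindx{n}{\geq\altordering(k)}}{\bolleke}$, where $\altordering$ is a growth function supplied by Lemma~\ref{lem:bound:on:sum:of:cuts} so that $\sum_n 2^k\uglobalprob(\rectestindx{n}{\geq\altordering(k)})\leq 2^{-k}$. The factor $2^k$ and the depth threshold $\altordering(k)$ are the point: when $\pth$ hits $\rectestcutindx{n}{}$ at depth $\ell_n$, every $Z_{n,k}$ with $\altordering(k)\leq\ell_n$ has value exactly $2^k$ there, so that $Z(\pthto{\ell_n})\geq 2^{\altordering^\sharp(\ell_n)-1}$, where $\altordering^\sharp(\ell):=\sup\{k:\altordering(k)\leq\ell\}$ is the upper inverse of $\altordering$ and is itself a growth function. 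Crucially, this lower bound is indexed by the \emph{depth} $\ell_n$ rather than the test index $n$, which is precisely what a growth function $\ordering(m)$ must see; no control on the size of $\ell_n$ is required beyond the fact (supplied by non-degeneracy, via an argument of the same type you invoke at the end) that $\ell_n\to\infty$. Computability of $Z$ then requires its own effort (Lemmas~\ref{lem:also:conditional}, \ref{lem:sum:is:comp:super:schnorr} and~\ref{lem:unell:is:comp:super:schnorr}), but this is a convergence-rate bookkeeping exercise of the kind you already sketch for $W$. So: your instinct that the proof must ``build a depth scale into the supermartingale'' is right, but the place to do it is inside the definition of the summands themselves, not via a secondary test.
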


\begin{proof}
% Proof by Gert, based on the ideas of Downey and Hirschfeldt, and extensions by Floris
% Improved by Floris
% Corrected by Gert
% Checked by Gert
For this converse result too, we give a proof by contraposition.
Assume that \(\pth\) isn't Schnorr test random for~\(\frcstsystem\), which implies that there's some Schnorr test~\(\rectest\) for~\(\frcstsystem\) such that \(\pth\in\bigcap_{n\in\naturalswithzero}\rectestindx{n}{}\).
It follows from Proposition~\ref{prop:schnorr:test} that we may assume without loss of generality that the sets of situations~\(\rectestcutindx{n}{}\) are partial cuts for all~\(n\in\naturalswithzero\).
We'll now use this \(\rectest\) to construct a {\comp} test supermartingale that is computably unbounded on~\(\pth\).

We infer from Lemma~\ref{lem:bound:on:sum:of:cuts} that there's some growth function~\(\altordering\) such that
\begin{equation}\label{eq:bound:on:sum:of:cuts}
\smashoperator{\sum_{n=0}^{\infty}}2^{k}\uglobalprob\group[\big]{\rectestindx[\big]{n}{\geq\altordering(k)}}\leq2^{-k}
\text{ for all~\(k\in\naturalswithzero\)}.
\end{equation}
We use this growth function~\(\altordering\) to define the following maps, all of which are non-negative supermartingales for~\(\frcstsystem\), by Corollary~\ref{cor:supermartingales:based:on:cuts} and~\ref{axiom:coherence:homogeneity}, because the~\(\rectestcutindx{n}{\geq\altordering(k)}\) are partial cuts:
\begin{equation*}
Z_{n,k}\colon\sits\to\reals\colon\sit\mapsto2^{k}\uglobalcondprobgroup{\rectestindx[\big]{n}{\geq\altordering(k)}}{\sit}{\big},
\text{ for all~\(n,k\in\naturalswithzero\)}.
\end{equation*}
Since the forecasting system~\(\frcstsystem\) was assumed to be non-degenerate, Proposition~\ref{prop:bound:for:any:situation} now implies that
\begin{equation}\label{eq:martingale:parts:inequality}
0
\leq Z_{n,k}(\sit)
\leq Z_{n,k}(\init)\frcstsystembound(\sit)
=2^{k}\uglobalprob\group[\big]{\rectestindx[\big]{n}{\geq\altordering(k)}}\frcstsystembound(\sit)
\text{ for all~\(\sit\in\sits\)}.
\end{equation}
If we also define the (possibly extended) real process~\(Z\coloneqq\frac12\sum_{n,k\in\naturalswithzero}Z_{n,k}\), then we infer from Equations~\eqref{eq:bound:on:sum:of:cuts} and~\eqref{eq:martingale:parts:inequality} that
\begin{multline}\label{eq:supermatin:Z:real:valued}
0
\leq Z(s)
=\frac12\smashoperator[r]{\sum_{n,k\in\naturalswithzero}}Z_{n,k}(\sit)
\leq\frac12\frcstsystembound(\sit)\sum_{\mathclap{n,k\in\naturalswithzero}}
2^{k}\uglobalprob\group[\big]{\rectestindx[\big]{n}{\geq\altordering(k)}}
\leq\frcstsystembound(\sit)\frac12\smashoperator[r]{\sum_{k\in\naturalswithzero}}2^{-k}
=\frcstsystembound(\sit)\\
\text{ for all~\(\sit\in\sits\)}.
\end{multline}
This guarantees that~\(Z\) is real-valued, and that, moreover, \(Z(\init)\leq1\).

Now, fix any~\(\sit\in\sits\).
Then we readily see that \(\frac12\sum_{n=0}^N\sum_{\ell=0}^LZ_{n,\ell}(\sit)\nearrow Z(\sit)\) and therefore also \(\frac12\sum_{n=0}^N\sum_{\ell=0}^L\adddelta Z_{n,\ell}(\sit)\to\adddelta Z(\sit)\) as \(N,L\to\infty\).
Since the gambles~\(\adddelta Z_{n,\ell}(\sit)\) and~\(\adddelta Z(\sit)\) are defined on the finite domain~\(\outcomes\), this point-wise convergence also implies uniform convergence, so we can infer from~\ref{axiom:coherence:uniform:convergence} and
\begin{equation*}
\uex_{\frcstsystem(\sit)}\group[\bigg]{\frac12\sum_{n=0}^N\sum_{\ell=0}^L\adddelta Z_{n,\ell}(\sit)}
\leq\frac12\sum_{n=0}^N\sum_{\ell=0}^L\uex_{\frcstsystem(\sit)}\group{\adddelta Z_{n,\ell}(\sit)}
\leq0,
\end{equation*}
which is implied by~\ref{axiom:coherence:homogeneity}, \ref{axiom:coherence:subadditivity} and the supermartingale character of the~\(Z_{n,\ell}\), that also
\begin{equation}\label{eq:supermartin:Z}
\uex_{\frcstsystem(\sit)}(\adddelta Z(\sit))
=\lim_{N,L\to\infty}\uex_{\frcstsystem(\sit)}
\group[\bigg]{\frac12\sum_{n=0}^N\sum_{\ell=0}^L\adddelta Z_{n,\ell}(\sit)}
\leq0.
\end{equation}
This tells us that~\(Z\) is a non-negative supermartingale for~\(\frcstsystem\).
It follows from Lemma~\ref{lem:sum:is:comp:super:schnorr} that \(Z\) is also {\comp}.

The relevant condition being \(\uex_{\frcstsystem(\init)}(Z(\init\andoutcome))\leq Z(\init)\), we see that replacing~\(Z(\init)\leq1\) by~\(1\) does not change the supermartingale character of \(Z\), and doing so leads to a {\comp} test supermartingale~\(Z'\) for~\(\frcstsystem\).

To show that this \(Z'\) is computably unbounded on~\(\pth\), we take two steps.

In a first step, we fix any~\(n\in\naturalswithzero\).
Since \(\pth\in\bigcap_{m\in\naturalswithzero}\rectestindx{m}{}\), and since the \(\rectestcutindx{m}{}\) were assumed to be partial cuts, there's some (unique)~\(\ell_n\in\naturalswithzero\) such that \(\pthto{\ell_n}\in\rectestcutindx{n}{}\).
This tells us that if \(\ell\leq\ell_n\), then also \(\pthto{\ell_n}\in\rectestcutindx{n}{\geq\ell}\), and therefore, by Corollary~\ref{cor:supermartingales:based:on:cuts}\ref{it:supermartingales:based:on:cuts:where:one:and:where:zero}, that \(\uglobalcondprob{\rectestindx{n}{\geq\ell}}{\pthto{\ell_n}}=1\) for all~\(\ell\leq\ell_n\).
Hence,
\[
\uglobalcondprobgroup{\rectestindx[\big]{n}{\geq\altordering(k)}}{\pthto{\ell_n}}{\big}=1
\text{ for all~\(k\in\naturalswithzero\) such that \(\altordering(k)\leq\ell_n\)}.
\]

Let's now define the map~\(\altordering^\sharp\colon\naturalswithzero\to\naturalswithzero\) such that \(\altordering^\sharp(\ell)\coloneqq\sup\cset{k\in\naturalswithzero}{\altordering(k)\leq\ell}\) for all~\(\ell\in\naturalswithzero\), where we use the convention that \(\sup\emptyset=0\).
It's clear that \(\altordering^\sharp\) is a growth function.
Moreover, as soon as \(\ell_n\geq\altordering(0)\), we find that, in particular, \(\altordering(k)\leq\ell_n\) for~\(k=\altordering^\sharp(\ell_n)\).
Hence,
\begin{equation*}
\uglobalcondprobgroup{\rectestindx[\big]{n}{\geq\altordering(k)}}{\pthto{\ell_n}}{\big}=1
\text{ for~\(k=\altordering^\sharp(\ell_n)\)},
\text{ if \(\ell_n\geq\altordering(0)\)}.
\end{equation*}
This leads us to the conclusion that for all~\(n\in\naturalswithzero\), there's some~\(\ell_n\in\naturalswithzero\) such that
\begin{equation}\label{eq:computable:bound}
Z'(\pthto{\ell_n})
\geq
Z(\pthto{\ell_n})
\geq\frac12Z_{n,\altordering^\sharp(\ell_n)}(\pthto{\ell_n})
=2^{\altordering^\sharp(\ell_n)-1}
\text{ if \(\ell_n\geq\max\set{\altordering(0),1}\)}.
\end{equation}
Since \(\altordering^\sharp\) is a growth function, so is the map~\(\ordering\colon\naturalswithzero\to\naturalswithzero\) defined by
\[
\ordering(m)\coloneqq\max\set[\big]{2^{\altordering^\sharp(m)-1}-1,1}
\text{ for all~\(m\in\naturalswithzero\)}.
\]
We will therefore be done if we can now show that the sequence~\(\ell_n\) is unbounded as \(n\to\infty\), because the inequality in Equation~\eqref{eq:computable:bound} will then guarantee that
\begin{equation*}
\limsup_{m\to\infty}\sqgroup{Z'(\pthto{m})-\ordering(m)}>0,
\end{equation*}
so the {\comp} test supermartingale~\(Z'\) is computably unbounded on~\(\pth\).

Proving that~\(\ell_n\) is unbounded as \(n\to\infty\) is therefore our second step.
To accomplish this, we use the assumption that \(\frcstsystem\) is non-degenerate.
Assume, towards contradiction, that there's some natural number~\(B\) such that \(\ell_n\leq B\) for all~\(n\in\naturalswithzero\).
The non-degenerate character of~\(\frcstsystem\) implies that \(\min\set{\ufrcstsystem(\sit),1-\lfrcstsystem(\sit)}>0\) for all~\(\sit\in\sits\), which implies in particular that there's some real~\(1>\delta>0\) such that \(\min\set{\ufrcstsystem(\pthto{k}),1-\lfrcstsystem(\pthto{k})}\geq\delta\) for all non-negative integers~\(k\leq B\), as they are finite in number.
But this implies that, for any~\(n\in\naturalswithzero\),
\begin{equation*}
2^{-n}
\geq\uglobalprob(\rectestindx{n}{})
\geq\uglobalprob(\cylset{\pthto{\ell_n}})
=\prod_{k=0}^{\ell_n-1}\ufrcstsystem(\pthto{k})^{\pthat{k+1}}[1-\lfrcstsystem(\pthto{k})]^{1-\pthat{k+1}}
\geq\delta^{\ell_n}\geq\delta^B,
\end{equation*}
where the first inequality follows from the properties of a Schnorr test, the second inequality from \(\cylset{\pthto{\ell_n}}\subseteq\rectestindx{n}{}\) and~\ref{axiom:lower:upper:monotonicity}, the equality from Proposition~\ref{prop:lower:upper:probs:for:cylinder:sets}, and the fourth inequality from \(1>\delta>0\) and~\(\ell_n\leq B\).
However, since \(1>\delta>0\) and~\(B\in\naturals\), there's always some~\(n\in\naturalswithzero\) such that \(2^{-n}<\delta^B\), which is the desired contradiction.
\end{proof}

\begin{lemma}\label{lem:also:conditional}
Consider any Schnorr test~\(\rectest\) for a non-degenerate {\comp} forecasting system~\(\frcstsystem\), such that the corresponding~\(\rectestcutindx{n}{}\) are partial cuts for all~\(n\in\naturalswithzero\).
Then there's some recursive map~\(\tilde{e}\colon\naturalswithzero\times\sits\to\naturalswithzero\) such that its partial maps~\(\tilde{e}(\bolleke,\sit)\) are growth functions for all~\(\sit\in\sits\), and such that
\begin{equation*}
\uglobalcondprobgroup{\rectestindx{n}{\geq\ell}}{\sit}{\big}\leq2^{-N}
\text{ for all~\((N,n,\sit)\in\natsandnatsandsits\) and all~\(\ell\geq\tilde{e}(N,\sit)\)}.
\end{equation*}
\end{lemma}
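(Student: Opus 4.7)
The plan is to lift the unconditional tail bound from Proposition~\ref{prop:schnorr:test} to a conditional one by exploiting the universal supermartingale bound from Proposition~\ref{prop:bound:for:any:situation}. By Proposition~\ref{prop:schnorr:test}\ref{it:schnorr:test:tail:bound}, I may first replace the original tail bound of~\(\rectest\) by a univariate growth function \(e\colon\naturalswithzero\to\naturalswithzero\), so that \(\uglobalprob(\rectestindx{n}{\geq\ell})\leq2^{-M}\) for all \(n\in\naturalswithzero\) and all \(\ell\geq e(M)\).

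Next, since \(\rectestcutindx{n}{\geq\ell}\subseteq\rectestcutindx{n}{}\) is itself a partial cut, Corollary~\ref{cor:supermartingales:based:on:cuts}\ref{it:supermartingales:based:on:cuts:supermartingale} makes \(\uglobalcondprob{\rectestindx{n}{\geq\ell}}{\bolleke}\) a non-negative supermartingale for~\(\frcstsystem\). Since \(\frcstsystem\) is non-degenerate, Proposition~\ref{prop:bound:for:any:situation} then gives
\[
\uglobalcondprob{\rectestindx{n}{\geq\ell}}{\sit}
\leq\frcstsystembound(\sit)\uglobalprob(\rectestindx{n}{\geq\ell})
\text{ for all }\sit\in\sits.
\]
Because \(\frcstsystem\) is computable, so is \(\frcstsystembound\); and because \(\frcstsystembound\geq1\), Lemma~\ref{lem:recursive:logarithm} yields a recursive map \(L_{\frcstsystembound}\colon\sits\to\naturals\) with \(\frcstsystembound(\sit)\leq2^{L_{\frcstsystembound}(\sit)}\) for every~\(\sit\in\sits\).

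I would then define
\[
\tilde{e}(N,\sit)\coloneqq e\bigl(N+L_{\frcstsystembound}(\sit)\bigr)
\text{ for all }(N,\sit)\in\naturalswithzero\times\sits,
\]
which is recursive as a composition of recursive maps; combining the two displayed inequalities gives \(\uglobalcondprob{\rectestindx{n}{\geq\ell}}{\sit}\leq2^{L_{\frcstsystembound}(\sit)}\cdot 2^{-N-L_{\frcstsystembound}(\sit)}=2^{-N}\) for all \(\ell\geq\tilde{e}(N,\sit)\). Each partial map \(\tilde{e}(\bolleke,\sit)\) is clearly recursive and non-decreasing (because \(e\) is); and it is unbounded because \(\tilde{e}(N,\sit)\geq e(N)\) and \(e\) is itself a growth function. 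No genuine obstacle arises: the construction is essentially an assembly of Propositions~\ref{prop:schnorr:test} and~\ref{prop:bound:for:any:situation} together with Lemma~\ref{lem:recursive:logarithm}, the only subtlety being the use of the computable over-estimate \(L_{\frcstsystembound}(\sit)\) to absorb the \(\sit\)-dependent loss factor \(\frcstsystembound(\sit)\) without sacrificing recursivity of~\(\tilde{e}\).
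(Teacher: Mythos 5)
Your proof is correct and follows essentially the same route as the paper's: take the univariate growth function $e$ from Proposition~\ref{prop:schnorr:test}\ref{it:schnorr:test:tail:bound}, lift the unconditional bound to a conditional one via Proposition~\ref{prop:bound:for:any:situation} using non-degeneracy, absorb the loss factor $\frcstsystembound(\sit)$ with the recursive overestimate $L_{\frcstsystembound}$ from Lemma~\ref{lem:recursive:logarithm}, and set $\tilde{e}(N,\sit)\coloneqq e(N+L_{\frcstsystembound}(\sit))$. The paper's verification of the growth-function property for $\tilde{e}(\bolleke,\sit)$ is also the same as yours.
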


\begin{proof}
% Proof by Gert
% Checked by Gert
Proposition~\ref{prop:schnorr:test}\ref{it:schnorr:test:tail:bound} guarantees that there's a growth function~\(e\colon\naturalswithzero\to\naturalswithzero\) such that
\begin{equation*}
\uglobalprob\group[\big]{\rectestindx{n}{\geq\ell}}
=\uglobalprob\group[\big]{\rectestindx{n}{}\setminus\rectestindx{n}{<\ell}}
\leq2^{-M}
\text{ for all~\(M,n\in\naturalswithzero\) and all~\(\ell\geq e(M)\)},
\end{equation*}
where the equality holds because the~\(\rectestcutindx{n}{}\) are assumed to be partial cuts.
Since the real process~\(\uglobalcondprobgroup{\rectestindx{n}{\geq\ell}}{\bolleke}{\big}\) is a non-negative supermartingale by Corollary~\ref{cor:supermartingales:based:on:cuts}, we infer from the non-degeneracy of~\(\frcstsystem\), Proposition~\ref{prop:bound:for:any:situation} and Lemma~\ref{lem:recursive:logarithm} [where we recall that \(\frcstsystembound\geq1\) is {\comp}] that
\begin{multline*}
0\leq\uglobalcondprobgroup{\rectestindx{n}{\geq\ell}}{\sit}{\big}\leq\uglobalprob\group[\big]{\rectestindx{n}{\geq\ell}}\frcstsystembound(\sit)
\leq2^{-M}\frcstsystembound(\sit)\leq2^{-M+L_{\frcstsystembound}(\sit)}\\
\text{ for all~\((M,n)\in\natsandnats\) and all~\(\ell\geq e(M)\)}.
\end{multline*}
It's therefore clear that if we let
\begin{equation*}
\tilde{e}(N,\sit)\coloneqq e\group[\big]{N+L_{\frcstsystembound}(\sit)}
\text{ for all~\((N,\sit)\in\natsandsits\)},
\end{equation*}
then
\begin{equation*}
\uglobalcondprobgroup{\rectestindx{n}{\geq\ell}}{\sit}{\big}\leq2^{-N}
\text{ for all~\((N,n,\sit)\in\natsandnatsandsits\) and all~\(\ell\geq\tilde{e}(N,\sit)\)}.
\end{equation*}
This \(\tilde{e}\) is recursive because \(e\) and~\(L_{\frcstsystembound}\) are [recall that \(L_{\frcstsystembound}\) is recursive by Lemma~\ref{lem:recursive:logarithm}].
For any fixed~\(\sit\) in~\(\sits\), \(\tilde{e}(\bolleke,\sit)\) is clearly non-decreasing and unbounded, because \(e\) is.
\end{proof}

\begin{lemma}\label{lem:bound:on:sum:of:cuts}
Consider any Schnorr test~\(\rectest\) for a {\comp} forecasting system~\(\frcstsystem\), such that the corresponding~\(\rectestcutindx{n}{}\) are partial cuts for all~\(n\in\naturalswithzero\).
Then there's some growth function~\(\altordering\colon\naturalswithzero\to\naturalswithzero\) such that
\begin{equation*}
\smashoperator{\sum_{n=0}^{\infty}}2^{k}\uglobalprob\group[\big]{\rectestindx[\big]{n}{\geq\altordering(k)}}\leq2^{-k}
\text{ for all~\(k\in\naturalswithzero\)}.
\end{equation*}
\end{lemma}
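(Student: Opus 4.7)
The plan is to exploit the tail-bound property of Schnorr tests guaranteed by Proposition~\ref{prop:schnorr:test}\ref{it:schnorr:test:tail:bound} and to combine it with the defining inequality~\(\uglobalprob(\rectestindx{n}{})\leq2^{-n}\) via a simple head/tail split of the index~\(n\).

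Concretely, by Proposition~\ref{prop:schnorr:test}\ref{it:schnorr:test:tail:bound} I may assume without loss of generality that the Schnorr test has a tail bound~\(e\colon\naturalswithzero\to\naturalswithzero\) that is a univariate growth function, so that
\[
\uglobalprob\group[\big]{\rectestindx{n}{\geq\ell}}\leq2^{-N}
\text{ for all~\(n,N\in\naturalswithzero\) and all~\(\ell\geq e(N)\).}
\]
For fixed~\(k\in\naturalswithzero\), I would split the infinite sum at index~\(M_k\coloneqq 2k+1\). For the tail~\(n>M_k\), monotonicity (\ref{axiom:lower:upper:monotonicity}) and the {\ML}-test bound give
\[
\smashoperator{\sum_{n>M_k}}2^k\uglobalprob\group[\big]{\rectestindx{n}{\geq\altordering(k)}}
\leq 2^k\smashoperator{\sum_{n>M_k}}\uglobalprob(\rectestindx{n}{})
\leq 2^k\smashoperator{\sum_{n>M_k}}2^{-n}
=2^{k-M_k}\leq 2^{-k-1},
\]
independently of the choice of~\(\altordering(k)\). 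For the head~\(0\leq n\leq M_k\), I want each of the~\(M_k+1=2k+2\) terms to be at most~\(2^{-2k-2}\), which is ensured by taking~\(\altordering(k)\geq e(N_k)\) for some recursive~\(N_k\) with~\(2^{N_k}\geq(2k+2)\cdot2^{2k+1}\), e.g.\ \(N_k\coloneqq 3k+2+\lceil\log_2(2k+2)\rceil\). Then the head sum is at most~\((2k+2)\cdot 2^k\cdot 2^{-N_k}\leq 2^{-k-1}\), and adding the two contributions yields~\(2^{-k}\).

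It remains to define~\(\altordering\) so that it is a growth function. I would set
\[
\altordering(k)\coloneqq e(N_k)+k
\text{ for all~\(k\in\naturalswithzero\),}
\]
which is recursive (since~\(e\) and~\(k\mapsto N_k\) are), strictly increasing in~\(k\) (because of the `\(+k\)'), and therefore unbounded. Since~\(\altordering(k)\geq e(N_k)\), the tail-bound inequality applies with~\(\ell=\altordering(k)\) and~\(N=N_k\), which completes the required estimate.

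I do not expect any real obstacle here: the argument is a routine two-part estimate, and all the hard work is already done by Proposition~\ref{prop:schnorr:test}\ref{it:schnorr:test:tail:bound}. The only point requiring care is bookkeeping — making sure the chosen~\(N_k\) really does dominate~\(2k+1+\log_2(M_k+1)\), and that the resulting~\(\altordering\) inherits recursiveness, monotonicity and unboundedness from~\(e\) and the explicit formula for~\(N_k\).
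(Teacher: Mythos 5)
Your proof is correct and follows essentially the same strategy as the paper's: both split the sum at $n=2k+1$, use $\uglobalprob(\rectestindx{n}{})\leq2^{-n}$ for the tail, and invoke the univariate tail bound $e$ from Proposition~\ref{prop:schnorr:test}\ref{it:schnorr:test:tail:bound} for the head. The only cosmetic difference is in the head estimate: the paper bounds each of the $2k+2$ terms with a varying threshold $e(2k+2+n)$ so the head forms a geometric series, while you use a single threshold $N_k$ large enough that a crude count of terms suffices; both yield $2^{-(k+1)}$ and both produce a recursive, non-decreasing, unbounded $\altordering$, so the variation is harmless.
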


\begin{proof}
% Proof by Gert, formalising the sketch by Floris
% Simplified by Floris after a suggestion by Jasper
% Cleanup up by Gert
% Checked by Gert
Proposition~\ref{prop:schnorr:test}\ref{it:schnorr:test:tail:bound} guarantees that there's a growth function~\(e\colon\naturalswithzero\to\naturalswithzero\) such that
\begin{equation*}
\uglobalprob\group[\big]{\rectestindx{n}{\geq\ell}}
=\uglobalprob\group[\big]{\rectestindx{n}{}\setminus\rectestindx{n}{<\ell}}
\leq2^{-N}
\text{ for all~\(N,n\in\naturalswithzero\) and all~\(\ell\geq e(N)\)},
\end{equation*}
where the equality holds because the~\(\rectestcutindx{n}{}\) are assumed to be partial cuts.
Let \(\altordering\colon\naturalswithzero\to\naturalswithzero\) be defined by~\(\altordering(k)\coloneqq\max_{n=0}^{2k+1}e(2k+2+n)\) for all~\(k\in\naturalswithzero\).
Clearly, \(\altordering\) is recursive because \(e\) is. 
It follows from the non-decreasingness and unboundedness of~\(e\) that \(\altordering\) is non-decreasing, since
\begin{equation*}
\altordering(k+1)
=\max_{n=0}^{2k+3}e(2k+4+n)
\geq\max_{n=0}^{2k+1}e(2k+2+n)=\altordering(k)
\text{ for all~\(k\in\naturalswithzero\)},
\end{equation*}
and that \(\altordering\) is unbounded, since \(\altordering(k)\geq e(2k+2)\) for all~\(k\in\naturalswithzero\).
So we conclude that \(\altordering\) is a growth function.

Now, for any~\(k\in\naturalswithzero\), we find that, indeed,
\begin{align*}
\smashoperator{\sum_{n=0}^{\infty}}2^k\uglobalprob\group[\big]{\rectestindx[\big]{n}{\geq\altordering(k)}}
&=2^k\smashoperator{\sum_{n=0}^{2k+1}}\uglobalprob\group[\big]{\rectestindx[\big]{n}{\geq\altordering(k)}}
+2^k\smashoperator{\sum_{n=2k+2}^{\infty}}\uglobalprob\group[\big]{\rectestindx[\big]{n}{\geq\altordering(k)}}\\
&\leq2^k\smashoperator{\sum_{n=0}^{2k+1}}\uglobalprob\group[\big]{\rectestindx[\big]{n}{\geq e(2k+2+n)}}
+2^k\smashoperator{\sum_{n=2k+2}^{\infty}}\uglobalprob\group[\big]{\rectestindx{n}{}}\\
&\leq2^k\smashoperator{\sum_{n=0}^{2k+1}}2^{-(2k+2+n)}
+2^k\smashoperator{\sum_{n=2k+2}^{\infty}}2^{-n}
=2^{-(k+1)}\smashoperator{\sum_{n=0}^{2k+1}}2^{-(n+1)}+2^{-(k+1)}\\
&\leq2^{-(k+1)}+2^{-(k+1)}
=2^{-k},
\end{align*}
where the first inequality follows from~\ref{axiom:lower:upper:monotonicity}.
\end{proof}

\begin{lemma}\label{lem:sum:is:comp:super:schnorr}
The non-negative supermartingale~\(Z\) in the proof of Proposition~\ref{prop:schnorr:equivalence:martingale:then:test} is {\comp}.
\end{lemma}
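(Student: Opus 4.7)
The strategy mirrors that of Lemma~\ref{lem:limit:is:lscomp:super}, but since we now need full computability of~\(Z\) rather than merely lower semicomputability, all tail estimates have to be effective. The plan splits naturally into two steps: first, to show that each individual~\(Z_{n,k}(\sit)\) is a {\comp} real effectively in~\((n,k,\sit)\); and second, to show that the double series \(\frac12\sum_{n,k\in\naturalswithzero}Z_{n,k}(\sit)\) converges to~\(Z(\sit)\) at an effective rate. The quantitative tools are drawn from Lemmas~\ref{lem:also:conditional} and~\ref{lem:bound:on:sum:of:cuts}, Proposition~\ref{prop:bound:for:any:situation}, and Lemma~\ref{lem:recursive:logarithm}.

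For the first step, the partial cut~\(\rectestcutindx{n}{\geq\altordering(k)}\) need not be finite, so the Workhorse Lemma~\ref{lem:comp:pain:in:the:ass} does not apply directly. I would therefore truncate at depth~\(L\geq\altordering(k)\), writing \(\rectestcutindx{n}{\geq\altordering(k)}=(\rectestcutindx{n}{<L}\setminus\rectestcutindx{n}{<\altordering(k)})\cup\rectestcutindx{n}{\geq L}\); the left summand is a finite recursive partial cut of depth at most~\(L\), to which the Workhorse Lemma does apply, and the right summand is controlled by Lemma~\ref{lem:also:conditional}. Using monotonicity and subadditivity (\ref{axiom:lower:upper:monotonicity},~\ref{axiom:lower:upper:subadditivity}) one obtains
\[
0\leq\uglobalcondprob{\rectestindx{n}{\geq\altordering(k)}}{\sit}-\uglobalcondprob{\rectestindx{n}{<L}\setminus\rectestindx{n}{<\altordering(k)}}{\sit}\leq\uglobalcondprob{\rectestindx{n}{\geq L}}{\sit}\leq2^{-N}
\]
as soon as~\(L\geq\max\set{\altordering(k),\tilde e(N,\sit)}\), with~\(\tilde e\) the recursive map supplied by Lemma~\ref{lem:also:conditional}. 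This gives an effective modulus of convergence as~\(L\to\infty\), and hence computability of~\(Z_{n,k}(\sit)=2^k\uglobalcondprob{\rectestindx{n}{\geq\altordering(k)}}{\sit}\) effectively in~\((n,k,\sit)\).

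For the second step, the outer tail in~\(k\) is handled by combining Lemma~\ref{lem:bound:on:sum:of:cuts} (which yields \(\sum_{n=0}^\infty Z_{n,k}(\init)\leq 2^{-k}\)) with Proposition~\ref{prop:bound:for:any:situation} to obtain \(\sum_{n=0}^\infty Z_{n,k}(\sit)\leq\frcstsystembound(\sit)2^{-k}\). The inner tail in~\(n\) uses the Schnorr test bound \(\uglobalprob(\rectestindx{n}{})\leq2^{-n}\) together with~\ref{axiom:lower:upper:monotonicity} and Proposition~\ref{prop:bound:for:any:situation} to produce \(Z_{n,k}(\sit)\leq\frcstsystembound(\sit)2^{k-n}\), so \(\sum_{n\geq N}Z_{n,k}(\sit)\) also drops geometrically. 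Replacing~\(\frcstsystembound(\sit)\) by~\(2^{L_{\frcstsystembound}(\sit)}\) via Lemma~\ref{lem:recursive:logarithm}, one can select recursive cutoffs~\(K(M,\sit)\) and~\(N_k(M,\sit)\) making both tails sit below~\(2^{-M-1}\); the partial sums \(\frac12\sum_{k<K}\sum_{n<N_k}Z_{n,k}(\sit)\), which by the first step are computable effectively in~\((M,\sit)\), then approximate~\(Z(\sit)\) to within~\(2^{-M}\). The main obstacle is the bookkeeping required to merge these various recursive moduli into a single effective rate of convergence, but no new conceptual ingredient beyond those used for Lemma~\ref{lem:limit:is:lscomp:super} is needed.
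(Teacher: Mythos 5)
Your proposal is correct and follows essentially the same route as the paper: the computability of the individual $Z_{n,k}(\sit)$ is obtained by truncating $\rectestcutindx{n}{\geq\altordering(k)}$ at depth $\ell$, applying the Workhorse Lemma~\ref{lem:comp:pain:in:the:ass} to the finite partial cut $\rectestcutindx{n}{<\ell}\cap\rectestcutindx{n}{\geq\altordering(k)}$ and bounding the truncation error via Lemma~\ref{lem:also:conditional} (this is precisely Lemma~\ref{lem:unell:is:comp:super:schnorr} in the paper), and the double series is then truncated with recursive cutoffs in $k$ (via Lemma~\ref{lem:bound:on:sum:of:cuts} and Proposition~\ref{prop:bound:for:any:situation}) and in $n$ (via $\uglobalprob(\rectestindx{n}{})\leq 2^{-n}$), with $\frcstsystembound(\sit)$ replaced by $2^{L_{\frcstsystembound}(\sit)}$ from Lemma~\ref{lem:recursive:logarithm}. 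The only cosmetic difference is that you allow the inner cutoff $N_k$ to depend on $k$, whereas the paper uses a single inner cutoff $q$ depending on the outer cutoff $p$; both resolve to the same bookkeeping.
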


\begin{proof}
% Proof by Gert, formalising the sketch by Floris
% Checked by Gert
We use the notations in the proof of Proposition~\ref{prop:schnorr:equivalence:martingale:then:test}.
We aim at obtaining a {\comp} real map that converges effectively to \(Z\).
First of all, for any~\(p\in\naturalswithzero\),
\begin{equation*}
Z(\sit)
=\frac12\sum_{k=0}^{\infty}\smashoperator[r]{\sum_{n=0}^{\infty}}Z_{n,k}(\sit)
=\frac12\sum_{k=0}^{p}\smashoperator[r]{\sum_{n=0}^{\infty}}Z_{n,k}(\sit)
+\underset{R_1(p,\sit)}{\underbrace{\frac12\sum_{k=p+1}^{\infty}\smashoperator[r]{\sum_{n=0}^{\infty}}Z_{n,k}(\sit)}},
\end{equation*}
where
\begin{align*}
\abs{R_1(p,\sit)}
&=R_1(p,\sit)
=\frac12\sum_{k=p+1}^{\infty}\smashoperator[r]{\sum_{n=0}^{\infty}}Z_{n,k}(\sit)\\
&\leq\frac12\sum_{k=p+1}^{\infty}\smashoperator[r]{\sum_{n=0}^{\infty}}
2^{k}\uglobalprob\group[\big]{\rectestindx[\big]{n}{\geq\altordering(k)}}\frcstsystembound(\sit)
=\frac12\frcstsystembound(\sit)\smashoperator[l]{\sum_{k=p+1}^{\infty}}
\group[\bigg]{\smashoperator[r]{\sum_{n=0}^{\infty}}
2^{k}\uglobalprob\group[\big]{\rectestindx[\big]{n}{\geq\altordering(k)}}}\\
&\leq\frcstsystembound(\sit)\frac12\smashoperator[r]{\sum_{k=p+1}^{\infty}}2^{-k}
=\frcstsystembound(\sit)2^{-(p+1)}
\leq2^{-(p+1-L_{\frcstsystembound}(\sit))}.
\end{align*}
In this chain of (in)equalities, the first inequality follows from Equation~\eqref{eq:martingale:parts:inequality} and the second inequality follows from~Equation~\eqref{eq:bound:on:sum:of:cuts}.
The last inequality is based on Lemma~\ref{lem:recursive:logarithm} and the notations introduced there.
If we therefore define the recursive map~\(e_1\colon\natsandsits\to\naturalswithzero\) by~\(e_1(N,\sit)\coloneqq N+L_{\frcstsystembound}(\sit)\) for all~\((N,\sit)\in\natsandsits\) [recall that \(L_{\frcstsystembound}\) is recursive by Lemma~\ref{lem:recursive:logarithm}], then we find that
\begin{equation*}
\abs{R_1(p,\sit)}\leq2^{-(N+1)}
\text{ for all~\((N,\sit)\in\natsandsits\) and all~\(p\geq e_1(N,\sit)\)}.
\end{equation*}

Next, we consider any~\(p,q\in\naturalswithzero\) and look at
\begin{equation*}
\frac12\sum_{k=0}^{p}\smashoperator[r]{\sum_{n=0}^{\infty}}Z_{n,k}(\sit)
=\frac12\sum_{k=0}^{p}\smashoperator[r]{\sum_{n=0}^{q}}Z_{n,k}(\sit)
+\underset{R_2(p,q,\sit)}{\underbrace{\frac12\sum_{k=0}^{p}\smashoperator[r]{\sum_{n=q+1}^{\infty}}Z_{n,k}(\sit)}},
\end{equation*}
where
\begin{align*}
\abs{R_2(p,q,\sit)}
&=R_2(p,q,\sit)
=\frac12\sum_{k=0}^{p}\smashoperator[r]{\sum_{n=q+1}^{\infty}}Z_{n,k}(\sit)\\
&\leq\frac12\sum_{k=0}^{p}\smashoperator[r]{\sum_{n=q+1}^{\infty}}
2^{k}\uglobalprob\group[\big]{\rectestindx[\big]{n}{\geq\altordering(k)}}\frcstsystembound(\sit)
\leq\frac12\frcstsystembound(\sit)2^{p}
\sum_{k=0}^{p}\group[\bigg]{\smashoperator[r]{\sum_{n=q+1}^{\infty}}\uglobalprob\group[\big]{\rectestindx{n}{}}}\\
&\leq\frcstsystembound(\sit)2^{p-1}(p+1)\smashoperator{\sum_{n=q+1}^{\infty}}2^{-n}
=\frcstsystembound(\sit)2^{p-q-1}(p+1)
\leq2^{2p-q-1+L_{\frcstsystembound}(\sit)}.
\end{align*}
In this chain of (in)equalities, the first inequality follows from Equation~\eqref{eq:martingale:parts:inequality}, the second inequality follows from~\ref{axiom:lower:upper:monotonicity} since \(\rectestindx{n}{\geq\altordering(k)}\subseteq\rectestindx{n}{}\) for all~\(k,n\in\naturalswithzero\), and the third inequality follows from the assumption that \(\rectest\) is a Schnorr test.
The fourth inequality is based on Lemma~\ref{lem:recursive:logarithm} and the notations introduced there, and the fact that \(p+1\leq2^p\) for all~\(p\in\naturalswithzero\).
If we therefore define the recursive map~\(e_3\colon\natsandnatsandsits\to\naturalswithzero\) by~\(e_3(p,N,\sit)\coloneqq N+2p+L_{\frcstsystembound}(\sit)\) for all~\((p,N,\sit)\in\natsandnatsandsits\) [recall that \(L_{\frcstsystembound}\) is recursive by Lemma~\ref{lem:recursive:logarithm}], then we find that
\begin{equation*}
\abs{R_2(p,q,\sit)}\leq2^{-(N+1)}
\text{ for all~\((p,N,\sit)\in\natsandnatsandsits\) and~\(q\geq e_3(p,N,\sit)\)}.
\end{equation*}
Now, consider the recursive map~\(e_2\colon\natsandsits\to\naturalswithzero\) defined by~\(e_2(N,\sit)\coloneqq e_3(e_1(N,\sit),N,\sit)\) for all~\((N,\sit)\in\natsandsits\), and let
\begin{equation*}
V_N(\sit)
\coloneqq\frac12\sum_{k=0}^{e_1(N,\sit)}\smashoperator[r]{\sum_{n=0}^{e_2(N,\sit)}}Z_{n,k}(\sit)
\text{ for all~\(N\in\naturalswithzero\) and~\(\sit\in\sits\)}.
\end{equation*}
Since the real map~\(\group{n,k,\sit}\mapsto Z_{n,k}(\sit)\) is {\comp} by Lemma~\ref{lem:unell:is:comp:super:schnorr}, it follows that the real map~\(\group{N,\sit}\mapsto V_N(\sit)\) is {\comp} as well, since by definition each \(V_N(\sit)\) is a finite sum of a real numbers that are {\comp} effectively in~\(N\) and~\(\sit\), and all terms that are included in the sum are defined recursively as a function of~\(N\) and~\(\sit\).
From the argumentation above, we infer that
\begin{align*}
\abs{Z(\sit)-V_N(\sit)}
&=\abs{R_1(e_1(N,\sit),\sit)+R_2(e_1(N,\sit),e_2(N,\sit),\sit)}\\
&\leq\abs{R_1(e_1(N,\sit),\sit)}+\abs{R_2(e_1(N,\sit),e_2(N,\sit),\sit)}\\
&\leq2^{-(N+1)}+2^{-(N+1)}=2^{-N}
\text{ for all~\(\sit\in\sits\) and~\(N\in\naturalswithzero\)},	
\end{align*}
proving that \(Z\) is indeed {\comp}.
\end{proof}

\begin{lemma}\label{lem:unell:is:comp:super:schnorr}
For the non-negative supermartingales~\(Z_{n,k}\) defined in the proof of Proposition~\ref{prop:schnorr:equivalence:martingale:then:test}, the real map \(\group{n,k,\sit}\mapsto Z_{n,k}(\sit)\) is {\comp}.
\end{lemma}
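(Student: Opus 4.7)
The plan is to show that the real map $(n,k,\sit)\mapsto Z_{n,k}(\sit)=2^k\uglobalcondprobgroup{\rectestindx[\big]{n}{\geq\altordering(k)}}{\sit}{\big}$ is {\comp} by reducing to the {\compy} of $(n,\ell,\sit)\mapsto\uglobalcondprobgroup{\rectestindx{n}{\geq\ell}}{\sit}{\big}$, which will then follow through a combination of finite truncation, handled by the Workhorse Lemma~\ref{lem:comp:pain:in:the:ass}, and effective tail control, supplied by Lemma~\ref{lem:also:conditional}. The main obstacle is that $\rectestcutindx{n}{\geq\ell}$ is in general an \emph{infinite} partial cut, so the Workhorse Lemma---which requires a uniform depth bound on the situations involved---cannot be applied to it directly.

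I would approximate as follows. For any $L\in\naturalswithzero$, set $A_n^{\ell,L}\coloneqq\rectestcutindx{n}{}\cap\cset{\sit\in\sits}{\ell\leq\dist{\sit}\leq L}$. Since $\rectest$ is recursive, $A_n^{\ell,L}$ is a finite partial cut that is recursive effectively in $n$, $\ell$ and $L$, and every situation in it has depth at most $L$. An appropriate instantiation of the Workhorse Lemma---for instance with $\countables\to\natsandnats$, $d\to(n,\ell)$, $p\to L$ and $\altrectest\to\cset{((n,\ell),L,\sit)\in\natsandnatsandsits}{\sit\in A_n^{\ell,L}}$---will then yield that $\uglobalcondprob{\cylset{A_n^{\ell,L}}}{\sit}$ is a {\comp} real effectively in $n$, $\ell$, $L$ and $\sit$.

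To turn this into an effectively convergent {\comp} approximation of $\uglobalcondprobgroup{\rectestindx{n}{\geq\ell}}{\sit}{\big}$, I would exploit that $\rectestcutindx{n}{\geq\ell}$ decomposes disjointly as $A_n^{\ell,L}\sqcup\rectestcutindx{n}{\geq L+1}$, so that $\ind{\rectestindx{n}{\geq\ell}}=\ind{\cylset{A_n^{\ell,L}}}+\ind{\rectestindx{n}{\geq L+1}}$. Combining monotonicity~\ref{axiom:lower:upper:monotonicity} with subadditivity~\ref{axiom:lower:upper:subadditivity} then yields
\[
0
\leq\uglobalcondprobgroup{\rectestindx{n}{\geq\ell}}{\sit}{\big}-\uglobalcondprob{\cylset{A_n^{\ell,L}}}{\sit}
\leq\uglobalcondprobgroup{\rectestindx{n}{\geq L+1}}{\sit}{\big}.
\]
Lemma~\ref{lem:also:conditional} supplies a recursive map $\tilde{e}\colon\natsandsits\to\naturalswithzero$ such that $\uglobalcondprobgroup{\rectestindx{n}{\geq\ell'}}{\sit}{\big}\leq2^{-N}$ whenever $\ell'\geq\tilde{e}(N,\sit)$. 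Taking the recursive choice $L(N,n,\ell,\sit)\coloneqq\max\set{\ell,\tilde{e}(N,\sit)}$, which forces $L+1\geq\tilde{e}(N,\sit)$, the {\comp} real map $(N,n,\ell,\sit)\mapsto\uglobalcondprob{\cylset{A_n^{\ell,L(N,n,\ell,\sit)}}}{\sit}$ therefore converges effectively to $\uglobalcondprobgroup{\rectestindx{n}{\geq\ell}}{\sit}{\big}$ with error at most $2^{-N}$. This establishes the {\compy} of $(n,\ell,\sit)\mapsto\uglobalcondprobgroup{\rectestindx{n}{\geq\ell}}{\sit}{\big}$; multiplying by the recursive factor $2^k$ and substituting $\altordering(k)$ for $\ell$ then delivers the lemma.
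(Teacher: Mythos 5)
Your proof is correct and takes essentially the same approach as the paper: truncate to a finite set of situations whose upper probability is computable via the Workhorse Lemma, then invoke Lemma~\ref{lem:also:conditional} for effective tail control to get an effectively convergent approximation. The only difference is presentational—by first establishing computability of \((n,\ell,\sit)\mapsto\uglobalcondprob{\rectestindx{n}{\geq\ell}}{\sit}\) and then substituting \(\ell\to\altordering(k)\), you factor out \(\altordering\) and sidestep the case split (\(\ell\leq\altordering(k)\)) that the paper's decomposition via \(\rectestcutindx{n}{k,\ell}\coloneqq\rectestcutindx{n}{<\ell}\cap\rectestcutindx{n}{\geq\altordering(k)}\) has to handle explicitly.
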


\begin{proof}
% Proof by Gert, formalising the sketch by Floris
% Simplified by Floris
% Checked by Gert
We use the notations and assumptions in the proof of Proposition~\ref{prop:schnorr:equivalence:martingale:then:test}.
Clearly, it suffices to prove that the real map \(\group{n,k,\sit}\mapsto Z_{n,k}(s)2^{-k}=\smash{\uglobalcondprobgroup{\rectestindx[\big]{n}{\geq\altordering(k)}}{\sit}{\big}}\) is {\comp}.
If we let
\[
\rectestcutindx{n}{k,\ell}
\coloneqq\rectestcutindx{n}{<\ell}\cap\rectestcutindx{n}{\geq\altordering(k)}
=\cset{\sit\in\rectestcutindx{n}{}}{\altordering(k)\leq\dist{\sit}<\ell},
\]
then \(\rectestindx{n}{k,\ell}\subseteq\rectestindx[\big]{n}{\geq\altordering(k)}\) and the global events~\(\rectestindx{n}{k,\ell}\) and~\(\rectestindx{n}{\geq\ell}\) are disjoint for all~\(\ell,n,k\in\naturalswithzero\), because the \(\rectestcutindx{n}{}\) have been assumed to be partial cuts.
Moreover,
\begin{equation}\label{eq:unell:is:comp:super:schnorr:inequalities}
\rectestindx[\big]{n}{k,\ell}
\subseteq\rectestindx[\big]{n}{\geq\altordering(k)}
\left\{
\begin{aligned}
&=\rectestindx{n}{k,\ell}\cup\rectestindx{n}{\geq\ell}
&\text{if \(\ell>\altordering(k)\)}\\
&\subseteq\rectestindx{n}{\geq\ell}=\rectestindx{n}{k,\ell}\cup\rectestindx{n}{\geq\ell}
&\text{if \(\ell\leq\altordering(k)\)}
\end{aligned}
\right\}
\subseteq\rectestindx{n}{k,\ell}\cup\rectestindx{n}{\geq\ell},
\end{equation}
where the last equality holds because then \(\rectestindx{n}{k,\ell}=\emptyset\).
By Lemma~\ref{lem:also:conditional}, there's some recursive map~\(\tilde{e}\colon\natsandsits\to\naturalswithzero\) such that \(\uglobalcondprobgroup{\rectestindx{n}{\geq\ell}}{\sit}{\big}\leq2^{-N}\) for all~\((N,n,\sit)\in\natsandnatsandsits\) and all~\(\ell\geq\tilde{e}(N,\sit)\).
This allows us to infer that
\begin{align}
\uglobalcondprobgroup{\rectestindx{n}{k,\ell}}{\sit}{\big}
&\leq\uglobalcondprobgroup{\rectestindx[\big]{n}{\geq\altordering(k)}}{\sit}{\big}
\leq\uglobalcondprobgroup{\rectestindx{n}{k,\ell}\cup\rectestindx{n}{\geq\ell}}{\sit}{\big}\notag\\
&\leq\uglobalcondprobgroup{\rectestindx{n}{k,\ell}}{\sit}{\big}+\uglobalcondprobgroup{\rectestindx{n}{\geq\ell}}{\sit}{\big}\notag\\
&\leq\uglobalcondprobgroup{\rectestindx{n}{k,\ell}}{\sit}{\big}+2^{-N}
\text{ for all~\(N,k,n\in\naturalswithzero\) and~\(\sit\in\sits\) and~\(\ell\geq\tilde{e}(N,\sit)\)},
\label{eq:unell:is:comp:super:schnorr:bounds}
\end{align}
where the first two inequalities follow from Equation~\eqref{eq:unell:is:comp:super:schnorr:inequalities} and~\ref{axiom:lower:upper:monotonicity}, and the third inequality follows from~\ref{axiom:lower:upper:subadditivity}, because \(\rectestindx{n}{k,\ell}\) and~\(\rectestindx{n}{\geq\ell}\) are disjoint.
Now, the sets~\(\rectestcutindx{n}{k,\ell}\) are recursive effectively in \(n\), \(k\) and \(\ell\), and it also holds that \(\abs{\sit}<\ell\) for all~\(\sit\in\rectestcutindx{n}{k,\ell}\) and \(n,k,\ell\in\naturalswithzero\).
Hence, the real map~\(\group{n,k,\ell,\sit}\mapsto\smash{\uglobalcondprob{\rectestindx{n}{k,\ell}}{\sit}}\) is {\comp} by an appropriate instantiation of our Workhorse Lemma~\ref{lem:comp:pain:in:the:ass}
[with~\(\countables\to\natsandnats\), \(d\to(n,k)\), \(p\to\ell\) and \(\altrectest\mapsto\cset{(n,k,\ell,\sit)\in\natsandnatsandnatsandsits}{\sit\in\rectestcutindx{n}{k,\ell}}\), and therefore \(\altrectestcutindx{d}{p}\to\rectestcutindx{n}{k,\ell}\)], because the forecasting system~\(\frcstsystem\) is {\comp} as well.
The inequalities in Equation~\eqref{eq:unell:is:comp:super:schnorr:bounds} tell us that this {\comp} real map converges effectively to the real map~\(\group{n,k,\sit}\mapsto\smash{\uglobalcondprob{\rectestindx{n}{\geq\altordering(k)}}{\sit}}\), which is therefore {\comp} as well.
\end{proof}

If we now combine Propositions~\ref{prop:schnorr:equivalence:test:then:martingale} and~\ref{prop:schnorr:equivalence:martingale:then:test}, we find the desired result.

\begin{theorem}\label{thm:schnorr:equivalence}
Consider any path~\(\pth\) in~\(\pths\) and any non-degenerate {\comp} forecasting system~\(\frcstsystem\).
Then \(\pth\) is Schnorr random for~\(\frcstsystem\) if and only if it is Schnorr test random for~\(\frcstsystem\).
\end{theorem}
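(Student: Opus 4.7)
The plan is to obtain the theorem as an immediate combination of the two directional propositions already proved in this section, namely Proposition~\ref{prop:schnorr:equivalence:test:then:martingale} and Proposition~\ref{prop:schnorr:equivalence:martingale:then:test}. First I would invoke Proposition~\ref{prop:schnorr:equivalence:test:then:martingale} to establish the ``\(\Leftarrow\)'' direction: if \(\pth\) is Schnorr test random for~\(\frcstsystem\), then it is Schnorr random for~\(\frcstsystem\). This direction requires no extra hypotheses on \(\frcstsystem\), so the assumption of non-degeneracy and computability is not needed here. Then I would invoke Proposition~\ref{prop:schnorr:equivalence:martingale:then:test} to establish the ``\(\Rightarrow\)'' direction: if \(\pth\) is Schnorr random for~\(\frcstsystem\), then it is Schnorr test random for~\(\frcstsystem\), and this is where the non-degeneracy and computability of \(\frcstsystem\) enter.

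Since both propositions already deliver exactly the two implications of the biconditional under precisely the hypotheses stated in the theorem, no additional argument is required. There is no nontrivial obstacle at the level of this theorem itself; all the technical work (construction of the test supermartingale from the Schnorr test via Lemmas~\ref{lem:also:conditional}--\ref{lem:unell:is:comp:super:schnorr}, and the construction in the opposite direction via Ville's inequality together with an effective growth function bounding the depth of the test sets) has been carried out in the proofs of the two propositions. In particular, the role of non-degeneracy in bounding supermartingales through Proposition~\ref{prop:bound:for:any:situation} and of computability in ensuring that the constructed supermartingale is itself computable has already been absorbed into Proposition~\ref{prop:schnorr:equivalence:martingale:then:test}.

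Thus the entire proof is a single line: the result follows by combining Propositions~\ref{prop:schnorr:equivalence:test:then:martingale} and~\ref{prop:schnorr:equivalence:martingale:then:test}. The only ``hard part'' has already been dispatched inside the proof of Proposition~\ref{prop:schnorr:equivalence:martingale:then:test}, where one must construct, from a given Schnorr test, a computable test supermartingale that is computably unbounded on the paths covered by the test, and verify both its computability and its computable unboundedness using the growth function furnished by Lemma~\ref{lem:bound:on:sum:of:cuts} and the non-degeneracy bound of Proposition~\ref{prop:bound:for:any:situation}.
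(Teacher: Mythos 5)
Your proof is correct and matches the paper's own argument exactly: the theorem is obtained by combining Propositions~\ref{prop:schnorr:equivalence:test:then:martingale} and~\ref{prop:schnorr:equivalence:martingale:then:test}, with non-degeneracy and computability needed only for the forward direction. Nothing further is required.
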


\section{Universal {\ML} tests and universal {\lscomp} test supermartingales}\label{sec:universal}
% Checked by Gert
In our definition of {\ML} randomness of a path~\(\pth\), \emph{all} {\lscomp} test supermartingales~\(\test\) must remain bounded on~\(\pth\).
Similarly, for~\(\pth\) to be {\ML} test random, we require that \(\pth\notin\bigcap_{n\in\naturalswithzero}\rectestindx{n}{}\) for \emph{all} {\ML} tests \(\rectest\).

In his seminal paper \cite{martinlof1966:random:sequences}, {\ML} proved that test randomness of a path can also be checked using a single, so-called \emph{universal}, {\ML} test.
A few years later, Schnorr proved in his doctoral thesis on algorithmic randomness for fair-coin forecasts that {\ML} randomness can also be checked using a single, so-called \emph{universal}, {\lscomp} test supermartingale.

Let's now prove that something similar is still possible in our more general context.
We begin by proving the existence of a universal {\ML} test.

\begin{proposition}\label{prop:universal:test}
Consider any {\comp} forecasting system~\(\frcstsystem\).
Then there's a so-called \emph{universal} {\ML} test~\(\urectest\) for~\(\frcstsystem\) such that a path~\(\pth\in\pths\) is {\ML} test random for~\(\frcstsystem\) if and only if \(\pth\notin\bigcap_{n\in\naturalswithzero}\urectestindx{n}{}\).
\end{proposition}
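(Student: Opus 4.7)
The plan is to enumerate all candidate Martin-Löf tests, trim each into a genuine Martin-Löf test for~\(\frcstsystem\) in a way that preserves its failure set whenever it was already one, and combine the trimmed tests by a weighted union. Fix a standard effective enumeration~\((\rectest^{(e)})_{e\in\naturalswithzero}\) of all recursively enumerable subsets of~\(\natsandsits\), and list the elements of each~\(\rectest^{(e)}\) effectively in~\(e\) as~\((n^{(e)}_1,\sit^{(e)}_1),(n^{(e)}_2,\sit^{(e)}_2),\dots\). From this I will construct a recursively enumerable set~\(\altrectest^{(e)}\subseteq\rectest^{(e)}\), uniformly in~\(e\), such that \textup{(a)}~\(\uglobalprob(\altrectestindx{n}{(e)})\leq2^{-(n-1)}\) for every~\(n\in\naturalswithzero\), and \textup{(b)}~if \(\rectest^{(e)}\) is itself a Martin-Löf test for~\(\frcstsystem\), then~\(\altrectest^{(e)}=\rectest^{(e)}\).

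I build~\(\altrectest^{(e)}\) in stages. At stage~\(i\), let \(T_{n,i-1}\) be the finite set of situations~\(\sit^{(e)}_j\), \(j<i\), already admitted for which~\(n^{(e)}_j=n\), and consider the finite recursive set~\(S\coloneqq T_{n^{(e)}_i,i-1}\cup\set{\sit^{(e)}_i}\) of situations of depth at most~\(\dist{\sit^{(e)}_i}\). Since~\(\frcstsystem\) is~{\comp}, an appropriate instantiation of the Workhorse Lemma (Lemma~\ref{lem:comp:pain:in:the:ass}) produces a rational~\(q\) with~\(\abs{q-\uglobalprob(\cylset{S})}\leq2^{-(i+n^{(e)}_i+2)}\), effectively in~\(e\) and~\(i\); admit~\((n^{(e)}_i,\sit^{(e)}_i)\) into~\(\altrectest^{(e)}\) iff \(q+2^{-(i+n^{(e)}_i+2)}\leq2^{-n^{(e)}_i}+2^{-(n^{(e)}_i+i)}\). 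This check is effectively decidable, so~\(\altrectest^{(e)}\) is recursively enumerable, effectively in~\(e\). Whenever admission occurs, \(\uglobalprob(\cylset{T_{n^{(e)}_i,i}})=\uglobalprob(\cylset{S})\leq q+2^{-(i+n^{(e)}_i+2)}\leq2^{-n^{(e)}_i}+2^{-(n^{(e)}_i+i)}\leq2^{-(n^{(e)}_i-1)}\). Because the slice~\(\altrectestcutindx{n}{(e)}\) is the monotone union of the~\(T_{n,i}\), applying~\ref{axiom:lower:upper:monotone:convergence} to the indicators of the corresponding cylinder sets yields~(a). If moreover~\(\rectest^{(e)}\) is a Martin-Löf test for~\(\frcstsystem\), then~\(\cylset{S}\subseteq\rectestindx{n^{(e)}_i}{(e)}\), so~\ref{axiom:lower:upper:monotonicity} gives~\(\uglobalprob(\cylset{S})\leq2^{-n^{(e)}_i}\) and hence \(q+2^{-(i+n^{(e)}_i+2)}\leq2^{-n^{(e)}_i}+2\cdot2^{-(i+n^{(e)}_i+2)}\leq2^{-n^{(e)}_i}+2^{-(n^{(e)}_i+i)}\), so admission always occurs, proving~(b).

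Now define~\(\urectest\coloneqq\cset{(n,\sit)\in\natsandsits}{(\exists e\in\naturalswithzero)(n+e+2,\sit)\in\altrectest^{(e)}}\); since each~\(\altrectest^{(e)}\) is recursively enumerable effectively in~\(e\), so is~\(\urectest\). Its slices satisfy \(\urectestindx{n}{}=\bigcup_{e\in\naturalswithzero}\altrectestindx{n+e+2}{(e)}\), and countable subadditivity of~\(\uglobal\)---which follows from the finite subadditivity~\ref{axiom:lower:upper:subadditivity} combined with monotone convergence~\ref{axiom:lower:upper:monotone:convergence} applied to growing partial unions---together with~(a) gives
\begin{equation*}
\uglobalprob(\urectestindx{n}{})
\leq\sum_{e\in\naturalswithzero}\uglobalprob(\altrectestindx{n+e+2}{(e)})
\leq\sum_{e\in\naturalswithzero}2^{-(n+e+1)}
=2^{-n},
\end{equation*}
so~\(\urectest\) is a Martin-Löf test for~\(\frcstsystem\). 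For universality, suppose \(\pth\in\bigcap_{m\in\naturalswithzero}\rectestindx{m}{}\) for some Martin-Löf test~\(\rectest\) for~\(\frcstsystem\), and pick~\(e\in\naturalswithzero\) with~\(\rectest^{(e)}=\rectest\). By~(b), \(\altrectest^{(e)}=\rectest^{(e)}\), so \(\pth\in\bigcap_{m\in\naturalswithzero}\altrectestindx{m}{(e)}\), and hence \(\pth\in\altrectestindx{n+e+2}{(e)}\subseteq\urectestindx{n}{}\) for every~\(n\in\naturalswithzero\), proving~\(\pth\in\bigcap_{n\in\naturalswithzero}\urectestindx{n}{}\).

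The main technical obstacle is the trimming: since~\(\uglobalprob(\cylset{S})\) is computable only as a real rather than a rational, the exact inequality~\(\uglobalprob(\cylset{S})\leq2^{-n}\) is not decidable, so I test against a strictly larger threshold that still passes whenever the original set is a valid test; this costs a factor of two in the per-level bound, but is cheaply absorbed by shifting the offset in the final combination.
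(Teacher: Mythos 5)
Your proof is correct and follows the same broad template as the paper's---enumerate the recursively enumerable subsets of~\(\natsandsits\) (the paper's~\(\allrectestcut{m}{}{}\), your~\(\rectest^{(e)}\)), trim each into a genuine {\ML} test for~\(\frcstsystem\) in a way that leaves actual {\ML} tests untouched, and combine by an index-shifted union---but your trimming step is genuinely different, and the difference buys you something. The paper trims each candidate by halting its depth-\(\ell\) prefixes once the rational approximation of~\(\uglobalprob(\allrectest{m}{n}{<\ell})\) exceeds the \emph{fixed} threshold~\(2^{-(n+1)}+2^{-(n+2)}\), which can only be guaranteed to preserve candidates whose \(n\)-th slice has upper probability at most~\(2^{-(n+1)}\); the paper therefore needs a separate Lemma~\ref{lem:ml:test:equivalence} to translate between those and genuine {\ML} tests. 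Your element-by-element trimming instead uses a \emph{stage-dependent} threshold~\(2^{-n}+2^{-(n+i)}\) (with~\(n=n^{(e)}_i\)) calibrated against a stage-dependent approximation precision~\(2^{-(i+n+2)}\); this calibration makes the admission criterion permissive enough to pass every genuine {\ML} test (upper probability at most~\(2^{-n}\)) unchanged, while still forcing~\(\uglobalprob(\altrectestindx{n}{(e)})\leq2^{-(n-1)}\), and the weakened per-slice bound is absorbed harmlessly by the~\(n+e+2\) offset in the combination. You thereby eliminate the need for Lemma~\ref{lem:ml:test:equivalence}, a mild but genuine simplification. One small imprecision worth fixing: the set~\(S=T_{n^{(e)}_i,i-1}\cup\set{\sit^{(e)}_i}\) need not consist only of situations of depth at most~\(\dist{\sit^{(e)}_i}\), since earlier-admitted situations can well be deeper; take the parameter~\(p\) in the Workhorse Lemma to be~\(\max_{\sit\in S}\dist{\sit}\), which remains recursive effectively in~\(e\) and~\(i\), so the application of Lemma~\ref{lem:comp:pain:in:the:ass} still goes through.
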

\noindent To prove this, we'll use the following alternative characterisation of {\ML} test randomness.

\begin{lemma}\label{lem:ml:test:equivalence}
A path~\(\pth\in\pths\) is {\ML} test random for a forecasting system~\(\frcstsystem\) if and only if \(\pth\not\in\bigcap_{m\in\naturalswithzero}\altrectestindx{m}{}\) for all recursively enumerable subsets~\(\altrectest\) of~\(\natsandsits\) such that \(\uglobalprob\group{\altrectestindx{n}{}}\leq2^{-(n+1)}\) for all~\(n\in\naturalswithzero\).
\end{lemma}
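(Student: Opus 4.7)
The plan is to prove the two implications of Lemma~\ref{lem:ml:test:equivalence} by exploiting a simple index-shifting construction; I expect essentially no obstacle, since the bound $2^{-(n+1)}$ is merely a strengthening of $2^{-n}$ that can be recovered by a re-indexing of any given {\ML} test.

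For the `only if' direction, suppose \(\pth\) is {\ML} test random for~\(\frcstsystem\) and consider any recursively enumerable \(\altrectest\subseteq\natsandsits\) with~\(\uglobalprob(\altrectestindx{n}{})\leq2^{-(n+1)}\) for all~\(n\in\naturalswithzero\). Since \(2^{-(n+1)}\leq2^{-n}\), \(\altrectest\) satisfies the weaker bound as well, so it is itself a {\ML} test for~\(\frcstsystem\) in the sense of Definition~\ref{def:martin-loef:test}. Hence \(\pth\notin\bigcap_{m\in\naturalswithzero}\altrectestindx{m}{}\) by Definition~\ref{def:martin-loef:schnorr:test:randomness}, as required.

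For the `if' direction, assume that \(\pth\notin\bigcap_{m\in\naturalswithzero}\altrectestindx{m}{}\) for every recursively enumerable \(\altrectest\subseteq\natsandsits\) with~\(\uglobalprob(\altrectestindx{n}{})\leq2^{-(n+1)}\). Fix any {\ML} test~\(\rectest\) for~\(\frcstsystem\); we must show that \(\pth\notin\bigcap_{m\in\naturalswithzero}\rectestindx{m}{}\). Define the shifted set
\[
\altrectest\coloneqq\cset{(n,\sit)\in\natsandsits}{(n+1,\sit)\in\rectest},
\]
so that \(\altrectestcutindx{n}{}=\rectestcutindx{n+1}{}\) and therefore \(\altrectestindx{n}{}=\rectestindx{n+1}{}\) for all~\(n\in\naturalswithzero\). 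Since \(\rectest\) is recursively enumerable, so is \(\altrectest\); and the bound~\(\uglobalprob(\altrectestindx{n}{})=\uglobalprob(\rectestindx{n+1}{})\leq2^{-(n+1)}\) is inherited directly from~\(\rectest\) being a {\ML} test. Thus \(\altrectest\) is a `stronger test' of the form covered by our assumption, which yields \(\pth\notin\bigcap_{m\in\naturalswithzero}\altrectestindx{m}{}=\bigcap_{m\in\naturalswithzero}\rectestindx{m+1}{}\). Since \(\bigcap_{m\in\naturalswithzero}\rectestindx{m}{}\subseteq\bigcap_{m\in\naturalswithzero}\rectestindx{m+1}{}\), we conclude that \(\pth\notin\bigcap_{m\in\naturalswithzero}\rectestindx{m}{}\), so \(\pth\) is {\ML} test random for~\(\frcstsystem\).

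The argument involves no real obstacle: the only thing to check is that index-shifting preserves both the recursive enumerability and the upper-probability bound, and that the intersection over all indices is monotone under shifting, all of which are immediate.
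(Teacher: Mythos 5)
Your proof is correct and takes essentially the same approach as the paper: the paper dismisses the `only if' direction as immediate (you spell it out) and proves the `if' direction by the same index-shift construction $\altrectest\coloneqq\cset{(n,\sit)}{(n+1,\sit)\in\rectest}$, phrased by contradiction rather than directly.
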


\begin{proof}
% Proof by Floris
% Checked and corrected by Gert
It clearly suffices to prove the `if' part.
So assume towards contradiction that \(\pth\) isn't {\ML} test random, meaning that there's some {\ML} test~\(\rectest\) for~\(\frcstsystem\) such that \(\pth\in\bigcap_{m\in\naturalswithzero}\rectestindx{m}{}\).
Consider the recursively enumerable set~\(\altrectest\subseteq\natsandsits\) defined by
\begin{equation*}
\altrectest\coloneqq\cset{(n,\sit)\in\natsandsits}{(n+1,\sit)\in\rectest},
\end{equation*}
then \(\altrectestcutindx{n}{}=\rectestcutindx{n+1}{}\), and therefore also \(\uglobalprob(\altrectestindx{n}{})=\uglobalprob(\rectestindx{n+1}{})\leq2^{-(n+1)}\) for all~\(n\in\naturalswithzero\).
Since \(\bigcap_{n\in\naturalswithzero}\rectestindx{n}{}\subseteq\bigcap_{n\in\naturalswithzero}\rectestindx{n+1}{}=\bigcap_{n\in\naturalswithzero}\altrectestindx{n}{}\), we see that also \(\pth\in\bigcap_{n\in\naturalswithzero}\altrectestindx{n}{}\), a contradiction.
\end{proof}

\begin{proof}[Proof of Proposition~\ref{prop:universal:test}]
% Proof by Floris
% Clarified and improved by Gert
% Improved and clarified by Floris
% Checked by Gert
It's a standard result in computability theory that the countable collection~\(\phi_i\colon\naturalswithzero\to\naturalswithzero\), with \(i\in\naturalswithzero\), of all partial recursive maps is itself partial recursive, meaning that there's some partial recursive map \(\phi\colon\natsandnats\to\naturalswithzero\) such that \(\phi(i,n)=\phi_i(n)\) for all~\(i,n\in\naturalswithzero\); see for instance Refs.~\cite[Lemma~1.7.1]{li1993} and~\cite[Prop.~2.1.2]{downey2010}.
Consequently, via encoding, we can infer that there's a recursively enumerable set~\(\allrectestcut{}{}{}\subseteq\natsandnatsandsits\) that contains all recursively enumerable sets \(\altrectest\subseteq\natsandsits\), in the sense that for every recursively enumerable set~\(\altrectest\subseteq\natsandsits\) there's some~\(M\in\naturalswithzero\) such that \(\altrectest=\allrectestcut{M}{}{}\), with \(\allrectestcut{m}{}{}\coloneqq\cset{(n,\sit)\in\natsandsits}{(m,n,\sit)\in\rectest}\) for all~\(m\in\naturalswithzero\).
With every such~\(\allrectestcut{m}{}{}\), we associate as usual the sets of situations~\(\allrectestcut{m}{n}{}\), defined for all~\(n\in\naturalswithzero\) by~\(\allrectestcut{m}{n}{}\coloneqq\set{s\in\sits\colon(n,\sit)\in\allrectestcut{m}{}{}}\).
For reasons explained after Definition~\ref{def:martin-loef:test}, we can and will assume, without changing the map of global events~\(\group{m,n}\mapsto\allrectest{m}{n}{}\), that all these sets~\(\allrectestcut{m}{n}{}\) are partial cuts and recursive effectively in \(m\) and \(n\); again, see Ref.~\cite[Sec.~2.19]{downey2010} for more discussion and proofs.
For this~\(\allrectestcut{}{}{}\), we then have that for every recursively enumerable set~\(\altrectest\subseteq\natsandsits\) there's some~\(m_C\in\naturalswithzero\) such that \(\altrectestindx{n}{}=\allrectest{m_C}{n}{}\) for all~\(n\in\naturalswithzero\).

As a first step in the proof, we show that there's a single finite algorithm for turning, for any given~\(m\in\naturalswithzero\), the corresponding recursive set~\(\allrectestcut{m}{}{}\) into a {\ML} test~\(\brectestcut{m}{}{}\) for~\(\frcstsystem\).
%In analogy with the proof of Proposition~\ref{prop:martin-loef:equivalence:martingale:then:test}, our procedure takes the following steps.
Let \(\allrectestcut{m}{n}{<\ell}\coloneqq\cset{\sit\in\sits}{(m,n,\sit)\in\rectest,\abs{\sit}<\ell}\) for all~\(m,n,\ell\in\naturalswithzero\).
It's clear from the construction that the finite sets~\(\allrectestcut{m}{n}{<\ell}\) are recursive effectively in~\(m\), \(n\) and \(\ell\).
Observe that the computability of the forecasting system~\(\frcstsystem\), the recursive character of the finite partial cuts~\(\allrectestcut{m}{n}{<\ell}\) and an appropriate instantiation of our Workhorse Lemma~\ref{lem:comp:pain:in:the:ass} [with~\(\countables\to\natsandnats\), \(d\to(m,n)\), \(p\to\ell\) and \(\altrectest\to\cset{(m,n,\ell,\sit)\in\natsandnatsandnatsandsits}{\sit\in\allrectestcut{m}{n}{<\ell}}\), and therefore \(\altrectestcutindx{d}{p}\to\allrectestcut{m}{n}{<\ell}\)] allow us to infer that the real map~\(\group{m,n,\ell}\mapsto\uglobalprob(\allrectest{m}{n}{<\ell})\) is {\comp}, meaning that there's some recursive rational map~\(q\colon\natsandnatsandnatsandnats\to\rationals\) such that
\begin{equation*}
\abs[\big]{\uglobalprob(\allrectest{m}{n}{<\ell})-q(m,n,\ell,N)}\leq2^{-N}
\text{ for all~\(m,n,\ell,N\in\naturalswithzero\)}.
\end{equation*}
Observe that \(q(m,n,\ell,n+2)\) is a rational approximation for~\(\uglobalprob(\allrectest{m}{n}{<\ell})\) up to \(2^{-(n+2)}\), since
\begin{equation}\label{eq:universal:test}
\abs[\big]{\uglobalprob(\allrectest{m}{n}{<\ell})-q(m,n,\ell,n+2)}\leq 2^{-(n+2)}
\text{ for all~\(m,n,\ell\in\naturalswithzero\)}.
\end{equation}
Now consider the (obviously) recursive map~\(\lambda\colon\natsandnatsandnats\to\naturalswithzero\), defined by
\begin{multline}\label{eq:universal:definition:lambda}
\lambda(m,n,\ell)
\coloneqq\max\cset[\Big]{p\in\set{0,\dots,\ell}}
{\group{\forall k\in\set{0,\dots,p}}
q(m,n,k,n+2)\leq2^{-(n+1)}+2^{-(n+2)}}\\
\text{ for all~\(m,n,\ell\in\naturalswithzero\)}.
\end{multline}
Observe that \(\lambda(m,n,0)=0\), because
\[
q(m,n,0,n+2)\leq\uglobalprob(\allrectest{m}{n}{<0})+2^{-(n+2)}
=\uglobalprob(\emptyset)+2^{-(n+2)}=2^{-(n+2)},
\]
where the inequality follows from Equation~\eqref{eq:universal:test}, and the last equality from~\ref{axiom:lower:upper:bounds}; this ensures that the map~\(\lambda\) is indeed well-defined.
Consequently, by construction,
\begin{equation}\label{eq:universal:test:bound}
q(m,n,\lambda(m,n,\ell),n+2)\leq2^{-(n+1)}+2^{-(n+2)}
\text{ for all~\(m,n,\ell\in\naturalswithzero\)}.
\end{equation}
Also, the partial maps~\(\lambda(m,n,\bolleke)\) are obviously non-decreasing.

Now let \(\brectestcut{m}{n}{\ell}\coloneqq\allrectestcut{m}{n}{<\lambda(m,n,\ell)}\) for all~\(m,n,\ell\in\naturalswithzero\).
It follows from Equations~\eqref{eq:universal:test} and~\eqref{eq:universal:test:bound} that
\begin{align*}
\uglobalprob\group[\big]{\brectest{m}{n}{\ell}}
=\uglobalprob\group[\Big]{\allrectest{m}{n}{<\lambda(m,n,\ell)}}
&\leq q(m,n,\lambda(m,n,\ell),n+2)+2^{-(n+2)}\\
&\leq\group[\big]{2^{-(n+1)}+2^{-(n+2)}}+2^{-(n+2)}
=2^{-n}.
\end{align*}
We now use the sets~\(\brectestcut{m}{n}{\ell}\) in the obvious manner to define
\begin{equation*}
\brectestcut{m}{n}{}\coloneqq\bigcup_{\ell\in\naturalswithzero}\brectestcut{m}{n}{\ell}
\text{ and }
\brectestcut{m}{}{}\coloneqq\bigcup_{n\in\naturalswithzero}\set{n}\times\brectestcut{m}{n}{},
\text{ for all~\(m,n\in\naturalswithzero\)},
\end{equation*}
so the set~\(\brectestcut{m}{}{}\subseteq\natsandsits\) is recursively enumerable as a countable union of finite sets \(\set{n}\times\brectestcut{m}{n}{\ell}\) that are recursive effectively in \(n\) and \(\ell\).
Moreover, it follows from~\ref{axiom:lower:upper:monotone:convergence} and the non-decreasing character of the partial map~\(\lambda(m,n,\bolleke)\) that
\begin{equation*}
\uglobalprob\group[\big]{\brectest{m}{n}{}}
=\sup_{\ell\in\naturalswithzero}\uglobalprob\group[\big]{\brectest{m}{n}{\ell}}
\leq2^{-n},
\text{ for all~\(m,n\in\naturalswithzero\)},
\end{equation*}
and therefore each \(\brectestcut{m}{}{}\) is a {\ML} test for~\(\frcstsystem\).

As a second step in the proof, we now show that any path~\(\pth\in\pths\) is {\ML} test random for~\(\frcstsystem\) if and only if \(\pth\notin\bigcap_{n\in\naturalswithzero}\brectest{m}{n}{}\) for all~\(m\in\naturalswithzero\).
Since each \(\brectestcut{m}{}{}\) is a {\ML} test for~\(\frcstsystem\), it suffices to show by Lemma~\ref{lem:ml:test:equivalence} that for every recursively enumerable subset~\(\altrectest\subseteq\natsandsits\) for which \(\uglobalprob\group{\altrectestindx{n}{}}\leq2^{-(n+1)}\) for all~\(n\in\naturalswithzero\), there's some~\(m_C\in\naturalswithzero\) such that \(\altrectestindx{n}{}=\brectest{m_C}{n}{}\) for all~\(n\in\naturalswithzero\); this is what we now set out to do.
%We can do so by proving that for every {\ML} test \(A\) there's some~\(m\in\naturalswithzero\) such that \(\rectestindx{n}{}=\brectest{m}{n}{}\) for all~\(n\in\naturalswithzero\).
%Taking into account Lemma~\ref{lem:ml:test:equivalence}, we need only show that if \(\altrectest\) is a recursively enumerable subset of~\(\natsandsits\) such that \(\uglobalprob\group{\altrectestindx{n}{}}\leq2^{-(n+1)}\) for all~\(n\in\naturalswithzero\), then there's some~\(m_C\in\naturalswithzero\) such that \(\altrectestindx{n}{}=\brectest{m_C}{n}{}\) for all~\(n\in\naturalswithzero\); this is what we now set out to do.

Since we assumed that \(\altrectest\) is recursively enumerable, we know there's some~\(m_C\in\naturalswithzero\) such that \(\altrectestindx{n}{}=\allrectest{m_C}{n}{}\) for all~\(n\in\naturalswithzero\).
This implies that \(\uglobalprob\group{\allrectest{m_C}{n}{}}=\uglobalprob\group{\altrectestindx{n}{}}\leq2^{-(n+1)}\) for all~\(n\in\naturalswithzero\), so we see that for this \(m_C\):
\begin{multline*}
q(m_C,n,\ell,n+2)
\leq\uglobalprob(\allrectest{m_C}{n}{<\ell})+2^{-(n+2)}
\leq\uglobalprob(\allrectest{m_C}{n}{})+2^{-(n+2)}
\leq2^{-(n+1)}+2^{-(n+2)}\\
\text{ for all~\(n,\ell\in\naturalswithzero\)},
\end{multline*}
where the first inequality follows from Equation~\eqref{eq:universal:test}, and the second inequality follows from \ref{axiom:lower:upper:monotonicity}.
If we now look at the definition of the map~\(\lambda\) in Equation~\eqref{eq:universal:definition:lambda}, we see that \(\lambda(m_C,n,\ell)=\ell\) for all~\(n,\ell\in\naturalswithzero\).
Consequently,
\[
\allrectestcut{m_C}{n}{}
=\bigcup_{\ell\in\naturalswithzero}\allrectestcut{m_C}{n}{<\ell}
=\bigcup_{\ell\in\naturalswithzero}\allrectestcut{m_C}{n}{<\lambda(m_C,n,\ell)}
=\bigcup_{\ell\in\naturalswithzero}\brectestcut{m_C}{n}{\ell}
=\brectestcut{m_C}{n}{}
\text{ for all~\(n\in\naturalswithzero\),}
\]
and therefore, indeed, \(\altrectestindx{n}{}=\allrectest{m_C}{n}{}=\brectest{m_C}{n}{}\) for all~\(n\in\naturalswithzero\).

As a third step in the proof, we show that we can combine the {\ML} tests~\(\brectestcut{m}{}{}\) for~\(\frcstsystem\), with~\(m\in\naturalswithzero\), into a single {\ML} test~\(\urectest\) for~\(\frcstsystem\).
To this end, let~\(\urectestcutindx{n}{}\coloneqq\bigcup_{m\in\naturalswithzero}\brectestcut{m}{n+m+1}{}\) for all~\(n\in\naturalswithzero\).
Then \(\urectest\coloneqq\bigcup_{n\in\naturalswithzero}\set{n}\times\urectestcutindx{n}{}\) is clearly recursively enumerable as a countably infinite union of finite sets \(\set{n}\times\brectestcut{m}{n+m+1}{l}\) that are recursive effectively in \(m\), \(n\) and \(\ell\), given the construction in the first step of the proof.
It is clear that
%We can again assume, without changing the sequence of global events~\(\urectestindx{n}{}\), that the~\(\urectestcutindx{n}{}\) are partial cuts (again, we refer to \cite[Sec.~2.19]{downey2010} for discussion and proofs), and it is clear that
\begin{multline*}
\uglobalprob\group[\big]{\urectestindx{n}{}}
=\uglobalprob\group[\bigg]{\smashoperator[r]{\bigcup_{m\in\naturalswithzero}}\brectest{m}{n+m+1}{}}
=\sup_{k\in\naturalswithzero}\uglobalprob\group[\bigg]{\smashoperator[r]{\bigcup_{m=0}^k}\brectest{m}{n+m+1}{}} \\
\leq\sup_{k\in\naturalswithzero}\sum_{m=0}^k\uglobalprob\group[\big]{\brectest{m}{n+m+1}{}}
\leq\sup_{k\in\naturalswithzero}\sum_{m=0}^k2^{-(n+m+1)}
=\sum_{m=0}^\infty2^{-(n+m+1)}
=2^{-n},
\end{multline*}
where the second equality follows from \ref{axiom:lower:upper:monotone:convergence} and the first inequality from~\ref{axiom:lower:upper:monotonicity} and~\ref{axiom:lower:upper:subadditivity}, given that
\(
\ind{\bigcup_{m=0}^k\brectest{m}{n+m+1}{}}
\leq\sum_{m=0}^k\ind{\brectest{m}{n+m+1}{}}.
\)
We conclude that \(U\) is indeed a {\ML} test for~\(\frcstsystem\).

We finish the argument, in a fourth and final step, by proving that any path~\(\pth\in\pths\) is {\ML} test random for~\(\frcstsystem\) if and only if \(\pth\notin\bigcap_{n\in\naturalswithzero}\urectestindx{n}{}\).
To this end, consider any path~\(\pth\in\pths\).
For necessity, assume that \(\pth\) is {\ML} test random for~\(\frcstsystem\).
Then clearly also \(\pth\notin\bigcap_{n\in\naturalswithzero}\urectestindx{n}{}\) by Definition~\ref{def:martin-loef:schnorr:test:randomness}, since we've just proved that \(\urectest\) is a {\ML} test for~\(\frcstsystem\).
For sufficiency, assume that \(\pth\notin\bigcap_{n\in\naturalswithzero}\urectestindx{n}{}\).
To prove that \(\pth\) is {\ML} test random, we must prove, as argued above, that \(\pth\notin\bigcap_{n\in\naturalswithzero}\brectest{m}{n}{}\) for all~\(m\in\naturalswithzero\).
Assume towards contradiction that there's some~\(m_o\in\naturalswithzero\) such that \(\pth\in\bigcap_{n\in\naturalswithzero}\brectest{m_o}{n}{}\).
By construction, clearly, \(\brectest{m_o}{n+m_o+1}{}\subseteq\urectestindx{n}{}\) for all~\(n\in\naturalswithzero\).
This implies that \(\pth\in\urectestindx{n}{}\) for all~\(n\in\naturalswithzero\), a contradiction.
\end{proof}

We continue by proving the existence of a universal {\lscomp} supermartingale that, as mentioned in the discussion above Lemma~\ref{lem:wnell} in Section~\ref{sec:equivalence:for:martin-loef}, tends to infinity on every non-{\ML} random path \(\pth\in\pths\), instead of merely being unbounded.

\begin{corollary}\label{cor:universal:test:supermartingale}
Consider any non-degenerate {\comp} forecasting system~\(\frcstsystem\).
Then there's a so-called \emph{universal} {\lscomp} test supermartingale~\(\test\) for~\(\frcstsystem\) such that any path~\(\pth\in\pths\) is not {\ML} (test) random for~\(\frcstsystem\) if and only if \(\lim_{n\to\infty}\test(\pthto{n})=\infty\).
\end{corollary}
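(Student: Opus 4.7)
The plan is to assemble the desired universal test supermartingale by feeding the universal \ML{} test furnished by Proposition~\ref{prop:universal:test} into the construction used in the proof of Proposition~\ref{prop:martin-loef:equivalence:martingale:then:test}. Concretely, first invoke Proposition~\ref{prop:universal:test} to obtain a universal \ML{} test \(\urectest\) for \(\frcstsystem\), with the property that any path \(\pth\) is \ML{} test random for \(\frcstsystem\) if and only if \(\pth\notin\bigcap_{n\in\naturalswithzero}\urectestindx{n}{}\). Because Corollary~\ref{cor:martin-loef:test} lets us assume \(\urectest\) is recursive with partial-cut layers, it meets exactly the hypotheses under which the proof of Proposition~\ref{prop:martin-loef:equivalence:martingale:then:test} constructs its supermartingale.

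Next, apply that same construction literally, but with the generic \ML{} test~\(\rectest\) replaced by our universal \(\urectest\). This produces a non-negative {\lscomp} supermartingale \(W\) for \(\frcstsystem\) with \(W(\init)\leq1\), defined as the limit (in \(\ell\)) of \(W^\ell=\frac12\sum_{n\in\naturalswithzero}\uglobalcondprob{\urectestindx{n}{<\ell}}{\bolleke}\). Replacing \(W(\init)\) by~\(1\), which is harmless for the supermartingale inequality \(\uex_{\frcstsystem(\init)}(W(\init\andoutcome))\leq W(\init)\), then yields a {\lscomp} test supermartingale \(\test\) for \(\frcstsystem\). The key output already established in that proof—see Equation~\eqref{eq:lim:rather:than:sup} and the explicit remark immediately following the proof—is the strengthened conclusion that
\[
\lim_{m\to\infty}\test(\pthto{m})=\infty
\quad\text{for every }\pth\in\bigcap_{n\in\naturalswithzero}\urectestindx{n}{}.
\]

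With \(\test\) in hand, the equivalence follows in two short steps. If \(\pth\) is not \ML{} random for \(\frcstsystem\), then by Theorem~\ref{thm:martin-loef:equivalence}, which relies on \(\frcstsystem\) being non-degenerate and {\comp}, \(\pth\) is not \ML{} test random, so Proposition~\ref{prop:universal:test} forces \(\pth\in\bigcap_{n\in\naturalswithzero}\urectestindx{n}{}\), and the displayed convergence gives \(\lim_{n\to\infty}\test(\pthto{n})=\infty\). Conversely, if \(\lim_{n\to\infty}\test(\pthto{n})=\infty\), then \(\test\) is an unbounded {\lscomp} test supermartingale on \(\pth\), so \(\pth\) is not \ML{} random by Definition~\ref{def:randomness:martin-loef}; equivalently, by Theorem~\ref{thm:martin-loef:equivalence}, not \ML{} test random either.

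There is no substantive new obstacle here: the delicate computability work (building a universal test, summing conditional upper probabilities into a {\lscomp} supermartingale, and upgrading unboundedness to divergence to \(\infty\)) has already been carried out in Proposition~\ref{prop:universal:test} and in the proof of Proposition~\ref{prop:martin-loef:equivalence:martingale:then:test}. The sole conceptual point to verify, and the one place where care is needed, is that the construction of \(W\) depends on \(\rectest\) only through the layers \(\urectestindx{n}{<\ell}\) and the hypothesis \(\uglobalprob(\urectestindx{n}{})\leq2^{-n}\); both are preserved when \(\rectest\) is taken to be the universal \(\urectest\), so all the bounds, supermartingale checks, and effective-convergence estimates go through verbatim.
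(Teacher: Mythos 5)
Your proposal is correct and matches the paper's own proof in all essentials: it feeds the universal \ML{} test from Proposition~\ref{prop:universal:test} into the supermartingale construction from the proof of Proposition~\ref{prop:martin-loef:equivalence:martingale:then:test}, uses the strengthened divergence-to-infinity conclusion (Equation~\eqref{eq:lim:rather:than:sup}) for one direction, and the easy implication from unboundedness to non-randomness for the other. The only cosmetic difference is that you cite Theorem~\ref{thm:martin-loef:equivalence} where the paper invokes Proposition~\ref{prop:martin-loef:equivalence:test:then:martingale} directly in the converse step, which is an immaterial restatement.
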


\begin{proof}
% Proof by Gert
% Simplified by Floris
% Further simplified by Gert
% Checked by Gert
Consider the universal {\ML} test~\(\urectest\) in Proposition~\ref{prop:universal:test}, and the corresponding {\comp} sequence of effectively open sets~\(\urectestindx{n}{}\).
The argumentation in the proof of Proposition~\ref{prop:martin-loef:equivalence:martingale:then:test} can now be used to construct the {\lscomp} test supermartingale~\(\test\) defined by~\(\test(\init)\coloneqq1\) and~\(\test(\xvalto)\coloneqq\frac12\lim_{\ell\to\infty}\sum_{m=0}^{\infty}\uglobalcondprob{\urectestindx{m}{<\ell}}{\xvalto}\) for all~\(\xvalto\in\sits\) with~\(n\in\naturals\), which we claim does the job.

Indeed, consider any path~\(\pth\in\pths\).
Suppose that \(\pth\) isn't {\ML} (test) random for~\(\frcstsystem\), then we know from (Theorem~\ref{thm:martin-loef:equivalence} and) Proposition~\ref{prop:universal:test} that \(\pth\in\bigcap_{n\in\naturalswithzero}\urectestindx{n}{}\), and therefore the argumentation in the proof of Proposition~\ref{prop:martin-loef:equivalence:martingale:then:test} guarantees that \(\lim_{n\to\infty}\test(\pthto{n})=\infty\).
Conversely, suppose that \(\lim_{n\to\infty}\test(\pthto{n})=\infty\).
This tells us that~\(\pth\) isn't {\ML} random for~\(\frcstsystem\), and therefore, by Proposition~\ref{prop:martin-loef:equivalence:test:then:martingale}, not {\ML} test random for~\(\frcstsystem\) either.
\end{proof}

\section{Conclusion and future work}\label{sec:conclusion}
The conclusion to be drawn from our argumentation is straightforward: {\ML} and Schnorr randomness for binary sequences can also be associated with interval, or imprecise, forecasts, and they can furthermore---like their precise forecast counterparts---be defined using a martingale-theoretic and a randomness test approach; both turn out to lead to the same randomness notions, at least under {\compy} and non-degeneracy conditions on the forecasts.
In addition, our {\ML} randomness notion for computable interval-valued forecasting systems can be characterised by a single universal {\lscomp} test supermartingale, or equivalently, for forecasting systems that are moreover non-degenerate, by a single universal {\ML}-test, as is the case for precise forecasts.

Why do we believe our results to merit interest?

Our study of randomness notions for imprecise forecasts aims at generalising martingale-theoretic and measure-theoretic randomness notions, by going from global probability measures to the global upper expectations---or equivalently, \emph{sets of global probability measures}---that can be associated with interval-valued forecasting systems; see Section~\ref{sec:upper:expectations}.
We have already argued extensively elsewhere \cite{cooman2021:randomness} why we believe this generalisation to be useful and important, so let's focus here on other arguments, specifically related to the results in this paper.

We have shown in Section~\ref{sec:uniform} that {\comp} imprecise forecasting systems correspond to at least a subset of effectively compact classes of measures, and that for this subset, the {\ML} test randomness we have introduced above coincides with Levin's uniform randomness.
In this sense, our results here also provide the notion of uniform randomness with a martingale-theoretic account, at least for this subset of effectively compact classes of measures.

In addition to the fact that our martingale-theoretic and Levin's measure-theoretic notion of randomness coincide for computable forecasting systems, they also carry similar interpretations.
Indeed, one of our earlier results, Corollary~11 in Ref.~\cite{persiau2022:dissimilarities}, indicates that a path is martingale-theoretically random for a stationary forecasting system if and only if it is martingale-theoretically random for some probability measure compatible with it.
We are furthermore convinced that this result can be extended to non-stationary forecasting systems as well.
On the other hand, according to Theorem~5.23 and Remark~5.24 in Ref.~\cite{bienvenu2011:randomness:class}, uniform randomness for an effectively closed class of probability measures tests whether a path is uniformly random with respect to some probability measure compatible with it.
Through these results and the links we've established, test randomness gets a martingale-theoretic `randomness for compatible precise probability models'-characterisation.

In our work so far, we have focused on extending martingale-theoretic and randomness test definitions of randomness to deal with interval forecasts.
In the precise-probabilistic setting, there are also other approaches to defining the classical notions of {\ML} and Schnorr randomness, besides the randomness test and martingale-theoretic ones: via Kolmogorov complexity \cite{schnorr1971,schnorr1973,downey2010,martinlof1966:random:sequences,li1993}, order-preserving transformations of the event tree associated with a sequence of outcomes \cite{schnorr1971}, or specific limit laws (such as Lévy's zero-one law) \cite{huttegger2023:levy,zafforablando2020:phdthesis}.
It remains to be investigated whether our interval forecast extensions can also be arrived at via such alternative routes.

\section*{Acknowledgments}
Research on this topic was started by Gert and Jasper in January 2020, during a research stay in Siracusa, Italy.
Gert finished work on the {\ML} part near the end of 2020.
Research on the Schnorr part was taken up in a second phase by Floris and Gert shortly afterwards.
Jasper served as a sounding board during this second phase, and his constructive but unrelentingly critical observations, and concrete suggestions for improvement, made Floris and Gert rewrite and simplify significant parts of the paper.
Section~\ref{sec:uniform} was added after a referee suggested we ought to investigate in more detail the relation between our work with that on uniform randomness.

Work on this paper was supported by the Research Foundation -- Flanders (FWO), project numbers 11H5523N (for Floris Persiau) and 3G028919 (for Gert de Cooman and Jasper De Bock).
Gert's research was also partially supported by a sabbatical grant from Ghent University, and from the FWO, reference number K801523N.
Gert also wishes to express his gratitude to Jason Konek, whose ERC Starting Grant ``Epistemic Utility for Imprecise Probability'' under the European Union’s Horizon 2020 research and innovation programme (grant agreement no.~852677) allowed him to make a sabbatical stay at Bristol University's Department of Philosophy, and to Teddy Seidenfeld, whose generous funding helped realise a sabbatical stay at Carnegie Mellon University's Department of Philosophy.
Jasper's work was also supported by his BOF Starting Grant “Rational decision making under uncertainty: a new paradigm based on choice functions”, number 01N04819.

%%% References
\ifbiblatex
\printbibliography
\else
\bibliographystyle{plainnat}
\bibliography{general.bib}
\fi

\appendix
\section{Proofs of results in Sections~\ref{sec:forecasting:systems} and~\ref{sec:computability}}\label{sec:additional:proofs}
For some of the proofs below, we'll need the following version of a well-known basic result a number of times; see also Refs.~\cite[Lemma~2]{shafer2012:zero-one} and \cite[Lemma~1]{cooman2015:markovergodic} for related but slightly stronger statements.

\begin{lemma}\label{lem:supermartingale:bounding:result}
Consider any supermartingale~\(\supermartin\) for~\(\frcstsystem\) and any situation~\(\sit\in\sits\), then there's some path~\(\pth\in\cylset{\sit}\) such that \(\supermartin(\sit)\geq\sup_{n\geq\dist{\sit}}\supermartin(\pthto{n})\).
\end{lemma}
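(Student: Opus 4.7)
The plan is to construct the desired path $\omega$ greedily by descending through the tree starting from $s$, at each step choosing the child in which the supermartingale does not increase beyond $M(s)$.

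The key ingredient is the bound axiom~\ref{axiom:coherence:bounds} applied locally: for any situation $t$, we have
\[
\min\{M(t0), M(t1)\} \leq \uex_{\frcstsystem(t)}(M(t\andoutcome)) \leq M(t),
\]
where the second inequality is the supermartingale property~\eqref{eq:supermartingale}. This tells us that in every situation $t$, there is at least one outcome $x \in \outcomes$ such that $M(tx) \leq M(t)$.

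Using this, I would define $\omega \in \cylset{s}$ inductively. Set $\pthto{k} \coloneqq \sitto{k}$ for all $k \leq \dist{s}$, so that $\omega$ goes through $s$. Then, for each $n \geq \dist{s}$, having chosen $\pthto{n}$, pick $\pthat{n+1} \in \outcomes$ to be any outcome such that $M(\pthto{n+1}) \leq M(\pthto{n})$; such an outcome exists by the local bound above applied to $t = \pthto{n}$. A straightforward induction on $n \geq \dist{s}$ then yields $M(\pthto{n}) \leq M(s)$ for every $n \geq \dist{s}$, which gives $\sup_{n \geq \dist{s}} M(\pthto{n}) \leq M(s)$, as required.

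There is no real obstacle here: the argument is entirely elementary, relying only on the definition of a supermartingale together with property~\ref{axiom:coherence:bounds} of the local upper expectations. The only thing to be slightly careful about is guaranteeing the existence of a coherent choice at every step (which is immediate since $\outcomes$ is finite and nonempty, so the minimum in $\min\{M(t0), M(t1)\}$ is attained).
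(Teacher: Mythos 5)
Your proof is correct and follows essentially the same approach as the paper: observe via~\ref{axiom:coherence:bounds} and the supermartingale property that each situation has a child where $M$ does not increase, then build the path greedily by always descending to such a child. The only slight difference is that you make the chaining/induction explicit, whereas the paper compresses it into ``repeating the same argument over and over again'' and notes in a footnote that the construction invokes the axiom of dependent choice.
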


\begin{proof}
% Proof by Gert
% Checked by Gert
Since \(\supermartin\) is a supermartingale, we know that \(\uex_{\frcstsystem(\sit)}(\supermartin(\sit\andoutcome))\leq\supermartin(\sit)\), and therefore, by~\ref{axiom:coherence:bounds}, that \(\min\supermartin(\sit\andoutcome)\leq\uex_{\frcstsystem(\sit)}(\supermartin(\sit\andoutcome))\leq\supermartin(\sit)\), implying that there's some~\(x\in\outcomes\) such that \(\supermartin(sx)\leq\supermartin(\sit)\).
Repeating the same argument over and over again\footnote{This argument requires the axiom of dependent choice.} leads us to conclude  that there's some~\(\pth\in\cylset{\sit}\) such that \(\supermartin(\pthto{\dist{\sit}+n})\leq\supermartin(\sit)\) for all~\(n\in\naturalswithzero\), whence indeed \(\sup_{n\geq\dist{\sit}}\supermartin(\pthto{n})\leq\supermartin(\sit)\).
\end{proof}

\begin{proof}[Proof of Proposition~\ref{prop:properties:of:global:expectations}]
% Proof by Gert
We'll give proofs for~\ref{axiom:lower:upper:bounds}--\ref{axiom:lower:upper:supermartin}, in the interest of making this paper as self-contained as possible.
The proof of~\ref{axiom:lower:upper:monotone:convergence} would take us too far afield, however; we refer the interested reader to Ref.~\cite[Thm.~23]{tjoens2021:upper:expectations:stochastic:processes}, which is applicable in our context as well.\footnote{See footnote~\ref{footnote:equivalentglobalmodels} for an explanation and more details.}

We begin by proving that \(\inf(g\vert\sit)\leq\uglobalcond{g}{\sit}\leq\sup(g\vert\sit)\).
Conjugacy will then imply that also \(\inf(g\vert\sit)\leq\lglobalcond{g}{\sit}\leq\sup(g\vert\sit)\), and therefore that both \(\lglobalcond{g}{\sit}\) and~\(\uglobalcond{g}{\sit}\) are real numbers.
This important fact will be used a number of times in the remainder of this proof.
The remaining inequality in~\ref{axiom:lower:upper:bounds} will be proved further on below.
Since all constant real processes are supermartingales [by~\ref{axiom:coherence:bounds}], we infer from Equation~\eqref{eq:tree:upper:expectation} that, almost trivially,
\begin{equation*}
\uglobalcond{g}{\sit}
\leq\inf\cset{\alpha\in\reals}
{\alpha\geq g(\pth)\text{ for all~\(\pth\in\cylset{\sit}\)}}
=\sup(g\vert\sit).
\end{equation*}
For the other inequality, consider any supermartingale \(\supermartin\in\supermartins\) such that \(\liminf\supermartin\geqsit g\) [there clearly is such a supermartingale since~\(g\) is bounded].
We derive from Lemma~\ref{lem:supermartingale:bounding:result} that there's some path~\(\pth\in\cylset{\sit}\) such that \(\supermartin(\sit)\geq\supermartin(\pthto{\dist{\sit}+n})\) for all~\(n\in\naturalswithzero\), and therefore also that \(\supermartin(\sit)\geq\liminf\supermartin(\pth)\geq g(\pth)\geq\inf(g\vert\sit)\).
Equation~\eqref{eq:tree:upper:expectation} then guarantees that, indeed,
\begin{equation*}
\uglobalcond{g}{\sit}
=\inf\cset[\big]{\supermartin(\sit)}
{\supermartin\in\supermartins\text{ and }\liminf\supermartin\geqsit g}
\geq\inf(g\vert\sit).
\end{equation*}
In particular, we then find for~\(g=0\) that
\begin{equation}\label{eq:zero:expectation}
\lglobalcond{0}{\sit}=\uglobalcond{0}{\sit}=0.
\end{equation}

\ref{axiom:lower:upper:homogeneity}.
We prove the first equality; the second equality then follows from conjugacy.
It follows from Equation~\eqref{eq:zero:expectation} that we may assume without loss of generality that \(\lambda>0\).
The desired equality now follows at once from Equation~\eqref{eq:tree:upper:expectation} and the equivalences \(\supermartin\in\supermartins\ifandonlyif\lambda^{-1}\supermartin\in\supermartins\) and~\(\liminf\supermartin\geqsit\lambda g\ifandonlyif\liminf\lambda^{-1}\supermartin\geqsit g\).

\ref{axiom:lower:upper:subadditivity}.
We prove the third and fourth inequalities; the remaining inequalities will then follow from conjugacy.
For the fourth inequality, we consider any real~\(\alpha\) and~\(\beta\) such that \(\alpha>\uglobalcond{g}{\sit}\) and~\(\beta>\uglobalcond{h}{\sit}\).
Then it follows from Equation~\eqref{eq:tree:upper:expectation} that there are supermartingales~\(\supermartin_1,\supermartin_2\in\supermartins\) such that \(\liminf\supermartin_1\geqsit g\), \(\liminf\supermartin_2\geqsit h\), \(\alpha>\supermartin_1(\sit)\) and~\(\beta>\supermartin_2(\sit)\).
But then \(\supermartin\coloneqq\supermartin_1+\supermartin_2\) is a supermartingale for~\(\frcstsystem\) [use~\ref{axiom:coherence:subadditivity}] with
\begin{equation*}
\liminf\supermartin
=\liminf(\supermartin_1+\supermartin_2)
\geqsit\liminf\supermartin_1+\liminf\supermartin_2
\geqsit g+h,
\end{equation*}
and we therefore infer from Equation~\eqref{eq:tree:upper:expectation} that
\begin{equation*}
\uglobalcond{g+h}{\sit}
\leq\supermartin(\sit)
=\supermartin_1(\sit)+\supermartin_2(\sit)
<\alpha+\beta.
\end{equation*}
Since this inequality holds for all real \(\alpha>\uglobalcond{g}{\sit}\) and~\(\beta>\uglobalcond{h}{\sit}\), and since we've  proved above that (conditional) upper expectations of global gambles are real-valued, we find that, indeed, \(\uglobalcond{g+h}{\sit}\leq\uglobalcond{g}{\sit}+\uglobalcond{h}{\sit}\).

For the third inequality, observe that \(h=(g+h)-g\), so we infer from the inequality we've  just proved that
\begin{equation*}
\uglobalcond{h}{\sit}
=\uglobalcond{(g+h)-g}{\sit}
\leq\uglobalcond{g+h}{\sit}+\uglobalcond{-g}{\sit}
=\uglobalcond{g+h}{\sit}-\lglobalcond{g}{\sit},
\end{equation*}
whence, indeed, \(\uglobalcond{g+h}{\sit}\geq\lglobalcond{g}{\sit}+\uglobalcond{h}{\sit}\), since we've already proved above that (conditional) upper and lower expectations of global gambles are real-valued.

\ref{axiom:lower:upper:bounds}.
It's only left to prove that \(\lglobalcond{g}{\sit}\leq\uglobalcond{g}{\sit}\).
Since \(g-g=0\), we infer from~\ref{axiom:lower:upper:subadditivity} and Equation~\eqref{eq:zero:expectation} that \(0=\uglobalcond{g-g}{\sit}\leq\uglobalcond{g}{\sit}+\uglobalcond{-g}{\sit}=\uglobalcond{g}{\sit}-\lglobalcond{g}{\sit}\).
The desired inequality now follows from the fact that (conditional) upper and lower expectations of global gambles are real-valued, as proved above.

\ref{axiom:lower:upper:constant:additivity}.
We prove the first equality; the second will then follow from conjugacy.
Infer from~\ref{axiom:lower:upper:bounds} that \(\lglobalcond{h}{\sit}=\uglobalcond{h}{\sit}=h_\sit\), and then~\ref{axiom:lower:upper:subadditivity} indeed leads to
\begin{equation*}
\uglobalcond{g}{\sit}+h_\sit
=\uglobalcond{g}{\sit}+\uglobalcond{h}{\sit}
\geq\uglobalcond{g+h}{\sit}
\geq\uglobalcond{g}{\sit}+\lglobalcond{h}{\sit}
=\uglobalcond{g}{\sit}+h_\sit.
\end{equation*}

\ref{axiom:lower:upper:restriction}.
We prove the first equality; the second will then follow from conjugacy.
Since the global gambles~\(g\) and~\(g\indexact{\sit}\) coincide on the global event~\(\cylset{\sit}\), we see that \(\liminf\supermartin\geqsit g\) is equivalent to \(\liminf\supermartin\geqsit g\indexact\sit\) for all supermartingales~\(\supermartin\) for~\(\frcstsystem\), and therefore the desired equality follows readily from Equation~\eqref{eq:tree:upper:expectation}.

\ref{axiom:lower:upper:monotonicity}.
We prove the first implication; the second will then follow from conjugacy.
Assume that \(g\leqsit h\), then \(\sup(g-h\vert\sit)\leq0\), so we infer from~\ref{axiom:lower:upper:bounds} and~\ref{axiom:lower:upper:subadditivity} that,
\begin{equation*}
0\geq\sup(g-h\vert\sit)
\geq\uglobalcond{g-h}{\sit}
\geq\uglobalcond{g}{\sit}+\lglobalcond{-h}{\sit}
=\uglobalcond{g}{\sit}-\uglobalcond{h}{\sit}.
\end{equation*}
The desired inequality now follows from the fact that (conditional) upper and lower expectations of global gambles are real-valued, as proved above.

\ref{axiom:lower:upper:local}.
We prove the first equality; the second will then follow from conjugacy.

First of all, it follows from~\ref{axiom:coherence:constant:additivity} and~\ref{axiom:lower:upper:constant:additivity} that we may assume without loss of generality that \(f\geq0\), and therefore also \(f_\sit\geq0\).
Now consider any supermartingale~\(\supermartin\) for~\(\frcstsystem\) such that \(\liminf\supermartin\geqsit f_\sit\).
An argument similar to the one involving Lemma~\ref{lem:supermartingale:bounding:result} near the beginning of this proof allows us to conclude that there's some~\(\pth\in\cylset{\sit1}\) such that \(\supermartin(\sit1)\geq\liminf\supermartin(\pth)\), and similarly, that there's some~\(\altpth\in\cylset{\sit0}\) such that \(\supermartin(\sit0)\geq\liminf\supermartin(\altpth)\).
Since \(\liminf\supermartin\geqsit f_\sit\), this implies that \(\supermartin(\sit1)\geq f(1)\) and~\(\supermartin(\sit0)\geq f(0)\), and therefore \(\supermartin(\sit\andoutcome)\geq f\).
But then we find that \(\adddelta\supermartin(\sit)\geq f-\supermartin(\sit)\), and therefore also, using~\ref{axiom:coherence:monotonicity} and~\ref{axiom:coherence:constant:additivity}, that
\begin{equation*}
0
\geq\uex_{\frcstsystem(\sit)}(\adddelta\supermartin(\sit))
\geq\uex_{\frcstsystem(\sit)}(f-\supermartin(\sit))
=\uex_{\frcstsystem(\sit)}(f)-\supermartin(\sit),
\end{equation*}
whence \(\supermartin(\sit)\geq\uex_{\frcstsystem(\sit)}(f)\).
Equation~\eqref{eq:tree:upper:expectation} then leads to \(\uglobalcond{f_\sit}{\sit}\geq\uex_{\frcstsystem(\sit)}(f)\).

To prove the converse inequality, consider the real process~\(\supermartin_o\) defined by
\begin{equation*}
\supermartin_o(\altsit)\coloneqq
\begin{cases}
f(1)&\text{if \(\sit1\precedes\altsit\)}\\
f(0)&\text{if \(\sit0\precedes\altsit\)}\\
\uex_{\frcstsystem(\sit)}(f)&\text{otherwise}.
\end{cases}
\end{equation*}
It's clear that \(\adddelta\supermartin_o(\altsit)=0\) for all~\(\altsit\neq\sit\).
To check that \(\supermartin_o\) is a supermartingale for~\(\frcstsystem\), it is therefore enough to observe that, using~\ref{axiom:coherence:constant:additivity},
\begin{align*}
\uex_{\frcstsystem(\sit)}(\adddelta\supermartin_o(\sit))
&=\uex_{\frcstsystem(\sit)}(\supermartin_o(\sit\andoutcome)-\supermartin_o(\sit))\\
&=\uex_{\frcstsystem(\sit)}(\supermartin_o(\sit\andoutcome))-\supermartin_o(\sit)
=\uex_{\frcstsystem(\sit)}(f)-\uex_{\frcstsystem(\sit)}(f)
=0.
\end{align*}
Since clearly also \(\liminf\supermartin_o\geqsit f_\sit\), Equation~\eqref{eq:tree:upper:expectation} allows us to conclude that, indeed, also \(\uglobalcond{f_\sit}{\sit}\leq\supermartin_o(\sit)=\uex_{\frcstsystem(\sit)}(f)\).

\ref{axiom:lower:upper:supermartin}.
We prove the first equality; the second equality will then follow from conjugacy.

Consider any supermartingale~\(\supermartin\) for~\(\frcstsystem\) such that \(\liminf\supermartin\geqsit g\).
Then also \(\liminf\supermartin\geqsit[\sit x]g\), and therefore, using Equation~\eqref{eq:tree:upper:expectation}, \(\supermartin(\sit x)\geq\uglobalcond{g}{\sit x}\), for all~\(x\in\outcomes\).
Hence, \(\supermartin(\sit\andoutcome)\geq\uglobalcond{g}{\sit\andoutcome}\) and therefore,
\begin{equation*}
\supermartin(\sit)
\geq\uex_{\frcstsystem(\sit)}(\supermartin(\sit\andoutcome))
\geq\uex_{\frcstsystem(\sit)}(\uglobalcond{g}{\sit\andoutcome}),
\end{equation*}
where the first inequality follows from the supermartingale condition, and the second one from~\ref{axiom:coherence:monotonicity}.
Equation~\eqref{eq:tree:upper:expectation} then guarantees that \(\uglobalcond{g}{\sit}\geq\uex_{\frcstsystem(\sit)}(\uglobalcond{g}{\sit\andoutcome})\).

For the converse inequality, fix any real~\(\epsilon>0\).
For any~\(x\in\outcomes\), we infer from~\ref{axiom:lower:upper:bounds} that \(\uglobalcond{g}{\sit x}\) is real, and therefore Equation~\eqref{eq:tree:upper:expectation} tells us that there's some~\(\supermartin_x\in\supermartins\) such that \(\liminf\supermartin_x\geqsit[\sit x]g\) and \(\supermartin_x(\sit x)\leq\uglobalcond{g}{\sit x}+\epsilon\).
We now define the real process \(\supermartin\) by letting
\[
\supermartin(\altsit)
\coloneqq
\begin{cases}
\supermartin_x(\altsit)&\text{if \(\altsit\follows\sit x\) with \(x\in\outcomes\)}\\
\uex_{\frcstsystem(\sit)}(\supermartin(\sit\andoutcome))&\text{otherwise}
\end{cases}
\text{ for all~\(\altsit\in\sits\)}.
\]
On the one hand, observe that, in particular, by construction,
\[
\supermartin(\sit x)=\supermartin_x(\sit x)\leq\uglobalcond{g}{\sit x}+\epsilon
\text{ for all~\(x\in\outcomes\)},
\]
and therefore also
\begin{equation}\label{eq:iterated:expectations}
\supermartin(\sit)
=\uex_{\frcstsystem(\sit)}(\supermartin(\sit\andoutcome))
\leq\uex_{\frcstsystem(\sit)}(\uglobalcond{g}{\sit\andoutcome}+\epsilon)
=\uex_{\frcstsystem(\sit)}(\uglobalcond{g}{\sit\andoutcome})+\epsilon,
\end{equation}
where the inequality follows from~\ref{axiom:coherence:monotonicity} and the second equality from~\ref{axiom:coherence:constant:additivity}.
On the other hand, a straightforward verification shows that \(\supermartin\) is a supermartingale for~\(\frcstsystem\).
Moreover, again by construction,
\[
\liminf\supermartin(\pth)
=\liminf\supermartin_x(\pth)
\geq g(\pth)
\text{ for all~\(\pth\in\cylset{\sit x}\) with~\(x\in\outcomes\)},
\]
and therefore \(\liminf\supermartin\geqsit g\), so we infer from Equation~\eqref{eq:tree:upper:expectation} that \(\supermartin(\sit)\geq\uglobalcond{g}{\sit}\).
Combined with the inequality in Equation~\eqref{eq:iterated:expectations}, this leads to \(\uglobalcond{g}{\sit}\leq\uex_{\frcstsystem(\sit)}(\uglobalcond{g}{\sit\andoutcome})+\epsilon\).
Since this holds for all~\(\epsilon>0\), we find that, indeed also, \(\uglobalcond{g}{\sit}\leq\uex_{\frcstsystem(\sit)}(\uglobalcond{g}{\sit\andoutcome})\).
\end{proof}

\begin{proof}[Proof of Corollary~\ref{cor:supermartingales:based:on:cuts}]
% Proof by Gert
% Checked by Gert
Statements~\ref{it:supermartingales:based:on:cuts:with:equality} and~\ref{it:supermartingales:based:on:cuts:supermartingale} follow at once from~\ref{axiom:lower:upper:supermartin}.

Statement~\ref{it:supermartingales:based:on:cuts:bounds} follows from~\ref{axiom:lower:upper:bounds}, taking into account that \(0\leq\indexact{\cut}\leq1\).

For~\ref{it:supermartingales:based:on:cuts:where:one:and:where:zero}, observe on the one hand that~\(\sit\follows\cut\) implies that the global gamble~\(\indexact{\cut}\) assumes the constant value~\(1\) on~\(\cylset{\sit}\), and use~\ref{axiom:lower:upper:bounds}.
If, on the other hand, \(\sit\incomp\cut\), then \(\indexact{\cut}\) assumes the constant value~\(0\) on~\(\cylset{\sit}\), and the desired result again follows from~\ref{axiom:lower:upper:bounds}.

For~\ref{it:supermartingales:based:on:cuts:levy}, observe that it follows from \ref{axiom:lower:upper:bounds} that \(\uglobalcondprob{\cylset{\cut}}{\bolleke}\geq0\).
It therefore suffices to consider any~\(\pth\in\cylset{\cut}\) and to prove that then \(\liminf\uglobalcondprob{\cylset{\cut}}{\pth}=1\).
But if \(\pth\in\cylset{\cut}\), then there must be some~\(\sit\in\cut\) such that \(\pth\in\cylset{\sit}\).
Hence, \(\pthto{n}\follows\cut\)  and therefore, by~\ref{it:supermartingales:based:on:cuts:where:one:and:where:zero}, also \(\uglobalcondprob{\cylset{\cut}}{\pthto{n}}=1\) for all~\(n\geq\abs{\sit}\).
\end{proof}

\begin{proof}[Proof of Proposition~\ref{prop:lower:upper:probs:for:cylinder:sets}]
% Proof by Gert
% Completely rewritten, clarified and simplified by Gert
% Checked by Gert
We give the proof for the upper probability.
The proof for the lower probability is completely similar.

First of all, fix any~\(\ell\in\set{0,1,\dots,\dist{\sit}-1}\).
For any~\(x\in\outcomes\),
\begin{align*}
\uglobalcondprob{\cylset{\sit}}{\sitto{\ell}\,x}
&=\uglobalcond{\indexact{\sit}}{\sitto{\ell}\,x}
=\uglobalcond{\indexact{\sit}\indexact{\sitto{\ell}\,x}}{\sitto{\ell}\,x}
=\uglobalcond{\indexact{\sit}\indsing{\sitat{\ell+1}}(x)}{\sitto{\ell}\,x}\\
&=\uglobalcond{\indexact{\sit}}{\sitto{\ell}\,x}\indsing{\sitat{\ell+1}}(x)
=\uglobalcond{\indexact{\sit}}{\sitto{\ell+1}}\indsing{\sitat{\ell+1}}(x)\\
&=\uglobalcondprob{\cylset{\sit}}{\sitto{\ell+1}}\indsing{\sitat{\ell+1}}(x),
\end{align*}
where \(\indsing{\sitat{\ell+1}}\) is the indicator (gamble) on~\(\outcomes\) of the singleton~\(\set{\sitat{\ell+1}}\), and where the second equality follows from~\ref{axiom:lower:upper:restriction} and the fourth equality from~\ref{axiom:lower:upper:homogeneity}.
Hence,
\begin{equation}\label{eq:lower:upper:probs:for:cylinder:sets:one}
\uglobalcondprob{\cylset{\sit}}{\sitto{\ell}\andoutcome}
=\uglobalcondprob{\cylset{\sit}}{\sitto{\ell+1}}\indsing{\sitat{\ell+1}},
\end{equation}
so we can infer from the recursion equation in Corollary~\ref{cor:supermartingales:based:on:cuts}\ref{it:supermartingales:based:on:cuts:with:equality} that
\begin{align*}
\uglobalcondprob{\cylset{\sit}}{\sitto{\ell}}
&=\uex_{\frcstsystem(\sitto{\ell})}\group[\big]{\uglobalcondprob{\cylset{\sit}}{\sitto{\ell}\andoutcome}}
=\uex_{\frcstsystem(\sitto{\ell})}\group[\big]{\uglobalcondprob{\cylset{\sit}}{\sitto{\ell+1}}\indsing{\sitat{\ell+1}}}\\
&=\uglobalcondprob{\cylset{\sit}}{\sitto{\ell+1}}
\uex_{\frcstsystem(\sitto{\ell})}\group{\indsing{\sitat{\ell+1}}},
\end{align*}
where the second equality follows from Equation~\eqref{eq:lower:upper:probs:for:cylinder:sets:one} and the third equality from~\ref{axiom:coherence:homogeneity} and the fact that \(\uglobalcondprob{\cylset{\sit}}{\sitto{\ell+1}}\geq0\) [use~\ref{axiom:lower:upper:bounds}].
Since Equation~\eqref{eq:local:upper} now tells us that
\begin{equation*}
\uex_{\frcstsystem(\sitto{\ell})}\group{\indsing{\sitat{\ell+1}}}
=
\begin{cases}
\ufrcstsystem(\sitto{\ell})&\text{if \(\sitat{\ell+1}=1\)}\\
1-\lfrcstsystem(\sitto{\ell})&\text{if \(\sitat{\ell+1}=0\)}
\end{cases}
=\ufrcstsystem(\sitto{\ell})^{\sitat{\ell+1}}[1-\lfrcstsystem(\sitto{\ell})]^{1-\sitat{\ell+1}},
\end{equation*}
this leads to
\begin{equation*}
\uglobalcondprob{\cylset{\sit}}{\sitto{\ell}}
=\uglobalcondprob{\cylset{\sit}}{\sitto{\ell+1}})\ufrcstsystem(\sitto{\ell})^{\sitat{\ell+1}}[1-\lfrcstsystem(\sitto{\ell})]^{1-\sitat{\ell+1}}.
\end{equation*}

A simple iteration on~\(\ell\) now shows that, indeed,
\begin{align*}
\uglobalprob(\cylset{\sit})
=\uglobalcondprob{\cylset{\sit}}{\init}
&=\uglobalcondprob{\cylset{\sit}}{\sit}
\smashoperator{\prod_{k=0}^{\dist{\sit}-1}}\ufrcstsystem(\sitto{k})^{\sitat{k+1}}[1-\lfrcstsystem(\sitto{k})]^{1-\sitat{k+1}}\\
&=\smashoperator{\prod_{k=0}^{\dist{\sit}-1}}\ufrcstsystem(\sitto{k})^{\sitat{k+1}}[1-\lfrcstsystem(\sitto{k})]^{1-\sitat{k+1}},
\end{align*}
where the last equality follows from \(\uglobalcondprob{\cylset{\sit}}{\sit}=1\), as is guaranteed by~\ref{axiom:lower:upper:bounds}, or alternatively, by Corollary~\ref{cor:supermartingales:based:on:cuts}\ref{it:supermartingales:based:on:cuts:where:one:and:where:zero}.
\end{proof}

\begin{proof}[Proof of Proposition~\ref{prop:ville:inequality}]
% Proof by Gert
% Corrected and simplified by Gert
% Checked by Gert
Let \(G_C\coloneqq\cset{\pth\in\pths}{\sup_{n\in\naturalswithzero}\test(\pthto{n})\geq C}\).
Consider any~\(0<\epsilon<C\), and let \(\test_\epsilon\) be the real process given for all~\(\sit\in\sits\) by
\begin{equation*}
\test_\epsilon(\sit)
\coloneqq
\begin{cases}
\test(\altsit)
&\text{if there's some first \(\altsit\precedes\sit\) such that \(\test(\altsit)\geq C-\epsilon\)}\\
\test(\sit)
&\text{if \(\test(\altsit)<C-\epsilon\) for all~\(\altsit\precedes\sit\)},
\end{cases}
\end{equation*}
so \(\test_\epsilon\) is the version of~\(\test\) that mimics the behaviour of~\(\test\) but is stopped---kept constant---as soon as it reaches a value of at least \(C-\epsilon\).
Observe that \(\test_\epsilon(\init)=\test(\init)\), and that \(\frac{1}{C-\epsilon}\test_\epsilon\) is still a non-negative supermartingale for~\(\frcstsystem\).
For any~\(\pth\in G_C\), we have that \(\sup_{n\in\naturalswithzero}\test(\pthto{n})\geq C>C-\epsilon\), so there's some~\(n\in\naturalswithzero\) such that \(\test(\pthto{n})>C-\epsilon\), implying that \(\test_\epsilon(\pthto{m})=\test_\epsilon(\pthto{n})>C-\epsilon\) for all~\(m\geq n\), and therefore \(\liminf_{n\to\infty}\frac{1}{C-\epsilon}\test_\epsilon(\pthto{n})\geq1\).
Hence
\begin{equation*}
\liminf_{n\to\infty}\frac{1}{C-\epsilon}\test_\epsilon(\pthto{n})\geq\ind{G_C}(\pth)
\text{ for all~\(\pth\in\pths\)},
\end{equation*}
and therefore Equation~\eqref{eq:tree:upper:expectation} tells us that \(\uglobalprob(G_C)\leq\frac{1}{C-\epsilon}\test_{\epsilon}(\init)=\frac{1}{C-\epsilon}\test(\init)\).
Since this holds for all~\(0<\epsilon<C\), we are done.
\end{proof}

\begin{proof}[Proof of Proposition~\ref{prop:precise:forecasting:systems}]
% Proof by Gert
% Checked by Gert
The first statement follows from combining Proposition~10 and Theorem~6 in Ref.~\cite{tjoens2021:upper:expectation}, which are applicable in the present context as well.\footnote{See footnote~\ref{footnote:equivalentglobalmodels} for an explanation and more details.}
The second statement involving the partial cuts then follows from the first, as any cut~\(\cut\) is necessarily countable, as a subset of the countable set~\(\sits\).
This implies that \(\cylset{\cut}\) is a countable union of clopen sets, and therefore belongs to the Borel algebra.

To make this paper more self-contained, we nevertheless provide an alternative and more direct proof for the last statement involving partial cuts~\(\cut\), which is all we'll really need for the purposes of this paper.
Let, for ease of notation,
\[
p(\sit)
\coloneqq\smashoperator{\prod_{k=0}^{\dist{\sit}-1}}
\precisefrcstsystem(\sitto{k})^{\sitat{k+1}}[1-\precisefrcstsystem(\sitto{k})]^{1-\sitat{k+1}},
\text{ for all~\(\sit\in\sits\)}.
\]

First of all, let's assume that \(\cut\) is finite, then it follows from \ref{axiom:lower:upper:subadditivity} and Proposition~\ref{prop:lower:upper:probs:for:cylinder:sets} that
\[
\uglobalprob[\precisefrcstsystem](\cylset{\cut})
\leq\sum_{\sit\in\cut}p(\sit)
\leq\lglobalprob[\precisefrcstsystem](\cylset{\cut}),
\]
and then \ref{axiom:lower:upper:bounds} guarantees that
\begin{equation}\label{eq:precise:forecasting:systems:finite}
\globalprob[\precisefrcstsystem](\cylset{\cut})
\coloneqq\lglobalprob[\precisefrcstsystem](\cylset{\cut})
=\uglobalprob[\precisefrcstsystem](\cylset{\cut})
=\sum_{\sit\in\cut}p(\sit).
\end{equation}

Next, let's consider the more involved (and only remaining) case that \(\cut\) is countably infinite.
Let \(\cutindx{}{\leq n}\coloneqq\cset{\sit\in\cut}{\abs{\sit}\leq n}\), for all~\(n\in\naturals\), then \(\cutindx{}{\leq n}\) is an increasing nested sequence of finite partial cuts, with \(\cut=\bigcup_{n\in\naturals}\cutindx{}{\leq n}\), and similarly \(\cylset{\cut}=\bigcup_{n\in\naturals}\cylset{\cutindx{}{\leq n}}\).
It now follows from~\ref{axiom:lower:upper:monotone:convergence} and the non-negativity of the \(p(\sit)\) that
\begin{equation}\label{eq:precise:forecasting:systems:countable:upper}
\uglobalprob[\precisefrcstsystem](\cylset{\cut})
=\sup_{n\in\naturals}\uglobalprob[\precisefrcstsystem](\cylset{\cutindx{}{\leq n}})
=\sup_{n\in\naturals}\globalprob[\precisefrcstsystem](\cylset{\cutindx{}{\leq n}})
=\sup_{n\in\naturals}\sum_{\sit\in\cutindx{}{\leq n}}p(\sit)
=\sum_{\sit\in\cut}p(\sit).
\end{equation}
On the other hand, it follows from~\ref{axiom:lower:upper:bounds}, \ref{axiom:lower:upper:subadditivity} and Equation~\eqref{eq:precise:forecasting:systems:finite} that, for all~\(n\in\naturals\),
\begin{align*}
0
\leq\lglobalprob[\precisefrcstsystem](\cylset{\cut}\setminus\cylset{\cutindx{}{\leq n}})
&=\lglobal[\precisefrcstsystem]\group[\big]{\ind{\cylset{\cut}}-\ind{\cylset{\cutindx{}{\leq n}}}}
\leq\lglobal[\precisefrcstsystem]\group[\big]{\ind{\cylset{\cut}}}+\uglobal[\precisefrcstsystem]\group[\big]{-\ind{\cylset{\cutindx{}{\leq n}}}}\\
&=\lglobal[\precisefrcstsystem]\group[\big]{\ind{\cylset{\cut}}}-\lglobal[\precisefrcstsystem]\group[\big]{\ind{\cylset{\cutindx{}{\leq n}}}}
=\lglobalprob[\precisefrcstsystem]\group{\cylset{\cut}}-\globalprob[\precisefrcstsystem]\group{\cylset{\cutindx{}{\leq n}}}\\
&=\lglobalprob[\precisefrcstsystem]\group{\cylset{\cut}}-\sum_{\sit\in\cutindx{}{\leq n}}p(\sit),
\end{align*}
and therefore \(\sum_{\sit\in\cutindx{}{\leq n}}p(\sit)\leq\lglobalprob[\precisefrcstsystem]\group{\cylset{\cut}}\).
Taking the supremum over \(n\in\naturals\) on both sides of this inequality leads to
\[
\sum_{\sit\in\cut}p(\sit)
=\sup_{n\in\naturals}\sum_{\sit\in\cutindx{}{\leq n}}p(\sit)
\leq\lglobalprob[\precisefrcstsystem]\group{\cylset{\cut}},
\]
which, together with Equation~\eqref{eq:precise:forecasting:systems:countable:upper} and~\ref{axiom:lower:upper:bounds}, leads to
\[
\globalprob[\precisefrcstsystem](\cylset{\cut})
\coloneqq\lglobalprob[\precisefrcstsystem](\cylset{\cut})
=\uglobalprob[\precisefrcstsystem](\cylset{\cut})
=\sum_{\sit\in\cut}p(\sit).\qedhere
\]
\end{proof}

\begin{proof}[Proof of Proposition~\ref{prop:lsc:reals}]
%Since any recursive rational map~\(q\colon\mathcal{D}\times\naturalswithzero\to\rationals\) is also a {\comp} real map, it clearly suffices to prove the `if' part.
%So
Suppose there's a {\comp} real map~\(q\colon\mathcal{D}\times\naturalswithzero\to\reals\) such that \(q(d,n+1)\geq q(d,n)\) and~\(r(d)=\lim_{m\to\infty}q(d,m)\) for all~\(d\in\mathcal{D}\) and~\(n\in\naturalswithzero\).
Since~\(q\) is {\comp}, there's some recursive rational map~\(p\colon\mathcal{D}\times\natsandnats\to\rationals\) such that
\begin{equation} \label{eq:prop:comp}
\abs{q(d,m)-p(d,m,n)}\leq2^{-n}\textrm{ for all }d\in\mathcal{D}\textrm{ and }m,n\in\naturalswithzero.
\end{equation}
Let \(q'\colon\mathcal{D}\times\naturalswithzero\to\rationals\) be defined as \(q'(d,n)\coloneqq\max_{k=0}^n[p(d,k,k)-2^{-k}]\) for all~\(d\in\mathcal{D}\) and \(n\in\naturalswithzero\).
This map is clearly rational and recursive.
Furthermore,
\begin{align*}
q'(d,n+1)
=\max_{k=0}^{n+1}[p(d,k,k)-2^{-k}]
\geq\max_{k=0}^{n}[p(d,k,k)-2^{-k}]
=q'(d,n)
\shortintertext{and}
q'(d,n)=\max_{k=0}^n[p(d,k,k)-2^{-k}]
\leq\sup_{k\in\naturalswithzero}[p(d,k,k)-2^{-k}]
\leq\sup_{k\in\naturalswithzero}q(d,k)
=r(d)
\end{align*}
for all~\(d\in\mathcal{D}\) and \(n\in\naturalswithzero\), where the last inequality holds by Equation~\eqref{eq:prop:comp}.
We end this proof by showing that~\(\lim_{n\to\infty}q'(d,n)=r(d)\).
To this end, assume towards contradiction that there's some~\(N\in\naturalswithzero\) such that \(\lim_{n\to\infty}q'(d,n)+2^{-N}<r(d)\).
Since \(r(d)=\lim_{m\to\infty}q(d,m)\), there's some natural~\(M>N+1\) such that \(q(d,M)>r(d)-2^{-(N+1)}\).
As a consequence, we have that, also taking into account Equation~\eqref{eq:prop:comp},
\begin{align*}
q'(d,M)
<r(d)-2^{-N}
&<q(d,M)-2^{-N}+2^{-(N+1)}
=q(d,M)-2^{-(N+1)}\\
&\leq p(d,M,M)+2^{-M}-2^{-(N+1)}
\leq p(d,M,M)-2^{-M}
\leq q'(d,M),
\end{align*}
which is clearly a contradiction.
\end{proof}

\end{document}